\newtheorem{theorem}{Theorem}[section]
\newtheorem{proposition}[theorem]{Proposition}
\newtheorem{lemma}[theorem]{Lemma}
\newtheorem{conjecture}[theorem]{Conjecture}
\newtheorem{corollary}[theorem]{Corollary}
\newtheorem*{MainThm1}{Theorem~\ref{thm:moregeneralpro}}
\newtheorem*{MainThm2}{Theorem~\ref{thm:rowrow}}
\newtheorem*{MainThm3}{Theorem~\ref{thm:jdtpro}}
\newtheorem*{CorGT}{Corollary~\ref{cor:GT}}
\newtheorem*{CorSSYT1}{Corollary~\ref{cor:rectssytppartition}}
\newtheorem*{CorFlagged1}{Corollary~\ref{cor:flagged1}}
\newtheorem*{CorFlagged2}{Corollary~\ref{cor:flagged2}}
\newtheorem*{CorSymp}{Corollary~\ref{cor:symp}}
\newtheorem*{homConj}{Conjecture~\ref{conj:typeahomomesy}}
\theoremstyle{definition}
\newtheorem{remark}[theorem]{Remark}
\newtheorem{definition}[theorem]{Definition}
\newtheorem{example}[theorem]{Example}
\newcommand{\orb}{\mathcal{O}}
\DeclareMathOperator*{\pro}{Pro}
\DeclareMathOperator*{\row}{Row}
\DeclareMathOperator*{\togpro}{TogPro}
\DeclareMathOperator*{\jdtpro}{JdtPro}
\DeclareMathOperator{\evac}{\mathcal{E}}
\DeclareMathOperator*{\rk}{{\rm rk}}
\newcommand{\inc}[2]{\mathrm{Inc}^{#2}(#1)}
\DeclareRobustCommand{\widetriangle}{%
\begin{tikzpicture}%
\draw (-1.5ex,0) -- (1.5ex, 0) -- (0, 2ex) -- (-1.5ex,0);
\end{tikzpicture}%
}
\DeclareRobustCommand{\necornertriangle}{%
\begin{tikzpicture}%
\draw (0,0) -- (0, 2ex) -- (-2ex, 2ex) -- (0,0);
\end{tikzpicture}%
}
\title[$P$-strict promotion and $B$-bounded rowmotion]{$P$-strict promotion and $B$-bounded rowmotion, \linebreak with applications to tableaux of many flavors}
\begin{document}
\maketitle

\begin{abstract}
We define $P$-strict labelings for a finite poset $P$ as a generalization of semistandard Young tableaux and show that promotion on these objects is in equivariant bijection with a toggle action on $B$-bounded $Q$-partitions of an associated poset $Q$. In many nice cases, this toggle action is conjugate to rowmotion. We apply this result to flagged tableaux, Gelfand-Tsetlin patterns, and symplectic tableaux, obtaining new cyclic sieving and homomesy conjectures. We also show $P$-strict promotion can be equivalently defined using Bender-Knuth and jeu de taquin perspectives.
\end{abstract}

\tableofcontents

\section{Introduction}
This paper builds on the papers \cite{SW2012,DPS2017,DSV2019} investigating ever more general domains in which promotion on tableaux (or tableaux-like objects) and rowmotion on order ideals (or generalizations of order ideals) correspond. In \cite{SW2012}, N.~Williams and the second author proved a general result about rowmotion and toggles which yielded an equivariant bijection between promotion on $2\times n$ standard Young tableaux and rowmotion on order ideals of the triangular poset $\widetriangle_{n-1}$ (by reinterpreting the Type A case of a result of D.~Armstrong, C.~Stump, and H.~Thomas~\cite{AST2013} as a special case of a general theorem they showed about toggles). In \cite{DPS2017}, the second author, with K.~Dilks and O.~Pechenik, found a correspondence between $a\times b$ increasing tableaux with entries at most $a+b+c-1$ under $K$-promotion and order ideals of $[a]\times[b]\times[c]$ under rowmotion. In \cite{DSV2019}, the second and third authors with Dilks broadened this correspondence to generalized promotion on increasing labelings of any finite poset $P$ with restriction function $R$ on the labels and rowmotion on order ideals of a corresponding poset.

In this paper, we generalize from rowmotion on order ideals to
rowmowtion on \emph{$B$-bounded $Q$-partitions}
and determine the corresponding promotion action on tableaux-like objects we call \emph{$P$-strict labelings} (named in analogy to column-strict tableaux). This general theorem includes all of the previously known correspondences between promotion and rowmotion and gives new corollaries relating $P$-strict promotion on flagged or symplectic tableaux to $B$-bounded rowmotion on nice $Q$-partitions. Our main results also specialize to include a result of A.~Kirillov and A.~Berenstein~\cite{KirBer95} which states that Bender-Knuth involutions on semistandard Young tableaux correspond to piecewise-linear toggles on the corresponding Gelfand-Tsetlin pattern.

The paper is structured as follows. The introduction begins in Section~\ref{sec:ex} with a motivating example. Then we define our new objects, $P$-strict labelings, and a corresponding promotion action in Section~\ref{sec:pro}. In Section~\ref{sec:row}, we define $B$-bounded $Q$-partitions and the associated toggle and rowmotion actions. 
In Section~\ref{sec:results}, the final section of our introduction, we summarize the main results of this paper. Section~\ref{sec:maintheorem} proves our main theorems relating $P$-strict promotion, toggles, and $B$-bounded rowmotion. Section~\ref{sec:proevac} studies further properties of promotion and evacuation on $P$-strict labelings, including a jeu de taquin characterization of promotion for special $P$-strict labelings. Finally, Section~\ref{sec:corollaries} applies our main theorem to many special cases of interest.
\subsection{An example}
\label{sec:ex}
To motivate our main results, we begin with an example (see the remaining subsections of the introduction for definitions). 
In \cite{DSV2019}, Dilks and the second two authors found as an application of their main results an equivariant bijection between promotion on increasing labelings of a chain $P=[n]:=p_1\lessdot p_2\lessdot\cdots\lessdot p_n$ with the label $f(p_j)$ restricted as $j\leq f(p_j)\leq 2j$  and rowmotion on order ideals of the positive root poset $\widetriangle_n$. The idea for the current paper arose from the question of what happens in the above correspondence when order ideals of $\widetriangle_n$ are replaced by $\widetriangle_n$-partitions of height $\ell$ (that is, weakly increasing labelings of $\widetriangle_n$ with labels in $\{0,1,\ldots,\ell\}$).

In this paper, we give a bijection to the following: take $\ell$ copies of $P=[n]$ to form the poset $P\times[\ell]=\{(p,i) \ | \ p\in P \mbox{ and } 0\leq i\leq \ell\}$  and consider labelings $f:P\times[\ell]\rightarrow\mathbb{N}$ that are strictly increasing in each copy of $P$, weakly increasing along each copy of $[\ell]$, and obey the restriction $j\leq f(p_j,i)\leq 2j$ as before (call this restriction $R$). We call these \emph{$P$-strict labelings} of $P\times[\ell]$ with restriction function $R$. In this special case, under a mild transformation (represented by the top arrow of Figure~\ref{fig:fig1}), these are flagged tableaux of shape $\ell^n$ with flag $(2,4,\ldots,2n)$ (that is, semistandard tableaux with entries in row $j$ at most $2j$). The rightmost arrow of Figure~\ref{fig:fig1} represents the bijection from the first main result of this paper, Theorem~\ref{thm:moregeneralpro}.

Our second main result, Theorem~\ref{thm:rowrow}, implies that $P$-strict promotion (also called \emph{flagged} promotion in this case) on these flagged tableaux is in equivariant bijection with $B$-bounded rowmotion (also called \emph{piecewise-linear} rowmotion) on these $\widetriangle_n$-partitions with labels at most $\ell$. Then we deduce by a theorem of D.~Grinberg and T.~Roby \cite[Corollary 66]{GR2015}  on \emph{birational} rowmotion that promotion on these flagged tableaux is, surprisingly, of order $2(n+1)$. Note there is no dependence on the number of columns $\ell$! We discuss this and other applications to flagged tableaux in more detail in Section~\ref{sec:flagged}. See Corollaries~\ref{cor:flagged1} and \ref{cor:ftorder} for these specific results and Figure~\ref{fig:fig1} for an example of the bijection.

\begin{figure}[htbp]
\begin{center}
\includegraphics[width=.9\textwidth]{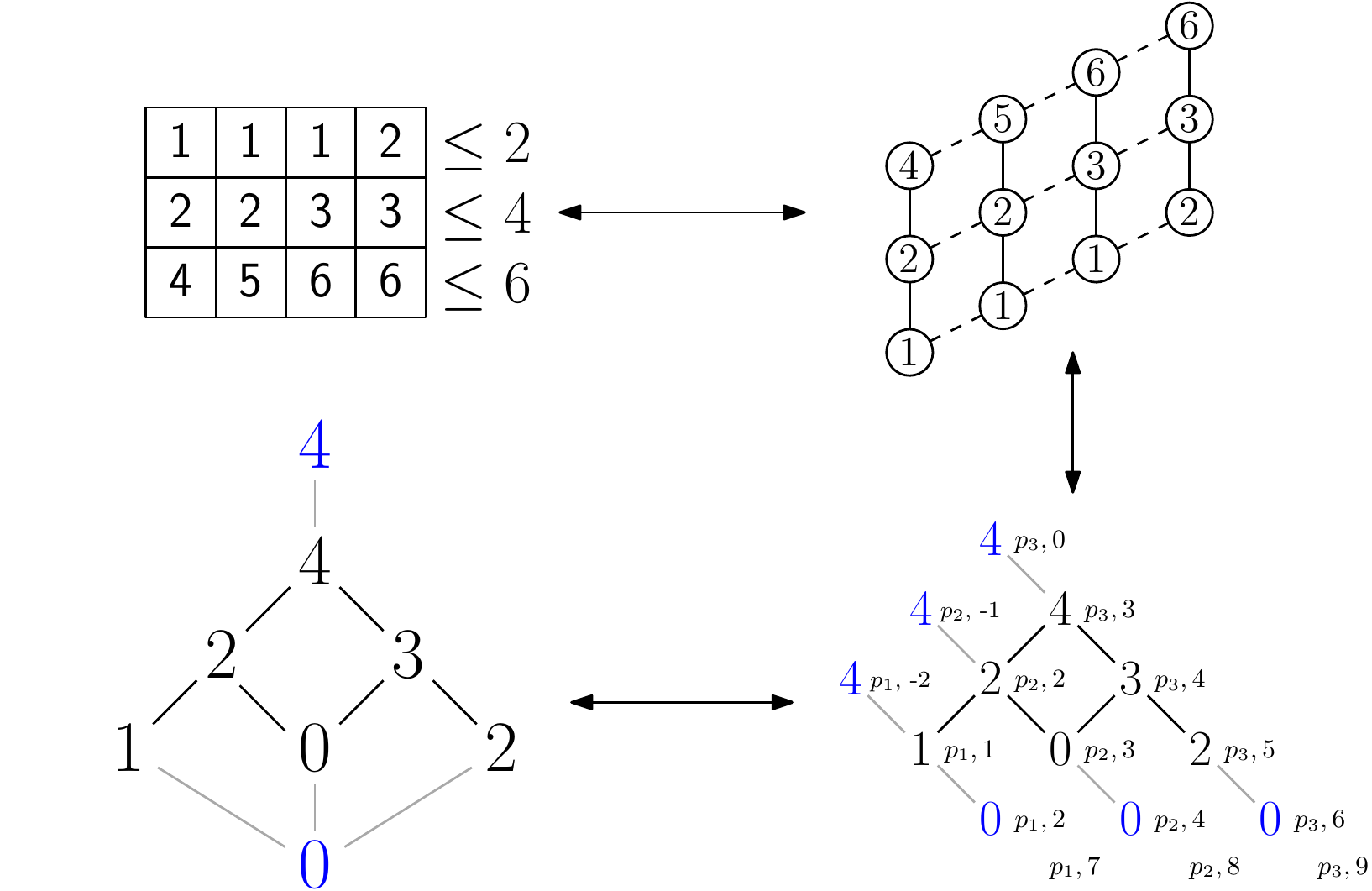}
\end{center}
\caption{A motivating example of the bijection of this paper, relating flagged tableaux of shape $\ell^n$ with flag $(2,4,6,\ldots,2n)$ to $\widetriangle_n$-partitions with labels at most $\ell$. See Corollaries~\ref{cor:flagged1} and~\ref{cor:ftorder}, which imply the order of promotion on these flagged tableaux is $8$.}
\label{fig:fig1}
\end{figure}

\subsection{Promotion on $P$-strict labelings}
\label{sec:pro}
{Promotion} is a well-loved action defined by M.-P.\ Sch\"ut\-zen\-berger on {linear extensions} of a finite poset \cite{Sch1972}. On a partition-shaped poset, linear extensions are equivalent to standard Young tableaux. 
Promotion has been defined on many other flavors of tableaux and labelings of posets using \emph{jeu de taquin} slides and their generalizations. Equivalently (as shown in \cite{Gansner,Stanley2009,DPS2017,DSV2019}), promotion may be defined by a sequence of {involutions}, introduced by E.~Bender and D.~Knuth on semistandard Young tableaux~\cite{BK1972}. 
This will be our main perspective; we discuss the jeu de taquin viewpoint further in Section~\ref{sec:proevac}.

Below, we define \emph{P-strict labelings}, which generalize both semistandard Young tableaux and increasing labelings. We extend the definition of promotion in terms of Bender-Knuth involutions to this setting.
We show in Theorem~\ref{thm:jdtpro} in which cases promotion may be equivalently defined using jeu de taquin. 

\begin{definition} In this paper, {$P$ represents a finite poset} with partial order $\leq_P$, $\lessdot$ indicates a {covering relation} in a poset, {$\ell$ and $q$ are positive integers}, $[\ell]$ denotes a {chain poset} (total order) of $\ell$ elements (whose elements will be named as indicated in context), and $P\times[\ell]=\{(p,i) \ | \ p\in P, i\in \mathbb{N}, \mbox{ and } 1\leq i\leq \ell\}$ with the usual Cartesian product poset structure.
\end{definition}

Below, we define $P$-strict labelings on \emph{convex subposets} of $P\times[\ell]$. A \textbf{convex} subposet is a subposet such that if two comparable poset elements $a$ and $b$ are in the subposet, then so is the entire interval $[a,b]$. This level of generality is necessary to, for instance, capture the case of promotion on semistandard Young tableaux of non-rectangular shape. 

\begin{definition}
Given $S$ a convex subposet of $P\times[\ell]$, let $L_i=\{(p,i)\in S \ | \ p\in P\}$ be the $i$th \textbf{layer} of $S$ and $F_p=\{(p,i)\in S \ | \ 1\leq i\leq \ell \}$ be the $p$th \textbf{fiber} of $S$.
\end{definition}

Convex subposets of $P\times[\ell]$ have a predictable structure, as we show in the following proposition.

\begin{definition}
\label{def:P_ell_uv}
Let $u: P \to \{0,1,\ldots,\ell\}$ and $v: P \to \{0,1,\ldots,\ell\}$ with $u(p) + v(p) \leq \ell$ for all $p \in P$ and $v(p_1) \leq v(p_2)$ and $u(p_1) \geq u(p_2)$ whenever $p_1 \leq_P p_2$.  
Then define $P \times [\ell]^v_u$ as the subposet of $P \times [\ell]$ given by $\{(p,i) \in P \times [\ell] \mid u(p) < i < \ell + 1  - v(p)\}$.
\end{definition}

\begin{proposition}
Let $S$ be a convex subposet of $P\times[\ell]$. Then there exist $u, v$ such that $S = P \times [\ell]^v_u$.
\end{proposition}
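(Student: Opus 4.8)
The plan is to show that each fiber $F_p = \{i \mid (p,i)\in S\}$ is a contiguous block of indices and that the top and bottom of these blocks vary monotonically in $p$, so that recording the endpoints produces the desired $u$ and $v$. First I would fix $p\in P$ and note that if $(p,i_1),(p,i_2)\in S$ with $i_1\le i_2$ then $(p,i_1)\le (p,i_2)$, and the interval between them in $P\times[\ell]$ is exactly $\{(p,i')\mid i_1\le i'\le i_2\}$ (antisymmetry collapses the first coordinate). Convexity then forces every such $(p,i')$ into $S$, so each $F_p$ is an integer interval. For $F_p\neq\emptyset$ I set $a(p)=\min F_p$ and $b(p)=\max F_p$; the target representation then demands $u(p)=a(p)-1$ and $v(p)=\ell-b(p)$, and the whole problem becomes extending $a$ and $b$ to all of $P$ so that both are order-reversing, with $a(p)=b(p)+1$ on the empty fibers, since this last condition is exactly what makes the prescribed interval empty and gives $u(p)+v(p)=\ell$ there.

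Next I would record two structural facts, both immediate from convexity via the observation that $(p_1,i_1)\le(p_2,i_2)$ (when $p_1\le_P p_2$ and $i_1\le i_2$) drags both $(p_1,i_2)$ and $(p_2,i_1)$ into $S$. The first is monotonicity on nonempty fibers: if $p_1\le_P p_2$ with both fibers nonempty, then $a(p_1)\ge a(p_2)$ and $b(p_1)\ge b(p_2)$; for instance, $a(p_1)<a(p_2)$ would put $(p_2,a(p_1))\in S$, contradicting minimality of $a(p_2)$. The second is a sandwich lemma: if $x\le_P y\le_P z$ with $F_x,F_z\neq\emptyset$ and $F_y=\emptyset$, then $a(x)>b(z)$; otherwise $a(x)\le b(z)$ gives $(x,a(x))\le(z,b(z))$, whose interval contains $(y,a(x))$, forcing $F_y\neq\emptyset$.

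Finally I would build the extension by defining $b(p)=\max(\{0\}\cup\{b(p')\mid p'\ge_P p,\ F_{p'}\neq\emptyset\})$ for every $p$; this agrees with $\max F_p$ on nonempty fibers and is order-reversing because the principal filter $\{p'\ge_P p\}$ shrinks as $p$ increases, so the maximum can only decrease. I then set $a(p)=b(p)+1$ on the empty fibers and put $u=a-1$, $v=\ell-b$. Since $v$ is order-preserving for free (as $b$ is order-reversing), and the range conditions, the inequality $u(p)+v(p)\le\ell$ with equality exactly on empty fibers, and the recovery of $F_p$ as the prescribed interval are all routine, the one genuine obstacle is verifying that $a$ remains order-reversing across a nonempty/empty boundary. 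This is where the sandwich lemma does the work: for $p_1\le_P p_2$ with $p_1$ nonempty and $p_2$ empty, $b(p_2)=b(p')$ for some nonempty $p'\ge_P p_2\ge_P p_1$, and the lemma gives $a(p_1)>b(p')=b(p_2)$, hence $a(p_1)\ge a(p_2)$; the reverse boundary case ($p_1$ empty, $p_2$ nonempty) follows from $b(p_1)\ge b(p_2)\ge a(p_2)$. I expect this empty-fiber bookkeeping — enforcing $u+v=\ell$ there while respecting the two-sided monotonicity constraints — to be the main difficulty, since a naive assignment of $u,v$ on empty fibers easily violates one of the monotonicity inequalities.
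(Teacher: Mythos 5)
Your proof is correct, and at the top level it follows the same strategy as the paper's: convexity forces each fiber to be an integer interval, the endpoints of the nonempty fibers define $u$ and $v$ there, and the interval-dragging argument gives monotonicity of those endpoints across comparable nonempty fibers. Where you genuinely diverge is in the treatment of empty fibers, and your version is the more careful one. The paper disposes of that case by asserting that $F_p=\emptyset$ implies $F_\omega=\emptyset$ for every $\omega\gtrdot_P p$ ``by convexity,'' and then sets $u(p)=\min\{u(q)\mid F_q\neq\emptyset\}$ and $v(p)=\ell-u(p)$ on empty fibers; but that assertion does not follow from convexity (take $P=\{p\lessdot\omega\}$, $\ell=1$, $S=\{(\omega,1)\}$: this is convex, $F_p=\emptyset$, $F_\omega\neq\emptyset$, and the paper's recipe yields $v(p)=1>0=v(\omega)$, violating the monotonicity that Definition~\ref{def:P_ell_uv} requires of $v$). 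Your construction --- extending $b$ over empty fibers as the maximum of $b$ over the filter above $p$ (with default $0$), setting $a=b+1$ there, and invoking the sandwich lemma at a nonempty-below-empty boundary --- handles exactly the configuration the paper's argument misses and produces admissible $u,v$ in every case. The cost is one extra lemma and some bookkeeping; what it buys is a proof valid for all convex subposets rather than only those whose empty fibers form an up-set.
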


\begin{proof} Since $S$ is convex, along any fiber $F_p$ we have $(p,i) \in S$ with $i_0 < i < i_1$ for some $i_0 \geq 0$ and some $i_1 \leq \ell+1$.  If $F_p \neq \emptyset$, let $u(p) = i_0$ and $v(p) = \ell + 1 - i_1$.  If $\omega \gtrdot_P p$, then $u(\omega) \leq u(p)$, otherwise $(p,u(\omega)), (\omega, u(\omega) + 1) \in S$ but $(\omega, u(\omega)) \notin S$, contradicting the convexity of $S$.  Similarly, $v(\omega) \geq v(p)$.  If $F_p = \emptyset$,  then $F_{\omega} = \emptyset$ for all $\omega \gtrdot_P p$ by convexity. For all $p \in P$ with $F_p = \emptyset$, set $u(p) = \min\{u(q) \mid F_q \neq \emptyset\}$ and $v(p) = \ell - u(p)$.  Thus $u(p) + v(p) = \ell$ and, over all of $P$, $u(p_1) \geq u(p_2)$ when $p_1 \lessdot_P p_2$.  Moreover, since for all $p$ with $F_p \neq \emptyset$ we have $v(p) < \ell - u(p)$, $v(p_1) \leq v(p_2)$ for all $p_1 \lessdot_P p_2$.
\end{proof}

\begin{example}
Let $\lambda = (\lambda_1 , \ldots, \lambda_n)$ and $\mu = (\mu_1, \ldots, \mu_n)$ be partitions. Consider the case where $P = [n]$, $u(p) = \mu_p$, and $v(p) = \ell - \lambda_p$ for all $p \in P$. In this case, the convex subposet is a skew tableau shape $\lambda / \mu$ that fits inside an $n \times \ell$ rectangle.
\end{example}

\begin{definition}
\label{def:restriction}
Let $\mathcal{P}(\mathbb{Z})$ represent the set of all nonempty,  finite subsets of $\mathbb{Z}$. A \textbf{restriction function on $P$} is a map $R:P \to \mathcal{P}(\mathbb{Z})$.
\end{definition}

In this paper, $R$ will always represent a restriction function.

\begin{definition}
\label{def:Pstrict}
We say that a function $f:P \times [\ell]^v_u \rightarrow \mathbb{Z}$ is a $P$-\textbf{strict labeling of $P \times [\ell]^v_u$ with restriction function $R$} if 
$f$ satisfies the following on $P \times [\ell]^v_u$: 
\begin{enumerate}
\item $f(p_1,i)<f(p_2,i)$ whenever $p_1<_P p_2$,
\item $f(p,i_1)\leq f(p,i_2)$ whenever $i_1\leq i_2$,
\item $f(p,i)\in R(p)$.
\end{enumerate}
That is, $f$ is strictly increasing inside each copy of $P$ (layer), weakly increasing along each copy of the chain $[\ell]$ (fiber), and such that the labels come from the restriction function $R$.

Let $\mathcal{L}_{P \times [\ell]}(u,v,R)$ denote the set of all $P$-strict labelings on $P \times [\ell]^v_u$ with restriction function $R$. If the convex subposet is $P\times[\ell]$ itself, i.e.\ $u(p) = v(p) = 0$ for all $p \in P$, we use the notation $\mathcal{L}_{P \times [\ell]}(R)$. 
\end{definition}

The following definition says that $R$ is consistent if every possible label is used in some $P$-strict labeling.
\begin{definition}
\label{def:consistent}
Let $R:P \to \mathcal{P}(\mathbb{Z})$. We say $R$ is \textbf{consistent} with respect to $P \times [\ell]^v_u$ if, for every $p\in P$ and $k\in R(p)$, there exists some $P$-strict labeling $f\in \mathcal{L}_{P \times [\ell]}(u,v,R)$ and $u(p) < i < \ell + 1 -v(p)$ such that $f(p,i)=k$.

We denote the consistent restriction function induced by (either global or local) upper and lower bounds as $R_a^b$, where $a,b: P \rightarrow \mathbb{Z}$. In the case of a global upper bound $q$, our restriction function will be $R^q_1$, that is, we take $a$ to be the constant function $1$ and $b$ to be the constant function $q$. Since a lower bound of $1$ is used frequently, we suppress the subscript $1$; that is, if no subscript appears, we take it to be $1$.
\end{definition}

\begin{remark} \label{rem:ell1inc}
If $\ell=1$, $\mathcal{L}_{P \times [\ell]}(R)=\inc{P}{R}$ from \cite{DSV2019}. A notion of consistent $R$ for this case was defined. This coincides with the above definition.
\end{remark}

We will use the following two definitions in  Definition~\ref{def:BenderKnuth}.
\begin{definition}
Let $R(p)_{>k}$ denote the smallest label of $R(p)$ that is larger than $k$, and let $R(p)_{<k}$ denote the largest label of $R(p)$ less than $k$.
\end{definition}

\begin{definition}
Say that a label $f(p,i)$ in a $P$-strict labeling $f\in \mathcal{L}_{P \times [\ell]}(u,v,R)$ is \textbf{raisable (lowerable)} if there exists another $P$-strict labeling  $g\in\mathcal{L}_{P \times [\ell]}(u,v,R)$ where $f(p,i)<g(p,i)$ ($f(p,i)>g(p,i)$), and $f(p',i')=g(p',i')$ for all $(p',i')\in P \times [\ell]^v_u$, $p' \neq p$. 
\end{definition}
It is important to note that the above definition is analogous to the increasing labeling case of \cite{DSV2019}, so raisability (lowerability) is thought of with respect to the layer, not the entire $P$-strict labeling.

\begin{definition}
\label{def:BenderKnuth}
Let the action of the \textbf{$k$th Bender-Knuth involution} $\rho_k$ on 
a $P$-strict labeling $f\in \mathcal{L}_{P \times [\ell]}(u,v,R)$ be as follows: 
identify all raisable labels $f(p,i)=k$ and all lowerable labels $f(p,i)=R(p)_{>k}$ (if $k = \max R(p)$, then there are no raisable or lowerable labels on the fiber $F_p$). 
Call these labels `free'. Suppose the labels $f(F_p)$ include $a$ free $k$ labels followed by $b$ free $R(p)_{>k}$ labels; $\rho_k$ changes these labels to $b$ copies of $k$ followed by $a$ copies of $R(p)_{>k}$. 
\textbf{Promotion} on $P$-strict labelings is defined as the composition of these involutions: $\pro(f)=\cdots\circ\rho_3\circ\rho_2\circ\rho_1 \circ \cdots(f)$. Note that since $R$ induces upper and lower bounds on the labels, only a finite number of Bender-Knuth involutions act nontrivially.
\end{definition}

We compute promotion on a $P$-strict labeling in Figure~\ref{fig:propropro}. 
We continue this example in Figure~\ref{fig:moregeneralex}.

\begin{example}
Consider the action of $\rho_1$ in Figure \ref{fig:propropro}.  In the fiber $F_a$, neither of the $1$ labels can be raised to $R(a)_{>1} = 3$, since they are restricted above by the $3$ labels in the fiber $F_b$.  However, the $3$ label in $F_a$ can be lowered to a $1$, and so the action of $\rho_1$ takes the one free $3$ label and replaces it with a $1$.  Similarly, in $F_c$, the $2$ is lowered to a $1$.  In $F_b$, the $1$ can be raised to a $3$ and the $3$ can be lowered to a $1$.  Because there is one of each, $\rho_1$ makes no change in $F_b$.

After applying $\rho_2$, we look closer at the action of $\rho_3$. In $F_a$, there are no $3$ labels or $R(a)_{>3} = 4$ labels, so we do nothing.  In $F_b$, however, there are three $3$ labels that can be raised to $R(b)_{>3} =5$ and one $5$ that can be lowered to $3$.  Thus $\rho_3$ replaces these four free labels with one $3$ and three $5$ labels.
\end{example}

\begin{remark}
In the case $\ell=1$, $\mathcal{L}_{P \times [\ell]}(R)$ equals $\mathrm{Inc}^R(P)$, the set of increasing labelings of $P$ with restriction function $R$. So the above definition specializes to \emph{generalized Bender-Knuth involutions} and \emph{increasing labeling promotion} $\mathrm{IncPro}$, as studied in~\cite{DSV2019}. If, in addition, $P$ is \mbox{(skew-)}partition shaped, these increasing labelings are equivalent to \emph{(skew-)increasing tableaux}, and the above definition specializes to \emph{$K$-Bender-Knuth involutions} and \emph{$K$-Promotion}, as in~\cite{DPS2017}. 

If we restrict our attention to \emph{linear extensions} of $P$, the above definition specializes to usual Bender-Knuth involutions and promotion, as studied in \cite{Stanley2009}.

If $P=[n]$ and $\ell$ is arbitrary, $\mathcal{L}_{P \times [\ell]}(R^q)$ is equivalent to the set of semistandard Young tableaux of shape an $n\times \ell$ rectangle and entries at most $q$, and $\mathcal{L}_{P \times [\ell]}(u,v,R^q)$ is the set of (skew-)semistandard Young tableaux with shape corresponding to $P \times [\ell]^v_u$ and entries at most $q$. In these cases, the above definition specializes to usual Bender-Knuth involutions and promotion. We give more details on this specialization in Section~\ref{sec:ssyt}.

Given that Definition~\ref{def:BenderKnuth} specializes to the right thing in each of these cases (including linear extensions and semistandard Young tableaux), we will no longer use the prefixes $K$-, increasing labeling, or generalized, and rather call all these actions `Bender-Knuth involutions' and `promotion', letting the object acted upon specify the context.
\end{remark}

\begin{figure}[htbp]
\begin{center}
\includegraphics[width=\textwidth]{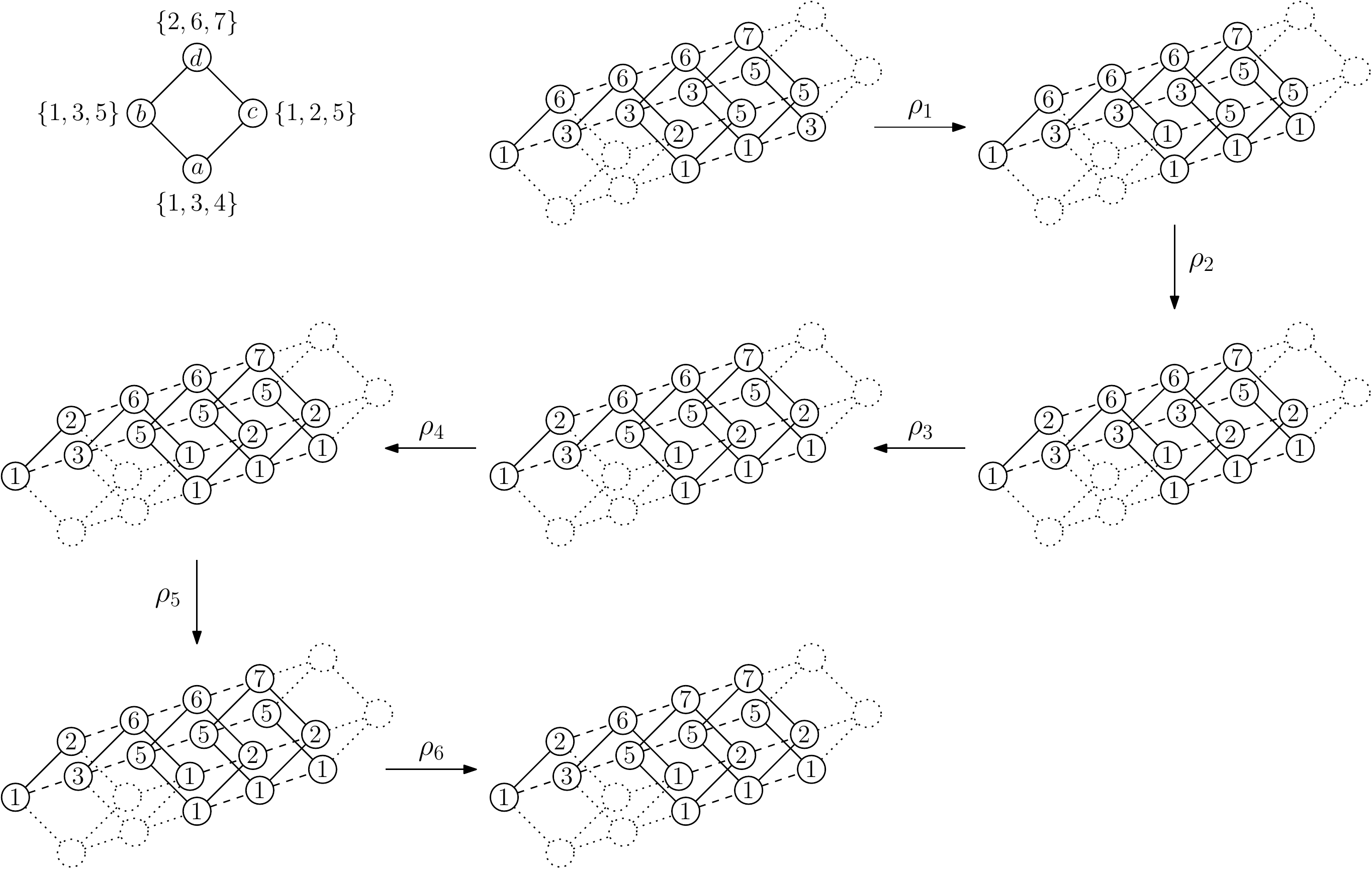}
\end{center}
\caption{Promotion on a $P$-strict labeling of a convex subposet of $P\times[5]$, where the poset $P = \{a,b,c,d\}$ along with the restriction function $R$ are given at the top. Each Bender-Knuth involution $\rho_i$ is shown.}
\label{fig:propropro}
\end{figure}

\subsection{Rowmotion on $Q$-partitions}
\label{sec:row}
Rowmotion is an intriguing action  that has recently generated significant interest as a prototypical action in  dynamical algebraic combinatorics; see, for example, the survey articles \cite{Roby2016,Striker2017}. 
Rowmotion was originally defined on hypergraphs by P.~Duchet \cite{Duchet1974} and generalized to order ideals $J(Q)$ of an arbitrary finite poset $(Q,\leq_Q)$ by A.~Brouwer and A.~Schrijver~\cite{BS1974}. P.~Cameron and D.~Fon-der-Flaass~\cite{CF1995} then described it in terms of toggles; thereafter, Williams and the second author~\cite{SW2012} related it to promotion and gave it the name `rowmotion'. Rowmotion was further generalized to piecewise-linear and birational domains by D.~Einstein and J.~Propp \cite{EP2014, EP2020}. In this paper, we discuss toggling and rowmotion on $Q$-partitions, as a rescaling of the piecewise-linear version. 

In light of our use of $P$ for $P$-strict labelings, we use $Q$ rather than $P$ when referring to an arbitrary finite poset associated with the definitions of this section.

\begin{definition}
\label{def:ppart}
A \textbf{$Q$-partition} is a map $\sigma:Q\rightarrow\mathbb{N}_{\geq 0}$ such that if $x\leq_Q x'$, then $\sigma(x)\leq \sigma(x')$. Let $\hat{Q}$ denote $Q$ with $\hat{0}$ added below all elements and $\hat{1}$ added above all elements. Let $\mathcal{A}^{\ell}({Q})$ denote the set of all $\hat{Q}$-partitions $\sigma$ with $\sigma(\hat{0})=0$ and $\sigma(\hat{1})=\ell$. 
\end{definition}

\begin{remark}
In~\cite{Stanley1972}, Stanley uses the reverse convention: that a $Q$-partition is order-\emph{reversing} rather than order-\emph{preserving}. We choose our convention to match with the order-preserving nature of points in the order polytope, on which the toggles of Einstein and Propp act \cite{EP2014, EP2020}. 
\end{remark}

In Definition \ref{def:ppart2}, we generalize Definition \ref{def:ppart} by specifying bounds element-wise. Then in Definition~\ref{def:ppart3}, we define our main objects of study in this section:  {$B$-bounded $Q$-partitions}.

\begin{definition}
\label{def:ppart2}
Let $\delta,\epsilon \in \mathcal{A}^{\ell}(Q)$. Let $\mathcal{A}^{\delta}_{\epsilon}(Q)$ denote the set of all $Q$-partitions $\sigma\in \mathcal{A}^{\ell}(Q)$ with $\epsilon(x)\leq\sigma(x)\leq\delta(x)$. Call these \textbf{$(\delta,\epsilon)$-bounded $Q$-partitions}.\end{definition}

\begin{remark}
\label{remark:elldeltaepsilon}
If $\delta(x)=\ell$ and $\epsilon(x)=0$ for all $x\in Q$, then $\mathcal{A}^{\delta}_{\epsilon}(Q) = \mathcal{A}^{\ell}(Q)$. 
\end{remark}


\begin{definition} 
\label{def:ppart3}
Let $B \in \mathcal{A}^{\ell}(W)$ where $W$ is a subset of $Q$ that includes all maximal and minimal elements. Let $\mathcal{A}^{B}(Q)$ denote the set of all $Q$-partitions $\sigma\in \mathcal{A}^{\ell}(Q)$ with $\sigma(x)=B(x)$ for all $x\in W$. Call these \textbf{$B$-bounded $Q$-partitions}. We refer to the subset $W$ as $\text{dom}(B)$, the domain of $B$.
\end{definition}

The next two remarks note that Definition~\ref{def:ppart3} contains Definitions~\ref{def:ppart} and \ref{def:ppart2} as special cases.
\begin{remark}
\label{remark:chi_ell}
If $B$ is defined as $B(\hat{0})=0$, $B(\hat{1})=\ell$, then $\mathcal{A}^{B}(\hat{Q})$ is equivalent to $\mathcal{A}^{\ell}(Q)$. 
\end{remark}

\begin{remark}
\label{remark:chi_delta}
Let $Q'$ be the poset $Q$ with two additional elements added for each $x\in Q$: a minimal element $\hat{0}_x$ covered by $x$ and a maximal element $\hat{1}_x$ covering $x$. If $B$ is defined as $B(\hat{0}_x)=\epsilon(x)$, $B(\hat{1}_x)=\delta(x)$, then $\mathcal{A}^{B}(Q')$ is equivalent to $\mathcal{A}^{\delta}_{\epsilon}(Q)$. 
\end{remark}

\begin{remark}
Note that $B$-bounded $Q$-partitions correspond to rational points in a certain \emph{marked order polytope}, though this perspective is not necessary for this paper.
\end{remark}

In Definitions~\ref{def:toggle1} and \ref{def:row1} below, we define toggles and rowmotion. In the case of $\mathcal{A}^{\ell}(Q)$, these definitions are equivalent (by rescaling) to those first given by Einstein and Propp on the order polytope~\cite{EP2014,EP2020}. By the above remarks, it is sufficient to give the definitions of toggles and rowmotion for $\mathcal{A}^{B}(Q)$.

\begin{definition}
\label{def:toggle1}
For $\sigma \in \mathcal{A}^{B}(Q)$ and $x \in Q\setminus \mbox{dom}(B)$, let $\alpha_{\sigma}(x) = \min\{\sigma(y) \mid y \in Q \mbox{ covers } x\}$ and $\beta_{\sigma}(x) = \max\{\sigma(z) \mid z \in Q \mbox{ is covered by } x\}$.
Define the  \textbf{toggle} $\tau_{x}:\mathcal{A}^{B}({Q})\rightarrow \mathcal{A}^{B}({Q})$  
by \[ \tau_{x}(\sigma)(x'):= \begin{cases}
\sigma(x') &  x \neq x' \\
\alpha_{\sigma}(x') + \beta_{\sigma}(x') -\sigma(x') & x=x'.
\end{cases} \] 
\end{definition}

\begin{remark} 
\label{remark:commute}
By the same reasoning as in the case of order ideal toggles, the $\tau_{x}$ satisfy:
\begin{enumerate}
\item $\tau_{x}^2=1$, and
\item $\tau_{x}$ and $\tau_{x'}$ commute whenever  $x$ and $x'$ do not share a covering relation.
\end{enumerate}
\end{remark}

\begin{definition} 
\label{def:row1}
\textbf{Rowmotion} on $\mathcal{A}^{B}({Q})$ is defined as the toggle composition $\row := \tau_{x_1}\circ \tau_{x_2}\circ \cdots\circ \tau_{x_m}$ where $x_1,x_2,\ldots,x_m$ is any linear extension of $Q\setminus \mbox{dom}(B)$.
\end{definition}

\begin{remark}
It may be argued that we should call these actions piecewise-linear toggles and piecewise-linear rowmotion as defined in \cite{EP2014, EP2020}, but as in the case of promotion on tableaux and labelings, unless clarification is needed, we choose to leave the names of these actions adjective-free, allowing the objects acted upon to indicate the context.
\end{remark}

\subsection{Summary of main results}
\label{sec:results}
Our first main theorem gives a correspondence between $P$-strict labelings $\mathcal{L}_{P\times[\ell]}(u,v,R)$ under promotion and specific ${\hat{B}}$-bounded $Q$-partitions $\mathcal{A}^{\widehat{B}}(Q)$ under a composition of toggles, namely, the \emph{toggle-promotion} $\togpro$ of Definition~\ref{def:togpro}.
Here $Q$ is the poset $\Gamma(P,\hat{R})$ constructed in Section~\ref{subsec:def} and $\hat{B}$ depends on $u$, $v$, and $R$. The bijection map $\Phi$ is given in Definition~\ref{def:mainbijection}. See Figure~\ref{fig:moregeneralex} for an illustration of this theorem and Figure~\ref{fig:bijection} for an example of $\Phi$.

\begin{MainThm1} 
The set of $P$-strict labelings $\mathcal{L}_{P\times[\ell]}(u,v,R)$ under $\pro$ is in equivariant bijection with the set $\mathcal{A}^{\widehat{B}}(\Gamma(P,\!\hat{R}))$ under $\togpro$. More specifically, for $f\in\mathcal{L}_{P\times[\ell]}(u,v,R)$, $\Phi\left(\pro(f)\right)=\togpro\left(\Phi(f)\right)$.
\end{MainThm1}

Our second main theorem specifies cases in which toggle-promotion is conjugate in the toggle group to rowmotion, namely, when $\mathcal{A}^{\widehat{B}}({\Gamma}(P,\hat{R}))$ is \emph{column-adjacent} (see Definition~\ref{def:coladj}). 
\begin{MainThm2}
If $\mathcal{A}^{\widehat{B}}({\Gamma}(P,\hat{R}))$ is column-adjacent, then $\mathcal{A}^{\widehat{B}}({\Gamma}(P,\hat{R}))$ under $\row$ is in equivariant bijection with $\mathcal{L}_{P \times [\ell]}(u,v,R)$ under $\pro$.
\end{MainThm2}

Column-adjacency holds in many cases of interest, including the case of restriction functions induced by global or local bounds, such as the various sets of tableaux discussed in Section~\ref{sec:corollaries}. 

\medskip
Our third main theorem states that in the case of a global upper bound $q$, $P$-strict promotion can be equivalently defined in terms of jeu de taquin; see Definition~\ref{def:jdt} and Figure~\ref{fig:jdtexample}.
\begin{MainThm3} 
For $f\in \mathcal{L}_{P \times [\ell]}(u,v,R^q)$, $\jdtpro(f)=\pro(f)$.
\end{MainThm3}
In this same special case, we define and study $P$-strict evacuation; see Section~\ref{subsec:evac}.
\smallskip

We highlight some corollaries of our main theorems. The first is a correspondence that has been noted before (see Remark~\ref{remark:CSP}) between promotion on rectangular semistandard Young tableaux $\mathrm{SSYT}(\ell^n,q)$ and rowmotion on $Q$-partitions $\mathcal{A}^{\ell}(Q)$, where $Q$ is a product of two chains poset (see Figure \ref{fig:nbyelltab}). Such correspondences are often of interest since they provide immediate translation of results, such as Rhoades' \emph{cyclic sieving} theorem on $\mathrm{SSYT}(\ell^n,q)$~\cite{Rhoades2010}, from one domain to the other.

\begin{CorSSYT1}
The set of semistandard Young tableaux $\mathrm{SSYT}(\ell^n,q)$ under $\pro$ is in equivariant bijection with the set $\mathcal{A}^{\ell}([n]\times[q-n])$ under $\row$.
\end{CorSSYT1}

We also recover the following result of Kirillov and Berenstein relating Bender-Knuth involutions $\rho_k$ on semistandard Young tableaux $\mathrm{SSYT}(\lambda/\mu,q)$ with \emph{elementary transformations} $t_k$ on \emph{Gelfand-Tsetlin patterns} $\mathrm{GT}(\tilde{\lambda},\tilde{\mu},q)$ (see Figure \ref{fig:GT}). 
\begin{CorGT}[\protect{\cite[Proposition 2.2]{KirBer95}}] 
The set $\mathrm{SSYT}(\lambda/\mu,q)$ is in bijection with $\mathrm{GT}(\tilde{\lambda},\tilde{\mu},q)$, where $\tilde{\lambda}_i:= \lambda_1-\mu_{n-i+1}$ and $\tilde{\mu}_i:= \lambda_1-\lambda_{n-i+1}$. Moreover, $\rho_k$ on $\mathrm{SSYT}(\lambda/\mu,q)$ corresponds to $t_{q-k}$ on $\mathrm{GT}(\tilde{\lambda},\tilde{\mu},q)$.
\end{CorGT}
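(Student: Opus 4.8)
The plan is to derive this statement as a direct specialization of Theorem~\ref{thm:moregeneralpro}. First I would identify the domain: taking $P=[n]$ and the restriction function $R=R^q$ with global upper bound $q$, the remark following Definition~\ref{def:BenderKnuth} already records that $\mathcal{L}_{[n]\times[\ell]}(u,v,R^q)$ is exactly the set of skew semistandard Young tableaux $\mathrm{SSYT}(\lambda/\mu,q)$, where $u$ and $v$ encode $\mu$ and $\lambda$ as in the example following Definition~\ref{def:P_ell_uv} (so $u(p)=\mu_p$ and $v(p)=\ell-\lambda_p$). Under this identification the $k$th Bender-Knuth involution $\rho_k$ of Definition~\ref{def:BenderKnuth} is precisely the classical Bender-Knuth involution on $\mathrm{SSYT}(\lambda/\mu,q)$, so there is nothing to check on the tableau side.

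Next I would apply Theorem~\ref{thm:moregeneralpro} to produce the equivariant bijection $\Phi$ onto $\mathcal{A}^{\widehat{B}}(\Gamma([n],\hat{R^q}))$. The main computation is to unwind the construction of $\Gamma([n],\hat{R^q})$ from Section~\ref{subsec:def} in this chain case and check that it is exactly the triangular Gelfand-Tsetlin poset, so that a $\widehat{B}$-bounded $Q$-partition is literally a filling of a GT array. The boundary data $\widehat{B}$ (which depends on $u$, $v$, and $R^q$) pins down the fixed outer entries of the array, and here I would verify that these fixed entries are exactly the partitions $\tilde{\lambda},\tilde{\mu}$ given by $\tilde{\lambda}_i=\lambda_1-\mu_{n-i+1}$ and $\tilde{\mu}_i=\lambda_1-\lambda_{n-i+1}$. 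The reversal $n-i+1$ and the complementation by $\lambda_1$ are the combinatorial shadow of the order-reversing/complementing nature of $\Phi$ (each tableau label $f$ is read against the global bound $q$ when passing to a $Q$-partition, and the chain $[n]$ is read in reverse). This gives the asserted bijection $\mathrm{SSYT}(\lambda/\mu,q)\cong\mathrm{GT}(\tilde{\lambda},\tilde{\mu},q)$.

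For the equivariance statement, I would extract from the proof of Theorem~\ref{thm:moregeneralpro} the finer fact that $\Phi$ carries each individual Bender-Knuth involution $\rho_k$ to the toggle of a single distinguished ``row'' of $\Gamma([n],\hat{R^q})$ (the full composition of which, over all $k$, is the toggle-promotion $\togpro$). Translated into the Gelfand-Tsetlin poset, toggling an entire row is exactly Kirillov-Berenstein's elementary transformation $t_j$ on the $j$th row, since changing only the number of $k$'s and $(k+1)$'s in a tableau alters solely the shape recorded in the corresponding GT row. It then remains to match the index: because $\Phi$ compares each label to the bound $q$, the row carrying the value $k$ in the tableau is the $(q-k)$th row of the GT array, so $\rho_k$ corresponds to $t_{q-k}$, as claimed. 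Note this argument uses only Theorem~\ref{thm:moregeneralpro} (toggle level) and not Theorem~\ref{thm:rowrow}.

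The main obstacle I anticipate is bookkeeping rather than conceptual: pinning down the exact index convention so that the reversal lands on $t_{q-k}$ (and not $t_k$ or $t_{q+1-k}$) while simultaneously producing the stated complemented and reversed partitions $\tilde{\lambda},\tilde{\mu}$. Concretely, I would fix a small calibrating case (say $n=2$) to fix the direction of the reversal and the complementation, and then confirm that the general formulas $\tilde{\lambda}_i=\lambda_1-\mu_{n-i+1}$, $\tilde{\mu}_i=\lambda_1-\lambda_{n-i+1}$ together with the shift $k\mapsto q-k$ are forced by the definition of $\Phi$ in Definition~\ref{def:mainbijection}.
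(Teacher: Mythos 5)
Your overall strategy---specialize Theorem~\ref{thm:moregeneralpro} to $P=[n]$, identify the target poset with the Gelfand--Tsetlin array, and match indices---points in the right direction and is close to the paper's route, but there is a genuine gap at the step where you claim that $\Gamma([n],\widehat{R^q})$ ``is exactly the triangular Gelfand-Tsetlin poset, so that a $\widehat{B}$-bounded $Q$-partition is literally a filling of a GT array.'' This is false in general. A pattern in $\mathrm{GT}(\tilde{\lambda},\tilde{\mu},q)$ is a full trapezoidal (or parallelogram-shaped) array with $q+1$ rows, i.e., one coordinate for every pair $(p_i,k)$ with $0\le k\le q$; but the elements of $\Gamma([n],\widehat{R^q})$ are indexed only by $k\in\widehat{R^q}(p_i)^*$, and the consistent restriction function $R^q(p_i)$ is typically a proper subset of $\{1,\dots,q\}$ (already for a straight shape with two rows one has $1\notin R^q(p_2)$, and for skew shapes $R^q(p_i)$ can even have gaps, as in the paper's example of shape $(4,4,4,4,2,2,2)/(2,2,2)$ with $q=5$). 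So $\mathcal{A}^{\widehat{B}}(\Gamma([n],\widehat{R^q}))$ has strictly fewer coordinates than a GT pattern, and the ``literal'' identification you rely on does not exist as stated.

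The paper closes exactly this gap with Theorem~\ref{thm:gtlike}. It introduces the full-array object $\mathcal{A}^{\overline{B}}(\Gamma(P,\overline{R}))$ with $\overline{R}(p)=\{0,1,\dots,q+1\}$ for every $p$, defines a direct bijection $\Psi(f)(p,k)=\ell+1-j^p_k$ (Definition~\ref{def:gtlikebij}) onto it, and then proves two nontrivial facts: (i) every coordinate $(p,k)$ with $k\notin R^q(p)^*$ is \emph{fixed} (this uses Lemma~\ref{lem:pkfixed}), so the toggle there is trivial; and (ii) at the remaining coordinates, even though the covering relations of $\Gamma(P,\widehat{R^q})$ and $\Gamma(P,\overline{R})$ can differ, the minimum of the upper covers and the maximum of the lower covers agree, so $\tau_{(p,k)}$ computes the same value in both posets. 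Only after this does the rotation $a_{ij}=\overline{\sigma}(p_{n+1-j},q-i)$ identify $\mathcal{A}^{\overline{B}}(\Gamma([n],\overline{R}))$ with $\mathrm{GT}(\tilde{\lambda},\tilde{\mu},q)$ and match $\tau_k$ with $t_{q-k}$; that final step really is the bookkeeping you anticipate, and your appeal to the single-involution equivariance of Lemma~\ref{lem:equiv} is the correct ingredient on the tableau side. But without the fixedness argument and the comparison of covers, your argument does not produce a map to $\mathrm{GT}(\tilde{\lambda},\tilde{\mu},q)$ at all, let alone an equivariant one.
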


Theorem~\ref{thm:rowrow} specializes to the following two corollaries on \emph{flagged tableaux} $\mathrm{FT}(\lambda,b)$ of shape $\lambda$ and flag~$b$. The first was our motivating example of Subsection~\ref{sec:ex} and Figure~\ref{fig:fig1}; the second involves flagged tableaux of staircase shape that have appeared in the literature~\cite{CeLaSt2014,SerranoStump}.

\begin{CorFlagged1}
The set of flagged tableaux $\mathrm{FT}(\ell^n,(2,4,\ldots,2n))$ under $\pro$ is in equivariant bijection with $\mathcal{A}^{\ell}(\widetriangle_n)$ under $\row$.
\end{CorFlagged1}


\begin{CorFlagged2}
Let $b = (\ell+1,\ell+2,\ldots, \ell+n)$. There is an equivariant bijection between $\mathrm{FT}\left(sc_n,b\right)$ under $\pro$ and $\mathcal{A}^{\delta}_{\epsilon}([n] \times [\ell])$ under $\row$, where for $(i,j)\in[n]\times[\ell]$, $\delta(i,j) = n$ and $\epsilon(i,j) = i-1$.
\end{CorFlagged2}

The first corollary enables us to translate an existing {cyclic sieving} conjecture on $\mathcal{A}^{\ell}(\widetriangle_n)$~\cite{Hopkins_minuscule_doppelgangers} to these flagged tableaux (see Conjecture~\ref{conj:FT246CSP}).
The second allows us to translate an existing {cyclic sieving} conjecture on flagged tableaux~\cite{CeLaSt2014,SerranoStump} to $(\delta,\epsilon)$-bounded $Q$-partitions $\mathcal{A}^{\delta}_{\epsilon}(Q)$, where $Q$ is a product of two chains poset (see Conjecture~\ref{conj:cspflagged} and Figure \ref{fig:stairflagtab}). These translations provide new perspectives on the conjectures, which may be helpful for proving them.

We also present a new conjecture regarding \emph{homomesy} on $\mathcal{A}^{\ell}(\widetriangle_n)$ and use our main theorem to translate it to flagged tableaux in Conjecture~\ref{conj:flaggedtableauxhomomesy}.
\begin{homConj}
The triple $\left(\mathcal{A}^{\ell}(\widetriangle_n),\togpro,\mathcal{R}\right)$ is $0$-mesic when $n$ is even and $\frac{\ell}{2}$-mesic when $n$ is odd, where $\mathcal{R}$ is the {rank-alternating label sum} statistic.
\end{homConj}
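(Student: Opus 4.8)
The plan is to prove the homomesy by realizing $\mathcal{R}-c$ as a coboundary for the $\togpro$-action, so that its sum telescopes to zero around each orbit. Writing the rank-alternating label sum as $\mathcal{R}(\sigma)=\sum_{x\in\widetriangle_n}(-1)^{\rk(x)}\sigma(x)$, the target is a function $g$ on $\mathcal{A}^{\ell}(\widetriangle_n)$ with $\mathcal{R}(\sigma)-c=g(\sigma)-g(\togpro(\sigma))$ for all $\sigma$; summing over an orbit $\mathcal{O}$ then forces $\frac{1}{|\mathcal{O}|}\sum_{\sigma\in\mathcal{O}}\mathcal{R}(\sigma)=c$. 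First I would expand $\togpro$ as its defining product of toggles $\tau_{x_m}\circ\cdots\circ\tau_{x_1}$ and track the increment of $\mathcal{R}$ after each individual toggle. Since $\tau_x$ changes only the single label $\sigma(x)$, by the amount $\alpha_\sigma(x)+\beta_\sigma(x)-2\sigma(x)$, and since the contribution of $x$ to $\mathcal{R}$ carries the sign $(-1)^{\rk(x)}$, each toggle's effect on $\mathcal{R}$ is explicitly computable from the covering neighbors of $x$. The key structural input is that adjacent ranks of $\widetriangle_n$ carry opposite signs, so the cover-from-above ($\alpha$) and cover-from-below ($\beta$) terms produced at rank $r$ are weighted oppositely to the corresponding terms at ranks $r\pm1$; I would arrange these contributions into a telescoping pattern along the toggle order, isolating the residue as the coboundary increment $g-g\circ\togpro$.

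To pin down the constant $c$ and exploit extra symmetry, I would bring in the $P$-strict evacuation $\evac$ of Section~\ref{subsec:evac}, which (as for classical evacuation) is expected to conjugate $\togpro$ to its inverse and hence to reverse each orbit, sending $\mathcal{O}$ to an orbit of equal cardinality traversed backwards. Computing how $\mathcal{R}$ transforms under $\evac$ should yield a relation of the form $\mathcal{R}(\evac(\sigma))=C-(-1)^{N}\mathcal{R}(\sigma)$, where $N$ records the rank-parity behavior of the evacuation-induced involution on $\widetriangle_n$ and $C$ collects the fixed boundary values $\sigma(\hat0)=0$ and $\sigma(\hat1)=\ell$. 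The parity split in the statement should emerge exactly from this sign: for one parity of $n$ the statistic becomes a pure coboundary and averages to $0$, while for the other the two relations combine to $\mathcal{R}(\sigma)+\mathcal{R}(\evac(\sigma))=\ell$ once the boundary term is accounted for, giving the half-of-$\ell$ average. As a concrete proving ground I would transport everything through the equivariant bijection $\Phi$ of Theorem~\ref{thm:moregeneralpro} to promotion on the flagged tableaux $\mathrm{FT}(\ell^n,(2,4,\ldots,2n))$ of Corollary~\ref{cor:flagged1}, where $\mathcal{R}$ becomes a signed sum of entries and where the known order $2(n+1)$ of promotion both bounds the orbit sizes and supplies small cases against which to test the candidate coboundary $g$.

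The hard part will be producing $g$ in closed form (equivalently, establishing the evacuation-compatibility of orbits with the correct sign), because $\widetriangle_n$ is not self-dual: it admits no rank-reversing automorphism, so the naive complementation $\sigma\mapsto\ell-\sigma$ lands in partitions of the dual poset and cannot be applied directly. As a result the telescoping in the toggle expansion does not close up by symmetry alone, and controlling the $\alpha,\beta$ contributions across the non-uniform row sizes of $\widetriangle_n$ is delicate; this is presumably why the statement remains conjectural. A promising route around the obstacle is to lift to birational rowmotion and seek a multiplicative analogue of $\mathcal{R}(\sigma)+\mathcal{R}(\evac(\sigma))=\ell$ using the Grinberg--Roby machinery already invoked for the order of promotion, and then to tropicalize and specialize back to the $Q$-partition setting.
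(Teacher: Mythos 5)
The statement you are trying to prove is not a theorem in the paper: it is Conjecture~\ref{conj:typeahomomesy}, which the authors leave open. They verify it by computer for $n\leq 6$ and $\ell\leq 3$ and note that only the $\ell=1$ case is known, by Haddadan; the paper contains no proof for general $\ell$, so there is nothing to compare your argument against except the conjecture's status itself. Read as a proof attempt, your proposal has a genuine gap, and it is the one you yourself flag: the coboundary $g$ with $\mathcal{R}(\sigma)-c=g(\sigma)-g(\togpro(\sigma))$ is never constructed, and the evacuation identity $\mathcal{R}(\evac(\sigma))=C-(-1)^{N}\mathcal{R}(\sigma)$ is only ``expected.'' Either of these would constitute the entire mathematical content of the conjecture; without one of them the telescoping argument is a template, not a proof. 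Your diagnosis of the obstruction is correct and worth stating precisely: the proof of Theorem~\ref{thm:EPHomomesy} for $\mathcal{A}^{\ell}([n]\times[q-n])$ rests on the antipodal symmetry of the product of two chains, whereas $\widetriangle_n$ is ranked with $n-r$ elements at rank $r$ and therefore admits no rank-reversing bijection matching ranks $r$ and $n-1-r$; the complementation $\sigma\mapsto\ell-\sigma$ lands in partitions of the dual poset, so the standard Einstein--Propp/Propp--Roby machinery does not close up.

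Two smaller cautions. First, the conjecture is stated for $\togpro$, not $\row$; these are conjugate in the toggle group, so their orbit structures match, but the conjugating element does not fix $\mathcal{R}$, so you cannot freely pass between the two actions when averaging the statistic --- your plan to transport everything to flagged tableaux via $\Phi$ and Corollary~\ref{cor:flagged1} must keep track of which toggle order is in force (the paper's own translation, Conjecture~\ref{conj:flaggedtableauxhomomesy}, is stated for $\pro$, which corresponds to $\togpro$, so that route is fine). Second, the paper's Section~\ref{subsec:evac} shows that several classical evacuation identities \emph{fail} for $P$-strict labelings (e.g.\ $\evac(f)=f^{+}$ and $\pro^q(f)=f$ both fail already for $P=[3]\times[2]$, $\ell=2$), so the evacuation-compatibility you hope to exploit should be treated with suspicion until verified in this specific setting; what does survive is $\evac^2=\mathrm{id}$ and $\evac\circ\pro=\pro^{-1}\circ\evac$, which gives orbit reversal but not, by itself, the sign relation on $\mathcal{R}$ that your parity split requires. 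The birational lift via Grinberg--Roby is a reasonable direction, but as written the proposal establishes nothing beyond what the paper already records.
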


Finally, we obtain the following correspondence between promotion on \emph{symplectic tableaux} of staircase shape $sc_n$ and rowmotion on $(\delta,\epsilon)$-bounded $Q$-partitions $\mathcal{A}^{\delta}_{\epsilon}(Q)$, where $Q$ is the triangular poset $\necornertriangle_n$ (see Figure \ref{fig:stairsptab}).

\begin{CorSymp}
There is an equivariant bijection between $\mathrm{Sp}(sc_n, 2n)$ under $\pro$ and $\mathcal{A}^{\delta}_{\epsilon}(\necornertriangle_n)$ under $\row$, where for $(i,j) \in \necornertriangle_n$, $\delta(i,j) = \min(j,n)$ and $\epsilon(i,j) = i-1$.
\end{CorSymp}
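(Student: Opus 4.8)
My plan is to realize $\mathrm{Sp}(sc_n,2n)$ as a set of $P$-strict labelings and then feed it through Theorems~\ref{thm:moregeneralpro} and~\ref{thm:rowrow}, following the template of the flagged corollaries. King's symplectic tableaux of shape $sc_n$ on the ordered alphabet $1<\bar1<2<\bar2<\cdots<n<\bar n$ are exactly the semistandard fillings in which every entry of row $i$ is at least the letter $i$. Relabeling $k\mapsto 2k-1$ and $\bar k\mapsto 2k$ turns this symplectic flag into the condition that row $i$ uses only labels from $\{2i-1,2i,\ldots,2n\}$, and column-strictness and row-weakness are preserved by this order-preserving relabeling. I would therefore take $P=[n]$ with $p_i$ indexing row $i$, set $\ell=n$, use $u(p_i)=0$ and $v(p_i)=i-1$ to carve the straight staircase out of the $n\times n$ box (as in the Example after Definition~\ref{def:P_ell_uv}), and take the consistent restriction function $R(p_i)=\{2i-1,\ldots,2n\}$ induced by the local lower bound $2i-1$ and the global upper bound $2n$. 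The claim is then $\mathrm{Sp}(sc_n,2n)=\mathcal{L}_{[n]\times[n]}(u,v,R)$ as sets: the three conditions of Definition~\ref{def:Pstrict} reproduce column-strictness of layers, row-weakness of fibers, and the symplectic flag. Because the Bender-Knuth involutions of Definition~\ref{def:BenderKnuth} act within this set by construction, $\pro$ on $\mathrm{Sp}(sc_n,2n)$ is precisely $P$-strict promotion under this encoding, so no separate check that promotion preserves the symplectic condition is needed.

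Next I would apply Theorem~\ref{thm:moregeneralpro} to obtain the equivariant bijection $\Phi$ (Definition~\ref{def:mainbijection}) between $\mathcal{L}_{[n]\times[n]}(u,v,R)$ under $\pro$ and $\mathcal{A}^{\widehat B}(\Gamma(P,\hat R))$ under $\togpro$. The substantive computation is to run the construction of Section~\ref{subsec:def} on $P=[n]$ with the lower-flag restriction $R$ and verify two things: that the resulting poset $\Gamma(P,\hat R)$ is isomorphic to the triangular poset $\necornertriangle_n$, and that the bounding data $\widehat B$ it produces is, via Remark~\ref{remark:chi_delta}, exactly the element-wise pair $\epsilon(i,j)=i-1$ and $\delta(i,j)=\min(j,n)$ on $\necornertriangle_n$. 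This is the mirror image of the flagged rectangular computation that yields $\widetriangle_n$ from the upper flag $(2,4,\ldots,2n)$: here the lower flag $2i-1$ together with the staircase truncation $v(p_i)=i-1$ trims the opposite (northeast) corner, the truncation being what pins the floor at $\epsilon(i,j)=i-1$, while the global ceiling $2n$, read off through the counting nature of $\Phi$, is recorded by the non-constant bound $\delta(i,j)=\min(j,n)$.

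To pass from $\togpro$ to $\row$, I would check that $\mathcal{A}^{\widehat B}(\necornertriangle_n)$ is column-adjacent in the sense of Definition~\ref{def:coladj}. Since $R$ is induced by (local lower and global upper) bounds, this falls squarely within the family for which the paper establishes column-adjacency, so Theorem~\ref{thm:rowrow} applies and immediately produces the asserted equivariant bijection between $\mathrm{Sp}(sc_n,2n)$ under $\pro$ and $\mathcal{A}^{\delta}_{\epsilon}(\necornertriangle_n)$ under $\row$. Equivariance requires no extra argument: it is inherited from the intertwining $\Phi\circ\pro=\togpro\circ\Phi$ of Theorem~\ref{thm:moregeneralpro} composed with the conjugacy of $\togpro$ and $\row$ supplied by Theorem~\ref{thm:rowrow}.

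The main obstacle I expect is the explicit identification $\Gamma(P,\hat R)\cong\necornertriangle_n$ together with the matching of $\widehat B$ to $(\delta,\epsilon)$. One must track how the restriction data $R(p_i)=\{2i-1,\ldots,2n\}$ along the fiber $F_{p_i}$ determines the elements of $\necornertriangle_n$, how the staircase cut $v(p_i)=i-1$ shifts the lower boundary to $i-1$, and in particular why the constant ceiling $2n$ collapses to the non-constant bound $\min(j,n)$ rather than to a constant. Once the poset and bounds are pinned down, the semistandard-versus-$P$-strict bookkeeping of the first paragraph is routine and the column-adjacency check is subsumed by the general bound-induced case, so this poset-and-bounds computation is the real content of the proof.
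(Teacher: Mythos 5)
Your plan is the paper's proof: the encoding of $\mathrm{Sp}(sc_n,2n)$ as $\mathcal{L}_{[n]\times[n]}(u,v,R^{2n}_a)$ with $a=(1,3,\ldots,2n-1)$, $u(p_i)=0$, $v(p_i)=i-1$ is exactly Proposition~\ref{prop:sp}, the passage through Theorem~\ref{thm:moregeneralpro} and the bound-induced column-adjacency of Proposition~\ref{prop:hatcoladj} is exactly Corollary~\ref{cor:sp}, and what remains is the same poset-and-bounds identification the paper carries out. One caution on the part you defer: your heuristic attributes the non-constant ceiling $\delta(i,j)=\min(j,n)$ to ``the global ceiling $2n$ read off through $\Phi$,'' but that is not where it comes from. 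In the paper's computation the constant part $n$ of $\delta$ arises from $\hat{B}(p_i,\min\widehat{R^{2n}_a}(p_i)^*)=\ell-u(p_i)=n$, i.e.\ from the number of columns, while the $j$ part arises because the elements $(p_i,2n)$, which are fixed at value $v(p_i)=i-1$ by $\hat{B}$, lie \emph{above} the non-fixed elements $(p_{i'},k')$ with $i'<i$ and $k'\geq 2n-(i-i')$, so the staircase truncation propagates downward through the order relations as an upper bound $i-1$ on those elements; after the coordinate change $(p_i,k)\mapsto(i,2n-1+i-k)$ this becomes $\delta(i,j)=j$ for $j<n$. So both the floor and the variable part of the ceiling come from $v$, not from the bound $2n$. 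The isomorphism itself is the explicit map just quoted, verified against the covering relations of $\Gamma$ recorded in \cite[Thm.\ 2.21]{DSV2019}; with that and the bound bookkeeping done, your argument closes exactly as the paper's does.
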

This correspondence shows the cardinality of $\mathcal{A}^{\delta}_{\epsilon}(\necornertriangle_n)$ is $2^{n^2}$, as a consequence of the symplectic hook-content formula of P.\ Campbell and A.\ Stokke~\cite{CampbellStokke}.

\section{$P$-strict promotion and rowmotion}
\label{sec:maintheorem}
In this section, we prove our first two main theorems. Theorem~\ref{thm:moregeneralpro} relates promotion on $P$-strict labelings with restriction function $R$ and toggle-promotion  on $B$-bounded $Q$-partitions, where $Q$ is the poset $\Gamma(P,\hat{R})$, whose construction we discuss in the next subsection. Theorem~\ref{thm:rowrow} extends this correspondence to rowmotion in the case when our poset is \emph{column-adjacent}.

\subsection{Preliminary definitions}
\label{subsec:def}
Below we give some definitions needed for the objects of our main theorems. Recall $R$ is a restriction function (see Definition~\ref{def:restriction}).

\begin{definition}[\protect{\cite[Definition 2.10]{DSV2019}}]
For $p\in P$, let $R(p)^*$ denote $R(p)$ with its largest element removed.
\end{definition}

\begin{definition}[\protect{\cite[Definition 2.11]{DSV2019}}]
\label{def:GammaOne}
Let $P$ be a poset and $R:P\rightarrow \mathcal{P}(\mathbb{Z})$ a (not necessarily consistent) map of possible labels. Then define $\Gamma(P,R)$ to be the poset whose elements are $(p,k)$ with $p\in P$ and $k\in R(p)^*$, and covering relations given by $(p_1,k_1)\lessdot (p_2,k_2)$ if and only if either

\begin{enumerate}
\item  $p_1=p_2$ and $R(p_1)_{>k_2}=k_1$ (i.e., $k_1$ is the next largest possible label after $k_2$), or

\item $p_1\lessdot p_2$ (in $P$), $k_1=R(p_1)_{<k_2}\neq \max(R(p_1))$, and no greater $k$ in $R(p_2)$ has $k_1=R(p_1)_{<k}$. That is to say, $k_1$ is the largest label of $R(p_1)$ less than $k_2$ ($k_1\neq \max(R(p_1)))$, and there is no greater $k\in R(p_2)$ having $k_1$ as the largest label of $R(p_1)$ less than $k$.
\end{enumerate}
\end{definition}

\begin{figure}
\begin{center}
\mbox{\includegraphics[width=.85\textwidth]{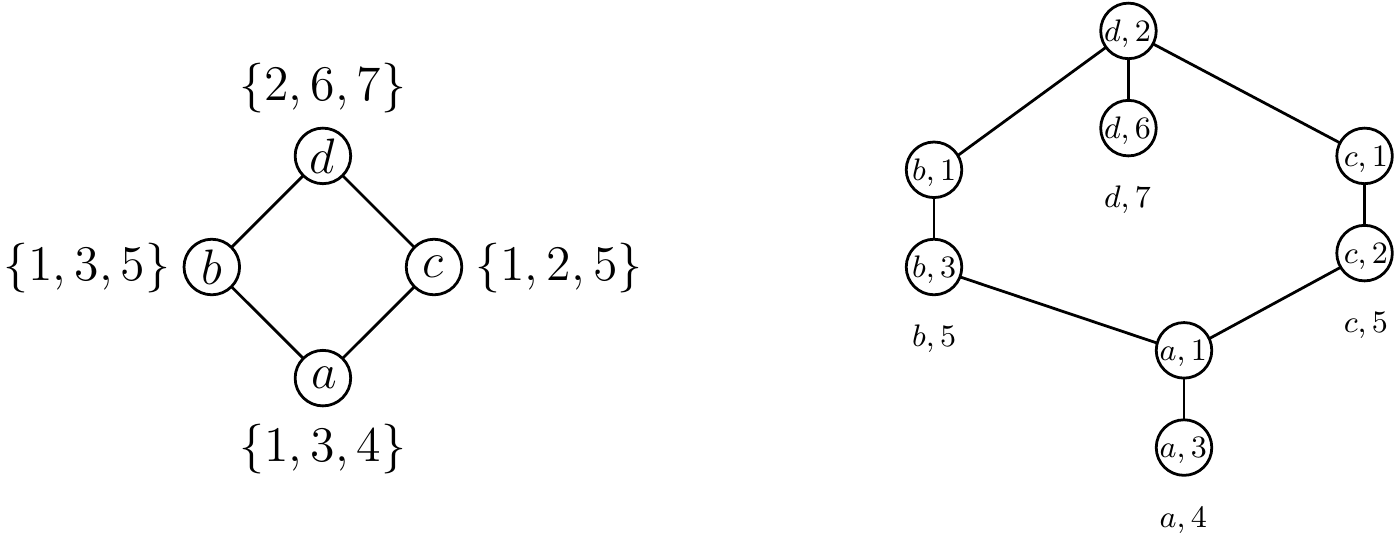}}
\end{center}
\caption{The diamond-shaped poset $P = \{a,b,c,d\}$ is shown on the left along with a consistent restriction function $R$, where $R(p)$ is displayed as a set next to the corresponding element.  The poset $\Gamma(P,R)$ defined in Definition \ref{def:GammaOne} is shown on the right.}
\label{fig:gammadef}
\end{figure}

\begin{example}
Refer to Figure \ref{fig:gammadef}.  The poset $\Gamma(P,R)$ consists of four chains corresponding to each element $a, b,c,$ and $d$, where each chain contains one less element than $R(p)$. For instance, $R(a) = \{1,3,4\}$, so, by (1) in Definition \ref{def:GammaOne}, $\Gamma(P,R)$ contains the chain $(a,3) \lessdot (a,1)$.  There is no element $(a,4)$ since $4 = \max R(a)$ and is therefore not in $R(a)^*$. We indicate this omission by writing $a,4$ beneath the element $(a,3)$.  The covering relations between the elements in these chains are described by (2) in Definition \ref{def:GammaOne}.  For example, $(b,1) \lessdot (d,2)$ since $b \lessdot d$ and $1$ is the greatest element of $R(b)$ that is strictly less than $2$.  Note $(d,6)$ does not cover $(b,3)$ since $5 \in R(b)$ is the greatest element less than $6$, not $3$.
\end{example}

In \cite[Theorem 4.31]{DSV2019} it is shown that if $R$ consistent on $P$, increasing labelings on $P$ under \textit{increasing labeling promotion} are in equivariant bijection with order ideals of $\Gamma(P,R)$ under \textit{toggle-promotion}.  This correspondence drives our first main theorem.
 In order to apply this result from \cite{DSV2019} to $P$-strict labelings, we need a restriction function that is consistent on $P$, not just on $P \times [\ell]^v_u$. The next definition constructs such a restriction function.

\begin{definition} \label{def:rhat}
Suppose $R$ is a consistent restriction function on $P \times [\ell]^v_u$.  Denote the number of elements less than or equal to $p$ in a maximum length chain containing $p$ as $h(p)$ and the number of elements greater than or equal to $p$ in a maximum length chain containing $p$ as $\tilde{h}(p)$.  Define a new restriction function $\hat{R}$ on $P$ given by \[\hat{R}(p) = R(p)\,\cup\, \left\{\min \bigcup_{q \in P} R(q) - \tilde{h}(p),\ \max \bigcup_{q\in P} R(q) + h(p)\right\}.\]
\end{definition}

\begin{proposition}
If $R$ is a consistent restriction function on $P \times [\ell]^v_u$, then $\hat{R}$ is consistent on~$P$.
\end{proposition}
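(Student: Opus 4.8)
The plan is to unwind what ``consistent on $P$'' means and then, for each $p\in P$ and each $k\in\hat{R}(p)$, build an explicit increasing labeling of $P$ with values in $\hat{R}$ taking the value $k$ at $p$. By Remark~\ref{rem:ell1inc}, consistency of $\hat{R}$ on $P$ is the $\ell=1$ (equivalently $u=v=0$) instance of Definition~\ref{def:consistent}: I must exhibit, for every $p$ and every $k\in\hat{R}(p)$, a labeling $f\colon P\to\mathbb{Z}$ in $\mathrm{Inc}^{\hat{R}}(P)$, i.e.\ strictly increasing along $<_P$ with $f(q)\in\hat{R}(q)$ for all $q$, and with $f(p)=k$. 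Write $m=\min\bigcup_{q\in P}R(q)$ and $M=\max\bigcup_{q\in P}R(q)$, so the two labels adjoined in Definition~\ref{def:rhat} are $m-\tilde{h}(p)$ and $M+h(p)$. Since $\tilde{h}(p),h(p)\ge 1$, these lie strictly below and strictly above every element of $R(p)$, so $\min\hat{R}(p)=m-\tilde{h}(p)$ and $\max\hat{R}(p)=M+h(p)$.

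First I would record the two monotonicity facts driving everything: $h$ is strictly order-preserving and $\tilde{h}$ is strictly order-reversing, i.e.\ $q<_P r$ forces $h(q)<h(r)$ and $\tilde{h}(q)>\tilde{h}(r)$ (prepend or append $q,r$ to a longest chain realizing the other value). Consequently the two ``extreme'' labelings $f_{\min}(q):=m-\tilde{h}(q)$ and $f_{\max}(q):=M+h(q)$ are themselves members of $\mathrm{Inc}^{\hat{R}}(P)$. This immediately settles the cases $k=m-\tilde{h}(p)$ (take $f=f_{\min}$) and $k=M+h(p)$ (take $f=f_{\max}$), since these are exactly the minimum and maximum of $\hat{R}(p)$.

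The substantive case is $k\in R(p)$, where I use the hypothesis. Consistency of $R$ on $P\times[\ell]^v_u$ supplies a $P$-strict labeling $g$ and an index $i_0$ with $u(p)<i_0<\ell+1-v(p)$ and $g(p,i_0)=k$. Restricting $g$ to the layer $L_{i_0}$ gives, on its support $P'=\{q\in P: u(q)<i_0<\ell+1-v(q)\}$, a labeling $f_0(q)=g(q,i_0)$ that is strictly $<_P$-increasing by condition~(1) of Definition~\ref{def:Pstrict}, takes values in $R\subseteq\hat{R}$, and satisfies $f_0(p)=k$. The heart of the argument is the structural claim that $P\setminus P'$ decomposes as a disjoint union $D^-\sqcup U^+$, where $D^-=\{q:u(q)\ge i_0\}$ is a down-set lying below $P'$ and $U^+=\{q:v(q)\ge\ell+1-i_0\}$ is an up-set lying above $P'$. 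Disjointness uses $u(q)+v(q)\le\ell$ from Definition~\ref{def:P_ell_uv}; the ``below/above'' statements use that $u$ is order-reversing and $v$ is order-preserving, so that any element of $D^-$ comparable to an element of $P'\cup U^+$ sits strictly below it, and symmetrically for $U^+$.

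With this decomposition I would define $f$ to agree with $f_0$ on $P'$, with $f_{\min}$ on $D^-$, and with $f_{\max}$ on $U^+$, and verify strict increase across every covering relation $q\lessdot_P r$. Covers internal to $P'$, $D^-$, or $U^+$ are handled by the previous paragraphs; the ``wrong-direction'' boundary covers are excluded by the structural claim; and the remaining boundary covers reduce to the inequalities $m-\tilde{h}(q)<m\le\min R(r)$ and $\max R(q)\le M<M+h(r)$, which is precisely why the adjoined labels were chosen to undershoot $m$ and overshoot $M$. Since $f(p)=k$ and $f\in\mathrm{Inc}^{\hat{R}}(P)$, consistency follows. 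I expect the main obstacle to be the structural decomposition of $P\setminus P'$ together with confirming strict increase exactly at the boundary covers between $P'$ and the outer pieces, as this is where the monotonicity of $u,v$ and the specific design of $\hat{R}$ are essential.
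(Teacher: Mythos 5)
Your proof is correct, but it takes a genuinely different and noticeably heavier route than the paper's. The paper constructs the witnessing increasing labeling in one stroke: it places $k$ at the chosen element $p'$ and fills \emph{every} other element with one of the two adjoined extreme values $\min\bigcup_{q}R(q)-\tilde{h}(\cdot)$ or $\max\bigcup_{q}R(q)+h(\cdot)$ (large values above $p'$, small values elsewhere), and the single observation that these adjoined values undershoot, respectively overshoot, all of $\hat{R}$ at any comparable element makes the labeling strictly increasing with no further case analysis. In particular the paper never invokes consistency of $R$ on $P\times[\ell]^v_u$ at all --- the conclusion holds for an arbitrary restriction function $R$ --- whereas your argument routes the case $k\in R(p)$ through that hypothesis: you extract a layer $L_{i_0}$ of a $P$-strict labeling, observe its support $P'$ and then prove the decomposition $P\setminus P'=D^-\sqcup U^+$ into a down-set and an up-set (using $u(q)+v(q)\le\ell$ and the monotonicity of $u,v$) before patching with the extreme labelings $f_{\min},f_{\max}$. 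Your verification of the boundary covers is sound, and your treatment of the two adjoined labels via $f_{\min}$ and $f_{\max}$ matches the paper's mechanism; what your extra work buys is a labeling that genuinely extends a layer of a $P$-strict labeling, but that is more than the proposition asks for. One small point in your favor: your construction is stated uniformly for all of $P$, including elements incomparable to $p$, whereas the paper's displayed case split only addresses elements comparable to $p'$ (incomparable elements must implicitly receive one of the extreme values, and the reader must check this still works), so your version is in that respect more carefully complete.
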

\begin{proof}
If $p_1 <_P p_2$, then $\min \bigcup_{q \in P} R(q) - \tilde{h}(p_1)$, an element of $\hat{R}(p_1)$, is less than all elements of $\hat{R}(p_2)$ and $\max \bigcup_{q\in P} R(q) + h(p_2)$, an element of $\hat{R}(p_2)$, is greater than all elements of $\hat{R}(p_1)$.  Thus, for any $p' \in P$ and any element $k$ of $\hat{R}(p')$, the labeling $f$ of $P$ given by \[f(p) = \begin{cases}
k & p = p'\\
\min \bigcup_{q \in P} R(q) - \tilde{h}(p) & p' < p\\
\max \bigcup_{q\in P} R(q) + h(p) & p' > p
\end{cases}\] is an element of $\mathcal{L}_{P \times [1]}(\hat{R}) = \inc{P}{\widehat{R}}$ (see Remark \ref{rem:ell1inc}).  Since for all $p \in P$ and $k \in \hat{R}(p)$ there exists a labeling $f$ with $f(p) = k$,  $\hat{R}$ is consistent on $P$.
\end{proof}

We use the structure of $\Gamma(P,\hat{R})$ in our main result.  While any consistent restriction function on $P$ constructed by adding a new minimum and maximum element to each $R(p)$ would serve our purposes, we choose to use $\hat{R}$ for the sake of consistency.

\subsection{First main theorem: $P$-strict promotion and toggle-promotion}
\label{subsec:togpro}
Below, we state and prove our first main result, Theorem~\ref{thm:moregeneralpro}. First, we define an action on $B$-bounded $\Gamma(P,\hat{R})$-partitions.

\begin{definition} 
\label{def:togpro}
\textbf{Toggle-promotion} on $\mathcal{A}^{B}({\Gamma}(P,\hat{R}))$ is defined as the toggle composition \linebreak $\togpro := \cdots \circ \tau_{2}\circ \tau_{1}\circ \tau_{0}\circ \tau_{-1}\circ \tau_{-2} \circ \cdots$, where $\tau_{k}$ denotes the composition of all the $\tau_{(p,k)}$ over all $p\in P$ such that $(p,k) \notin \mbox{dom}(B)$.
\end{definition}

This composition is well-defined, since the toggles within each  $\tau_k$ commute by Remark~\ref{remark:commute}.

\begin{definition}
\label{def:Bhat}
Given $\mathcal{L}_{P\times[\ell]}(u,v,R)$, define $\hat{B}$ (on $\Gamma(P,\hat{R})$) as $\hat{B}(p,\min \hat{R}(p)^*) = \ell - u(p)$ and $\hat{B}(p, \max \hat{R}(p)^*) = v(p)$.
\end{definition}

To see an example of toggle-promotion on a $\hat{B}$-bounded $\Gamma(P,\hat{R})$-partition, refer to Figure~\ref{fig:gammagamma}. 
See Figure~\ref{fig:moregeneralex} for an example illustrating Theorem~\ref{thm:moregeneralpro}. 

In Theorem~\ref{thm:moregeneralpro} below, $\Phi$ is the bijection map given in Definition~\ref{def:mainbijection}.

\begin{figure}[htpb]
\begin{center}
\mbox{\includegraphics[width=.95\textwidth]{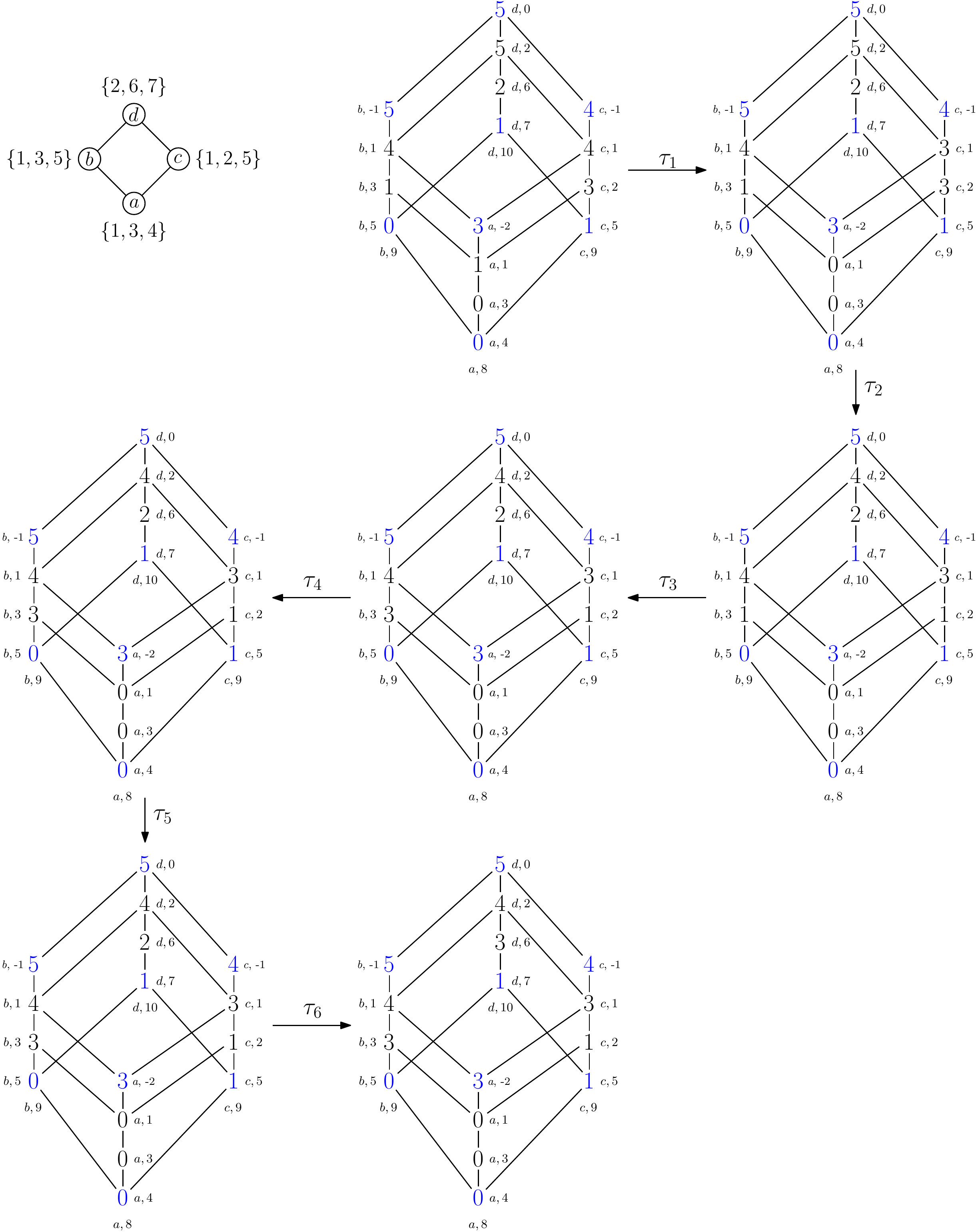}}
\end{center}
\caption{Toggle-promotion on a $\hat{B}$-bounded $\Gamma(P,\hat{R})$-partition, where the poset $P = \{a,b,c,d\}$ along with the restriction function $R$ are given at the top. Each toggle $\tau_i$ is shown.}
\label{fig:gammagamma}
\end{figure}

\begin{theorem} 
\label{thm:moregeneralpro}
The set of $P$-strict labelings $\mathcal{L}_{P\times[\ell]}(u,v,R)$ under $\pro$ is in equivariant bijection with the set  $\mathcal{A}^{\widehat{B}}(\Gamma(P,\!\hat{R}))$ under $\togpro$. More specifically, for $f\in\mathcal{L}_{P\times[\ell]}(u,v,R)$, $\Phi\left(\pro(f)\right)=\togpro\left(\Phi(f)\right)$.
\end{theorem}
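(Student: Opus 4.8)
The plan is to first verify that the map $\Phi$ of Definition~\ref{def:mainbijection} is a well-defined bijection from $\mathcal{L}_{P\times[\ell]}(u,v,R)$ to $\mathcal{A}^{\widehat{B}}(\Gamma(P,\hat{R}))$, and then to establish the equivariance $\Phi\circ\pro=\togpro\circ\Phi$ by decomposing both actions into their elementary constituents and matching them one index at a time. Since $\pro=\cdots\circ\rho_2\circ\rho_1\circ\cdots$ (Definition~\ref{def:BenderKnuth}) and $\togpro=\cdots\circ\tau_2\circ\tau_1\circ\tau_0\circ\cdots$ (Definition~\ref{def:togpro}) are composed in the same order indexed by $k$, it suffices to prove the single-index intertwining relation $\Phi\circ\rho_k=\tau_k\circ\Phi$ for every $k$, where $\tau_k$ is the commuting product of the toggles $\tau_{(p,k)}$ over all $p$ with $(p,k)\notin\mathrm{dom}(\hat{B})$. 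The full relation then follows by composing these and noting that only finitely many $\rho_k$ and $\tau_k$ act nontrivially.

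For the well-definedness of $\Phi$, I would check that the image $\sigma=\Phi(f)$ is order-preserving on $\Gamma(P,\hat{R})$ and attains the prescribed boundary values $\hat{B}$ of Definition~\ref{def:Bhat}. The natural encoding records at each $(p,k)\in\Gamma(P,\hat{R})$ a cumulative count along the fiber $F_p$, namely the number of layers $i$ for which $f(p,i)$ lies on the appropriate side of the threshold $k$. Monotonicity along a fiber of $\Gamma$ (covering relations of type~(1)) is immediate since this is a cumulative count, while monotonicity across fibers (covering relations of type~(2)) uses the strict-increase-in-each-layer condition together with the way $\hat{R}$ shifts thresholds; the boundary values at $\min\hat{R}(p)^*$ and $\max\hat{R}(p)^*$ record $\ell-u(p)$ and $v(p)$, matching $\hat{B}$. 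Injectivity and surjectivity follow by exhibiting the inverse, which reconstructs each fiber's multiset of labels from the consecutive differences of $\sigma$ along that fiber.

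The heart of the argument is the intertwining $\Phi\circ\rho_k=\tau_k\circ\Phi$, and my plan is to reduce it to the $\ell=1$ correspondence of \cite[Theorem~4.31]{DSV2019} by slicing. A $\widehat{B}$-bounded $Q$-partition valued in $\{0,\ldots,\ell\}$ is equivalent to a chain of order ideals of $\Gamma(P,\hat{R})$, and a single piecewise-linear toggle $\tau_{(p,k)}$ with its rule $\alpha_\sigma(p,k)+\beta_\sigma(p,k)-\sigma(p,k)$ decomposes, slice by slice, into the order-ideal toggles at $(p,k)$ that govern DSV toggle-promotion. On the tableau side, I would show that the Bender-Knuth flip $\rho_k$, which replaces $a$ free $k$'s followed by $b$ free $R(p)_{>k}$'s by $b$ copies of $k$ followed by $a$ copies of $R(p)_{>k}$, is exactly the effect of applying the $\ell=1$ generalized Bender-Knuth involution on each threshold slice. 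Matching the two slicewise shows that $\alpha_\sigma(p,k)$ and $\beta_\sigma(p,k)$ encode precisely the blocking of free labels by the covering and covered fibers, so that $\alpha_\sigma+\beta_\sigma-\sigma$ reproduces the $a\leftrightarrow b$ swap in cumulative-count coordinates.

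The step I expect to be the main obstacle is exactly this translation of the \emph{layer-relative} notion of raisable/lowerable, hence of \emph{free}, labels into the local piecewise-linear window $[\beta_\sigma(p,k),\alpha_\sigma(p,k)]$ of the toggle. One must check carefully that a $k$-label in $F_p$ fails to be free precisely when raising it would violate strict increase against a covering fiber $F_{p'}$ with $p\lessdot_P p'$, that this blocking is captured by $\alpha_\sigma$, and symmetrically that lowerability of an $R(p)_{>k}$-label is captured by $\beta_\sigma$, including the degenerate cases where $k=\max R(p)$ or where the relevant neighbor lies in $\mathrm{dom}(\hat{B})$, where the boundary values $\hat{B}$ take over. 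Once this dictionary between free-label counts and the toggle window is verified, the slicewise reduction to \cite[Theorem~4.31]{DSV2019} and the index-matching of the composition close the proof.
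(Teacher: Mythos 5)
Your overall architecture is the same as the paper's: establish that $\Phi$ is a well-defined bijection (the paper's Lemma~\ref{lem:bij}), prove the single-index intertwining $\Phi\circ\rho_k=\tau_k\circ\Phi$ (the paper's Lemma~\ref{lem:equiv}), and compose. The well-definedness step and the identification of the crux — a dictionary between counts of free labels in $f(F_p)$ and the toggle window $[\beta_\sigma(p,k),\alpha_\sigma(p,k)]$ — are both correct and match the paper. The gap is in the proposed mechanism for the key lemma: the slicewise reduction to the $\ell=1$ result of \cite{DSV2019} does not work as a literal factorization of maps. The fiber Bender--Knuth involution $\rho_k$ is \emph{not} the composition of independent $\ell=1$ generalized Bender--Knuth involutions applied to each layer, and the piecewise-linear toggle $\tau_{(p,k)}$ is \emph{not} the composition of independent order-ideal toggles applied to each slice $\mathcal{O}_i$ of the multichain. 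For a minimal counterexample, take a fiber $F_p$ with labels $(k,\,R(p)_{>k})$ where the $k$ is raisable and the $R(p)_{>k}$ is lowerable: here $a=b=1$, so $\rho_k$ fixes the fiber, whereas applying the layer-wise involution to each slice raises the first label and lowers the second, producing $(R(p)_{>k},\,k)$ --- not weakly increasing, hence not a $P$-strict labeling at all; the analogous failure on the $Q$-partition side is that toggling $(p,k)$ in each $\mathcal{O}_i$ independently destroys the multichain condition $\mathcal{O}_\ell\leq\cdots\leq\mathcal{O}_1$. So Theorem~4.31 of \cite{DSV2019} applied slice by slice cannot be invoked to conclude equivariance of the aggregate.

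What does survive slicing is the count data: the multiset of labels on each fiber, equivalently the quantities $j^p_k$ of Definition~\ref{def:j} and hence $\sigma(p,k)=\ell+1-j^p_k$. To close the argument you must show directly that the number of raisable $k$ labels in $f(F_p)$ equals $\min_{y\gtrdot(p,k)}\sigma(y)-\sigma(p,k)$ and the number of lowerable $R(p)_{>k}$ labels equals $\sigma(p,k)-\max_{z\lessdot(p,k)}\sigma(z)$, including the case analysis on whether $(p,k)$ has upper covers in $\Gamma(P,\hat{R})$ coming from elements $\omega\gtrdot_P p$ (and the degenerate cases handled by $\hat{B}$); the $a\leftrightarrow b$ swap then yields $\sigma(p,k)\mapsto\alpha_\sigma+\beta_\sigma-\sigma(p,k)$. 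This is exactly the computation the paper carries out in Lemma~\ref{lem:equiv}, and it cannot be outsourced to the $\ell=1$ theorem: once you replace the slicewise decomposition by this counting argument, you are performing the paper's direct proof rather than a reduction.
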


\begin{figure}[htbp]
\begin{center}
\includegraphics[width=\textwidth]{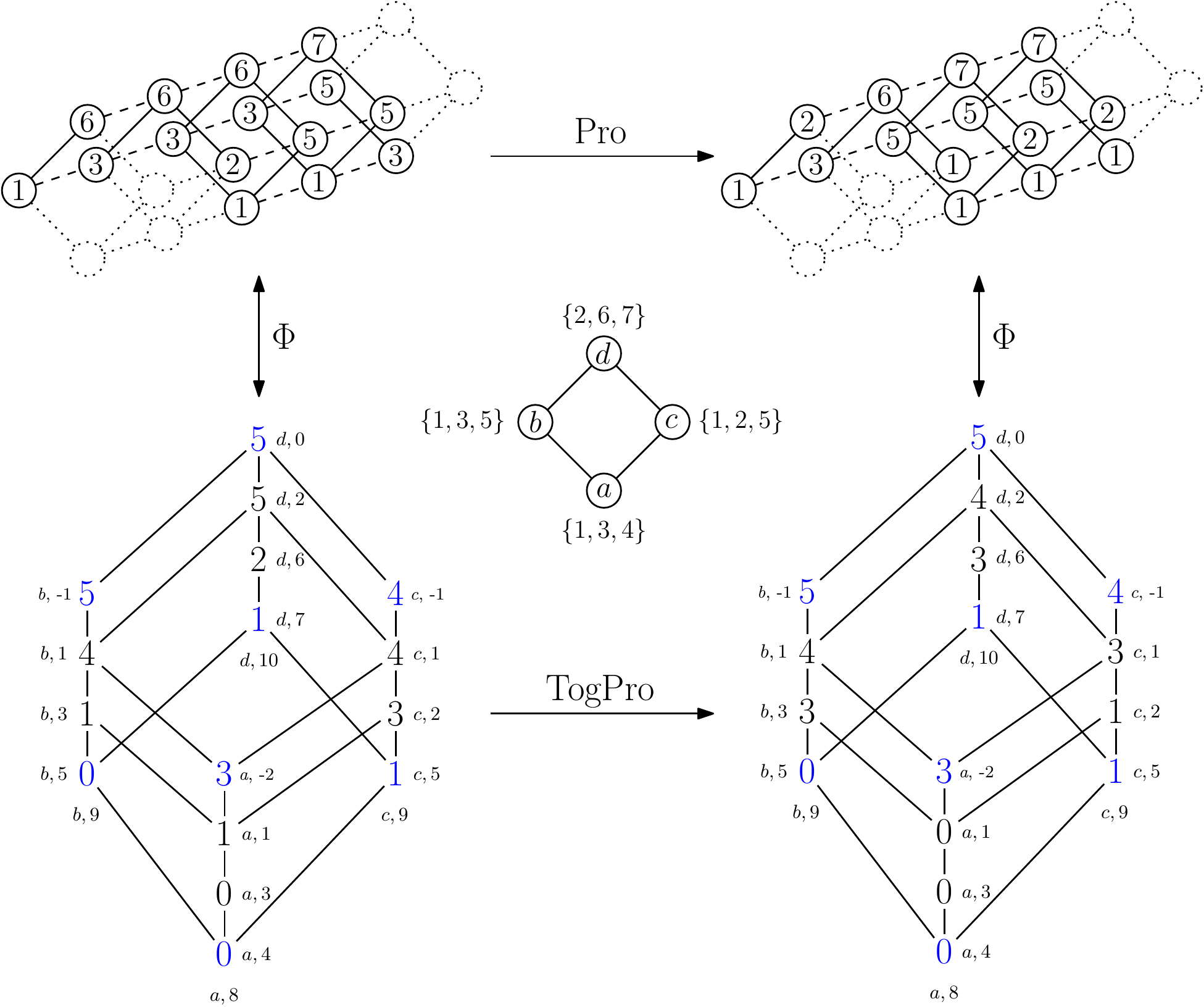}
\end{center}
\caption{An illustration of Theorem~\ref{thm:moregeneralpro}. Promotion on the $P$-strict labeling of Figure~\ref{fig:propropro} corresponds to toggle-promotion on a $\hat{B}$-bounded $\Gamma(P,\hat{R})$-partition. The poset $P = \{a,b,c,d\}$ along with the restriction function $R$ are shown in the center. See Figure~\ref{fig:gammagamma} for the steps in calculating $\togpro$ in this example.}
\label{fig:moregeneralex}
\end{figure}

The proof will use the following definitions and lemmas. We first define the bijection map.

\begin{definition} \label{def:mainbijection}
We define the map $\Phi: \mathcal{L}_{P \times [\ell]}(u,v,R) \rightarrow \mathcal{A}^{\widehat{B}}(\Gamma(P,\hat{R}))$ as the composition of three intermediate maps $\phi_1,\, \phi_2,$ and $\phi_3$.  Start with a $P$-strict labeling $f \in \mathcal{L}_{P \times [\ell]}(u,v,R)$.  Let $\phi_1(f) = \hat{f} \in \mathcal{L}_{P \times [\ell]}(\hat{R})$ where $\hat{f}$ is given by: 
\[ \hat{f}(p,i) = 
\begin{cases}
\min \hat{R}(p) & i \leq u(p) \\
f(p,i) & u(p) < i < \ell + 1 - v(p) \\
\max \hat{R}(p) & \ell + 1 - v(p) \leq i.
\end{cases} \] 
Next, $\phi_2$ sends $\hat{f}$ to the multichain $\mathcal{O}_{\ell} \leq \mathcal{O}_{\ell - 1} \leq \cdots \leq \mathcal{O}_1$ in $J(\Gamma(P,\hat{R}))$ layer by layer, that is, $\hat{f}(L_i)$ is sent to its associated order ideal $\mathcal{O}_i \in J(\Gamma(P,\hat{R}))$, where $\mathcal{O}_i$ is generated by the set $\{(p,k) \mid \hat{f}(p) = k\}$. Lastly, $\phi_3$ maps the above multichain to a $\Gamma(P,\hat{R})$-partition $\sigma$, where $\sigma(p,k) = \vert\{i \mid (p,k) \notin \mathcal{O}_i\}\vert$, the number of order ideals not including $(p,k)$.  Let $\Phi = \phi_3 \circ \phi_2 \circ \phi_1$.
\end{definition}

The map $\phi_2$ in Definition \ref{def:mainbijection} is the main bijection used in \cite[Theorem 2.14]{DSV2019}, and the map $\phi_3$ is the usual bijection between multichains of $J(P)$ and $P$-partitions (see \cite{Stanley1972}).

\begin{figure}[hbtp]
\begin{center}
\includegraphics[width=\textwidth]{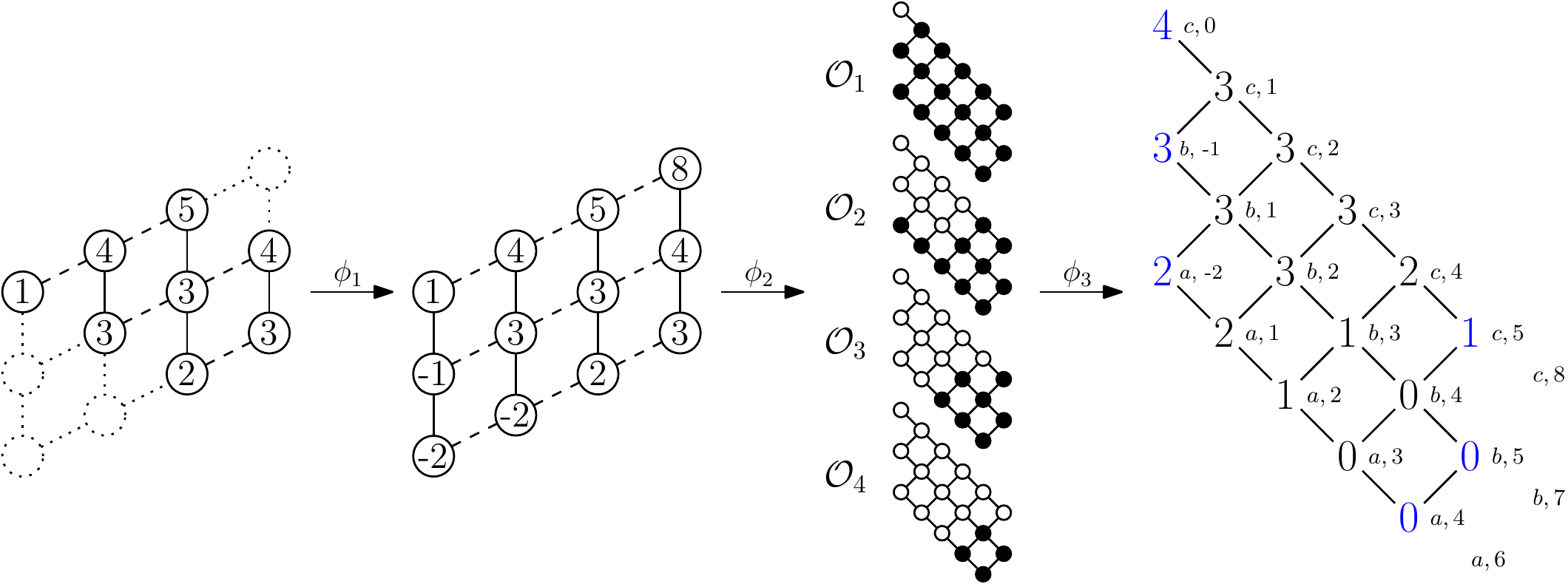}
\end{center}
\caption{An example of the map $\Phi = \phi_3 \circ \phi_2 \circ \phi_1$ beginning with $f \in \mathcal{L}_{P \times [4]}(u,v,R^5)$ on the left and ending with $\sigma \in \mathcal{A}^{\widehat{B}}(\Gamma(P,\widehat{R^5}))$ on the right, where $P$ is the chain $a \lessdot b \lessdot c$, $u(a,b,c)=(2,1,0)$, and $v(a,b,c)=(0,0,1)$.}
\label{fig:bijection}
\end{figure}

\begin{lemma} \label{lem:bij}
The map $\Phi$ is well-defined and invertible.
\end{lemma}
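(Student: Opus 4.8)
The plan is to prove Lemma~\ref{lem:bij} by establishing that each of the three intermediate maps $\phi_1$, $\phi_2$, $\phi_3$ is a well-defined bijection onto the appropriate intermediate set, so that their composition $\Phi$ is a well-defined bijection with a well-defined inverse. The three target sets are: $\phi_1 : \mathcal{L}_{P\times[\ell]}(u,v,R) \to \mathcal{L}_{P\times[\ell]}(\hat{R})$ (onto the image of ``completed'' labelings), $\phi_2$ from that image to the set of multichains $\mathcal{O}_\ell \leq \cdots \leq \mathcal{O}_1$ of a prescribed length in $J(\Gamma(P,\hat{R}))$, and $\phi_3$ from such multichains to $\mathcal{A}^{\widehat{B}}(\Gamma(P,\hat{R}))$. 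Two of these are essentially cited: $\phi_2$ is the bijection of \cite[Theorem 2.14]{DSV2019} between increasing labelings and multichains of order ideals, and $\phi_3$ is the classical correspondence between multichains in $J(Q)$ and $Q$-partitions (see \cite{Stanley1972}). So the genuinely new content lies in $\phi_1$ and in checking that the image constraints line up so that the three maps compose correctly.

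First I would verify that $\phi_1$ is well-defined: I must check that $\hat{f}$, obtained by padding $f$ with $\min\hat{R}(p)$ below layer $u(p)$ and $\max\hat{R}(p)$ above layer $\ell-v(p)$, is a genuine element of $\mathcal{L}_{P\times[\ell]}(\hat{R})$ — that is, strictly increasing in each layer, weakly increasing in each fiber, and label-compatible with $\hat{R}$. The fiber monotonicity within the middle block is inherited from $f$; at the two padding interfaces I would use the inequalities $f(p,i) \in R(p)$ together with $\min\hat{R}(p) < \min R(p)$ and $\max R(p) < \max\hat{R}(p)$ (which hold by the construction in Definition~\ref{def:rhat}) to get the weak increase across the boundary. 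For strictness across a layer, when $p_1 <_P p_2$ I would use the monotonicity conditions $u(p_1)\geq u(p_2)$ and $v(p_1)\leq v(p_2)$ from Definition~\ref{def:P_ell_uv} to ensure that wherever one fiber is in its padding region and the other is not (or both are), the padded values still satisfy $\hat{f}(p_1,i) < \hat{f}(p_2,i)$; the key point is that $\min\hat{R}(p_1) < \min\hat{R}(p_2)$ and $\max\hat{R}(p_1) < \max\hat{R}(p_2)$, again by Definition~\ref{def:rhat}, so the padding values themselves respect the strict layer order. Invertibility of $\phi_1$ is then immediate: one recovers $f$ simply by restricting $\hat{f}$ to the layers $u(p) < i < \ell+1-v(p)$, and the image of $\phi_1$ is exactly those $\hat{f} \in \mathcal{L}_{P\times[\ell]}(\hat{R})$ whose values on the padded layers equal the prescribed extremal labels.

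Next I would confirm that the bound constraint $\widehat{B}$ is precisely what the padding forces, so that $\Phi$ lands in $\mathcal{A}^{\widehat{B}}(\Gamma(P,\hat{R}))$ rather than just $\mathcal{A}^{\ell}(\Gamma(P,\hat{R}))$. Tracing $\sigma = \phi_3\circ\phi_2(\hat{f})$, the value $\sigma(p,k)$ counts the layers $i$ for which $(p,k)\notin\mathcal{O}_i$, i.e. the number of layers where $\hat{f}(p,\cdot)$ has not yet reached past $k$; padding the bottom $u(p)$ layers with $\min\hat{R}(p)$ and the top $v(p)$ layers with $\max\hat{R}(p)$ pins down $\sigma(p,\min\hat{R}(p)^*) = \ell - u(p)$ and $\sigma(p,\max\hat{R}(p)^*) = v(p)$, matching Definition~\ref{def:Bhat} exactly. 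Conversely, any $\sigma\in\mathcal{A}^{\widehat{B}}(\Gamma(P,\hat{R}))$ satisfying those two boundary values corresponds under the inverses of $\phi_3$ and $\phi_2$ to an $\hat{f}$ whose extremal layers are forced to be the padding values, hence lies in the image of $\phi_1$; this is what guarantees the composite inverse is defined on all of $\mathcal{A}^{\widehat{B}}$. I expect the main obstacle to be this bookkeeping at the boundary — correctly matching the $\widehat{B}$ values to the padding and verifying the image of $\phi_1$ is exactly $(\phi_3\circ\phi_2)^{-1}(\mathcal{A}^{\widehat{B}})$ — since the cited results for $\phi_2$ and $\phi_3$ handle the unbounded ($\mathcal{A}^\ell$) case and the content here is showing the boundary conditions cut out precisely $\widehat{B}$-bounded partitions on both sides. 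The strict-monotonicity check across layers in $\phi_1$, where the interplay of $u,v$ with the extended restriction function $\hat{R}$ is used, is the most delicate part of the verification, but it reduces to the inequalities already recorded in Definitions~\ref{def:P_ell_uv} and~\ref{def:rhat}.
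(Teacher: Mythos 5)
Your proposal is correct and follows essentially the same route as the paper: decompose $\Phi=\phi_3\circ\phi_2\circ\phi_1$, cite \cite{DSV2019} for $\phi_2$ and the classical multichain correspondence for $\phi_3$, and verify that the padding forces exactly the boundary values $\ell-u(p)$ and $v(p)$ prescribed by $\widehat{B}$. You spell out the well-definedness of $\phi_1$ and the surjectivity onto $\mathcal{A}^{\widehat{B}}(\Gamma(P,\hat{R}))$ in more detail than the paper (which compresses the former into a one-line appeal to convexity), but the argument is the same.
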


\begin{proof}
Since $P \times [\ell]^v_u$ is a convex subposet of $P \times [\ell]$, $\hat{f} \in \mathcal{L}_{P \times [\ell]}(\hat{R})$. Therefore, since $\hat{f}$ is weakly increasing across layers, $\mathcal{O}_{\ell} \leq \mathcal{O}_{\ell - 1} \leq \cdots \leq \mathcal{O}_1$ is a multichain in $J(\Gamma(P,\hat{R}))$.

For invertibility, $\phi_1$ is invertible by removing the labels of $\hat{f}$ that are not in $R$, and $\phi_2$ is invertible by \cite{DSV2019}.  Given $\sigma \in \mathcal{A}^{\widehat{B}}(\Gamma(P,\hat{R}))$ we can recover the associated multichain by $\mathcal{O}_i = \{(p,k) \mid \sigma(p,k) > i\}$, so $\phi_3$ is invertible.

What remains to show is that $\Phi(f) \in \mathcal{A}^{\widehat{B}}({\Gamma}(P,\hat{R}))$ for all $f \in  \mathcal{L}_{P \times [\ell]}(u,v,R)$.  We verify $\Phi(f)(p,\min\hat{R}(p)) = \ell - u(p)$ and $\Phi(f)(p,\max\hat{R}(p)^*) = v(p)$ for all $p \in P$.  Suppose $\phi_1(f) = \hat{f}$ and $\phi_2(\hat{f})$ is the multichain $\mathcal{O}_{\ell} \leq \cdots \leq \mathcal{O}_1$.   From the definition of $\phi_1$, the number of $\min \hat{R}(p)$ labels in $f(F_p)$, or the number of order ideals in $\phi_2(\hat{f})$ containing $(p,\min \hat{R}(p))$, is $u(p)$.  Therefore, $\Phi(f)(p,\min\hat{R}(p)) = \ell - u(p)$.  Next, $(p, \max R(p)^*)$ is included in every order ideal associated to a layer where $p$ is not labeled by $\max \hat{R}(p)$.  Since there are $v(p)$ such layers, $v(p)$ order ideals do not contain $(p, \max R(p)^*)$, so $\Phi(f)(p,\max\hat{R}(p)^*) = v(p)$.
\end{proof}

\begin{lemma} \label{lem:equiv}
The bijection map $\Phi$ equivariantly takes the generalized Bender-Knuth involution $\rho_k$ to the toggle operator $\tau_k$.
\end{lemma}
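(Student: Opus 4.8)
The plan is to show that $\Phi$ intertwines the $k$th Bender-Knuth involution $\rho_k$ on $P$-strict labelings with the composite toggle $\tau_k$ on $\hat{B}$-bounded $\Gamma(P,\hat{R})$-partitions, i.e.\ $\Phi(\rho_k(f)) = \tau_k(\Phi(f))$. Since $\Phi = \phi_3\circ\phi_2\circ\phi_1$ and the middle map $\phi_2$ is exactly the bijection from \cite[Theorem 2.14]{DSV2019}, I would leverage that prior result rather than reprove everything from scratch. The key observation is that both $\rho_k$ and $\tau_k$ act \emph{fiberwise}: $\rho_k$ only rearranges the free $k$ and $R(p)_{>k}$ labels within each fiber $F_p$ independently, and $\tau_k$ is the commuting product of the toggles $\tau_{(p,k)}$ over all $p$. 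So it suffices to verify the intertwining relation one fiber at a time, which reduces the problem to understanding, for a fixed $p$, how the count $\sigma(p,k) = |\{i \mid (p,k)\notin\mathcal O_i\}|$ changes under toggling versus how the multiset of labels in $F_p$ changes under $\rho_k$.

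First I would unwind the composite $\Phi$ on a single fiber. Under $\phi_1\circ\phi_2$, the layer labels $\hat f(p,\cdot)$ get encoded as a weakly decreasing sequence of order ideals, and under $\phi_3$ the value $\sigma(p,k)$ records how many layers $i$ have $\hat f(p,i) > k$ in the $\hat R(p)$-scale; more precisely, $\sigma(p,k)$ counts the layers where the label at $(p,i)$ exceeds $k$ (using the chain structure of $\Gamma(P,\hat R)$ from Definition~\ref{def:GammaOne}(1), where $(p,\cdot)$ forms a descending chain). The toggle $\tau_{(p,k)}$ replaces $\sigma(p,k)$ by $\alpha_\sigma(p,k)+\beta_\sigma(p,k)-\sigma(p,k)$, where the covering relations in $\Gamma(P,\hat R)$ determine $\alpha$ and $\beta$. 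The crux is to show that $\alpha_\sigma(p,k)$ and $\beta_\sigma(p,k)$ encode exactly the constraints that make a label \emph{raisable} or \emph{lowerable}: the min over upper covers captures the bound imposed by larger elements of $P$ (the strict-increase condition blocking a raise), and the max over lower covers captures the bound from smaller elements of $P$. Thus the number of \emph{free} $k$ and $R(p)_{>k}$ labels on $F_p$ is precisely $\beta_\sigma(p,k)-\sigma(p,k)$ and $\sigma(p,k)-\alpha_\sigma(p,k)$ respectively, and the toggle's reflection $\sigma(p,k)\mapsto \alpha+\beta-\sigma$ is exactly the swap $a$ free $k$'s and $b$ free $R(p)_{>k}$'s $\mapsto$ $b$ and $a$.

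Then I would carry out the bookkeeping: on the labeling side, $\rho_k$ converts $a$ copies of $k$ followed by $b$ copies of $R(p)_{>k}$ into $b$ copies of $k$ followed by $a$ copies of $R(p)_{>k}$; translating through $\phi_3$, this changes the layer-count $\sigma(p,k)$ from $v(p)$-type base value plus the position of the $k/R(p)_{>k}$ boundary by exactly $b-a = (\alpha+\beta-\sigma)-\sigma$ after absorbing $\alpha,\beta$. A key check is that the constraints from the $\hat B$-boundary values (Definition~\ref{def:Bhat}), which fix $\sigma$ on the extremal chain elements $(p,\min\hat R(p)^*)$ and $(p,\max\hat R(p)^*)$, correspond to the non-raisability/non-lowerability at the ends of each fiber, so that the toggle and $\rho_k$ agree even at the boundary. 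Because Remark~\ref{remark:commute} guarantees the individual $\tau_{(p,k)}$ commute (distinct $p$ at the same $k$ share no covering relation within the fiber chains), the fiberwise identities assemble into $\Phi\circ\rho_k = \tau_k\circ\Phi$.

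\textbf{The main obstacle} I anticipate is the careful handling of the \emph{raisable/lowerable} definitions versus the toggle bounds $\alpha,\beta$ when the covering relations of type~(2) in Definition~\ref{def:GammaOne} interact across fibers. Raisability of a label $f(p,i)=k$ depends on the labels in the fibers $F_{p'}$ for $p'\gtrdot_P p$ and $p'\lessdot_P p$, and I must confirm that the min-over-upper-covers and max-over-lower-covers in $\Gamma(P,\hat R)$ translate \emph{exactly} into these adjacent-fiber constraints, accounting for the possibly non-contiguous values in $R(p)$ (the role of $R(p)_{<k}$ and the exclusion $k_1\neq\max R(p_1)$). This is precisely the content that \cite{DSV2019} handled in the $\ell=1$ case via $\phi_2$, so I expect the honest work to be verifying that lifting to general $\ell$ through $\phi_3$ introduces no new subtleties beyond the fiberwise reduction already described — the boundary values $\hat B$ being the one genuinely new ingredient requiring the extremal-chain argument above.
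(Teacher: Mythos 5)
Your proposal follows essentially the same route as the paper's proof: reduce to a single fiber $F_p$, interpret $\sigma(p,k)$ as the count of layers whose label exceeds $k$ (the paper's Lemma~\ref{lem:j} via $j^p_k$), identify the number of raisable $k$ labels and lowerable $\hat{R}(p)_{>k}$ labels with $\alpha_\sigma(p,k)-\sigma(p,k)$ and $\sigma(p,k)-\beta_\sigma(p,k)$ respectively, and conclude by the reflection formula; the ``main obstacle'' you flag (matching covers of type~(2) in $\Gamma(P,\hat R)$ to the adjacent-fiber raisability constraints) is exactly the three-case analysis on $\mathcal{U}$ that constitutes the technical core of the paper's argument, so be aware that is where the real work lies rather than in the assembly. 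One small slip: you have $\alpha$ and $\beta$ transposed (your free-label counts $\beta_\sigma-\sigma$ and $\sigma-\alpha_\sigma$ are negative as written), though this is harmless since the toggle formula $\alpha+\beta-\sigma$ is symmetric in the two.
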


The following notation will be useful for the proof of this lemma.

\begin{definition} \label{def:position}
We consider the label at $(p,i) \in P \times [\ell]^v_u$ to be in \textbf{position} $i$, and the first (last) position satisfying a particular condition is the least (greatest) such position.
\end{definition}

\begin{definition} \label{def:j}
For $f \in \mathcal{L}_{P \times [\ell]}(u,v,R)$ and $k \in \mathbb{Z}$, let \[ j^p_{k} = \begin{cases}
\text{min}\{j \mid f(p,j) > k \} & \exists b \in f(F_p) \text{ such that } b > k  \\
\ell - v(p) + 1 &\text{otherwise}
\end{cases}. \]  That is, $j^p_{k}$ is the first position in the fiber $F_p$ with label greater than $k$, where we may consider a label at $(p, \ell-v(p) + 1)$ that is greater than all other labels.
\end{definition}

\begin{example}
In Figure \ref{fig:bijection}, we have $j^c_4 = 3, j^a_4 = 5$, and $j^b_{-1} = 2$.
\end{example}

We can now write the bijection $\Phi$ in terms of $j^p_{k}$.

\begin{lemma} \label{lem:j}
Let $f \in \mathcal{L}_{P \times [\ell]}(u,v,R)$ and $\Phi(f) = \sigma \in \mathcal{A}^{\widehat{B}}(\Gamma(P,\hat{R}))$.  Then $\sigma(p,k) = \ell + 1 - j^p_{k}$
\end{lemma}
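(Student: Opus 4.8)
The plan is to unwind the three constituent maps of $\Phi$ and track exactly how many of the order ideals $\mathcal{O}_\ell \leq \cdots \leq \mathcal{O}_1$ fail to contain the element $(p,k)$, since by the definition of $\phi_3$ this count is precisely $\sigma(p,k)$. So the goal reduces to showing that $(p,k) \in \mathcal{O}_i$ for exactly $j^p_k - 1$ of the layers, because then the number of ideals \emph{not} containing $(p,k)$ is $\ell - (j^p_k - 1) = \ell + 1 - j^p_k$.

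First I would recall from the definition of $\phi_2$ that the order ideal $\mathcal{O}_i$ associated to the layer $\hat{f}(L_i)$ is generated by $\{(p, \hat{f}(p,i))\}$, so that $(p,k) \in \mathcal{O}_i$ exactly when the generator on the fiber $F_p$ in layer $i$ sits at or below $(p,k)$ in $\Gamma(P,\hat{R})$ — equivalently, using covering relation (1) of Definition~\ref{def:GammaOne}, exactly when $\hat{f}(p,i) > k$ (the chain for $p$ is ordered by decreasing label, so larger labels generate ideals reaching further down and including $(p,k)$). Thus the number of layers $i$ with $(p,k) \in \mathcal{O}_i$ equals the number of positions $i$ with $\hat{f}(p,i) > k$. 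The next step is to convert this back to a statement about $f$ and $j^p_k$: because $\hat{f}$ is weakly increasing along the fiber $F_p$ (property (2) of a $P$-strict labeling, preserved by $\phi_1$), the positions with label exceeding $k$ are exactly $i \geq j^p_k$, and there are $\ell + 1 - j^p_k$ of them among positions $1, \ldots, \ell$.

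The care needed — and the main obstacle — is in reconciling the definition of $j^p_k$, which is phrased in terms of the original labeling $f$ on $P \times [\ell]^v_u$, with the count performed on the extended labeling $\hat{f}$ defined on all of $P \times [\ell]$ via $\phi_1$. I would verify that the extension in $\phi_1$ does not corrupt the count: the labels $\min\hat{R}(p)$ placed in positions $i \leq u(p)$ are the global minimum of $\hat{R}(p)$ and hence never exceed any $k \in R(p)^*$ that indexes a genuine element $(p,k)$ of $\Gamma(P,\hat{R})$, while the labels $\max\hat{R}(p)$ placed in positions $i \geq \ell+1-v(p)$ always exceed such $k$. This is exactly what makes the ``otherwise'' branch of Definition~\ref{def:j} (setting $j^p_k = \ell - v(p) + 1$ when no genuine label exceeds $k$) agree with counting the $v(p)$ top-padded positions, and what lets the boundary conventions in $\Phi$ match those of $j^p_k$.

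Finally I would tie the two boundary values checked in Lemma~\ref{lem:bij} to this formula as a consistency check: at $k = \min\hat{R}(p)$ every genuine label exceeds $k$ so $j^p_k = u(p)+1$ giving $\sigma = \ell - u(p)$, and at $k = \max\hat{R}(p)^*$ only the top-padded labels exceed $k$ so $j^p_k = \ell - v(p) + 1$ giving $\sigma = v(p)$, both matching $\hat{B}$. Assembling the layer-count identity with the conversion $\ell - (j^p_k - 1) = \ell + 1 - j^p_k$ then yields $\sigma(p,k) = \ell + 1 - j^p_k$, completing the proof.
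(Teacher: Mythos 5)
Your overall architecture is the same as the paper's: unwind $\Phi = \phi_3\circ\phi_2\circ\phi_1$, identify for which layers $i$ the element $(p,k)$ lies in $\mathcal{O}_i$, and count. Your first paragraph correctly reduces the lemma to showing $(p,k)\in\mathcal{O}_i$ for exactly $j^p_k-1$ values of $i$, and your third paragraph's care about how the padding in $\phi_1$ interacts with the two branches of the definition of $j^p_k$ is a genuinely useful elaboration of a point the paper's two-line proof glosses over. The problem is that your second paragraph proves the opposite of what the first paragraph asks for. In $\Gamma(P,\hat{R})$ the chain over $p$ has larger labels sitting \emph{lower} (covering relation (1) of Definition~\ref{def:GammaOne} gives $(p,k_1)\lessdot(p,k_2)$ only when $k_1>k_2$), so the principal order ideal generated by $(p,\hat{f}(p,i))$ contains $(p,k)$ precisely when $(p,k)\leq(p,\hat{f}(p,i))$, i.e.\ when $k\geq\hat{f}(p,i)$. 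The correct criterion is therefore that $(p,k)\in\mathcal{O}_i$ if and only if $\hat{f}(p,i)\leq k$ --- not, as you assert, if and only if $\hat{f}(p,i)>k$. Your parenthetical ``larger labels generate ideals reaching further down and including $(p,k)$'' has the geometry backwards: a generator with a larger label is lower in the chain and generates a \emph{smaller} ideal, which is less likely to contain $(p,k)$. You can check the direction against the boundary computation in Lemma~\ref{lem:bij}: the number of ideals containing $(p,\min\hat{R}(p))$ is $u(p)$, the number of layers labeled $\min\hat{R}(p)$, which only comes out right with the inequality $\hat{f}(p,i)\leq k$.

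As written, your paragraph two concludes that $(p,k)\in\mathcal{O}_i$ for the $\ell+1-j^p_k$ positions with $\hat{f}(p,i)>k$, which contradicts the target of $j^p_k-1$ set in paragraph one and, pushed through $\phi_3$, would give $\sigma(p,k)=j^p_k-1$ rather than $\ell+1-j^p_k$. Your final paragraph silently reverts to the paragraph-one version, so the displayed answer is correct but is not supported by the argument in between. The fix is local: flip the criterion to $(p,k)\in\mathcal{O}_i$ iff $\hat{f}(p,i)\leq k$, so that the layers containing $(p,k)$ are exactly $1\leq i<j^p_k$ --- which is precisely what the paper's proof asserts --- and the rest of what you wrote, including the treatment of the padded positions and the consistency check against $\hat{B}$, then assembles correctly.
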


\begin{proof}
From the definition of $\Phi$, in the multichain $\phi_2(\phi_1(f))$, $(p,k) \in \mathcal{O}_i$ for $1 \leq i < j^p_{k}$ and $(p,k) \notin \mathcal{O}_i$ for $j^p_{k} \leq i \leq \ell$.  Thus $\sigma(p,k) = \#\{i \mid (p,k) \notin \mathcal{O}_i\} = \ell + 1 - j^p_{k}$.
\end{proof}

\begin{proof}[Proof of Lemma \ref{lem:equiv}]
We prove this lemma by showing $\Phi$ equivariantly takes the action of $\rho_k$ on $f(F_p)$ to the toggle $\tau_k$ at $(p,k) \in \Gamma(P,\hat{R})$.
Let $f \in \mathcal{L}_{P \times [\ell]}(u,v,R)$ and $\Phi(f) = \sigma \in \mathcal{A}^B(\Gamma(P,\hat{R}))$.

Consider the action of $\rho_k$ on $f(F_p)$.  If $k \notin R(p)^*$, then $\rho_k$ acts as the identity on $f(F_p)$ and $\tau_k$ acts as the identity on $\sigma(p,k)$, so we are done. Therefore, let $k \in R(p)^*$. We aim to count the number of raisable $k$ labels and lowerable $\hat{R}(p)_{>k}$ labels in $f(F_p)$. We begin with finding the number of raisable $k$.

Using Lemma \ref{lem:j}, the total number of (not necessarily raisable) $k$ labels is given by \begin{align*}
j^p_k - j^p_{\hat{R}(p)_{<k}}  &=  (\ell + 1 - \sigma(p,k)) - (\ell + 1 - \sigma(p,\hat{R}(p)_{<k}))\\
&= \sigma(p,\hat{R}(p)_{<k}) - \sigma(p,k). \tag{1} \label{eq:difference}
\end{align*}
Now, we determine which of these labels are raisable.  We consider three cases based on the upper covers of $(p,k)$ in $\Gamma(P,\hat{R})$ associated to a different element of $P$.  Let $\mathcal{U} = \{(\omega,c) \in \Gamma(P,\hat{R}) \mid \omega \gtrdot_P p \text{ and } (\omega,c) \gtrdot_{\Gamma(P,\hat{R})} (p,k)\}$.

\textbf{Case $\mathcal{U} \neq \emptyset$:} For $(\omega, c) \in \mathcal{U}$, by construction of $\Gamma(P,\hat{R})$, $k = \hat{R}(p)_{<c}$ and $c$ is the largest such $c \in \hat{R}(\omega)$.  Equivalently, $c$ is the greatest element of $\hat{R}(\omega)$ that is less than or equal to $\hat{R}(p)_{>k}$. Thus, since $\hat{R}(\omega)_{>c} > \hat{R}(p)_{>k}$, the first position in $f(F_p)$ that is not restricted above by labels in $f(F_{\omega}) $ is $j^{\omega}_c$.  Therefore the first position in $f(F_{\omega})$ that can be raised to $\hat{R}(p)_{>k}$ is $\max_{(\omega,c) \in \mathcal{U}}j^{\omega}_c$, so the number of labels in $f(F_p)$  that can be raised to $\hat{R}(p)_{>k}$ (that are necessarily less than $\hat{R}(p)_{>k}$) is 
\begin{align*}
j^p_k - \max_{(\omega, c) \in \mathcal{U}}(j^{\omega}_c) &= (\ell + 1 - \sigma(p,k)) - \max_{(\omega, c) \in \mathcal{U}}(\ell + 1 - \sigma(\omega,c)) \\
&= -\sigma(p,k) - \max_{(\omega, c) \in \mathcal{U}}(-\sigma(\omega,c))\\
&= \min_{(\omega, c) \in \mathcal{U}}(\sigma(\omega,c)) - \sigma(p,k).
\end{align*}

\textbf{Case $\mathcal{U} = \emptyset$ and $\omega \gtrdot_P p$ for some $\omega \in P$:}  This implies that $k \neq \hat{R}(p)_{<c}$ for any $c \in \hat{R}(\omega)$ for any $\omega \gtrdot_P p$.  Thus, if $c > k$, then we also have $c > \hat{R}(p)_{>k}$. Since $f$ is strict on layers, if $f(p,i) = k$, all $f(\omega,i)$ are greater than $R(p)_{>k}$. Therefore all $k$ labels in $f(F_p)$ are raisable.

\textbf{Case $\mathcal{U} = \emptyset$ and $p$ has no upper covers in $P$:}  In this case, $f(F_p)$ is not restricted above, and again all $k$ labels in $f(F_p)$ are raisable.

The number of raisable $k$ is the lesser of the number of $k$ labels and the number of labels less than $\hat{R}(p)_{>k}$ that can be raised to $\hat{R}(p)_{>k}$.  Let $Y = \{y \mid y \text{ covers } (p,k) \text{ in } \Gamma(P,\hat{R})\}$.  Then, by the above cases, the number of raisable $k$ in $f(F_p)$ is given by \[\min_{y \in Y}\left(\sigma(x)\right) - \sigma(p,k).\]

If $Z = \{z \mid z \text{ is covered by } (p,k) \text{ in } \Gamma(P,\hat{R})\}$, by a similar argument we obtain that the number of lowerable $\hat{R}(p)_{>k}$ labels in $f(F_p)$ is \[ \sigma(p,k) - \max_{z \in Z}\left(\sigma(z)\right).\]

Suppose there are $a$ raisable $k$ and $b$ lowerable $\hat{R}(p)_{>k}$ in $f(F_p)$.  Apply $\rho_k$ to $f(F_p)$, and let $\sigma^p_k$ be the $\Gamma(P,\hat{R})$-partition corresponding to this new $P$-strict labeling.  For all $d \neq k$, the first position in $f(F_p)$ with a label greater than $d$ is unchanged after applying $\rho_k$.  Thus the only label that differs between $\sigma$ and $\sigma^p_k$ is the label at $(p,k)$.  Since there are $b$ raisable $k$ in $\rho_k(f(F_p))$, with $Y$ and $Z$ defined as above we have \[\sigma(p,k) - \max_{z \in Z}(\sigma(z)) = b =  \min_{y \in Y}(\sigma(y)) - \sigma^p_k(p,k).\] Therefore, \[\sigma^p_k(p,k) = \min_{y \in Y}(\sigma(y)) + \max_{z \in Z}(\sigma(z)) - \sigma(p,k),\] which is exactly $\tau_{(p,k)}(\sigma)(p,k)$.

Thus, $\rho_k$ on $f$ corresponds to toggling on $\sigma$ over all elements $(p,k)$ with $p \in P$.
\end{proof}

In the following, we give an example of each case from the previous proof.
\begin{example}
Refer to the poset $\Gamma(P,\hat{R})$ from Figure \ref{fig:moregeneralex}. For the element $(a,1)$ we have $\mathcal{U} = \{(b,3), (c,2)\}$, for the element $(a,3)$ we have $a \lessdot b$ and $a \lessdot c$ but $\mathcal{U} = \emptyset$, and for the element $(d,6)$ we have $\mathcal{U} = \emptyset$ and $d$ has no upper covers in $P$.
\end{example}

\begin{example} Figure \ref{fig:bijection} shows an example of the bijection map; the number of 3 labels in $f(F_b)$ is $2 = \sigma(b,2) - \sigma(b,3)$ and the number of 4 labels is $1 = \sigma(b,3) - \sigma(b,4)$.  The number of positions where a 3 could be raised to a 4 is $1 = \sigma(c,4) - \sigma(b,3)$ and the number of positions where a 4 could be lowered to a 3 is $0 = \sigma(b,3) - \sigma(a,2)$.
\end{example}

We now prove our first main theorem.
\begin{proof}[Proof of Theorem \ref{thm:moregeneralpro}]
By Lemma \ref{lem:bij}, $\Phi$ is a bijection.  By Lemma \ref{lem:equiv}, $\Phi(\pro(f)) = \Phi(\cdots \circ\rho_{2}\circ\rho_{1}\circ\rho_{0}\circ\rho_{-1}\circ\rho_{-2}\circ\cdots(f)) = \cdots \circ \tau_{-2}\circ \tau_{-1}\circ \tau_{0}\circ \tau_{1}\circ \tau_{2} \circ \cdots(\Phi(f)) = \togpro(\Phi(f))$.
\end{proof}

\subsection{Second main theorem: $P$-strict promotion and rowmotion}
Our next main result, Theorem~\ref{thm:rowrow}, says that for certain kinds of restriction functions, promotion on $P$-{strict labelings of $P \times [\ell]^v_u$ with restriction function $R$} is equivariant with {rowmotion} on $B$-bounded $\Gamma(P,\hat{R})$-partitions. 

\begin{definition}
We call an element $p \in P$ \textbf{fixed in} $\mathcal{A}^B(P)$ if there exists some value $a$ such that $\sigma(p) = a$ for all $\sigma \in \mathcal{A}^B(P)$.
\end{definition}

\begin{definition}
\label{def:coladj}
We say that $\mathcal{A}^B({\Gamma}(P,R))$ is \textbf{column-adjacent} if whenever $(p_1,k_1)\lessdot (p_2,k_2)$ in $\Gamma(P,R)$ and neither of $(p_1,k_1)$ nor $(p_2,k_2)$ are fixed in $\mathcal{A}^B({\Gamma}(P,R))$, then $\vert k_2 - k_1 \vert = 1$.
\end{definition}

We call this {column-adjacent} because it implies that the non-fixed poset elements $(p,k)$ of $\Gamma(P,R)$ can be partitioned
into subsets indexed by $k$, called \emph{columns}, whose elements have covering relations with other non-fixed elements only when they are in adjacent columns. For many nice cases, including the posets considered in Section~\ref{sec:corollaries}, the word column is visually appropriate.

\begin{theorem}
\label{thm:rowrow}
If $\mathcal{A}^{\widehat{B}}({\Gamma}(P,\hat{R}))$ is column-adjacent, then $\mathcal{A}^{\widehat{B}}({\Gamma}(P,\hat{R}))$ under $\row$ is in equivariant bijection with $\mathcal{L}_{P \times [\ell]}(u,v,R)$ under $\pro$.
\end{theorem}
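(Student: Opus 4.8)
The plan is to derive Theorem~\ref{thm:rowrow} from Theorem~\ref{thm:moregeneralpro} by proving that, under the column-adjacency hypothesis, $\togpro$ and $\row$ are conjugate elements of the toggle group of $\mathcal{A}^{\widehat{B}}(\Gamma(P,\hat{R}))$. First I would record why this suffices: if $w$ lies in the toggle group and $\row = w\circ\togpro\circ w^{-1}$, then $\row\circ w = w\circ\togpro$, so $w$ is itself an equivariant bijection from $(\mathcal{A}^{\widehat{B}}(\Gamma(P,\hat{R})),\togpro)$ to $(\mathcal{A}^{\widehat{B}}(\Gamma(P,\hat{R})),\row)$. Composing with the equivariant bijection $\Phi$ of Theorem~\ref{thm:moregeneralpro} yields an equivariant bijection $w\circ\Phi$ between $(\mathcal{L}_{P\times[\ell]}(u,v,R),\pro)$ and $(\mathcal{A}^{\widehat{B}}(\Gamma(P,\hat{R})),\row)$, which is exactly the desired statement. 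Thus the whole theorem reduces to the conjugacy of $\togpro$ and $\row$.

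Next I would set up the toggle-group bookkeeping. The elements of $\mathrm{dom}(\widehat{B})$, and more generally all elements fixed in $\mathcal{A}^{\widehat{B}}(\Gamma(P,\hat{R}))$, are never toggled, so both $\togpro$ and $\row$ are products of exactly the toggles $\tau_{(p,k)}$ ranging over the non-fixed elements, each occurring exactly once (this is a finite product since $R$ bounds the labels). Two commutation facts are then relevant. By construction of $\Gamma(P,\hat{R})$ there are no covering relations between two elements of a single column $\{(p,k)\mid p\in P\}$, so all toggles within a column commute, making each $\tau_k$ well-defined; and the column-adjacency hypothesis guarantees, via Remark~\ref{remark:commute}, that toggles in columns $k$ and $k'$ with $|k-k'|\geq 2$ commute, since non-fixed elements of such columns share no covering relation. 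With these in hand I would invoke the general toggle-conjugacy principle of \cite{SW2012}: two products using each of a fixed set of involutions exactly once are conjugate provided one can be carried to the other by a sequence of moves, each of which either transposes two adjacent commuting factors (preserving the element) or cyclically rotates the first factor to the last position (conjugating by that factor).

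The heart of the argument, and the step I expect to be the main obstacle, is to transform the column-ordered word $\togpro = \cdots\tau_2\tau_1\tau_0\tau_{-1}\cdots$ into a word whose order is a linear extension of $\Gamma(P,\hat{R})$ restricted to the non-fixed elements, since by Definition~\ref{def:row1} any such order computes $\row$. This is exactly the rc-poset-style ``shearing'' argument of \cite{SW2012}: the non-adjacent-column commutations let one freely interleave toggles from columns that are at distance at least two, and cyclic rotations let one sweep a single column's toggles from one end of the word to the other; iterating these moves reorganizes the ``by column'' order into a ``by rank'' order. I would carry this out by induction on the number of non-fixed columns, peeling off an extremal column at each stage and checking that column-adjacency supplies precisely the commutations needed to keep every intermediate rearrangement legal, so that the accumulated conjugating element $w$ is well-defined. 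The delicate point is this verification that no illegal swap of adjacent (hence possibly non-commuting) columns is ever required; once the target linear-extension order is reached, it is recognized as a computation of $\row$, and the conjugacy—hence the theorem—follows.
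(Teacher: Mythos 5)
Your proposal is correct and follows essentially the same route as the paper: reduce to Theorem~\ref{thm:moregeneralpro} plus the conjugacy of $\togpro$ and $\row$ in the toggle group, with column-adjacency supplying exactly the commutation relations needed among toggles at non-fixed elements. The only difference is bookkeeping: the paper passes to the subposet $\tilde{\Gamma}(P,\hat{R})$ of non-fixed elements with suitably restricted toggles and then cites the column-toggle-order conjugation result \cite[Theorem 4.19]{DSV2019} as a black box, whereas you propose to re-derive that lemma directly from the \cite{SW2012} shearing argument (which is where the cited result ultimately comes from, so your flagged ``delicate point'' is a known step rather than a gap).
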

\begin{proof}
Let $\tilde{\Gamma}(P,\hat{R})$ be the poset with elements $\Gamma(P,\hat{R}) \setminus \{(p,k) \mid (p,k) \text{ is fixed in } \mathcal{A}^{\widehat{B}}(\Gamma(P,\hat{R}))\}$ where $(p,k) \lessdot (p',k')$ in $\tilde{\Gamma}(P,\hat{R})$ if and only if $(p,k) \lessdot (p',k')$ in $\Gamma(P,\hat{R})$. To any $\sigma \in \mathcal{A}^{\widehat{B}}(\Gamma(P,\hat{R}))$ we associate a $\tilde{\Gamma}(P,\hat{R})$-partition $\tilde{\sigma}$ in $\mathcal{A}^{\ell}(\tilde{\Gamma}(P,\hat{R}))$ where $\tilde{\sigma}(p,k) = \sigma(p,k)$.  We define the toggle $\tilde{\tau}_{(p,k)}$ on $\mathcal{A}^{\ell}(\tilde{\Gamma}(P,\hat{R}))$ as usual with the added restriction that, if $(p',k') \gtrdot (p,k)$ in $\Gamma(P,\hat{R})$ and $(p',k')$ is fixed in $\mathcal{A}^{\widehat{B}}(\Gamma(P,\hat{R}))$ with $\sigma(p',k') = a$ for all $\sigma$, then the minimum value of the upper covers of $(p,k)$ may not exceed $a$, and, similarly,  if $(p',k') \lessdot (p,k)$ in $\Gamma(P,\hat{R})$ and $(p',k')$ is fixed as $a$ in $\mathcal{A}^{\widehat{B}}(\Gamma(P,\hat{R}))$, the maximum value of the lower covers must be at least $a$.  Thus, $\tilde{\tau}_{(p,k)}(\tilde{\sigma})(p,k) = \tau_{(p,k)}(\sigma)(p,k)$.

Since these toggles on $\tilde{\Gamma}(P,\hat{R})$-partitions share the same commutation relations as toggles on $J(\tilde{\Gamma}(P,\hat{R}))$, as noted in Remark~\ref{remark:commute}, we can apply a conjugation result from \cite{DSV2019} as follows.  Because $\mathcal{A}^{\widehat{B}}({\Gamma}(P,\hat{R}))$ is column-adjacent,  if $(p,k) \lessdot (p',k')$ in $\tilde{\Gamma}(P,\hat{R})$, then $\vert k' - k \vert = 1$. Now, if $\tilde{\tau}_{k}$ is the composition over all $p \in P$ of $\tilde{\tau}_{(p,k)}$, $\togpro = \cdots \circ \tilde{\tau}_{1} \circ \tilde{\tau}_{0} \circ \tilde{\tau}_{-1} \circ \cdots$ is conjugate to $\row$ on $\tilde{\Gamma}(P,\hat{R})$ by \cite[Theorem 4.19]{DSV2019}.  Therefore, $\togpro$ is also conjugate to $\row$ on $\mathcal{A}^{\widehat{B}}({\Gamma}(P,\hat{R}))$, and we obtain the result by Theorem \ref{thm:moregeneralpro}.
\end{proof}

\begin{remark} 
As long as a toggle order is a \emph{column toggle order}, as defined in \cite{DSV2019}, the composition of toggles will be equivariant with rowmotion, so there are many more toggle orders besides that of $\togpro$ that are conjugate to rowmotion. We do not need this full level of generality of toggle orders.
\end{remark}

We show in the following proposition that for the case where our restriction function is induced by upper and lower bounds for each element (this includes the case of a global bound $q$), we have the column-adjacent property, so Theorem~\ref{thm:rowrow} yields Corollary~\ref{cor:abrow}.

\begin{proposition} \label{prop:hatcoladj} 
$\mathcal{A}^{\widehat{B}}({\Gamma}(P,\widehat{R_a^b}))$ is column-adjacent.
\end{proposition}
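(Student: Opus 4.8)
The goal is to show that $\mathcal{A}^{\widehat{B}}(\Gamma(P,\widehat{R_a^b}))$ satisfies the column-adjacency condition of Definition~\ref{def:coladj}: whenever $(p_1,k_1)\lessdot(p_2,k_2)$ in $\Gamma(P,\widehat{R_a^b})$ with neither element fixed, we must have $|k_2-k_1|=1$. The plan is to split into the two types of covering relations from Definition~\ref{def:GammaOne} and verify the claim in each.

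First I would handle type (1) covers, where $p_1=p_2=p$ and $\widehat{R_a^b}(p)_{>k_2}=k_1$, i.e., $k_1$ is the next label of $\widehat{R_a^b}(p)$ above $k_2$. The key observation is that $R_a^b$ is the restriction function induced by upper and lower bounds, so each $R_a^b(p)$ is an \emph{interval} of consecutive integers $\{a(p),a(p)+1,\ldots,b(p)\}$; consequently $\widehat{R_a^b}(p)$ only adds one new minimum and one new maximum element below and above this interval. For a type-(1) cover between two elements $(p,k_1),(p,k_2)$ that both live in $\Gamma$ (so $k_1,k_2\in\widehat{R_a^b}(p)^*$), consecutive-integer spacing forces $|k_1-k_2|=1$ \emph{except} possibly at the gap created by the adjoined extreme labels. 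So the real content is to show that whenever such a gap arises, one of the two elements is fixed. This is where Definition~\ref{def:Bhat} enters: $\widehat{B}$ assigns fixed values precisely to $(p,\min\widehat{R}(p)^*)$ and $(p,\max\widehat{R}(p)^*)$, which are exactly the elements adjacent to the adjoined extreme labels. I would check that the element sitting across any non-unit gap is one of these fixed elements, so the column-adjacency hypothesis is vacuously satisfied there.

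Next I would handle type (2) covers, where $p_1\lessdot_P p_2$ and $k_1=\widehat{R_a^b}(p_1)_{<k_2}$ is the largest label of $\widehat{R_a^b}(p_1)$ strictly below $k_2$, subject to the maximality condition in Definition~\ref{def:GammaOne}(2). Again using that each $\widehat{R_a^b}(p)$ is an interval (plus two adjoined extremes), I would argue that for interior labels $k_1=k_2-1$, giving $|k_2-k_1|=1$ directly; the only way to get $k_2-k_1>1$ is for $k_1$ to be an adjoined extreme label of $p_1$ or for there to be a mismatch at the boundary of the intervals $R_a^b(p_1)$ and $R_a^b(p_2)$. In each such boundary case I would again identify one of the two elements as a $\min$- or $\max$-type element fixed by $\widehat{B}$ via Definition~\ref{def:Bhat}, so the hypothesis of column-adjacency is not triggered.

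The main obstacle I anticipate is the careful bookkeeping at the interval boundaries: the functions $a$ and $b$ may vary with $p$, so the intervals $R_a^b(p_1)$ and $R_a^b(p_2)$ need not align, and the adjoined extremes in $\widehat{R}$ (whose offsets $h(p),\tilde h(p)$ depend on chain lengths in $P$) create the only genuinely non-consecutive jumps. I would therefore concentrate the proof on cataloguing exactly which poset elements $(p,k)$ are fixed in $\mathcal{A}^{\widehat{B}}$, show that this fixed set always includes both the $\min$- and $\max$-adjacent elements on every fiber, and then confirm that every covering relation with $|k_2-k_1|>1$ necessarily involves one of these fixed elements. Once that catalogue is in place, the column-adjacency condition follows because every cover between two \emph{non-fixed} elements lies strictly inside a consecutive-integer interval, forcing $|k_2-k_1|=1$.
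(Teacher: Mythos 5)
Your argument rests on the claim that each $R_a^b(p)$ is an interval $\{a(p),a(p)+1,\ldots,b(p)\}$, so that the only non-unit jumps in $\widehat{R_a^b}(p)$ occur at the two adjoined extreme labels. This is false: $R_a^b$ is by Definition~\ref{def:consistent} the \emph{consistent} restriction function induced by the bounds, and the consistency requirement on $P\times[\ell]^v_u$ can delete labels from the interior of the interval $[a(p),b(p)]$. The paper gives an explicit example in the remark following Corollary~\ref{cor:abrow}: for the skew shape $(4,4,4,4,2,2,2)/(2,2,2)$ with global bound $5$, the fourth row can only carry labels from $\{1,2,4,5\}$, which has an interior gap at $3$. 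Consequently your catalogue of fixed elements --- only the $\min$- and $\max$-adjacent elements of each fiber, which are fixed directly by the definition of $\hat{B}$ --- is incomplete, and covering relations that straddle an interior gap are not accounted for by your argument.

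The missing ingredient is exactly the paper's Lemma~\ref{lem:pkfixed}: if $k\in R_a^b(p)$ and $k+1\notin R_a^b(p)$, then $(p,k)$ is fixed in $\mathcal{A}^{\widehat{B}}(\Gamma(P,\widehat{R_a^b}))$, \emph{whether or not} $k$ is the top of that fiber's label set. Proving this for an interior gap is the genuinely nontrivial part: one shows such a gap forces the existence of a cover $p_1\gtrdot_P p$ with $k+1\in R_a^b(p_1)$ but $k+2\notin R_a^b(p_1)$, that every $\sigma$ must satisfy $\sigma(p,k)=\sigma(p_1,k+1)$, and one iterates up a chain $p\lessdot p_1\lessdot\cdots\lessdot p_m$ until reaching a maximal element whose value is pinned by $\hat{B}$; the forced equalities then propagate that fixed value back down to $(p,k)$. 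Once that lemma is available, the reduction of column-adjacency to it is short and matches your case split on the two types of covers from Definition~\ref{def:GammaOne}. So your overall strategy (show every cover with $|k_2-k_1|>1$ involves a fixed element) is the right one, but as written the proof only goes through under the extra hypothesis that every $R_a^b(p)$ is an interval, which is precisely the easier situation already handled by Corollary~4.22 of \cite{DSV2019} and not the general case this proposition addresses.
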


The proof of the above uses the following lemma.

\begin{lemma} \label{lem:pkfixed}
If $k \in R_a^b(p)$ and $k+1 \notin R_a^b(p)$, then $(p,k)$ is fixed in $\mathcal{A}^{\widehat{B}}({\Gamma}(P,\widehat{R_a^b}))$.
\end{lemma}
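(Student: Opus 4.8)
The goal is to show that $(p,k)$ takes a constant value across all $\sigma \in \mathcal{A}^{\widehat{B}}(\Gamma(P,\widehat{R_a^b}))$ under the hypothesis that $k \in R_a^b(p)$ but $k+1 \notin R_a^b(p)$. The first thing I would do is unpack what this hypothesis means combinatorially. Since $R_a^b(p) = \{a(p), a(p)+1, \ldots, b(p)\}$ is a contiguous interval of integers induced by the bounds $a$ and $b$, the condition $k \in R_a^b(p)$ with $k+1 \notin R_a^b(p)$ forces $k = b(p) = \max R_a^b(p)$. The plan is then to locate $(p,k)$ inside the poset $\Gamma(P,\widehat{R_a^b})$: recall $\widehat{R_a^b}(p)$ adds a new minimum and a new maximum to $R_a^b(p)$, and the elements of $\Gamma(P,\widehat{R_a^b})$ are pairs $(p,k)$ with $k \in \widehat{R_a^b}(p)^*$, i.e.\ with the overall maximum removed. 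Since $b(p) = \max R_a^b(p)$ is no longer the maximum of $\widehat{R_a^b}(p)$ (that role goes to the added element $\max \bigcup_q R(q) + h(p)$), the element $(p,k) = (p, b(p))$ does genuinely appear in $\Gamma(P,\widehat{R_a^b})$, and in fact $(p, \max \widehat{R_a^b}(p)^*) = (p, b(p))$ is the top element of the fiber chain over $p$.

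\textbf{Key steps.} Next I would invoke the boundary condition from Definition~\ref{def:Bhat}: the function $\widehat{B}$ is \emph{defined} precisely on the two extreme elements of each fiber, with $\widehat{B}(p, \max \widehat{R_a^b}(p)^*) = v(p)$. Combining this with the identification $\max \widehat{R_a^b}(p)^* = b(p) = k$ from the previous step, we get $(p,k) \in \operatorname{dom}(\widehat{B})$ with $\widehat{B}(p,k) = v(p)$. By the very definition of a $\widehat{B}$-bounded $Q$-partition (Definition~\ref{def:ppart3}), every $\sigma \in \mathcal{A}^{\widehat{B}}(\Gamma(P,\widehat{R_a^b}))$ satisfies $\sigma(x) = \widehat{B}(x)$ for all $x \in \operatorname{dom}(\widehat{B})$. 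Hence $\sigma(p,k) = v(p)$ for every such $\sigma$, which is exactly the assertion that $(p,k)$ is fixed, with the constant value $a = v(p)$.

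\textbf{Main obstacle.} The only genuine content is the translation in the first step — pinning down that the hypothesis $k \in R_a^b(p)$, $k+1 \notin R_a^b(p)$ is equivalent to $k$ being the maximum of $R_a^b(p)$, and then that this maximum coincides with $\max \widehat{R_a^b}(p)^*$ rather than with $\max \widehat{R_a^b}(p)$. One must be careful that passing from $R_a^b$ to $\widehat{R_a^b}$ inserts a strictly larger element above $b(p)$, so $b(p)$ survives the starring operation $(\cdot)^*$ and indeed becomes the top starred label; this is what places $(p,k)$ squarely in $\operatorname{dom}(\widehat{B})$. Once that identification is made, the conclusion is immediate from the boundary definition of $\mathcal{A}^{\widehat{B}}$, so there is no real analytic difficulty — the work is entirely bookkeeping about which labels appear and which are fixed by $\widehat{B}$.
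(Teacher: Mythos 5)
There is a genuine gap, and it occurs in your very first step. You assert that $R_a^b(p) = \{a(p), a(p)+1, \ldots, b(p)\}$ is a contiguous interval, so that $k \in R_a^b(p)$ and $k+1 \notin R_a^b(p)$ forces $k = \max R_a^b(p)$. This is false: by Definition~\ref{def:consistent}, $R_a^b$ is the \emph{consistent} restriction function induced by the bounds, i.e.\ it retains only those labels in $\{a(p),\ldots,b(p)\}$ that actually occur in some $P$-strict labeling of $P\times[\ell]^v_u$, and this pruning can create gaps. The paper gives an explicit example in the remark following Corollary~\ref{cor:abrow}: for the skew shape $(4,4,4,4,2,2,2)/(2,2,2)$ with global bound $5$, the fourth row can only carry labels from $\{1,2,4,5\}$. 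There $k=2$ satisfies the hypotheses of the lemma but is not the maximum of $R_a^b(p)$, so your argument says nothing about why $(p,2)$ is fixed.

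Your argument correctly handles the boundary case $k = \max R_a^b(p)$ (where $k = \max \widehat{R_a^b}(p)^*$ lies in $\operatorname{dom}(\widehat{B})$ and is fixed at $v(p)$ by Definition~\ref{def:Bhat}), and this is indeed the first case of the paper's proof. But the substantive content of the lemma is precisely the gap case $k+1 < \max R_a^b(p)$. The paper handles it by showing that the gap forces the existence of a cover $p_1 \gtrdot_P p$ with $k+1 \in R_a^b(p_1)$ and $k+2 \notin R_a^b(p_1)$, proving $\sigma(p,k) = \sigma(p_1,k+1)$ for every $\sigma$, and iterating up a chain $p \lessdot p_1 \lessdot p_2 \lessdot \cdots \lessdot p_m$ until reaching a maximal element where the value is pinned by $\widehat{B}$; the common value then propagates back down. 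None of this is bookkeeping that can be read off from Definition~\ref{def:ppart3} — it is the heart of the lemma, and indeed the reason the lemma is needed at all, since (as the paper notes) the interval case already follows from \cite[Corollary 4.22]{DSV2019}.
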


\begin{proof}
Let $\sigma \in \mathcal{A}^{\widehat{B}}({\Gamma}(P,\widehat{R_a^b}))$ and $f = \Phi^{-1}(\sigma) \in \mathcal{L}_{P \times [\ell]}(u,v,R_a^b)$.  Suppose $k \in R_a^b(p)$ and $k+1 \notin R_a^b(p)$.  If $k + 1 > \max R_a^b(p)$, then $k = \max \widehat{R_a^b}(p)^*$, so $(p,k)$ is fixed by the definition of $\hat{B}$.  Suppose, then, that $k + 1 < \max R_a^b(p)$.  Then there exists $p' \gtrdot_P p$ such that $k+1 \in R_a^b(p')$.  Otherwise, for all $p' \gtrdot_P p$, either $k+1 > \max R_a^b(p')$, $k+1 < a_{p'}$, or there exists $k' \geq k+2 \in R_a^b(p')$.  In all cases, we could have $f(p,i) = k+1$ wherever $f(p,i) = k$, a contradiction.

If $k+1$ and $k+2 \in R_a^b(p')$, then, because $P \times [\ell]^v_u$ is a convex subposet, any position in the fiber $f(F_{p'})$ that can be labeled by $k+1$ can also be labeled by $k+2$.  Thus, if $k+1$ and $k+2 \in R_a^b(p')$ for all covers $p' \gtrdot_P p$ with $k+1 \in R_a^b(p')$, then $k+1 \in R_a^b(p)$.  Therefore, there must exist $p_1$ of the covers $p'$ such that $k+2 \notin R(p_1)$. Moreover, if $\sigma(p,k) < \sigma(p_1,k+1)$, by Lemma \ref{lem:j}, the first position greater than $k+1$ in $f(F_{p_1})$ occurs before the first position greater than $k$ in $f(F_p)$. In this position, any values greater than $k+1$ and less than $R_a^b(p_1)_{\geq k+1}$, including $k+2$, would be possible, a contradiction.  Thus $\sigma(p,k) = \sigma(p_1,k+1)$.  Now, either $(p_1,k+1)$ is fixed by $\hat{B}$ and we are done, or, by the above reasoning, there exists $p_2 \gtrdot_P p_1$ such that $\sigma(p_1,k+1) = \sigma(p_2,k+2)$ and $k+3 \notin R_a^b(p_2)$.  We continue this until there exists a maximal $p_m \in P$ such that $\sigma(p,k) = \sigma(p_1,k+1) = \cdots = \sigma(p_m,k+m)$ and $k+m+1 \notin R_a^b(p_m)$.  Since $p_m$ is maximal, $k+m+1 \notin R_a^b(p_m)$ only if $k + m = \max R_a^b(p_m)$, so $\sigma(p_m,k+m)$ is fixed by $\hat{B}$.    Therefore $\sigma(p,k)$ is always equal to the value of a fixed element, and since $\sigma$ was arbitrary, $(p,k)$ is fixed in $\mathcal{A}^{\widehat{B}}({\Gamma}(P,\widehat{R_a^b}))$.
\end{proof}

\begin{proof}[Proof of Proposition \ref{prop:hatcoladj}]
We show that if $(p_1,k_1) \lessdot (p_2,k_2)$ in $\Gamma(P,\widehat{R_a^b})$ and $\vert k_2 - k_1 \vert > 1$, then either $(p_1,k_1)$ or $(p_2,k_2)$ is fixed.  Without loss of generality, let $k_2 - k_1 > 1$. If $p_1 = p_2$, then $k_1 +1 \notin R_a^b(p_1)$, so $(p_1,k_1)$ is fixed by Lemma \ref{lem:pkfixed}.  If $p_1 \lessdot_P p_2$, then $k_1+1 \notin R_a^b(p_1)$, otherwise $k_1$ would not be the greatest element in $R_a^b(p_1)$ less than $k_2$ by definition of $\Gamma(P,\widehat{R_a^b})$.  Thus, by Lemma \ref{lem:pkfixed} again, $(p_1,k_1)$ is fixed.
\end{proof}

\begin{corollary} \label{cor:abrow}
The set of $P$-strict labelings $\mathcal{L}_{P \times [\ell]}(u,v,R_a^b)$ under $\pro$ is in equivariant bijection with the set $\mathcal{A}^{\widehat{B}}({\Gamma}(P,\widehat{R_a^b}))$ under $\row$.
\end{corollary}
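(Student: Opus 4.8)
The plan is to derive Corollary~\ref{cor:abrow} directly by combining the two main theorems that precede it with Proposition~\ref{prop:hatcoladj}, so that almost no new work is required. The statement specializes Theorem~\ref{thm:rowrow} to the restriction function $R_a^b$ induced by element-wise upper and lower bounds (which includes the global bound $R^q$). Theorem~\ref{thm:rowrow} already provides an equivariant bijection between $\mathcal{L}_{P \times [\ell]}(u,v,R)$ under $\pro$ and $\mathcal{A}^{\widehat{B}}(\Gamma(P,\hat{R}))$ under $\row$, but it carries the hypothesis that $\mathcal{A}^{\widehat{B}}(\Gamma(P,\hat{R}))$ be \emph{column-adjacent} (Definition~\ref{def:coladj}). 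So the entire content of the corollary reduces to verifying that this hypothesis is satisfied when $R = R_a^b$.

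First I would invoke Proposition~\ref{prop:hatcoladj}, which asserts precisely that $\mathcal{A}^{\widehat{B}}(\Gamma(P,\widehat{R_a^b}))$ is column-adjacent. With that in hand, the hypothesis of Theorem~\ref{thm:rowrow} is met with $R = R_a^b$, and the theorem immediately yields the desired equivariant bijection between $\mathcal{L}_{P \times [\ell]}(u,v,R_a^b)$ under $\pro$ and $\mathcal{A}^{\widehat{B}}(\Gamma(P,\widehat{R_a^b}))$ under $\row$. Thus the proof is essentially the single line: by Proposition~\ref{prop:hatcoladj}, $\mathcal{A}^{\widehat{B}}(\Gamma(P,\widehat{R_a^b}))$ is column-adjacent, so the conclusion follows from Theorem~\ref{thm:rowrow}.

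The genuine mathematical work has been pushed into Proposition~\ref{prop:hatcoladj} and its supporting Lemma~\ref{lem:pkfixed}, so I would regard \emph{those} as the main obstacles rather than the corollary itself. The heart of Lemma~\ref{lem:pkfixed} is showing that a poset element $(p,k)$ with $k \in R_a^b(p)$ but $k+1 \notin R_a^b(p)$ is \emph{fixed} in $\mathcal{A}^{\widehat{B}}(\Gamma(P,\widehat{R_a^b}))$; the subtle part is the propagation argument that, when $(p,k)$ is not already pinned by $\widehat{B}$, constructs a chain $p \lessdot_P p_1 \lessdot_P \cdots \lessdot_P p_m$ along which $\sigma(p,k) = \sigma(p_1,k+1) = \cdots = \sigma(p_m,k+m)$, terminating at a maximal element where $k+m = \max R_a^b(p_m)$ so that $\widehat{B}$ fixes the top of the chain. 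This relies crucially on the convexity of $P \times [\ell]^v_u$ (to argue that if $k+1, k+2 \in R_a^b(p')$ for every relevant cover then $k+1 \in R_a^b(p)$, forcing some cover to omit $k+2$) and on Lemma~\ref{lem:j} (to rule out $\sigma(p,k) < \sigma(p_1,k+1)$ by exhibiting a forbidden label).

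Granting Lemma~\ref{lem:pkfixed}, the proof of Proposition~\ref{prop:hatcoladj} is a short case analysis: for a covering relation $(p_1,k_1) \lessdot (p_2,k_2)$ in $\Gamma(P,\widehat{R_a^b})$ with $|k_2-k_1|>1$, one checks in both the vertical case ($p_1=p_2$) and the cross-fiber case ($p_1 \lessdot_P p_2$) that $k_1+1 \notin R_a^b(p_1)$---in the latter case this is forced by condition~(2) of Definition~\ref{def:GammaOne}, since otherwise $k_1$ would fail to be the greatest element of $R_a^b(p_1)$ below $k_2$---and then applies Lemma~\ref{lem:pkfixed} to conclude $(p_1,k_1)$ is fixed, violating the premise of column-adjacency vacuously. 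Once column-adjacency is established, Corollary~\ref{cor:abrow} is an immediate specialization of Theorem~\ref{thm:rowrow}.
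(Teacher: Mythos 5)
Your proposal matches the paper's proof exactly: the paper derives Corollary~\ref{cor:abrow} in one line by citing Proposition~\ref{prop:hatcoladj} to establish column-adjacency and then applying Theorem~\ref{thm:rowrow}. Your additional commentary on Lemma~\ref{lem:pkfixed} and the case analysis in Proposition~\ref{prop:hatcoladj} accurately reflects where the paper places the real work, so there is nothing to correct.
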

\begin{proof}
This follows from Theorem \ref{thm:rowrow} and Proposition \ref{prop:hatcoladj}.
\end{proof}

\begin{remark}
Note that if $R^b_a(p)$ is a non-empty interval for all $p \in P$, then we obtain Corollary \ref{cor:abrow} by Corollary 4.22 in \cite{DSV2019}.  However, even though $R^b_a$ is induced by lower and upper bounds, this is not always the case.  The requirement that $R^b_a$ be consistent on $P \times [\ell]^v_u$ can result in gaps in a particular $R^b_a(p)$ depending on $u$ and $v$.  As an example, consider the semistandard Young tableau with shape $(4,4,4,4,2,2,2)/(2,2,2)$ and global maximum $5$ (that is, $P = [7]$ with restriction function $R^5_1$ and $u,v$ determined by the shape).  In this case, the fourth row of the tableau can only be labeled by elements of $\{1,2,4,5\}$.
\end{remark}

\subsection{Special cases of $\mathcal{A}^{\widehat{B}}({\Gamma}(P,\hat{R}))$}
In this subsection, we consider cases in which $\mathcal{A}^{\widehat{B}}(\Gamma(P,\hat{R}))$ from our main theorem can be more nicely described by restricting certain parameters.  We begin with two propositions that show when $\mathcal{A}^{\widehat{B}}(\Gamma(P,\hat{R}))$ is equivalent to $\mathcal{A}^{\ell}(\Gamma(P,R))$ or $\mathcal{A}^{\delta}_\epsilon(\Gamma(P,R))$ from Definitions~\ref{def:ppart} and \ref{def:ppart2}, and conclude with a corollary of our main theorem in the case where $\mathcal{A}^{\widehat{B}}(\Gamma(P,\hat{R}))$ is simply the product of the poset $P$ with a chain.  We use these results several times in Section~\ref{sec:corollaries}.


\begin{proposition}
\label{prop:deltaepsilon}
If $R$ is consistent on $P$, then $\mathcal{A}^{\widehat{B}}(\Gamma(P,\hat{R}))$ is equivalent to $\mathcal{A}^{\delta}_\epsilon(\Gamma(P,R))$, where $\delta(p,k) = \ell - u(p)$ and $\epsilon(p,k) = v(p)$.
\end{proposition}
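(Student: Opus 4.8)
The goal is to show that when $R$ is consistent on all of $P$ (not merely on $P \times [\ell]^v_u$), the added minimum and maximum elements that $\hat R$ inserts into each $R(p)$ become superfluous, in the sense that the $\hat B$-boundedness condition forces the values on these extra elements to be fixed, and the remaining partition is exactly a $(\delta,\epsilon)$-bounded $\Gamma(P,R)$-partition with the stated bounds. The plan is to first analyze the poset $\Gamma(P,\hat R)$ in relation to $\Gamma(P,R)$. Since $\hat R(p) = R(p) \cup \{m_p, M_p\}$ where $m_p = \min\bigcup_q R(q) - \tilde h(p)$ and $M_p = \max\bigcup_q R(q) + h(p)$, I would verify that $\min \hat R(p)^* = \min \hat R(p) = m_p$ (because $R$ being consistent on $P$ guarantees $m_p$ is strictly below everything in $R(p)$, so it is not the maximum and hence survives in $\hat R(p)^*$), and that $\max \hat R(p)^* = \max R(p)$ (because $M_p$ is the new maximum and is therefore deleted when forming $\hat R(p)^*$). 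Thus the two elements of $\Gamma(P,\hat R)$ singled out by Definition~\ref{def:Bhat}, namely $(p, m_p)$ and $(p, \max R(p))$, sit respectively strictly below and at the top of the chain of $\Gamma(P,R)$-elements coming from $R(p)$.

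\textbf{Key steps.} The central claim is that $\Gamma(P,\hat R)$ is obtained from $\Gamma(P,R)$ by attaching, to each fiber, one new bottom element $(p,m_p)$ below $(p,\min R(p))$ (where $\min R(p) \in R(p)^* $ since $m_p$ forces $\min R(p)$ to not be the max), and leaving the top of each $R(p)$-chain intact. I would check, using the covering rules (1) and (2) of Definition~\ref{def:GammaOne}, that the only elements of $\Gamma(P,\hat R)$ not already in $\Gamma(P,R)$ are exactly the bottom elements $(p,m_p)$, and that the covering relations among the elements coming from $R$ are unchanged by passing from $R$ to $\hat R$ (here I must confirm that inserting $m_p$ and $M_p$ does not alter the ``largest label less than $k_2$'' condition in rule~(2) for pairs of old labels; the new min/max are spaced so far out, via the $h$ and $\tilde h$ offsets, that they cannot interfere). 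Next, by Definition~\ref{def:Bhat}, any $\sigma \in \mathcal{A}^{\widehat B}(\Gamma(P,\hat R))$ satisfies $\sigma(p,m_p) = \ell - u(p)$ and $\sigma(p,\max R(p)) = v(p)$ on these boundary elements. The element $(p,m_p)$, being a new minimum added below the $R(p)$-chain, plays the role of the element $\hat 0_{(p,\cdot)}$ in the ``added-minimum'' construction of Remark~\ref{remark:chi_delta}: its fixed value becomes the lower bound $\epsilon$ on the element directly above it. Similarly the fixed value $v(p)$ at $(p,\max R(p))$ serves as the upper bound $\delta$ from above. I would then invoke the translation of Remark~\ref{remark:chi_delta} (with $\epsilon(p,k) = v(p)$ coming from the fixed bottom value through the order-preserving condition, and $\delta(p,k) = \ell - u(p)$ as the fixed top) to identify the restriction of $\sigma$ to $\Gamma(P,R)$ with an element of $\mathcal{A}^{\delta}_\epsilon(\Gamma(P,R))$.

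\textbf{Resolving the bound direction.} A point requiring care is matching which fixed value gives the upper bound $\delta$ and which gives the lower bound $\epsilon$. Since $\sigma$ is order-preserving and $(p,m_p)$ is a minimal-type element sitting below the whole $R(p)$-chain while $(p,\max R(p))$ sits at the top, one expects $\sigma(p, m_p) = \ell - u(p)$ to bound the fiber from below and $\sigma(p,\max R(p)) = v(p)$ to bound it from above. I would confirm the asserted assignment $\delta(p,k) = \ell - u(p)$, $\epsilon(p,k) = v(p)$ is consistent with order-preservation by checking $\epsilon(p,k) \le \sigma(p,k) \le \delta(p,k)$, i.e. $v(p) \le \sigma(p,k) \le \ell - u(p)$, holds for all $(p,k)$ interior to the chain; this is exactly what Lemma~\ref{lem:j} gives, since $\sigma(p,k) = \ell + 1 - j^p_k$ with $j^p_k$ confined between $u(p)+1$ and $\ell - v(p) + 1$. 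The constructed correspondence is manifestly a bijection (it simply deletes the two fixed boundary elements per fiber and records the rest), and it respects the $Q$-partition order because the deleted elements are extremal in each fiber and their fixed values are precisely $\delta$ and $\epsilon$.

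\textbf{Main obstacle.} The step I expect to be most delicate is verifying that passing from $R$ to $\hat R$ does not change the covering structure among the original $R$-labels in $\Gamma$, so that $\Gamma(P,\hat R)$ really is $\Gamma(P,R)$ with only fixed extremal elements adjoined per fiber. The covering rule~(2) depends globally on ``the largest label of $R(p_1)$ less than $k_2$'' and on there being ``no greater $k$'' with the same property, and one must be sure that inserting the far-away new min $m_{p_1}$ and new max $M_{p_1}$ into $R(p_1)$ cannot create or destroy such relations among genuine $R$-labels. The consistency of $R$ on $P$ and the strict separation guaranteed by the $h,\tilde h$ offsets in Definition~\ref{def:rhat} are exactly what make this work, but the bookkeeping of exactly which new covering relations $(p,m_p) \lessdot (\cdot,\cdot)$ and $(\cdot,\cdot)\lessdot(p,M_p)$ arise, and the fact that each $(p,M_p)$ is deleted in forming $\hat R(p)^*$ while $\max R(p)$ is retained and becomes the top boundary element, is where the argument must be checked carefully.
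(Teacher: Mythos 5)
Your overall strategy is the same as the paper's: show that the elements of $\Gamma(P,\hat R)$ outside $\Gamma(P,R)$ are exactly the extremal, $\hat B$-fixed elements of each fiber-chain, that the covering relations among the remaining elements are those of $\Gamma(P,R)$ (with consistency of $R$ on $P$ ruling out stray rule-(2) covers between new and old elements), and that restriction to $\Gamma(P,R)$ is then a bijection onto $\mathcal{A}^{\delta}_\epsilon(\Gamma(P,R))$. However, your structural analysis has the orientation of $\Gamma$ reversed, and this is not a cosmetic issue. By covering rule (1) of Definition~\ref{def:GammaOne}, $(p,k_1)\lessdot(p,k_2)$ when $k_1$ is the next label \emph{larger} than $k_2$, so within each fiber-chain of $\Gamma$ the larger labels sit \emph{lower}. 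Hence $(p,\min\hat R(p)^*)=(p,m_p)$ is the new \emph{top} of the $p$-chain and $(p,\max\hat R(p)^*)=(p,\max R(p))$ is the new \emph{bottom} --- the opposite of your claim that $(p,m_p)$ sits ``strictly below'' the chain. Your deduction that $\sigma(p,m_p)=\ell-u(p)$ bounds the fiber from below and $\sigma(p,\max R(p))=v(p)$ bounds it from above, followed literally, yields $\ell-u(p)\le\sigma(p,k)\le v(p)$, i.e.\ the proposition with $\delta$ and $\epsilon$ swapped. The correct statement is $(p,\max\hat R(p)^*)<_{\Gamma(P,\hat R)}(p,k)<_{\Gamma(P,\hat R)}(p,\min\hat R(p)^*)$ for all $k\in R(p)^*$, from which order-preservation gives $v(p)\le\sigma(p,k)\le\ell-u(p)$ directly. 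You notice the tension and fall back on Lemma~\ref{lem:j}; that does produce the right inequality (modulo invoking Lemma~\ref{lem:bij} so that every $\sigma$ is of the form $\Phi(f)$), but it leaves your own structural claims contradicting the conclusion rather than proving it.

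A second, related slip: you assert that the only elements of $\Gamma(P,\hat R)$ not already in $\Gamma(P,R)$ are the $(p,m_p)$. In fact $(p,\max R(p))$ is also new, since $\max R(p)\notin R(p)^*$ while $\max R(p)\in\hat R(p)^*$; there are two new fixed elements per fiber, and both must be deleted by the restriction map (you do later say ``deletes the two fixed boundary elements per fiber,'' contradicting the earlier claim). Getting this inventory right matters for the step you correctly flag as the main obstacle: one must check that every rule-(2) cover between distinct fibers joins either two old elements or two new elements, never one of each. The paper does this by showing $(p_1,\min\hat R(p_1)^*)\lessdot(p_2,\min\hat R(p_2)^*)$ and $(p_1,\max\hat R(p_1)^*)\lessdot(p_2,\max\hat R(p_2)^*)$ whenever $p_1\lessdot_P p_2$, and this is precisely where consistency of $R$ on $P$ is used. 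With the orientation and the list of new elements corrected, your outline reduces to the paper's proof, but as written the argument proves the wrong bounds.
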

\begin{proof}
We first consider the covering relations of the elements $(p,\hat{k})$ in $\Gamma(P,\hat{R})$ given by condition (2) in Definition \ref{def:GammaOne}, where $\hat{k} \in \hat{R}(p)^* \setminus R(p)^* = \{\min \hat{R}(p)^*, \max \hat{R}(p)^*\}$.  If $p_1 \lessdot_P p_2$, then $\min \hat{R}(p_1)^* = \hat{R}(p_1)_{<\min \hat{R}(p_2)^*}$ and, since $R$ is consistent on $P$, $\hat{R}(p_1)_{<\min R(p_2)} > \min \hat{R}(p_1)^*$, so $(p_1, \min \hat{R}(p_1)^*) \lessdot (p_2, \min \hat{R}(p_2)^*)$ in $\Gamma(P,\hat{R})$.  Similarly, $\max \hat{R}(p_1)^*$ is necessarily $\hat{R}(p_1)_{<\max \hat{R}(p_2)^*}$, and since there is no larger $k \in \hat{R}(p_2)^*$ than $\max \hat{R}(p_2)^*$, we have $(p_1, \max \hat{R}(p_1)^*) \lessdot (p_2, \max \hat{R}(p_2)^*)$.  Thus, if $(p_1,k_1) \lessdot (p_2,k_2)$ in $\Gamma(P,\hat{R})$ with $p_1 \neq p_2$, then either $(p_1,k_1), (p_2,k_2) \in \Gamma(P,R)$ or $(p_1,k_1), (p_2,k_2) \in \Gamma(P,\hat{R}) \setminus \Gamma(P,R)$.

Therefore the only covering relations in $\Gamma(P,\hat{R})$ between elements of $\Gamma(P,\hat{R}) \setminus \Gamma(P,R)$ and $\Gamma(P,R)$ are given by $(1)$ in Definition \ref{def:GammaOne}.  Specifically, these are $(p, \min R(p)) \lessdot (p, \min \hat{R}(p)^*)$ and $(p, \max \hat{R}(p)^*) \lessdot (p, \max R(p))$ for all $p \in P$.

The above shows that $(p_1,k_1) \lessdot (p_2,k_2)$ in $\Gamma(P,R)$ if and only if $(p_1,k_1) \lessdot (p_2,k_2)$ in $\Gamma(P,\hat{R})$.  Let $\sigma \in \mathcal{A}^{\widehat{B}}(\Gamma(P,\hat{R}))$.  Then, since $\sigma(p, \max \hat{R}(p)^*) = v(p)$,  $\sigma(p, \min \hat{R}(p)^*) = \ell -u(p)$, and $(p, \max \hat{R}(p)^*) <_{\Gamma(P,\hat{R})} (p,k) <_{\Gamma(P,\hat{R})} (p, \min \hat{R}(p)^*)$ for all $k \in R(p)^*$, we have $v(p) \leq \sigma(p,k) \leq \ell - u(p)$ for all $(p,k) \in \Gamma(P,R)$.  Thus the restriction of $\sigma$ to $\Gamma(P,R)$ is an element of $\mathcal{A}^{\delta}_\epsilon(\Gamma(P,R))$ where $\delta(p,k) = \ell - u(p)$ and $\epsilon(p,k) = v(p)$, and, since this restriction only omits the fixed values of $\sigma$, restriction to $\Gamma(P,R)$ is a bijection and we have the desired equivalence.
\end{proof}

\begin{proposition}
\label{prop:uv0}
If $u(p) = v(p) = 0$ for all $p\in P$, then $\mathcal{A}^{\widehat{B}}(\Gamma(P,\hat{R}))$ is equivalent to $\mathcal{A}^{\ell}(\Gamma(P,R))$.
\end{proposition}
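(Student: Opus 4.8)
The plan is to deduce this directly from the preceding Proposition~\ref{prop:deltaepsilon} together with Remark~\ref{remark:elldeltaepsilon}, rather than re-analyzing the poset $\Gamma(P,\hat{R})$ from scratch. Observe that setting $u(p)=v(p)=0$ in Definition~\ref{def:P_ell_uv} gives $P\times[\ell]^v_u = P\times[\ell]$, so the standing hypothesis that $R$ be consistent on $P\times[\ell]^v_u$ (needed to form $\hat{R}$ and hence $\mathcal{A}^{\widehat{B}}(\Gamma(P,\hat{R}))$) becomes consistency of $R$ on $P\times[\ell]$. The first step is therefore to upgrade this to consistency of $R$ on $P$, i.e.\ on $P\times[1]=\inc{P}{R}$, which is exactly the hypothesis required by Proposition~\ref{prop:deltaepsilon}.

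This upgrade is short. Given $p\in P$ and $k\in R(p)$, consistency on $P\times[\ell]$ furnishes some $f\in\mathcal{L}_{P\times[\ell]}(R)$ and a position $i$ with $f(p,i)=k$. Restricting $f$ to the single layer $L_i$ yields the labeling $g(p')=f(p',i)$ of $P$, which is strictly increasing on $P$ by condition~(1) of Definition~\ref{def:Pstrict} and takes all its values in $R$, so $g\in\inc{P}{R}$ with $g(p)=k$. As $p$ and $k$ were arbitrary, $R$ is consistent on $P$ in the sense of Remark~\ref{rem:ell1inc}.

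With consistency on $P$ in hand, I would invoke Proposition~\ref{prop:deltaepsilon}, which gives an equivalence between $\mathcal{A}^{\widehat{B}}(\Gamma(P,\hat{R}))$ and $\mathcal{A}^{\delta}_\epsilon(\Gamma(P,R))$ for $\delta(p,k)=\ell-u(p)$ and $\epsilon(p,k)=v(p)$. Substituting $u(p)=v(p)=0$ collapses these bounds to $\delta\equiv\ell$ and $\epsilon\equiv0$, and Remark~\ref{remark:elldeltaepsilon} then identifies $\mathcal{A}^{\delta}_\epsilon(\Gamma(P,R))$ with $\mathcal{A}^{\ell}(\Gamma(P,R))$, completing the chain of equivalences. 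The only point requiring genuine care is the consistency transfer of the previous paragraph, since Proposition~\ref{prop:deltaepsilon} is stated for $R$ consistent on $P$ while this subsection only guarantees consistency on $P\times[\ell]$; everything else is a direct citation. I do not anticipate a real obstacle, as the heavy lifting—matching the covering relations of $\Gamma(P,R)$ with those of $\Gamma(P,\hat{R})$ and locating the fixed values—has already been carried out in the proof of Proposition~\ref{prop:deltaepsilon}.
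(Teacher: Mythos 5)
Your proposal is correct and follows essentially the same route as the paper: the paper's proof likewise notes that $u=v=0$ makes $P\times[\ell]^v_u=P\times[\ell]$, asserts that consistency on $P\times[\ell]$ implies consistency on $P$, and then cites Proposition~\ref{prop:deltaepsilon} with $\delta\equiv\ell$, $\epsilon\equiv 0$ together with Remark~\ref{remark:elldeltaepsilon}. Your layer-restriction argument just fills in the consistency-transfer step that the paper states without proof.
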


\begin{proof}
Since $u(p) = v(p) = 0$ for all $p\in P$, $P \times [\ell]^v_u = P \times [\ell]$ by Definition~\ref{def:P_ell_uv}. Since $R$ is consistent on $P \times [\ell]$ it must also be consistent on $P$, and we can apply Proposition \ref{prop:deltaepsilon} where $\delta(p) = \ell$ and $\epsilon(p) = 0$ for all $p \in P$, which, by Remark \ref{remark:elldeltaepsilon}, gives the result.
\end{proof}
See Figures \ref{fig:fig1} and \ref{fig:nbyelltab} for examples of this equivalence.

For the following lemma and corollary of our main theorem, we consider a poset $P$ to be \textit{graded} of rank $n$ if all maximal chains of $P$ have $n+1$ elements.

\begin{lemma}
\label{lem:GradedGlobalq}
Let $P$ be a graded poset of rank $n$.  Then $\Gamma(P,R^q)$ is isomorphic to $P \times [q-n-1]$ as a poset.
\end{lemma}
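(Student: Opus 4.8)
The plan is to prove Lemma~\ref{lem:GradedGlobalq} by exhibiting an explicit poset isomorphism $\psi: \Gamma(P,R^q) \to P \times [q-n-1]$ and checking it respects covering relations in both directions. First I would analyze the restriction function $R^q = R^q_1$, which assigns to every $p \in P$ the full interval $\{1, 2, \ldots, q\}$. Since all maximal chains of the graded poset $P$ have $n+1$ elements and the entries must strictly increase along each layer (length-$(n+1)$ chain), the consistency of $R^q$ forces the usable labels at an element $p$ with $h(p) = r$ (so $p$ sits at rank $r-1$, with $\tilde h(p) = n+1-r$ elements weakly above in a maximal chain) to be exactly $\{r, r+1, \ldots, q - (n+1) + r\} = \{r, \ldots, q-n-1+r\}$. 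Thus $R^q(p)$ as a consistent restriction function is the interval of length $q-n$, and $R^q(p)^*$ (with the maximum removed) has exactly $q-n-1$ elements, matching the size of the chain $[q-n-1]$.

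The core of the argument is the map sending the element $(p,k) \in \Gamma(P,R^q)$ to $(p, k - h(p) + 1) \in P \times [q-n-1]$, where the second coordinate records the position of $k$ within the length-$(q-n-1)$ index set $R^q(p)^*$ at $p$. Explicitly, with $h(p) = r$, the labels in $R^q(p)^*$ are $r, r+1, \ldots, q-n-2+r$, so $k \mapsto k - r + 1$ bijects them onto $\{1, \ldots, q-n-1\}$. I would then verify that both families of covering relations from Definition~\ref{def:GammaOne} translate correctly. For the ``same-fiber'' relations of type~(1), within a fixed $p$ the relation $(p,k_1) \lessdot (p,k_2)$ holds precisely when $k_1 = k_2 + 1$ (since $R^q(p)$ is an interval), which under $\psi$ becomes a covering relation in the chain-coordinate $[q-n-1]$, correctly reversing orientation so that larger labels sit lower. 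For the ``cross-fiber'' relations of type~(2) with $p_1 \lessdot_P p_2$, one checks using the interval structure that $(p_1,k_1) \lessdot (p_2,k_2)$ forces $k_2 = k_1 + 1$ together with $h(p_2) = h(p_1)+1$ (gradedness), so both elements map to the \emph{same} chain-coordinate, giving exactly the covering relation $(p_1, m) \lessdot (p_2, m)$ in the product poset.

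The step I expect to be the main obstacle is the careful verification of the type~(2) covering relations, and in particular confirming that gradedness is genuinely what makes the chain-coordinate match. One must show that when $p_1 \lessdot_P p_2$, the condition ``$k_1 = R^q(p_1)_{<k_2}$ and no greater $k \in R^q(p_2)$ has $k_1 = R^q(p_1)_{<k}$'' selects exactly the pairs $(p_1,k_1),(p_2,k_2)$ with $k_2 = k_1+1$, using that $\min R^q(p_2) = \min R^q(p_1) + 1$ and $\max R^q(p_2) = \max R^q(p_1)+1$ (both consequences of $h(p_2) = h(p_1)+1$ in a graded poset). Gradedness is essential here: without it, $h$ could jump by more than one across a cover, the label intervals at $p_1$ and $p_2$ would be misaligned, and the second coordinates would fail to coincide, destroying the product structure. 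Once these covering relations are matched in both directions, $\psi$ is a bijection preserving and reflecting covers, hence a poset isomorphism, completing the proof.
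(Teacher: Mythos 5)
Your proposal is correct and follows essentially the same route as the paper: use gradedness to compute the consistent restriction function $R^q(p)=\{h(p),h(p)+1,\ldots,q-n+h(p)-1\}$, define an explicit coordinate map on $\Gamma(P,R^q)$, and verify that both types of covering relations become covers of $P\times[q-n-1]$ (the paper shortcuts your type-(2) analysis by citing the simplified covering relations of $\Gamma$ for interval-valued restriction functions from \cite[Thm.~2.21]{DSV2019}). The one caveat is that your map $(p,k)\mapsto(p,k-h(p)+1)$ sends larger labels to larger chain coordinates even though larger labels sit \emph{lower} in $\Gamma$, so you must either read the chain coordinate with reversed order (harmless, since a finite chain is self-dual) or use the paper's map $(p,k)\mapsto(p,\,q-n+h(p)-k-1)$; your off-by-one in the parenthetical value of $\tilde h(p)$ does not propagate, as the interval you derive for $R^q(p)$ is correct.
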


\begin{proof}
Write $P\times[q-n-1]$ as $\{(i,j) \mid 1 \leq i \leq n, 1 \leq j \leq q-n-1\}$, where $(p,j) \lessdot (p',j')$ if and only if $p=p'$ and $j = j' - 1$ or $j = j'$ and $p \lessdot p'$ (that is, the usual ordering $(p,j) \leq (p',j')$ if and only if $p \leq_P p'$ and $j \leq j'$).

Recall the definition of $h(p)$ from Definition \ref{def:rhat}. Since $u = v = 0$, $R^p$ is consistent on $P$, and, since $P$ is graded, for all $p \in P$ we have $R^q(p) = \{h(p), h(p) + 1,\ldots, q - n + h(p) - 1\}$. By definition of $\Gamma$ (as noted in \cite[Thm.\ 2.21]{DSV2019}), $(p, k) \lessdot (p',k')$ in $\Gamma(P,R^q)$ if and only if either $p=p'$ and $k - 1 = k'$ or $p \lessdot p'$ and $k + 1 = k'$.  Consider the map that takes $(p,k) \in \Gamma(P,R^q)$ to $(p,q-n+h(p)-k-1) \in [P] \times [q-n-1]$.  This map is a bijection to the elements of $[n]\times[q-n-1]$, since $h(p) \leq k \leq q-n+h(p)-2$ implies $1 \leq q-n+h(p)-k-1 \leq q-n-1$.  Moreover, the covers of $(p,k)$ in $\Gamma(P,R^q)$ correspond exactly to the covers of $(p, q-n+i-k-1)$ in $P\times[q-n-1]$, as $(p, q-n+h(p)-k-1) \lessdot x \in P\times[q-n-1]$ if and only if $x = (p, q-n+h(p)-(k-1))$ or $x = (p', q-n+(h(p)+1)-(k+1))$, where $p \lessdot p'$ (and thus $h(p) + 1 = h(p')$). Therefore $\Gamma(P,R^q)$ is isomorphic as a poset to $P\times[q-n-1]$.
\end{proof}

\begin{corollary}
\label{cor:GradedGlobalq}
Let $P$ be a graded poset of rank $n$.  Then $\mathcal{L}_{P \times [\ell]}(R^q)$ under $\pro$ is in equivariant bijection with $\mathcal{A}^\ell(P \times [q-n-1])$ under $\row$.
\end{corollary}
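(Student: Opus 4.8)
The plan is to chain together three previously established results. First I would observe that $R^q = R^q_1$ is the restriction function induced by the global bounds $a \equiv 1$ and $b \equiv q$, and that the notation $\mathcal{L}_{P\times[\ell]}(R^q)$ corresponds to the special case $u(p) = v(p) = 0$ for all $p \in P$. This places us squarely within the hypotheses of Corollary~\ref{cor:abrow} and Proposition~\ref{prop:uv0}.

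Next I would apply Corollary~\ref{cor:abrow} directly. Since $\mathcal{A}^{\widehat{B}}(\Gamma(P,\widehat{R^q}))$ is column-adjacent by Proposition~\ref{prop:hatcoladj}, this gives an equivariant bijection between $\mathcal{L}_{P\times[\ell]}(R^q)$ under $\pro$ and $\mathcal{A}^{\widehat{B}}(\Gamma(P,\widehat{R^q}))$ under $\row$. I would then simplify the right-hand side: because $u = v = 0$, Proposition~\ref{prop:uv0} shows that $\mathcal{A}^{\widehat{B}}(\Gamma(P,\widehat{R^q}))$ is equivalent to $\mathcal{A}^{\ell}(\Gamma(P,R^q))$, as the $\widehat{B}$-bounds on the extremal elements degenerate to the standard $\ell$-bounded $Q$-partition condition.

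Finally I would invoke Lemma~\ref{lem:GradedGlobalq}, which asserts that for $P$ graded of rank $n$ the poset $\Gamma(P,R^q)$ is isomorphic to $P \times [q-n-1]$. Since rowmotion is defined entirely in terms of the covering relations of the underlying poset, a poset isomorphism transports the set of $Q$-partitions together with its toggle and rowmotion actions. Hence $\mathcal{A}^{\ell}(\Gamma(P,R^q))$ under $\row$ is equivariantly isomorphic to $\mathcal{A}^{\ell}(P\times[q-n-1])$ under $\row$, and composing all the identifications yields the claimed equivariant bijection.

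The main point requiring care is conceptual rather than computational: one must confirm that the identification of Lemma~\ref{lem:GradedGlobalq} is a genuine poset isomorphism preserving covers in both directions (which that lemma establishes) and that rowmotion is functorial under such isomorphisms. The latter is immediate from Definitions~\ref{def:toggle1} and~\ref{def:row1}, since each toggle $\tau_x$ depends only on the values of $\sigma$ on the elements covering and covered by $x$; thus a cover-preserving bijection intertwines the toggles, and therefore the rowmotion compositions, on the two sides.
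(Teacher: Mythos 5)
Your proposal is correct and follows exactly the paper's own argument: combine Corollary~\ref{cor:abrow} with Proposition~\ref{prop:uv0} to reduce to $\mathcal{A}^{\ell}(\Gamma(P,R^q))$ under $\row$, then apply the poset isomorphism of Lemma~\ref{lem:GradedGlobalq}. Your extra remark that a cover-preserving isomorphism intertwines the toggles and hence rowmotion is a correct (and implicitly used) observation, but the route is the same.
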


\begin{proof}
By Corollary \ref{cor:abrow} and Proposition \ref{prop:uv0}, $\mathcal{L}_{P \times [\ell]}(R^q)$ under $\pro$ is in equivariant bijection with $\mathcal{A}^{\ell}(\Gamma(P,R^q))$ under $\row$ which, by Lemma~\ref{lem:GradedGlobalq}, is exactly $\mathcal{A}^\ell(P \times [q-n-1])$.
\end{proof}

\section{$P$-strict promotion and evacuation}
\label{sec:proevac}
In this section, we define promotion on $P$-strict labelings $\mathcal{L}_{P \times [\ell]}(u,v,R^q)$ via jeu de taquin and prove Theorem~\ref{thm:jdtpro}, which shows this is equivalent to our promotion via Bender-Knuth involutions from Definition~\ref{def:BenderKnuth}. We also define evacuation on $P$-strict labelings and show some properties of evacuation in this setting.

\subsection{Third main theorem: $P$-strict promotion via jeu de taquin}
We begin with the definition of jeu de taquin promotion on $P$-strict labelings $\mathcal{L}_{P \times [\ell]}(u,v,R^q)$.
\begin{definition}
\label{def:jdt}
Let $\mathbb{Z}_{\boxempty}(P \times [\ell]^v_u)$ denote the set of labelings $g:P \times [\ell]^v_u\rightarrow(\mathbb{Z}\cup\boxempty)$.
Define the $i$th \emph{jeu de taquin slide} $\mathrm{jdt}_i:\mathbb{Z}_{\boxempty}(P \times [\ell]^v_u)\rightarrow \mathbb{Z}_{\boxempty}(P \times [\ell]^v_u)$ 
as follows: 

\begin{subnumcases}{\mathrm{jdt}_i(g)(p,k)=}
i &  $g(p,k)=\boxempty \mbox{ and } g(p',k)=i \mbox{ for some } p'\gtrdot_P p$ \label{case1}\\
i & $g(p,k)=\boxempty, g(p,k+1)= i, \mbox{ and } g(p',k+1)\neq \boxempty \mbox{ for any } p'\lessdot_P p$ \label{case2}\\
\boxempty &  $g(p,k)=i \mbox{ and } g(p',k)=\boxempty \mbox{ for some } p'\lessdot_P p$ \label{case3}\\
\boxempty &  $g(p,k)=i, g(p,k-1)=\boxempty, \mbox{ and } g(p',k-1)\neq i \mbox{ for any } p'\gtrdot_P p$ \label{case4}\\
g(p,k) & \mbox{otherwise.}
\end{subnumcases}
In words, $\mathrm{jdt}_i(g)$ replaces a label $\boxempty$ at $(p,k)$ with $i$ if $i$ is the label of a cover of $(p,k)$ in its layer, or if $i$ is the label of a cover of $(p,k)$ in its fiber and this cover does not also cover an element within its own layer labeled by $\boxempty$. Furthermore, $\mathrm{jdt}_i(g)$ replaces a label $i$ by $\boxempty$ if $(p,k)$ covers an element in its layer labeled by $\boxempty$, or replaces a label $i$ by $\boxempty$ if $(p,k)$ covers an element in its fiber labeled by $\boxempty$, provided said element is not covered by an element in its layer labeled with $i$. Aside from these cases, $\mathrm{jdt}_i(g)$ leaves all other labels unchanged.

Let $\mathrm{jdt}_{i\rightarrow j}:\mathbb{Z}_{\boxempty}(P)\rightarrow \mathbb{Z}_{\boxempty}(P)$ be defined as \[\mathrm{jdt}_{i\rightarrow j}(g)(x)=\begin{cases} j & g(x)=i\\  g(x) &\mbox{ otherwise}.\end{cases}\]
In words, $\mathrm{jdt}_{i\rightarrow j}(g)(x)$ replaces all labels $i$ by $j$. 

For $f\in\mathcal{L}_{P \times [\ell]}(u,v,R^q)$, let $jdt(f)=\mathrm{jdt}_{\boxempty\rightarrow (q+1)} \circ (\mathrm{jdt}_{q})^{\ell} \circ (\mathrm{jdt}_{q-1})^{\ell} \circ \cdots \circ (\mathrm{jdt}_{3})^{\ell} \circ (\mathrm{jdt}_2)^{\ell} \circ \mathrm{jdt}_{1\rightarrow \boxempty}(f)$. That is, first replace all $1$ labels with $\boxempty$. Then perform the $i$th jeu de taquin slide $\mathrm{jdt}_i$ $\ell$ times for each $2\leq i\leq q$. Next, replace all labels $\boxempty$ with $q+1$. 
Define \emph{jeu de taquin promotion} on $f$ as $\jdtpro(f)(x)=jdt(f)(x)-1$.
\end{definition}

\begin{example}
\label{ex:jdtexample}
Figure~\ref{fig:jdtexample} shows an example of $\jdtpro$ being applied to a $P$-strict labeling. In this example, $P$ is the Y-shaped poset on four elements and $\ell=5$. We perform $\jdtpro$ on the $P$-strict labeling $f\in\mathcal{L}_{P \times [5]}(u,v,R^3)$ where $u(a,b,c,d)=(4,1,0,1)$ and $v(a,b,c,d)=(0,0,0,1)$. Observe that as part of $\jdtpro$, we perform $\mathrm{jdt}_{\boxempty\rightarrow 4} \circ (\mathrm{jdt}_{3})^{5} \circ (\mathrm{jdt}_2)^{5} \circ \mathrm{jdt}_{1\rightarrow \boxempty}(f)$. However, in this example, we do not show the applications of $\mathrm{jdt}_2$ and $\mathrm{jdt}_3$ that have no effect on the labeling. The final step of $\jdtpro$ is to subtract every label by 1, yielding the new $P$-strict labeling $\jdtpro(f)$.
\end{example}

\begin{figure}[htbp]
\includegraphics[scale=.5]{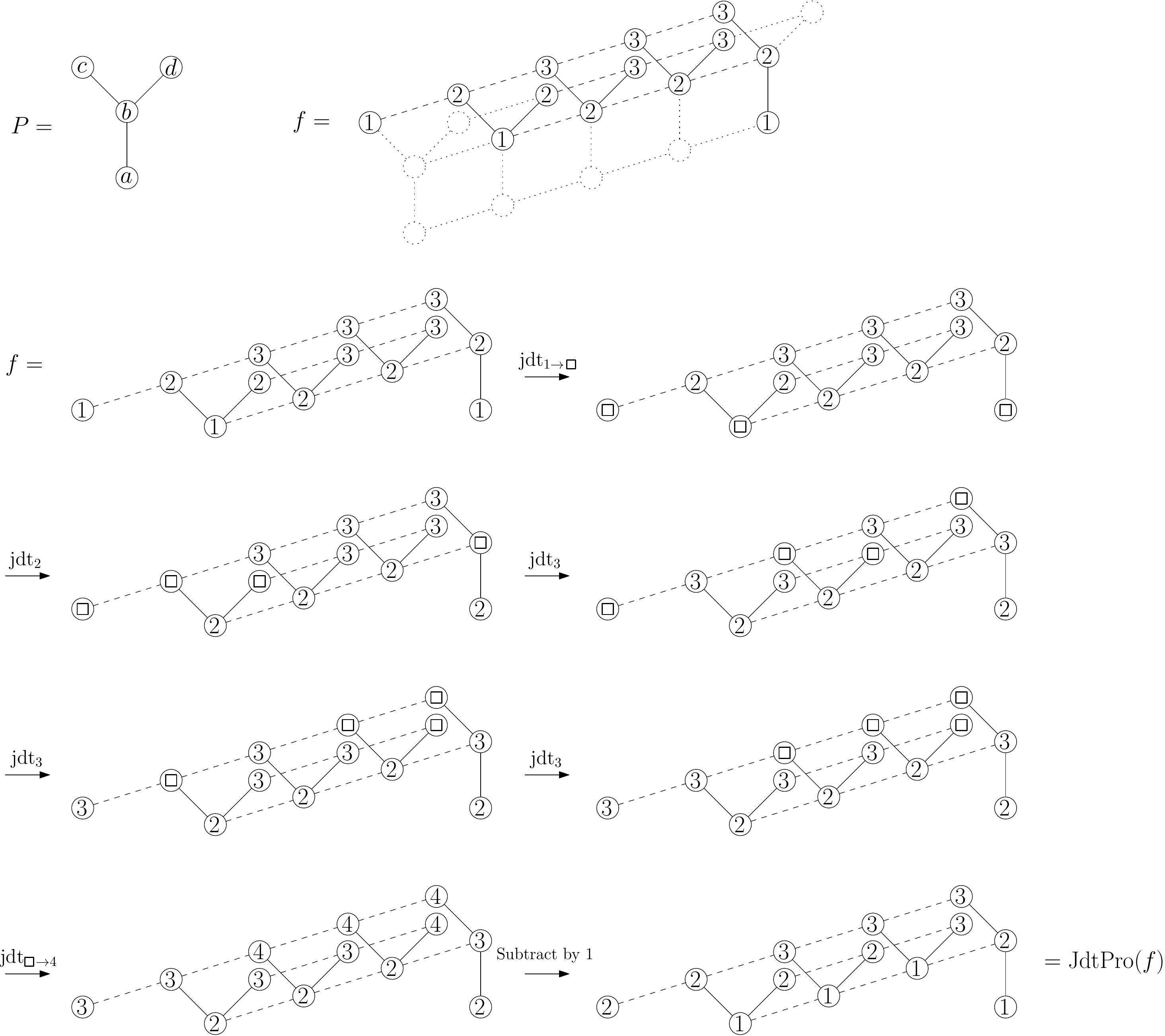}
\caption{We perform JdtPro on the $P$-strict labeling $f\in\mathcal{L}_{P \times [5]}(u,v,3)$ where $u(a,b,c,d)=(4,1,0,1)$ and $v(a,b,c,d)=(0,0,0,1)$. For the sake of brevity, we do not show the applications of $\mathrm{jdt}_2$ and $\mathrm{jdt}_3$ that do nothing. }
\label{fig:jdtexample}
\end{figure}

In Proposition \ref{prop:jdtpstrict}, we show that if we begin with a $P$-strict labeling $f$, $\jdtpro(f)$ is always a $P$-strict labeling. In order to prove this, we need Lemmas~\ref{lemma:jdtintegerlabels} and \ref{lemma:jdtboxlabels}, which give us conditions that a labeling cannot violate when performing jeu de taquin slides.

\begin{lemma}
\label{lemma:jdtintegerlabels}
Let $f\in\mathcal{L}_{P \times [\ell]}(u,v,R^q)$. When performing a jeu de taquin slide of $\jdtpro(f)$, no integer labels can violate the $P$-strict labeling order relations.
\end{lemma}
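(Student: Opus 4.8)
The plan is to argue by induction on the sequence of slides $\mathrm{jdt}_i$ comprising the computation of $\jdtpro(f)$, maintaining throughout the invariant that the integer labels (those not equal to $\boxempty$) satisfy the layer-strictness and fiber-monotonicity conditions (1) and (2) of Definition~\ref{def:Pstrict}. The base case is the labeling $\mathrm{jdt}_{1\to\boxempty}(f)$: its integer labels are exactly the labels of $f$ that are at least $2$, so they inherit the $P$-strict relations from $f$. For the inductive step I would fix one application of $\mathrm{jdt}_i$ to an intermediate labeling $g$ satisfying the invariant and show that $\mathrm{jdt}_i(g)$ does as well.

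First I would observe that $\mathrm{jdt}_i$ alters a cell only by exchanging $\boxempty$ and $i$ (Cases~\ref{case1}--\ref{case4}); every label different from both $i$ and $\boxempty$ is fixed. Consequently any newly created violation must be a comparable pair of integer cells, at least one of which carries the label $i$ in $\mathrm{jdt}_i(g)$, and I would reduce to checking the cover incidences at such a cell: its layer-covers and layer-covered neighbors (which must satisfy the \emph{strict} inequality) and its fiber neighbors at positions $k\pm 1$ (which must satisfy the \emph{weak} inequality). For a cell that acquires $i$ through a layer move (Case~\ref{case1}), I would use that the simultaneous vacating rule (Case~\ref{case3}) removes the $i$ from the cover that supplied it, so two equal labels never appear across a layer cover; for a cell that acquires $i$ through a fiber move (Case~\ref{case2}), the matching vacancy comes from Case~\ref{case4}. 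In each branch the pre-slide relations on $g$ bound the unchanged neighbors, while the guard clauses ``$g(p',k+1)\neq\boxempty$ for any $p'\lessdot_P p$'' in Case~\ref{case2} and ``$g(p',k-1)\neq i$ for any $p'\gtrdot_P p$'' in Case~\ref{case4} are precisely what forbid a fiber move that would strand $i$ next to a smaller layer-neighbor.

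The step I expect to be the main obstacle is the cross-direction interaction: the strict relation is governed by the layer Cases~\ref{case1} and~\ref{case3}, the weak relation by the fiber Cases~\ref{case2} and~\ref{case4}, and a single occurrence of $i$ may be pulled along a fiber yet end up adjacent to a layer-neighbor (or conversely). The delicate point is to rule out a newly placed $i$ acquiring a layer-cover that already carries a value at most $i$, since this cannot be excluded from the integer relations on $g$ alone. I would handle it by exploiting that $\jdtpro$ processes values in increasing order, applying $(\mathrm{jdt}_j)^{\ell}$ for every $j<i$ before $\mathrm{jdt}_i$; because a label can move at most $\ell$ times, each smaller value is fully \emph{settled} when $\mathrm{jdt}_i$ acts, so no $\boxempty$ sits immediately below a smaller integer across a cover. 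Combined with the guard clauses of Cases~\ref{case2} and~\ref{case4}, this forces every placement of $i$ to respect both the strict layer inequality and the weak fiber inequality, completing the induction. (The complementary behavior of the $\boxempty$ labels is the content of Lemma~\ref{lemma:jdtboxlabels}.)
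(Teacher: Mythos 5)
Your proposal is correct and rests on the same key observation as the paper's (much terser) proof: because the slides $\mathrm{jdt}_2,\mathrm{jdt}_3,\dots$ are applied in increasing order of value, every smaller value is already settled when $\mathrm{jdt}_i$ acts, so whenever a $\boxempty$ is replaced by $i$ that value is the smallest label among its covers and no order relation among integer labels can be violated. Your write-up simply makes explicit the case analysis (the paired Cases~\ref{case1}/\ref{case3} and \ref{case2}/\ref{case4} and their guard clauses) that the paper's two-sentence argument leaves implicit.
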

\begin{proof}
Because we apply all jeu de taquin slides $\mathrm{jdt}_2$, then all jeu de taquin slides $\mathrm{jdt}_3$, and so on for each $\mathrm{jdt}_i$ where $2 \le i \le q$, each time $\boxempty$ is replaced by a number, that number is the smallest label of its covers. As a result, no integer labels can violate the order relations after performing a jeu de taquin slide.
\end{proof}

\begin{lemma}
\label{lemma:jdtboxlabels}
Let $f\in\mathcal{L}_{P \times [\ell]}(u,v,R^q)$. If $g \in \mathbb{Z}_{\boxempty}(P \times [\ell]^v_u)$ is obtained by performing jeu de taquin slides on $f$, we can never have $\mathrm{jdt}_i(g)(p,k)=\mathrm{jdt}_i(g)(p',k)=\boxempty$ when $p'>_P p$.
\end{lemma}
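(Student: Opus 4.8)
\textbf{Proof plan for Lemma~\ref{lemma:jdtboxlabels}.}
The claim is that during any sequence of jeu de taquin slides applied to $f$, we never create a configuration in which two comparable elements $(p,k)$ and $(p',k)$ of the \emph{same} copy of $[\ell]$-coordinate $k$ (so in the same layer-slice in the second coordinate, but with $p' >_P p$ in $P$) both carry the label $\boxempty$ after a single slide $\mathrm{jdt}_i$. The natural approach is induction on the number of jeu de taquin slides performed, tracking the invariant that after every slide, no two labels $\boxempty$ occupy comparable positions within a single layer. I would first establish that $f$ itself, and $\mathrm{jdt}_{1\to\boxempty}(f)$, satisfy this invariant: in $\mathrm{jdt}_{1\to\boxempty}(f)$ the $\boxempty$ labels are exactly the positions formerly labeled $1$, and since $f$ is strictly increasing in each layer (property (1) of Definition~\ref{def:Pstrict}), at most one $1$ can appear in any layer along a given chain, so no two comparable positions in a layer are both $\boxempty$.

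The inductive step is the substance of the argument. Assuming $g$ satisfies the invariant, I would show $\mathrm{jdt}_i(g)$ does too, by contradiction: suppose $\mathrm{jdt}_i(g)(p,k)=\mathrm{jdt}_i(g)(p',k)=\boxempty$ with $p'>_P p$. Since $g$ already satisfied the invariant, at least one of these two $\boxempty$ labels must have been \emph{created} by the current slide $\mathrm{jdt}_i$, i.e.\ arose via case~\eqref{case3} or case~\eqref{case4} of Definition~\ref{def:jdt} (where a label $i$ is replaced by $\boxempty$). I would analyze which of $(p,k)$ and $(p',k)$ is newly emptied. The key observation is that a position becomes $\boxempty$ via case~\eqref{case3}/\eqref{case4} only because it held the label $i$, and two comparable positions $(p,k) <_{P} (p',k)$ in the same layer cannot both have held $i$ before the slide (again by layer-strictness, property (1)). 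So exactly one of them, say $(p',k)$, was emptied this step while the other, $(p,k)$, was $\boxempty$ already in $g$. But then I must rule out the possibility that $(p,k)$ remained $\boxempty$: since $(p',k)$ held $i$ and $(p',k)\gtrdot$ (or lies above) $(p,k)$ within the layer, $(p,k)$ being $\boxempty$ with an upper cover in its layer labeled $i$ would have triggered case~\eqref{case1}, filling $(p,k)$ with $i$ rather than leaving it $\boxempty$. This forces $(p,k)$ to receive the label $i$, contradicting $\mathrm{jdt}_i(g)(p,k)=\boxempty$.

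The main obstacle I anticipate is handling the interaction between the two directions of sliding --- the layer direction (cases~\eqref{case1},\eqref{case3}) and the fiber direction (cases~\eqref{case2},\eqref{case4}) --- since a $\boxempty$ can be filled or created by motion in either coordinate, and $p,p'$ need only be comparable in $P$ rather than a covering pair. I would reduce to covering relations by noting convexity of $P\times[\ell]^v_u$ and the fact that the offending invariant-violation, if it exists anywhere along a chain, must manifest at some covering pair $p\lessdot_P p''$ with both positions $\boxempty$; so it suffices to derive the contradiction for covers, where the clean case-by-case matchup between cases~\eqref{case1}/\eqref{case3} (and the fiber exception clauses in \eqref{case2}/\eqref{case4}) applies directly. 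Care is needed that the exception clause ``$g(p',k+1)\neq\boxempty$ for any $p'\lessdot_P p$'' in case~\eqref{case2} and the analogous clause in case~\eqref{case4} are precisely what prevents a fiber slide from creating a new same-layer $\boxempty$ collision; I would verify that these clauses are exactly the guards that make the invariant self-propagating, which is presumably why they were built into the definition.
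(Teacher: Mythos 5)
Your overall strategy is the same as the paper's: induct on the number of slides, maintain the invariant that no two comparable elements of a single layer are simultaneously $\boxempty$, and derive a contradiction by analyzing how a new $\boxempty$ could be created. Your base case, your observation that exactly one of $(p,k)$, $(p',k)$ can be newly emptied (note this needs Lemma~\ref{lemma:jdtintegerlabels} applied to the current labeling $g$, not just property (1) of the original $f$), and your disposal of the sub-case where the lower element was already $\boxempty$ and the upper one held $i$ --- case (\ref{case1}) fills the lower one --- all match the paper's argument.

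The genuine gap is that ``exactly one of them, say $(p',k)$, was emptied this step'' silently discards the non-symmetric possibility that the \emph{lower} element $(p,k)$ held $i$ and is newly emptied while the \emph{upper} element $(p',k)$ was already $\boxempty$ in $g$. Case (\ref{case1}) does not help there: it only fills a $\boxempty$ that is covered \emph{from above} in its layer by an $i$, and $(p',k)$ sits above $(p,k)$, so nothing forces $(p',k)$ to be filled. Emptying $(p,k)$ via case (\ref{case3}) is excluded by the invariant (it would need a $\boxempty$ at some $(p'',k)$ with $p''\lessdot_P p$, comparable to the $\boxempty$ at $(p',k)$), but emptying via the fiber-direction case (\ref{case4}) requires only $g(p,k-1)=\boxempty$ and is not immediately contradictory. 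This is where the paper does its real work: since $g(p,k-1)=\boxempty$, the invariant forces $g(p',k-1)\neq\boxempty$, and one then argues $g(p',k-1)=i$, so the exception clause of case (\ref{case4}) blocks the slide. You correctly anticipated that the exception clauses are ``the guards that make the invariant self-propagating,'' but you never supply the argument that $g(p',k-1)$ must equal $i$, which is the crux. A secondary issue: your reduction to covering pairs is not justified as stated --- a violating pair $(p,k),(p',k)$ whose intermediate elements carry integer labels other than $i$ need not ``manifest at some covering pair'' of $\boxempty$'s; the paper instead argues that no element strictly between them can be labeled $\boxempty$ or $i$, which is what licenses the assumption $p\lessdot_P p'$.
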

\begin{proof}
We show the claim by contradiction. Suppose $\mathrm{jdt}_i(g)(p,k)=\mathrm{jdt}_i(g)(p',k)=\boxempty$ for some $p'>_P p$. Furthermore, assume this is the first application of a jeu de taquin slide for which this occurs. In other words, we do not have two comparable elements within the same layer that both have a label of $\boxempty$ prior to this application of $\mathrm{jdt}_i$. Suppose this occurs from (\ref{case3}) of Definition~\ref{def:jdt}.  This implies $g(p'',k)=\boxempty$ for some $p'' \lessdot_P p$, which cannot occur by our assumption that $\mathrm{jdt}_i(g)(p,k)=\mathrm{jdt}_i(g)(p',k)=\boxempty$ is the first application of a jeu de taquin slide for which we have comparable elements within the same layer that are both labeled with $\boxempty$.

Now assume $\mathrm{jdt}_i(g)(p,k)=\mathrm{jdt}_i(g)(p',k)=\boxempty$ occurs after applying (\ref{case4}) of Definition~\ref{def:jdt}. For this to occur, we would need either $g(p,k)=i$ and $g(p',k)=\boxempty$, or $g(p,k)=\boxempty$ and $g(p',k)=i$. However, by assumption, any element between $(p,k)$ and $(p',k)$ cannot be labeled with $\boxempty$. Furthermore, by Lemma~\ref{lemma:jdtintegerlabels}, we cannot have any integer labels violate the order relations, so any element between $(p,k)$ and $(p',k)$ cannot be labeled with $i$. As a result, we may assume $p'\gtrdot_P p$. We can eliminate $g(p,k)=\boxempty$ and $g(p',k)=i$ as a possibility, as (\ref{case1}) of Definition~\ref{def:jdt} would be applied to $g(p,k)$, resulting in $\mathrm{jdt}_i(g)(p,k)=i$. Therefore, we may assume $g(p,k)=i$ and $g(p',k)=\boxempty$. We may also assume $g(p,k-1)=\boxempty$ in order for (\ref{case4}) of Definition~\ref{def:jdt} to be invoked. However, by our assumption, this means $g(p',k-1)$ cannot have label $\boxempty$, implying that $g(p',k-1)=i$. By definition, (\ref{case4}) of Definition~\ref{def:jdt} cannot be applied. We obtained a contradiction with each of (\ref{case3}) and (\ref{case4}) of Definition~\ref{def:jdt}, implying that we cannot have $\mathrm{jdt}_i(g)(p,k)=\mathrm{jdt}_i(g)(p',k)=\boxempty$ for some $p'>_P p$.
\end{proof}

\begin{proposition}
\label{prop:jdtpstrict}
For $f\in\mathcal{L}_{P \times [\ell]}(u,v,R^q)$, $\jdtpro(f)\in\mathcal{L}_{P \times [\ell]}(u,v,R^q)$.
\end{proposition}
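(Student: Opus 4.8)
\textbf{The plan} is to verify that the three defining conditions of a $P$-strict labeling in Definition~\ref{def:Pstrict} are preserved by $\jdtpro$, namely that $\jdtpro(f)$ is (1) strictly increasing in each layer, (2) weakly increasing in each fiber, and (3) has labels in $R^q(p)$. The key observation is that $\jdtpro(f)(x) = jdt(f)(x) - 1$, where $jdt(f)$ is built from a sequence of jeu de taquin slides, so it suffices to track the integer-label structure through the slides and then account for the final shift by $-1$. Lemmas~\ref{lemma:jdtintegerlabels} and~\ref{lemma:jdtboxlabels} do the heavy lifting for the order relations: the former guarantees that integer labels never violate the strict-in-layer or weak-in-fiber conditions during any slide, and the latter guarantees that two comparable elements in the same layer are never simultaneously labeled $\boxempty$.

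First I would establish condition (1). Since we begin with a valid $P$-strict labeling $f$, all integer inequalities hold initially; by Lemma~\ref{lemma:jdtintegerlabels}, after every slide $\mathrm{jdt}_i$ the integer labels still respect the order relations, so at the end of the slide sequence the integer labels are strictly increasing in each layer. The subtlety is the $\boxempty$ labels: I must argue that after the full sequence $(\mathrm{jdt}_q)^\ell \circ \cdots \circ (\mathrm{jdt}_2)^\ell$ followed by $\mathrm{jdt}_{\boxempty \to (q+1)}$, every $\boxempty$ has been replaced by $q+1$, so that comparing the final labels reduces to comparing integers. Here Lemma~\ref{lemma:jdtboxlabels} is essential: because no two comparable elements in a layer are ever both $\boxempty$, and because $q+1$ exceeds every possible integer label (as $f \in \mathcal{L}_{P \times [\ell]}(u,v,R^q)$ has labels at most $q$), replacing $\boxempty$ by $q+1$ keeps strict increase within each layer. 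The shift by $-1$ at the end preserves all strict inequalities.

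Next I would handle condition (2), weak increase in fibers, by the same two-step reasoning: Lemma~\ref{lemma:jdtintegerlabels} keeps integer labels weakly increasing along fibers, and once all $\boxempty$ entries become the maximal value $q+1$, the weak inequalities along each fiber persist; subtracting $1$ uniformly does not disturb them. Finally, for condition (3), I need the image labels to lie in $R^q(p) = \{h(p), \ldots\}$; more simply, since this is a global upper bound $q$, I must check the entries of $\jdtpro(f)$ lie in the correct range $1$ through $q$. Before the shift, $jdt(f)$ has integer labels between $2$ and $q$ (the $1$'s were removed) together with the value $q+1$ replacing the $\boxempty$'s; after subtracting $1$, these become labels between $1$ and $q$, as required.

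\textbf{The main obstacle} I anticipate is the bookkeeping in condition (1) for the $\boxempty$ entries: one must be certain that after exactly $\ell$ applications of each $\mathrm{jdt}_i$ no empty cell remains stranded in a position where it would, after conversion to $q+1$, fail the strict-layer inequality against another $q+1$ above it. This is precisely where Lemma~\ref{lemma:jdtboxlabels} is invoked — it rules out two comparable same-layer cells both being $\boxempty$, which is the only way two equal $q+1$ labels could arise within a layer and violate strictness. Once that lemma is in hand, the remaining verifications are routine range-checks and the uniform $-1$ shift, so the proof is short.
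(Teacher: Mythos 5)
Your proposal is correct and follows essentially the same route as the paper's proof: both reduce the claim to checking the order relations, invoke Lemma~\ref{lemma:jdtintegerlabels} for the integer labels and Lemma~\ref{lemma:jdtboxlabels} for the $\boxempty$ labels (so that the substitution $\boxempty\rightarrow q+1$ cannot break strictness in a layer), and finish with the range check after the uniform shift by $-1$. The only detail the paper makes explicit that you leave implicit is why $\ell$ applications of each $\mathrm{jdt}_i$ suffice to push every $\boxempty$ above the $i$ labels in each fiber (each fiber has length at most $\ell$), which is what guarantees the weak fiber inequalities survive the substitution.
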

\begin{proof}
By construction, $\jdtpro(f)$ is a labeling of $P \times [\ell]^v_u$ with integers in $\{1,\dots, q\}$. By the definition of $\jdtpro(f)$, we perform each jeu de taquin slide $\ell$ times. Note that we only need to perform each $\mathrm{jdt}_i$ until the $\boxempty$ labels are above the $i$ labels in every fiber where both appear. This is guaranteed to happen if we perform it $\ell$ times, as every fiber is of length at most $\ell$. We only need to verify that $\jdtpro(f)$ has the order relations of a $P$-strict labeling. By Lemma~\ref{lemma:jdtintegerlabels}, no integer labels of $\jdtpro(f)$ can violate the order relations after performing a jeu de taquin slide. Additionally, by Lemma~\ref{lemma:jdtboxlabels}, if $g \in \mathbb{Z}_{\boxempty}(P \times [\ell]^v_u)$ is obtained by performing jeu de taquin slides on $f$, we can never have $\mathrm{jdt}_i(g)(p,k)=\mathrm{jdt}_i(g)(p',k)=\boxempty$ when $p'>_P p$. Because of this, we guarantee that no $q+1$ labels violate the order relations after performing $\mathrm{jdt}_{\boxempty\rightarrow (q+1)}$ as part of $\jdtpro$.  As a result, this means the strict order relations of the $P$-strict labeling will be satisfied when we perform $\mathrm{jdt}_{\boxempty\rightarrow (q+1)}$.
\end{proof}

Our goal is Theorem~\ref{thm:jdtpro}, which states that jeu de taquin promotion from Definition~\ref{def:jdt} coincides with our definition of promotion by Bender-Knuth involutions. The crux of the proof is Lemmas~\ref{lemma:firstjdt} and \ref{lemma:bkjdt}. The idea of Lemma~\ref{lemma:firstjdt} is as follows. By definition, when performing $\jdtpro(f)$, we perform each jeu de taquin slide $\ell$ times. We observe that for $f\in \mathcal{L}_{P \times [\ell]}(u,v,R^q)$, when we apply $\mathrm{jdt}_{i}$, cases (\ref{case1}) and (\ref{case3}) of Definition~\ref{def:jdt} can only be invoked on the first application of $\mathrm{jdt}_{i}$.

\begin{lemma}
\label{lemma:firstjdt}
For $f\in \mathcal{L}_{P \times [\ell]}(u,v,R^q)$, when applying $\mathrm{jdt}_{i}$ in $\jdtpro(f)$ for any $2 \le i \le q$, (\ref{case1}) and (\ref{case3}) of Definition~\ref{def:jdt} can only be invoked on the first application of $\mathrm{jdt}_{i}$.
\end{lemma}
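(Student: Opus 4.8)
The plan is to analyze carefully what the two ``problematic'' cases (\ref{case1}) and (\ref{case3}) of Definition~\ref{def:jdt} require in terms of layer structure, and to show that once the first application of $\mathrm{jdt}_i$ has been performed, the configurations that trigger these cases can no longer occur. Recall that (\ref{case1}) moves an $i$ \emph{down} within a layer (a $\boxempty$ at $(p,k)$ is filled with $i$ because some $p'\gtrdot_P p$ in the same layer carries $i$), and (\ref{case3}) correspondingly vacates an $i$ within a layer. So both of these cases are precisely the \emph{intra-layer} (horizontal) slides, as opposed to the inter-fiber (vertical) slides of (\ref{case2}) and (\ref{case4}). The heart of the argument is that in a $P$-strict labeling the layers are strictly increasing, so after the $\boxempty$'s have been created from the $1$'s (which all sit at the bottom of their fibers by $P$-strictness) the only way an $i$ and a $\boxempty$ can be vertically adjacent \emph{within a single layer} is in the very first configuration, before any slide has equalized the layer.

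First I would set up notation for the state $g$ of the labeling immediately before the first application of $\mathrm{jdt}_i$, and observe the following key structural fact: in $f\in\mathcal{L}_{P\times[\ell]}(u,v,R^q)$, before we begin sliding $i$'s, every layer $L_k$ is strictly increasing in $P$ and every fiber is weakly increasing, and the $\boxempty$ labels occupy precisely the positions that previously held values $<i$ that have already been slid out in earlier rounds. I would then argue that a single full round of $\ell$ applications of $\mathrm{jdt}_i$ restores, within each layer, the property that no $i$ has a $\boxempty$ as a layer-cover below it and no $\boxempty$ has an $i$ as a layer-cover above it; equivalently, after the first application the $i$'s and $\boxempty$'s have sorted themselves out \emph{within} each layer so that all subsequent motion of $i$'s is purely vertical (fiber) motion governed only by (\ref{case2}) and (\ref{case4}). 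Lemma~\ref{lemma:jdtboxlabels} is the tool that controls the $\boxempty$ positions here: it guarantees that two comparable elements of the same layer are never both $\boxempty$, which rules out the degenerate configurations that could otherwise re-trigger (\ref{case1})/(\ref{case3}) later.

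The cleanest way to carry this out is by induction on the application index within the round, combined with a careful case analysis at a putative second occurrence of (\ref{case1}) or (\ref{case3}). Suppose, toward a contradiction, that (\ref{case1}) is invoked on the $m$th application with $m\geq 2$: then at that stage there is a layer-cover $p'\gtrdot_P p$ in the same layer $L_k$ with $g(p',k)=i$ and $g(p,k)=\boxempty$. I would trace back how $(p,k)$ came to hold $\boxempty$ and how $(p',k)$ came to hold $i$ at this stage, using Lemma~\ref{lemma:jdtintegerlabels} (no integer labels violate the order relations) and Lemma~\ref{lemma:jdtboxlabels} to show that this pair must already have been in the $i$-over-$\boxempty$ configuration at the first application, whence (\ref{case1}) would have fired then and moved the $i$ down—contradicting its presence at $(p',k)$ now. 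The symmetric argument handles (\ref{case3}).

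The main obstacle I anticipate is precisely the bookkeeping in that traceback: showing that a layer-adjacent $(i,\boxempty)$ pair cannot be \emph{created} by a vertical (fiber) slide in an intermediate application. This is where the interplay between the strictness along layers and the weak monotonicity along fibers must be exploited carefully, and where Lemmas~\ref{lemma:jdtintegerlabels} and~\ref{lemma:jdtboxlabels} do the real work—the former forbidding an $i$ from sliding vertically into a position that would sit improperly relative to a layer-neighbor, and the latter forbidding two layer-comparable $\boxempty$'s. Once it is established that no intermediate vertical slide can manufacture a fresh horizontal $(i,\boxempty)$ adjacency, the conclusion that (\ref{case1}) and (\ref{case3}) are confined to the first application follows.
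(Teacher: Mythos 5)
You have correctly isolated where the danger lies: cases (\ref{case1}) and (\ref{case3}) are the intra-layer slides, and the entire content of the lemma is that a vertical (fiber) slide in a later application cannot manufacture a fresh layer-adjacent $(i,\boxempty)$ pair. But your proposal stops exactly there --- you name this as ``the main obstacle,'' say that Lemmas~\ref{lemma:jdtintegerlabels} and~\ref{lemma:jdtboxlabels} ``do the real work,'' and then assert the conclusion. Those two lemmas alone do not close the gap. For a general $i<q$ the traceback you propose genuinely branches: the $i$ at the upper position may have arrived via case (\ref{case2}) from the next layer of its fiber, the hole at the lower position may have arrived via case (\ref{case4}), and one must rule out every combination; nothing in your outline controls these branches, and your intermediate claim that ``after the first application the $i$'s and $\boxempty$'s have sorted themselves out within each layer'' is essentially the statement to be proved.

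The paper supplies the missing idea in two steps absent from your plan. First it proves the statement only for $i=q$, the \emph{maximal} label. There the argument is short: a layer-cover of a hole must carry exactly $q$ (it cannot be $\boxempty$ by Lemma~\ref{lemma:jdtboxlabels}, and cannot be smaller since earlier rounds have already slid all smaller labels past the holes); after the first application that $q$ has moved down, and its former position $(p',k)$ has \emph{no} layer-cover at all, since such a cover would need a label exceeding $q$; and by the convexity of $P\times[\ell]^v_u$ there is then no element of the shape in any higher layer of that fiber either, so the wandering hole can never again acquire a layer-neighbor labeled $q$. Second, the general case is reduced to this one by restricting $f$ to the convex region of entries at most $i$, which is a $P$-strict labeling with restriction function $R^i$ in which $i$ is maximal, and on which $\mathrm{jdt}_2,\dots,\mathrm{jdt}_i$ act exactly as they do on $f$. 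Without this restriction trick (or an equivalent argument exploiting that labels greater than $i$ never move and block new adjacencies), your direct induction on the application index does not go through as written.
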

\begin{proof}
We begin by proving the result for $\mathrm{jdt}_{q}$. Suppose $g(p,k)=\boxempty$. If there is a cover $(p',k)$ of $(p,k)$ in the $k$th layer of $P \times [\ell]^v_u$, then we must have $g(p',k)=q$, as $g(p',k)$ could not be less than $q$ nor could it be $\boxempty$ by Lemma~\ref{lemma:jdtboxlabels}. Furthermore, if there does not exist a cover $(p',k)$ of $(p,k)$ in the $k$th layer, neither (\ref{case1}) nor (\ref{case3}) is invoked on $\boxempty$ from $g(p,k)$ when applying $\mathrm{jdt}_{q}$. Therefore, we may assume a cover of $(p,k)$ in the $k$th layer has a label of $q$. In other words, we assume there exists a $p'\gtrdot p$ such that $g(p',k)=q$. When applying $\mathrm{jdt}_{q}$, the first application of $\mathrm{jdt}_{q}$ will invoke (\ref{case1}) and (\ref{case3}), resulting in $g(p',k)$ being labeled with $\boxempty$ for any labels $g(p',k)$ such that $p' \gtrdot p$ and $g(p',k)=q$. However, on subsequent applications of $\mathrm{jdt}_{q}$, (\ref{case1}) cannot be invoked to result in a $\boxempty$ for any $g(p'',k)$ where $p'' \gtrdot p'$. This is because $g(p'',k)$, a label for a cover of $(p', k)$ in the $k$th layer, would need to be labeled with either $q$ or $\boxempty$, neither of which are possible due to Lemma~\ref{lemma:jdtboxlabels}. This means there does not exist a cover $(p'',k)$ of $(p',k)$ in the $k$th layer at all, as $g(p'',k)$ also cannot be less than $q$.

We might be concerned that subsequent invocations of (\ref{case2}) or (\ref{case4}) within the fiber $F_{p'}$ results in a $\boxempty$ appearing in a layer with which (\ref{case3}) can be invoked for a second time. However, because there is no $(p'',k) \in P \times [\ell]^v_u$, there cannot be an element $(p'',k') \in P \times [\ell]^v_u$ in any layer $k'$ where $k' > k$ by definition of $v$. Hence, subsequent invocations of (\ref{case2}) or (\ref{case4}) cannot position a $\boxempty$ into a separate layer such that (\ref{case3}) can be invoked for a second time. As a result, for this case, the label $\boxempty$ of $g(p,k)$ can affect the label of a separate fiber only on the first application of $\mathrm{jdt}_{q}$ via (\ref{case1}) and (\ref{case3}). An analogous argument shows that if we begin with $g(p,k)=q$, the label of $q$ can only affect the label of a separate fiber on the first application of $\mathrm{jdt}_{q}$.

We have shown that when applying $\mathrm{jdt}_{q}$ in $\jdtpro(f)$, (\ref{case1}) and (\ref{case3}) of Definition~\ref{def:jdt} can only be invoked on the first application of $\mathrm{jdt}_{q}$. To show the result for any $\mathrm{jdt}_{i}$, let $f_{\le i}$ with restriction $R^i$ denote the $P$-strict labeling $f$ restricted to the subposet of elements with labels less than or equal to $i$. Because $f_{\le i}$ has restriction function $R^i$, (\ref{case1}) and (\ref{case3}) of Definition~\ref{def:jdt} can only be invoked on the first application of $\mathrm{jdt}_{i}$ in $\jdtpro(f_{\le i})$, which means these cases can only be invoked on the first application of $\mathrm{jdt}_{i}$ in $\jdtpro(f)$.
\end{proof}

In order to state Lemma~\ref{lemma:bkjdt}, we need the following definition.

\begin{definition}
For $f\in \mathcal{L}_{P \times [\ell]}(u,v,R^q)$, define $\jdtpro_i(f)$ to be the result of freezing all labels of $f$ which are at least $i+1$, then performing jeu de taquin slides on the elements with labels less than or equal to $i$. In other words, perform $\mathrm{jdt}_{\boxempty\rightarrow (i+1)} (\mathrm{jdt}_{i})^{\ell} \circ (\mathrm{jdt}_{i-1})^{\ell} \circ \cdots \circ (\mathrm{jdt}_{3})^{\ell} \circ (\mathrm{jdt}_2)^{\ell} \circ \mathrm{jdt}_{1\rightarrow \boxempty}(f)$, then reduce all unfrozen labels by 1. We clarify that boxes labeled $i+1$ from the step $\mathrm{jdt}_{\boxempty\rightarrow (i+1)}$ are considered unfrozen.
\end{definition}

To prove Theorem~\ref{thm:jdtpro}, it will be sufficient to show that applying $\jdtpro_{q-1}$ and the Bender-Knuth involution $\rho_q$ yields the same result as $\jdtpro$ itself.

\begin{lemma}
\label{lemma:bkjdt}
For $f\in \mathcal{L}_{P \times [\ell]}(u,v,R^q)$, $\jdtpro(f)=\rho_{q-1} \circ \jdtpro_{q-1}(f)$.
\end{lemma}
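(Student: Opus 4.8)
The plan is to show that $\jdtpro(f)$ and $\rho_{q-1}\circ\jdtpro_{q-1}(f)$ produce the same labeling by tracking exactly how the labels $q-1$ and $q$ behave under the two procedures, since these are the only labels that can differ between them. By definition, $\jdtpro_{q-1}(f)$ carries out the identical sequence of slides $\mathrm{jdt}_{1\to\boxempty}$ followed by $(\mathrm{jdt}_2)^\ell,\ldots,(\mathrm{jdt}_{q-1})^\ell$ that $\jdtpro(f)$ performs, the only distinction being that in $\jdtpro(f)$ we continue with $(\mathrm{jdt}_q)^\ell$ and then fill the remaining boxes with $q$, whereas $\jdtpro_{q-1}$ fills boxes with $q$ at the earlier stage and freezes the genuine $q$ labels throughout. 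So the two labelings agree on all entries arising from labels $\le q-2$, and I would first record this, reducing the problem to analyzing the fibers where $q-1$ and $q$ (equivalently, the boxes destined to become $q$) interact.

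The key step is a fiber-by-fiber analysis. Fix a fiber $F_p$ and consider the situation just before $(\mathrm{jdt}_{q-1})^\ell$ is applied in the $\jdtpro$ computation; by Lemma~\ref{lemma:firstjdt}, once $\mathrm{jdt}_{q-1}$ has run its course the boxes and the $q-1$ labels have sorted themselves within each fiber so that the remaining cross-layer interactions (cases~(\ref{case1}) and~(\ref{case3})) have already been exhausted on the first application. I would argue that after the full $\jdtpro$ slide sequence, in each fiber $F_p$ the final count of $q$ labels among the positions that became $q$ via $\mathrm{jdt}_{\boxempty\to q+1}$ followed by the shift, together with the frozen-versus-boxed bookkeeping, matches precisely the Bender-Knuth rule $\rho_{q-1}$ applied to the output of $\jdtpro_{q-1}$. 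Concretely, $\jdtpro_{q-1}(f)$ leaves in each fiber some number of (shifted) $q-2\to q-1$ labels followed by the boxes-turned-$q$; applying $\rho_{q-1}$ then swaps the free $q-1$ and $q$ counts, and I must check this equals the distribution that $(\mathrm{jdt}_q)^\ell$ produces in $\jdtpro(f)$ before the final box-fill and shift. The heart of the matching is that both operations implement the same "push the $q$'s up and the $q-1$'s down" rearrangement on the free labels of each fiber, with the raisability/lowerability conditions of Definition~\ref{def:BenderKnuth} corresponding exactly to the cross-layer movability conditions of the jeu de taquin slides.

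The main obstacle I anticipate is correctly identifying which $q-1$ labels are \emph{free} in the Bender-Knuth sense and matching them to the labels that the slide $\mathrm{jdt}_q$ is actually able to move in $\jdtpro(f)$. A $q-1$ label is raisable (to $q$) exactly when it is not blocked from above by the layer structure, and a $q$ label is lowerable exactly when the position below it in its fiber can accommodate it; I would need to verify that these are precisely the labels that $(\mathrm{jdt}_q)^\ell$ relocates, using Lemmas~\ref{lemma:jdtintegerlabels} and~\ref{lemma:jdtboxlabels} to rule out obstructions and Lemma~\ref{lemma:firstjdt} to guarantee that all cross-fiber activity is confined to the first slide so that the subsequent $\ell-1$ slides merely perform the within-fiber vertical sorting. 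Once the free-label counts are shown to agree fiber-by-fiber, the equality $\jdtpro(f)=\rho_{q-1}\circ\jdtpro_{q-1}(f)$ follows, since both sides agree on all smaller labels and on the complementary frozen portions. I would also keep in mind the boundary bookkeeping introduced by $\mathrm{jdt}_{\boxempty\to q+1}$ versus $\mathrm{jdt}_{\boxempty\to(i+1)}$ and the final global shift by $1$, making sure the off-by-one alignment between the index $q$ in $\jdtpro$ and the index $q-1$ in $\rho_{q-1}\circ\jdtpro_{q-1}$ is handled consistently.
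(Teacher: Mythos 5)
Your plan is correct and matches the paper's proof essentially step for step: both reduce to the common prefix of slides through $(\mathrm{jdt}_{q-1})^\ell$, invoke Lemma~\ref{lemma:firstjdt} to confine the cross-fiber activity of $\mathrm{jdt}_q$ to its first application, and then match, fiber by fiber, the labels moved by $(\mathrm{jdt}_q)^\ell$ with the free labels swapped by $\rho_{q-1}$ (the paper carries out this count explicitly with four parameters $a,b,x,y$ and treats the box-free case separately). One small correction to keep in mind when fleshing this out: lowerability of a $q$ label is determined by the elements it covers in its \emph{layer}, not by whether the position below it in its fiber can accommodate it.
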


\begin{proof} 
Both $\jdtpro(f)$ and $\rho_{q-1} \circ \jdtpro_{q-1}(f)$ begin by applying $(\mathrm{jdt}_{q-1})^{\ell} \circ (\mathrm{jdt}_{q-2})^{\ell} \circ \cdots \circ (\mathrm{jdt}_{3})^{\ell} \circ (\mathrm{jdt}_2)^{\ell} \circ \mathrm{jdt}_{1\rightarrow \boxempty}$ to $f$. Let $f'\in \mathbb{Z}_{\boxempty}(P \times [\ell]^v_u)$ denote the labeling obtained after performing these jeu de taquin slides. What remains to be shown is that performing $\mathrm{jdt}_{\boxempty\rightarrow (q+1)} \circ(\mathrm{jdt}_{q})^{\ell}(f')$ and subtracting $1$ from all labels results in the same $P$-strict labeling as performing $\mathrm{jdt}_{\boxempty\rightarrow (q)}(f')$, subtracting $1$ from all unfrozen labels, then performing the Bender-Knuth involution $\rho_{q-1}$.

First, consider the case that there are no boxes $\boxempty$ in $f'$. This implies that there were no elements labeled 1 in $f$, so $\jdtpro(f)$ reduces all labels by 1. On the other hand, $\jdtpro_{q-1}(f)$ will reduce all labels by 1 except labels that are $q$, as these labels are frozen. However, after reducing unfrozen labels, there are no elements with a label of $q-1$, which means $\rho_{q-1}$ changes all labels of $q$ to $q-1$. The cumulative effect is that all labels in $f$ are reduced by 1. Therefore, in this case, we have $\jdtpro(f)=\rho_{q-1} \circ \jdtpro_{q-1}(f)$.

We now consider the case where $f'$ has at least one element labeled $\boxempty$. When applying $\mathrm{jdt}_{q}$, a label can only change if it is $\boxempty$ or $q$. By Lemma~\ref{lemma:firstjdt}, when applying $\mathrm{jdt}_{q}$, (\ref{case1}) and (\ref{case3}) of Definition~\ref{def:jdt} can only be invoked on the first application of $\mathrm{jdt}_{q}$. We now show that when applying $\mathrm{jdt}_{q}$, the first application of $\mathrm{jdt}_{q}$ places the correct number of elements labeled $q$ and $\boxempty$ in each fiber. Suppose $F_p$ has $a$ elements labeled with $\boxempty$ and $b$ elements labeled with $q$. Additionally, suppose $x$ of the elements that are labeled with $\boxempty$ have a cover in a separate fiber labeled with $q$ and suppose $y$ of the elements that are labeled with $q$ cover an element in a separate fiber labeled with a $\boxempty$. When performing $\mathrm{jdt}_{q}$, the $x$ labels of $\boxempty$ in $F_p$ change to $q$ and the $y$ labels of $q$ in $F_p$ change to $\boxempty$. Observe that the application of $\mathrm{jdt}_{q}$ may cause some labels of $q$ and $\boxempty$ to change positions within $F_p$. However, in Definition~\ref{def:jdt}, $\mathrm{jdt}_{q}$ prioritizes (\ref{case1}) and (\ref{case3}), so this might not occur. Because we know a label remains in its fiber after the first application of $\mathrm{jdt}_{q}$, the remaining applications of $\mathrm{jdt}_{q}$ results in all labels $\boxempty$ above all labels of $q$ in $F_p$. Additionally, we can determine that there are $a-x+y$ elements labeled $\boxempty$ and $b+x-y$ labeled $q$ in $F_p$. After performing $(\mathrm{jdt}_{q})^{\ell}$ for all fibers, we apply $\mathrm{jdt}_{\boxempty\rightarrow (q+1)}$ to replace all labels of $\boxempty$ with $q+1$, then reduce every label by 1. The result in $F_p$ is that we now have $b+x-y$ elements labeled $q-1$ and $a-x+y$ elements labeled $q$.

To determine what happens when we apply $\rho_{q-1}  \circ  \jdtpro_{q-1}(f)$, we begin by performing \linebreak $\mathrm{jdt}_{\boxempty\rightarrow (q)}(f')$ and subtracting 1 from all unfrozen labels. $F_p$ will have $a$ elements labeled with $q-1$ and $b$ elements labeled with $q$. Furthermore, we know that $x$ of the elements that are labeled with $q-1$ will have a cover in a separate fiber labeled with a $q$ and that $y$ of the elements that are labeled with $q$ will cover an element in a separate fiber that is labeled with a $q-1$. This means $F_p$ has $a-x$ labels of $q-1$ that are free and $b-y$ labels of $q$ that are free. Performing $\rho_{q-1}$ switches these into $a-x$ elements labeled with $q$ and $b-y$ elements labeled $q-1$. Combining this with the $x$ fixed labels of $q-1$, we obtain $b+x-y$ elements labeled $q-1$. Similarly, with the $y$ fixed labels of $q$, we obtain $a-x+y$ elements labeled $q$. This matches the $\jdtpro(f)$ case, allowing us to conclude that $\jdtpro(f)=\rho_{q-1} \circ \jdtpro_{q-1}(f)$.
\end{proof}

\begin{figure}[htbp]
\includegraphics[scale=.5]{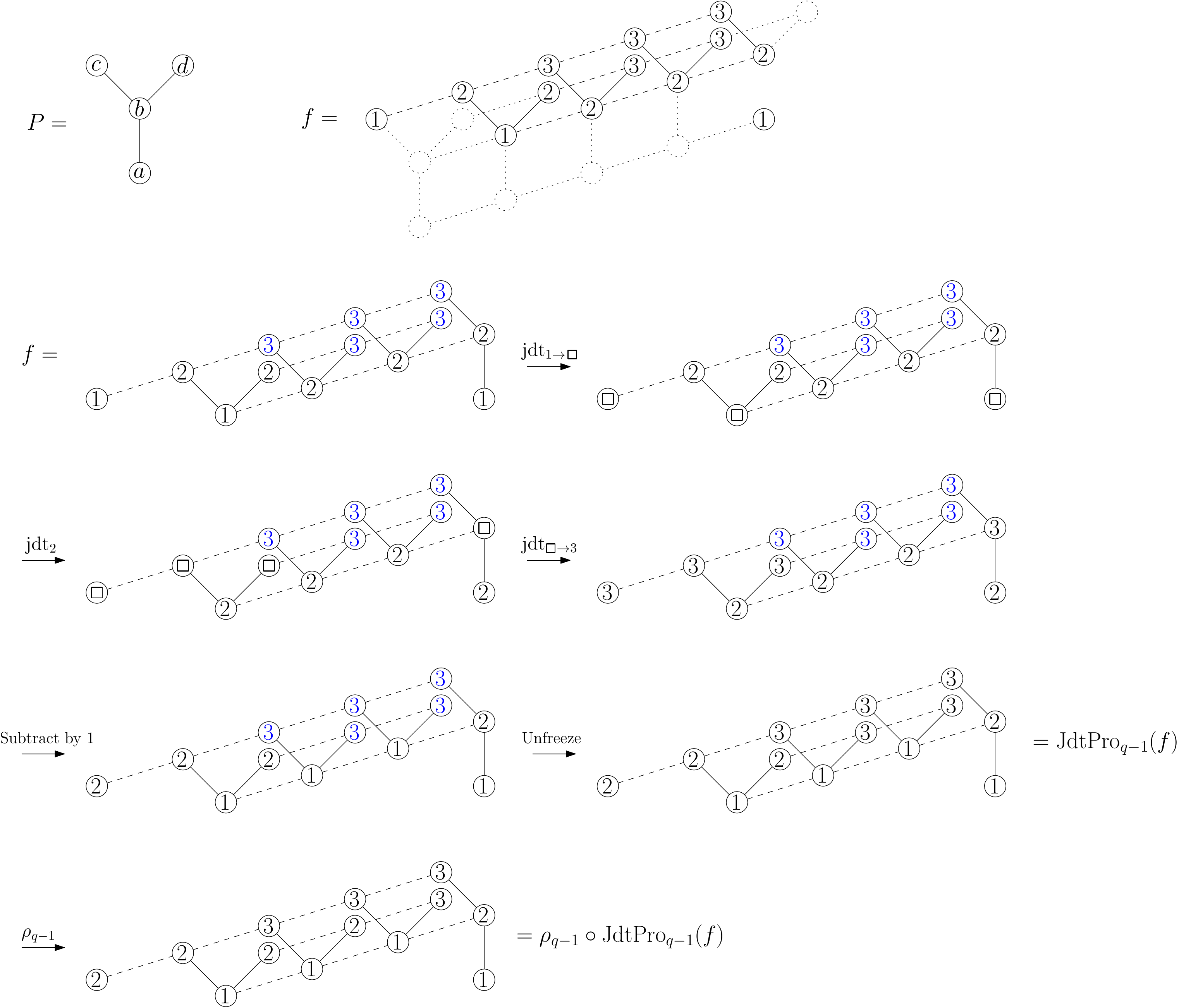}
\caption{We perform $\rho_{q-1} \circ \jdtpro_{q-1}(f)$ on $f\in\mathcal{L}_{P \times [5]}(u,v,3)$ from Figure~\ref{fig:jdtexample}. Labels colored blue are frozen. For the sake of brevity, we do not show the applications of $\mathrm{jdt}_2$ that do nothing.}
\label{fig:bkcomposejdtexample}
\end{figure}

Before presenting the main result of this section, we first give an example demonstrating $\rho_{q-1} \circ \jdtpro_{q-1}$ and the result of Lemma~\ref{lemma:bkjdt}.

\begin{example}
Figure~\ref{fig:bkcomposejdtexample} shows an example of $\rho_{q-1} \circ \jdtpro_{q-1}$ being applied to the same $P$-strict labeling from Figure~\ref{fig:jdtexample} and Example~\ref{ex:jdtexample}. To perform $\jdtpro_{2}$, we first freeze all labels that are greater than 2. In Figure~\ref{fig:bkcomposejdtexample}, these frozen labels are colored blue. We then apply $\mathrm{jdt}_{\boxempty\rightarrow 3} \circ (\mathrm{jdt}_2)^{5} \circ \mathrm{jdt}_{1\rightarrow \boxempty}(f)$. Note that in Figure~\ref{fig:bkcomposejdtexample}, we do not show applications of $\mathrm{jdt}_2$ that do nothing. Following this, we subtract all unfrozen labels by 1. After this step, we have finished applying $\jdtpro_{2}$, so all labels are now considered unfrozen. We conclude by applying the Bender-Knuth involution $\rho_{2}$. Observe that the resulting $P$-strict labeling in Figure~\ref{fig:bkcomposejdtexample} is identical to the $P$-strict labeling in Figure~\ref{fig:jdtexample} obtained by applying $\jdtpro$. Lemma~\ref{lemma:bkjdt} ensures that this will always be the case.
\end{example}

We proceed to the main theorem of this section, which states that $P$-strict promotion via jeu de taquin and $P$-strict promotion via Bender-Knuth toggles are equivalent. Our proof uses Lemma~\ref{lemma:bkjdt} and an inductive argument.

\begin{theorem}
\label{thm:jdtpro}
For $f\in \mathcal{L}_{P \times [\ell]}(u,v,R^q)$, $\jdtpro(f)=\pro(f)$.
\end{theorem}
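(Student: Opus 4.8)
The plan is to prove $\jdtpro(f)=\pro(f)$ by induction on $q$, the global upper bound of the restriction function $R^q$. The key reduction tool is Lemma~\ref{lemma:bkjdt}, which states that $\jdtpro(f)=\rho_{q-1}\circ\jdtpro_{q-1}(f)$; this isolates the top Bender-Knuth involution $\rho_{q-1}$ and leaves a jeu de taquin promotion performed only on labels at most $q-1$. The base case is straightforward: for small $q$ (say $q=1$, where every label equals $1$), both $\jdtpro$ and $\pro$ act trivially after the uniform shift, so they agree.

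For the inductive step, I would assume the theorem holds for all $P$-strict labelings with a global bound strictly less than $q$, and aim to show $\jdtpro_{q-1}(f)=\pro(f)$ restricted to the appropriate sense — more precisely, that $\jdtpro_{q-1}(f)$ coincides with $\rho_{q-2}\circ\rho_{q-3}\circ\cdots\circ\rho_1(f)$ shifted down by one. The crucial observation is that $\jdtpro_{q-1}$ freezes all labels equal to $q$ and performs jeu de taquin only on the elements labeled $\le q-1$. By restricting $f$ to the convex subposet $f_{\le q-1}$ of elements with labels at most $q-1$ (the construction already used in the proof of Lemma~\ref{lemma:firstjdt}), this restricted labeling lives in $\mathcal{L}_{P\times[\ell]}(u',v',R^{q-1})$ for suitably modified $u',v'$ determined by the positions of the frozen $q$-labels. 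The inductive hypothesis then applies to $f_{\le q-1}$, identifying its jeu de taquin promotion with its Bender-Knuth promotion $\rho_{q-2}\circ\cdots\circ\rho_1$. Combining this with Lemma~\ref{lemma:bkjdt} gives
\[
\jdtpro(f)=\rho_{q-1}\circ\jdtpro_{q-1}(f)=\rho_{q-1}\circ\rho_{q-2}\circ\cdots\circ\rho_1(f)=\pro(f),
\]
where the last equality is the definition of $\pro$ (Definition~\ref{def:BenderKnuth}), noting that $\rho_k$ acts trivially for $k\ge q$ and for $k\le 0$ on labelings bounded below by $1$ and above by $q$.

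The main obstacle I anticipate is carefully justifying that the frozen $q$-labels do not interfere with the jeu de taquin dynamics on the sublabeling, so that $\jdtpro_{q-1}(f)$ genuinely agrees with jeu de taquin promotion applied to the honest $P$-strict labeling $f_{\le q-1}$ with bound $q-1$. The subtlety is that the frozen $q$-labels sit in specific fibers and impose upper constraints on where boxes $\boxempty$ can slide; I must confirm that these constraints exactly reproduce the shape-restriction (encoded by modified $v$) of a genuine $P$-strict labeling of a convex subposet, so that the inductive hypothesis is legitimately applicable. Lemmas~\ref{lemma:jdtintegerlabels} and~\ref{lemma:jdtboxlabels} (guaranteeing integer labels never violate order relations and that two comparable box-labels never appear in a layer) together with Lemma~\ref{lemma:firstjdt} (restricting cases (\ref{case1}) and (\ref{case3}) to the first slide) are the technical inputs that make this clean, since they ensure the convex structure is preserved under the slides. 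Once the compatibility of the restriction with the inductive framework is established, the remainder of the argument is a direct composition identity.
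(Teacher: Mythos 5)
Your proposal is correct and follows essentially the same route as the paper's proof: both arguments induct on the global bound, use Lemma~\ref{lemma:bkjdt} to peel off the top Bender--Knuth involution, and reduce to the observation that $\jdtpro_{q-1}(f)$ agrees with jeu de taquin promotion on the truncation $f_{\le q-1}$ (the paper states this as ``$\jdtpro_{i}(f_{\le i+1})_{\le i}=\jdtpro(f_{\le i})$'' and, like you, treats it as an observation rather than giving a detailed verification). The only cosmetic difference is that the paper runs the induction over the truncations $f_{\le i}$ of a single fixed $f$ rather than over all labelings with smaller bound, which sidesteps the bookkeeping of modified $u',v'$ that you flag.
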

\begin{proof}
Let $f_{\le i}$ with restriction $R^i$ denote the $P$-strict labeling $f$ restricted to the subposet of elements with labels less than or equal to $i$. Observe that by Lemma~\ref{lemma:bkjdt}, we have $\jdtpro(f_{\le 2})=\rho_{1} \circ \jdtpro_{1}(f_{\le 2})=\rho_{1}(f_{\le 2})$. Now suppose $\jdtpro(f_{\le i})=\rho_{i-1} \circ \dots \circ \rho_{1}(f_{\le i})$. By applying Lemma~\ref{lemma:bkjdt}, we obtain $\jdtpro(f_{\le i+1})=\rho_{i} \circ \jdtpro_{i}(f_{\le i+1})$. Observe that $\jdtpro_{i}(f_{\le i+1})_{\le i}=\jdtpro(f_{\le i})$. This implies that $\jdtpro_{i}(f_{\le i+1})=\rho_{i-1} \circ \dots \circ \rho_{1}(f_{\le i+1})$, as none of $\rho_1, \dots, \rho_{i-1}$ affect $i+1$. Therefore, $\jdtpro(f_{\le i+1})=\rho_{i} \circ \rho_{i-1} \circ \dots \circ \rho_{1}(f_{\le i+1})$. By induction, we know this holds for $i=q-1$, yielding $\jdtpro(f_{\le q})=\rho_{q-1} \circ \rho_{q-2} \circ \dots \circ \rho_{1}(f_{\le q})$, which is the desired result.
\end{proof}

\subsection{$P$-strict evacuation}
\label{subsec:evac}
Evacuation has been well studied on both standard tableaux and semistandard tableaux. In \cite{BPS2016}, Bloom, Pechenik, and Saracino provide explicit statements and proofs for several evacuation results on semistandard tableaux. We define evacuation on $P$-strict labelings and investigate which of those results can be generalized and which cannot.

\begin{definition}
\label{def:pstrictevacuation}
For $f\in \mathcal{L}_{P \times [\ell]}(u,v,R^q)$, we define \textbf{evacuation} in terms of Bender-Knuth involutions: \[\evac = (\rho_1)\circ (\rho_2 \circ \rho_1) \circ \cdots \circ (\rho_{q-2}\circ \cdots \circ \rho_2 \circ \rho_1) \circ (\rho_{q-1}\circ \cdots \circ \rho_2 \circ \rho_1) \]
Additionally, define \textbf{dual evacuation}:
\[\evac' = (\rho_{q-1})\circ (\rho_{q-2} \circ \rho_{q-1}) \circ \cdots \circ (\rho_{2}\circ \cdots \circ \rho_{q-2} \circ \rho_{q-1}) \circ (\rho_{1}\circ \cdots \circ \rho_{q-2} \circ \rho_{q-1}) \]
\end{definition}

Evacuation and dual evacuation have a special relation on rectangular semistandard Young tableaux. We generalize that relation here. 
\begin{definition}
\label{def:product_of_chains}
Fix the following notation for the product of chains poset: $[a_1] \times [a_2]\times \cdots \times [a_k]=\{(i_1,i_2,\ldots,i_k) \ | \ 1\leq i_j\leq a_j, 1\leq j\leq k\}$.
\end{definition}

\begin{definition}
\label{def:antipode}
Given $(i_1,i_2,\ldots,i_k)\in [a_1] \times [a_2]\times \cdots \times [a_k]$, let $(a_1+1-i_1,a_2+1-i_2,\ldots,a_k+1-i_k)$ be the  \textbf{antipode} of $(i_1,i_2,\ldots,i_k)$.
\end{definition}

\begin{definition}
Suppose $P=[a_1] \times [a_2]\times \cdots \times [a_k]$.
For $f\in \mathcal{L}_{P \times [\ell]}(R^q)$, 
we obtain a new labeling by interchanging each label with the label of its antipode, then replacing each label $i$ with $q+1-i$. Denote this new labeling as $f^{+}$.
\end{definition}

\begin{lemma}
Let $P=[a_1] \times [a_2]\times \cdots \times [a_k]$ and $f\in \mathcal{L}_{P \times [\ell]}(R^q)$. Then $\evac'(f)=\evac(f^{+})^{+}$.
\end{lemma}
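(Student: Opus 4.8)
The plan is to reduce the statement to a single intertwining relation between the Bender--Knuth involutions and the operation ${}^+$, and then manipulate the products defining $\evac$ and $\evac'$ formally. Throughout I regard $P\times[\ell]$ as the product of chains $[a_1]\times\cdots\times[a_k]\times[\ell]$, so that the antipode of $(p,i)$ is $(\bar p,\ell+1-i)$, where $\bar p$ is the antipode of $p$ in $P$ and the chain $[\ell]$ is reversed as well; explicitly, $f^{+}(p,i)=q+1-f(\bar p,\ell+1-i)$. (Reversing the $[\ell]$-coordinate is forced: reversing only the $P$-coordinate would make the fibers weakly decreasing and destroy $P$-strictness.)

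First I would record that ${}^+$ is a well-defined involution on $\mathcal{L}_{P\times[\ell]}(R^q)$. The antipode is an order-reversing bijection of $P\times[\ell]$, since it reverses each chain factor, and the complement $i\mapsto q+1-i$ reverses the label order while preserving $R^q=\{1,\dots,q\}$; composing these two order-reversals preserves strict increase along layers and weak increase along fibers, so $f^{+}$ is again $P$-strict. As both the antipode and the complement are commuting involutions, $(f^{+})^{+}=f$.

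The heart of the argument is the intertwining lemma
\[(\rho_k(f))^{+}=\rho_{q-k}(f^{+})\qquad(1\le k\le q-1),\]
equivalently ${}^+\circ\rho_k=\rho_{q-k}\circ{}^+$, which keeps the index in range since $q-k$ also runs over $\{1,\dots,q-1\}$. To prove it I would first establish the raisable/lowerable duality: if a $P$-strict labeling $g$ witnesses that a label $f(p,i)=k$ is raisable, then $g^{+}$ agrees with $f^{+}$ off the fiber $F_{\bar p}$ and has a strictly smaller value at the antipodal cell, so it witnesses that $f^{+}(\bar p,\ell+1-i)=q+1-k$ is lowerable; the converse follows by involutivity. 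Hence a raisable $k$ at $(p,i)$ corresponds to a lowerable $q+1-k$ at the antipodal cell of $f^{+}$, and a lowerable $k+1$ corresponds to a raisable $q-k$. Thus ${}^+$ carries the free labels of $\rho_k$ on the fiber $F_p$ bijectively onto the free labels of $\rho_{q-k}$ on the fiber $F_{\bar p}$, and the position-reversal turns ``$a$ free $k$'s below $b$ free $(k+1)$'s'' into ``$b$ free $(q-k)$'s below $a$ free $(q+1-k)$'s.'' A direct comparison on this fiber -- computing $(\rho_k f)^{+}$ by rotating and complementing the output $b$ copies of $k$ then $a$ copies of $k+1$, and computing $\rho_{q-k}(f^{+})$ by applying the involution to the transformed configuration -- shows both yield $a$ copies of $q-k$ below $b$ copies of $q+1-k$, while no other labels or fibers change. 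This fiber-local bookkeeping of which labels are free and how positions reverse is where the real work lies, and is the step I expect to be the main obstacle.

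Finally I would assemble the result by conjugation. Because ${}^+$ is an involution, conjugation by ${}^+$ distributes over composition, so conjugating $\evac$ replaces each factor $\rho_k$ by ${}^+\rho_k{}^+=\rho_{q-k}$ while preserving the order of the factors. Carrying out the substitution $\rho_k\mapsto\rho_{q-k}$ term by term in
\[\evac=(\rho_1)\circ(\rho_2\circ\rho_1)\circ\cdots\circ(\rho_{q-1}\circ\cdots\circ\rho_1)\]
produces exactly the defining expression for $\evac'$. Hence ${}^+\circ\evac\circ{}^+=\evac'$, and evaluating at $f$ gives $\evac'(f)=\evac(f^{+})^{+}$, as claimed.
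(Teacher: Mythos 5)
Your proof is correct and follows the same route the paper intends: the paper's proof is the one-line remark that the identity ``follows from the definitions of evacuation and dual evacuation as a product of Bender--Knuth involutions,'' and your argument simply supplies the details behind that remark, namely the intertwining relation ${}^{+}\circ\rho_k\circ{}^{+}=\rho_{q-k}$ and the term-by-term substitution converting the product defining $\evac$ into that defining $\evac'$. Your verification of the relation via the raisable/lowerable duality under the antipode-and-complement map is accurate, so this is a faithful (and more complete) rendering of the paper's argument rather than a different approach.
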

\begin{proof}
This follows from the definitions of evacuation and dual evacuation as a product of Bender-Knuth involutions.
\end{proof}

Since $P$-strict labelings generalize both increasing labelings and semistandard Young tableaux, a natural aim would be to generalize results from these domains. Bloom, Pechenik, and Saracino found a \emph{homomesy} result on semistandard Young tableaux under promotion~\cite[Theorem 1.1]{BPS2016}. A natural generalization to investigate would be to $P$-strict labelings under promotion, where $P$ is a product of two chains and $\ell=2$. We find that the result does not generalize due to several evacuation results failing to hold. We note below two statements on evacuation which do generalize and two examples showing statements that do not generalize. 

\begin{proposition}
Let $P$ be a poset. For $f\in \mathcal{L}_{P \times [\ell]}(u,v,R^q)$, we have the following:
\begin{enumerate}
\item $\evac^2(f) = f$
\item $\evac \circ \pro(f) = \pro^{-1} \circ \evac(f)$
\end{enumerate}
\end{proposition}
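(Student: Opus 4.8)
The plan is to reduce both identities to two algebraic relations satisfied by the Bender--Knuth involutions $\rho_1,\dots,\rho_{q-1}$ (the only ones acting nontrivially for the global bound $R^q$): namely (i) $\rho_k^2=\mathrm{id}$ for all $k$, and (ii) $\rho_j\rho_k=\rho_k\rho_j$ whenever $|j-k|\ge 2$. Once (i) and (ii) are established, both (1) and (2) are purely formal consequences, proved exactly as for semistandard Young tableaux: since $\evac$ and $\pro$ are built from the $\rho_k$ by the same words as in the rectangular case, one may take over the arguments of \cite{BPS2016} (equivalently Stanley's treatment of evacuation from the relations (i) and (ii) in \cite{Stanley2009}) verbatim. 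Concretely, I would prove $\evac^2=\mathrm{id}$ by induction on $q$ via the recursion $\evac=\evac''\circ\pro$, where $\evac''$ is the evacuation built from $\rho_1,\dots,\rho_{q-2}$ and $\pro=\rho_{q-1}\circ\cdots\circ\rho_1$ is the innermost factor; and $\evac\circ\pro=\pro^{-1}\circ\evac$ follows from the same relations by the standard word manipulation. Each inductive step uses only (i) and (ii).

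To establish the relations, I would argue as follows. Relation (i) is immediate: applying $\rho_k$ twice to a fiber $F_p$ restores the original counts of free $k$ and free $R(p)_{>k}$ labels in their original order, and freeness is preserved; equivalently, under the bijection $\Phi$ of Theorem~\ref{thm:moregeneralpro} the involution $\rho_k$ corresponds to the toggle $\tau_k$, which squares to the identity by Remark~\ref{remark:commute}. For (ii) I would pass to the toggle side: by Lemma~\ref{lem:equiv}, $\Phi\rho_k\Phi^{-1}=\tau_k$, so $\rho_j$ and $\rho_k$ commute if and only if $\tau_j$ and $\tau_k$ do. Since $R^q$ is induced by global bounds, Proposition~\ref{prop:hatcoladj} shows that $\mathcal{A}^{\widehat{B}}(\Gamma(P,\widehat{R^q}))$ is column-adjacent, so any covering relation $(p_1,k_1)\lessdot(p_2,k_2)$ between two \emph{non-fixed} elements has $|k_1-k_2|=1$. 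Hence for $|j-k|\ge 2$ no non-fixed element of column $j$ shares a covering relation with a non-fixed element of column $k$; toggles at fixed elements act trivially and toggles within a single column commute, so $\tau_j$ and $\tau_k$ commute by Remark~\ref{remark:commute}, giving (ii).

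The crux, and the only genuinely nonformal point, is relation (ii). The naive reason that far-apart Bender--Knuth involutions commute --- that they move disjoint pairs of consecutive values --- breaks down here, because consistency of $R^q$ can force gaps in a fiber's label set $R(p)$, so that $R(p)_{>j}$ may equal some $k$ with $k-j\ge 2$ and $\rho_j,\rho_k$ appear to act on a shared value. The resolution is that whenever such a gap straddles columns $j$ and $k$, Lemma~\ref{lem:pkfixed} shows the lower element $(p,j)$ (with $j+1\notin R^q(p)$) is fixed, so $\rho_j$ cannot in fact perform the offending swap on $F_p$; this is exactly the information packaged by column-adjacency. Thus the real work is localized entirely in invoking Proposition~\ref{prop:hatcoladj} and Lemma~\ref{lem:pkfixed} to certify (ii), after which the inductive and word-theoretic proofs of $\evac^2=\mathrm{id}$ and $\evac\circ\pro=\pro^{-1}\circ\evac$ go through unchanged from the semistandard case as recorded in \cite{BPS2016}.
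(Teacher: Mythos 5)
Your proof takes essentially the same route as the paper: the paper's entire argument is the single sentence that both identities ``rely only on the commutation relations of toggles (see Remark~\ref{remark:commute}), and therefore follow using previous results on the toggle group,'' i.e.\ exactly the reduction to $\rho_k^2=\mathrm{id}$ and distant commutation followed by the standard word-theoretic manipulations that you carry out. Your write-up is correct and is in fact more complete than the paper's on the one genuinely nontrivial point: you rightly observe that $\rho_j\rho_k=\rho_k\rho_j$ for $|j-k|\ge 2$ is not automatic (gaps forced in $R^q(p)$ by consistency can create covering relations in $\Gamma(P,\widehat{R^q})$ between non-adjacent columns) and must be certified via Lemma~\ref{lem:pkfixed} and Proposition~\ref{prop:hatcoladj}, a step the paper leaves implicit.
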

\begin{proof}
Both parts rely only on the commutation relations of toggles (see Remark~\ref{remark:commute}), and therefore follow using previous results on the toggle group. 
\end{proof}

\begin{remark} 
$\pro^q(f) = f$ does not hold for general $f\in \mathcal{L}_{([a]\times [b]) \times [2]}(R^q)$. The $P$-strict labeling $f \in \mathcal{L}_{([3]\times [2]) \times [2]}(R^7)$ from Figure~\ref{fig:proqcounterexample} gives a counterexample.
\end{remark}

\begin{figure}[htbp]
\includegraphics[scale=.6]{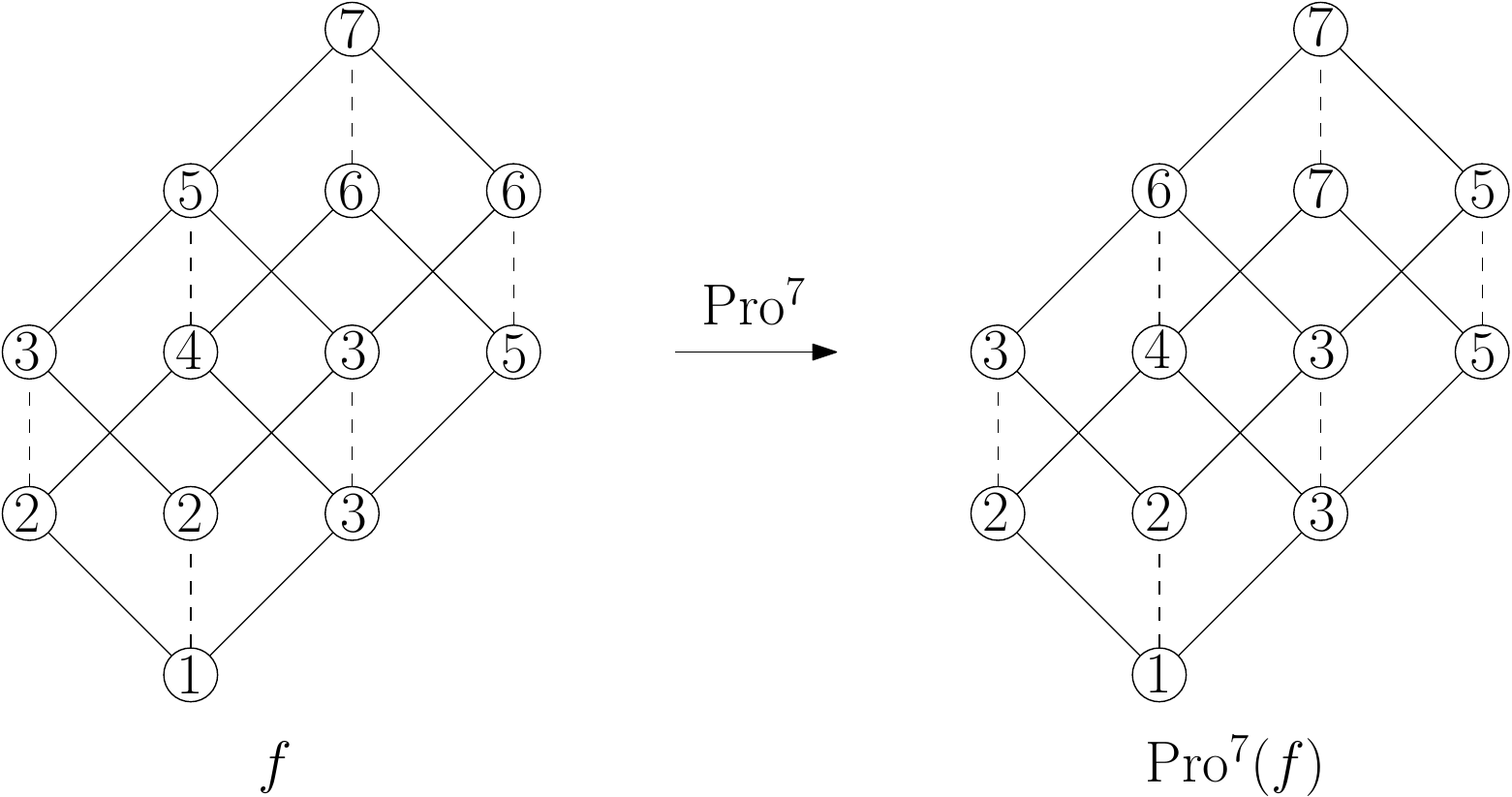}
\caption{By applying $\pro^7$ to the $P$-strict labeling $f$ on the left, we obtain the $P$-strict labeling on the right. We see that these are not equal and so $\pro^q(f) = f$ does not hold in general.}
\label{fig:proqcounterexample}
\end{figure}

\begin{remark}
$\evac(f) = f^{+}$ does not hold for general $f\in \mathcal{L}_{([a]\times [b]) \times [2]}(R^q)$. The $P$-strict labeling $f \in \mathcal{L}_{([3]\times [2]) \times [2]}(R^7)$ from Figure~\ref{fig:evaccounterexample} gives a counterexample.
\end{remark}

\begin{figure}[htbp]
\includegraphics[scale=.6]{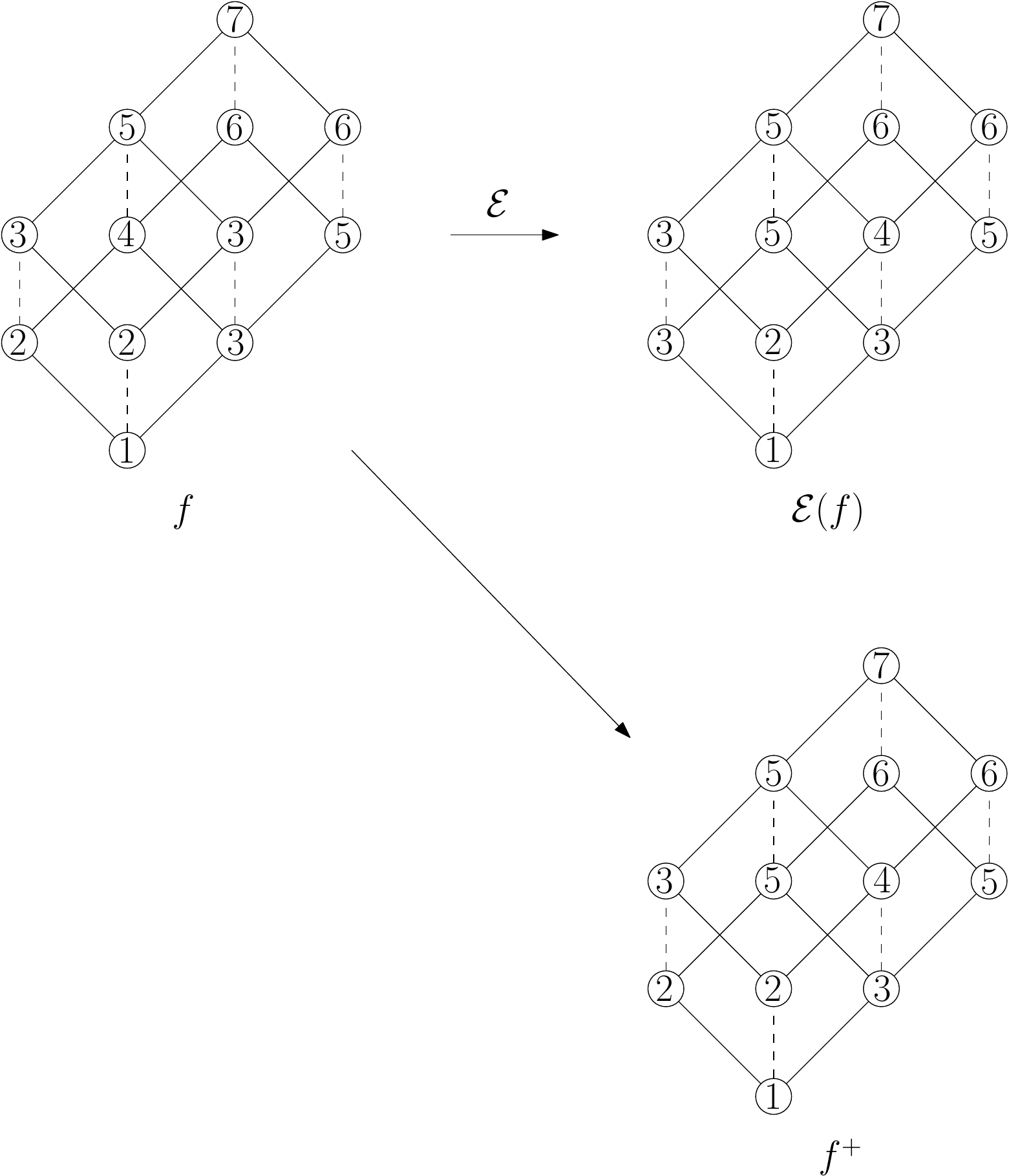}
\caption{By applying $\evac$ to the $P$-strict labeling $f$ in the upper left, we obtain the $P$-strict labeling in the upper right. Comparing $\evac(f)$ to $f^+$, shown in the bottom right, we see that these $P$-strict labelings are not equal and so $\evac(f) = f^{+}$ does not hold in general. }
\label{fig:evaccounterexample}
\end{figure}

\section{Applications of the main theorems to tableaux of many flavors}
\label{sec:corollaries}
In this section, we apply Theorems~\ref{thm:moregeneralpro} and \ref{thm:rowrow} to the case in which $P$ is a chain; in the subsections, we specialize to various types of tableaux. We translate results and conjectures from the domain of $P$-strict labelings to $B$-bounded $\Gamma(P,\hat{R})$-partitions and vice versa.

\subsection{Semistandard tableaux} 
\label{sec:ssyt}
First, we specialize Theorem~\ref{thm:moregeneralpro} to skew semistandard Young tableaux in Corollary~\ref{cor:SSYT}. We relate this to Gelfand-Tsetlin patterns and show how a proposition of Kirillov and Berenstein, Corollary~\ref{cor:GT}, follows from our bijection. Finally, we state some known cyclic sieving and homomesy results and use Corollary~\ref{cor:SSYT} to translate between the two domains. 

We begin by defining skew semistandard Young tableaux. 
\begin{definition}
Let $\lambda=(\lambda_1,\lambda_2,\ldots,\lambda_n)$ and $\mu=(\mu_1,\mu_2,\ldots,\mu_m)$ be partitions with non-zero parts such that $\mu\subset\lambda$. Where applicable, define $\mu_j:=0$ for $j>m$. Let $\lambda/\mu$ denote the skew partition shape defined by removing the (upper-left justified, in English notation) shape $\mu$ from $\lambda$. A \textbf{skew semistandard Young tableau} of shape $\lambda/\mu$ is a filling of $\lambda/\mu$ with positive integers such that the rows increase from left to right and the columns strictly increase from top to bottom. Let $\mathrm{SSYT}(\lambda/\mu, q)$ denote the set of semistandard Young tableaux of skew shape $\lambda/\mu$ with entries at most $q$. In the case $\mu=\emptyset$, the adjective `skew' is removed.
\end{definition}


In this and the next subsections, fix the chain $[n]=p_1\lessdot p_2\lessdot \cdots \lessdot p_n$. We also use the notation $\ell^n$ for the partition whose shape has $n$ rows and $\ell$ columns.
\begin{proposition}
\label{prop:SSYT}
The set of semistandard Young tableaux $\mathrm{SSYT}(\lambda/\mu,q)$ is equivalent to \linebreak$\mathcal{L}_{[n]\times[\lambda_1]}(u,v,R^q)$, where $u(p_i) = \mu_i$ and $v(p_i) = \lambda_1 - \lambda_i$ for all $1\leq i\leq n$.
\end{proposition}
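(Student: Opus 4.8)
The plan is to establish a bijection between the cells of the skew shape $\lambda/\mu$ and the elements of the convex subposet $[n]\times[\lambda_1]^v_u$, and then verify that the defining order conditions of a skew semistandard Young tableau translate precisely into the three conditions of Definition~\ref{def:Pstrict}. First I would set up the coordinate correspondence: a cell in row $i$ and column $j$ of the English-notation skew diagram, which exists precisely when $\mu_i < j \leq \lambda_i$, should correspond to the poset element $(p_i, j)$. The natural identification sends the entry in cell $(i,j)$ to the label $f(p_i, j)$.

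The key step is to check that this coordinate map indeed has image exactly $[n]\times[\lambda_1]^v_u$ with the prescribed $u$ and $v$. By Definition~\ref{def:P_ell_uv}, the element $(p_i, j)$ lies in $[n]\times[\lambda_1]^v_u$ iff $u(p_i) < j < \lambda_1 + 1 - v(p_i)$. Substituting $u(p_i) = \mu_i$ and $v(p_i) = \lambda_1 - \lambda_i$, this becomes $\mu_i < j < \lambda_1 + 1 - (\lambda_1 - \lambda_i) = \lambda_i + 1$, i.e. $\mu_i < j \leq \lambda_i$, which is exactly the condition for $(i,j)$ to be a cell of $\lambda/\mu$. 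I would also confirm that the hypotheses of Definition~\ref{def:P_ell_uv} hold so that the subposet is genuinely convex: since $\mu \subset \lambda$ are partitions, $i_1 \leq i_2$ gives $\mu_{i_1} \geq \mu_{i_2}$ and $\lambda_{i_1} \geq \lambda_{i_2}$, so $u$ is order-reversing and $v$ is order-preserving along the chain $[n]$ as required, and $u(p_i) + v(p_i) = \mu_i + \lambda_1 - \lambda_i \leq \lambda_1$ follows from $\mu_i \leq \lambda_i$.

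Next I would match the order conditions. The columns of an SSYT strictly increase top-to-bottom, meaning the entry in cell $(i_1, j)$ is strictly less than that in $(i_2, j)$ when $i_1 < i_2$; under the identification this is exactly condition~(1), strict increase within each layer $L_j$ (recall layers correspond to fixed column index $j$, and $p_{i_1} <_{[n]} p_{i_2}$). The rows weakly increase left-to-right, so the entry in $(i, j_1)$ is at most that in $(i, j_2)$ when $j_1 \leq j_2$; this is precisely condition~(2), weak increase along each fiber $F_{p_i}$. Condition~(3), that labels lie in $R^q(p_i)$, encodes the entry bound (at most $q$) together with the consistency-induced lower bounds; here I would invoke that $R^q = R_1^q$ is the consistent restriction function induced by the global bounds $1$ and $q$, so membership $f(p_i,j)\in R^q(p_i)$ is equivalent to the entry being a valid positive integer at most $q$ in some SSYT, which is automatic given the strict-column condition.

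The main obstacle I anticipate is not the combinatorial matching itself but carefully reconciling the orientation conventions: the poset $[n]\times[\ell]$ has layers indexed by the chain $[\ell]$ (columns) and fibers indexed by $P=[n]$ (rows), and the roles of "strict" versus "weak" must be assigned to the correct direction. A skew SSYT is strict down columns and weak along rows, whereas a $P$-strict labeling is strict within each layer and weak along each fiber; the substance of the proof is verifying these two assignments coincide under the chosen identification $(i,j)\leftrightarrow(p_i,j)$, with $j$ playing the layer role. Once the coordinate dictionary is fixed and the inequalities on $u,v$ are checked as above, the equivalence of the two sets of objects is immediate, and the bijection is manifestly invertible since the coordinate map is a bijection of index sets preserving all structure.
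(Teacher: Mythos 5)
Your proposal is correct and follows essentially the same route as the paper's proof: identify the cell $(i,j)$ of the skew diagram with the poset element $(p_i,j)$, check that the prescribed $u$ and $v$ carve out exactly the cells $\mu_i < j \leq \lambda_i$, and match strict columns with strict layers and weak rows with weak fibers. The paper states this in two sentences; you have simply spelled out the same verification (including the hypotheses of Definition~\ref{def:P_ell_uv} and the role of the consistent restriction function $R^q$) in more detail.
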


\begin{proof}
Each box $(i,j)$ of a tableau in $\mathrm{SSYT}(\lambda/\mu,q)$ corresponds exactly to the element $(p_i,j)$ in $P~\times~[\ell]^v_u$.  The weakly increasing condition on rows and strictly increasing condition on columns in $\mathrm{SSYT}(\lambda/\mu,q)$ corresponds to the weak increase on fibers and strict increase on layers, respectively, in $\mathcal{L}_{[n]\times[\lambda_1]}(u,v,R^q)$.
\end{proof}

We now specify the $B$-bounded $\Gamma(P,\hat{R})$-partitions in bijection with $\mathrm{SSYT}(\lambda/\mu, q)$. Recall $\hat{B}$ from Definition~\ref{def:Bhat}.
\begin{corollary}
\label{cor:SSYT}
$\mathrm{SSYT}(\lambda/\mu,q)$ under $\pro$ is in equivariant bijection with $\mathcal{A}^{\widehat{B}}({\Gamma}([n],\widehat{R^q}))$ under $\row$, with $\ell=\lambda_1$, $u(p_i) = \mu_i$, $v(p_i) = \lambda_1 - \lambda_i$ for all $1\leq i\leq n$. Moreover, for $T\in \mathrm{SSYT}(\lambda/\mu,q)$, $\Phi\left(\pro(T)\right)=\togpro\left(\Phi(T)\right)$.
\end{corollary}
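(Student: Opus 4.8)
The plan is to combine the two ingredients that have already been established earlier in the excerpt, namely Proposition~\ref{prop:SSYT} and Corollary~\ref{cor:abrow}. Proposition~\ref{prop:SSYT} identifies the set $\mathrm{SSYT}(\lambda/\mu,q)$ with the set of $P$-strict labelings $\mathcal{L}_{[n]\times[\lambda_1]}(u,v,R^q)$ under the specified choices $\ell=\lambda_1$, $u(p_i)=\mu_i$, and $v(p_i)=\lambda_1-\lambda_i$; crucially, this identification sends the Bender-Knuth involutions (and hence $\pro$) on tableaux to the $\pro$ of Definition~\ref{def:BenderKnuth} on $P$-strict labelings, so it is an equivariant bijection, not merely a bijection of sets. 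So the first step is simply to invoke Proposition~\ref{prop:SSYT} to replace $\mathrm{SSYT}(\lambda/\mu,q)$ under $\pro$ by $\mathcal{L}_{[n]\times[\lambda_1]}(u,v,R^q)$ under $\pro$.

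Next I would apply Corollary~\ref{cor:abrow} to this $P$-strict labeling set. Since the restriction function here is $R^q = R_1^q$, induced by the global bounds $a\equiv 1$ and $b\equiv q$, it is of the form $R_a^b$ to which Corollary~\ref{cor:abrow} applies. That corollary states that $\mathcal{L}_{P\times[\ell]}(u,v,R_a^b)$ under $\pro$ is in equivariant bijection with $\mathcal{A}^{\widehat{B}}(\Gamma(P,\widehat{R_a^b}))$ under $\row$, via the map $\Phi$ of Definition~\ref{def:mainbijection}. With $P=[n]$ and $R_a^b = R^q$, this is exactly $\mathcal{A}^{\widehat{B}}(\Gamma([n],\widehat{R^q}))$ under $\row$, as claimed. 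Composing the two equivariant bijections yields the desired equivariant bijection between $\mathrm{SSYT}(\lambda/\mu,q)$ under $\pro$ and $\mathcal{A}^{\widehat{B}}(\Gamma([n],\widehat{R^q}))$ under $\row$.

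For the final assertion, $\Phi(\pro(T))=\togpro(\Phi(T))$, I would note that this is a direct restatement of the equivariance in Theorem~\ref{thm:moregeneralpro}, transported across the identification of Proposition~\ref{prop:SSYT}. Concretely, a tableau $T$ corresponds to the $P$-strict labeling $f$ with the same entries (reading box $(i,j)$ as position $(p_i,j)$); Theorem~\ref{thm:moregeneralpro} gives $\Phi(\pro(f))=\togpro(\Phi(f))$, and since $T$ and $f$ are identified along with their respective $\pro$ actions, the same equation holds with $T$ in place of $f$.

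The main (and essentially only) obstacle is ensuring that the two invoked results genuinely compose, i.e.\ that the parameters match and the consistency hypotheses are met. Specifically, Corollary~\ref{cor:abrow} (through Proposition~\ref{prop:hatcoladj} and Theorem~\ref{thm:rowrow}) requires $R^q$ to be a \emph{consistent} restriction function on $[n]\times[\lambda_1]^v_u$ so that $\widehat{R^q}$ and the column-adjacency property are available; I would remark that for the skew shape $\lambda/\mu$ the induced global restriction $R^q$ is consistent, so the hypotheses of Corollary~\ref{cor:abrow} are satisfied. Once this bookkeeping is checked, the proof is a two-line composition with no computation, and the equivariance statement follows immediately from Theorem~\ref{thm:moregeneralpro}.
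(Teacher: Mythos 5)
Your proposal matches the paper's proof: the paper likewise invokes Proposition~\ref{prop:SSYT} to identify $\mathrm{SSYT}(\lambda/\mu,q)$ with $\mathcal{L}_{[n]\times[\lambda_1]}(u,v,R^q)$, applies Corollary~\ref{cor:abrow} with $a\equiv 1$, $b\equiv q$ for the first claim, and cites Theorem~\ref{thm:moregeneralpro} for the equivariance equation. Your extra remark on checking consistency of $R^q$ is sound bookkeeping that the paper leaves implicit.
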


\begin{proof}
By Proposition~\ref{prop:SSYT}, $P$-strict labelings $\mathcal{L}_{[n] \times [\lambda_1]}(u,v,R^q)$ with $u$ and $v$ as above are exactly semistandard Young tableaux of shape $\lambda/\mu$ with largest entry $q$, $\mathrm{SSYT}(\lambda/\mu, q)$. Therefore, the first claim follows from Corollary~\ref{cor:abrow}, where $a(p_i) = 1$ and $b(p_i) = q$ for all $1 \leq i \leq n$. The second claim follows directly from Theorem~\ref{thm:moregeneralpro}.
\end{proof}

When $P = [n]$, the lemma underlying our first main theorem is equivalent to a result of Kirillov and Berenstein regarding the correspondence between Bender-Knuth involutions on semistandard Young tableaux and \emph{elementary transformations}
on \emph{Gelfand-Tsetlin patterns}. We define these objects below and then state their result, Corollary~\ref{cor:GT}, in our notation. We then prove a more general result from which this follows, Theorem~\ref{thm:gtlike}, as a corollary of our first main theorem.

\begin{definition}
\label{def:GT}
Given partitions $\lambda = (\lambda_1,\ldots,\lambda_n)$, $\mu = (\mu_1,\ldots,\mu_m)$ such that $\mu\subset\lambda$, and $q$, 
a \textbf{Gelfand-Tsetlin pattern from $\mu$ to $\lambda$ with $q+1$ rows}  is a trapezoidal array of nonnegative integers $a = \{a_{ij}\}_{0\leq i\leq q,1\leq j\leq i+m}$ satisfying the following whenever the indices are defined:
\begin{enumerate}
\item $a_{0j}=\mu_j$,
\item $a_{ij}\geq a_{i-1,j}$,
\item $a_{ij}\geq a_{i+1,j+1}$, and
\item $a_{qj}=\lambda_j$, where if $j>|\lambda|$, we say $\lambda_j=0$
\end{enumerate}
Let the set of Gelfand-Tsetlin patterns from $\mu$ to $\lambda$ with $q+1$ rows be denoted $\mathrm{GT}(\lambda,\mu,q)$.
\end{definition}

\begin{definition}
\label{def:elem}
Let $a\in \mathrm{GT}(\lambda,\mu,q)$. For $1 \leq k \leq q-1$, define the \textbf{elementary transformation}  $t_k(a):\mathrm{GT}(\lambda,\mu,q)\rightarrow \mathrm{GT}(\lambda,\mu,q)$ as 
\[ t_k(a_{ij}):= 
\begin{cases}
a_{i,j} &  i \neq k \\
\min(a_{i-1,j-1}, a_{i+1,j}) + \max(a_{i-1,j},a_{i+1,j+1}) - a_{ij} & \mbox{otherwise},
\end{cases} \]
where we consider $a_{ij}=\infty$ if $j<1$ and $a_{ij}=0$ if $j>i+m$.
\end{definition}

We use the mechanism of our main theorem to prove Theorem \ref{thm:gtlike}, which yields the following result. We prove this corollary right before Remark~\ref{remark:KirBer}.

\begin{corollary}[\protect{\cite[Proposition 2.2]{KirBer95}}] 
\label{cor:GT}
The set $\mathrm{SSYT}(\lambda/\mu,q)$ is in bijection with $\mathrm{GT}(\tilde{\lambda},\tilde{\mu},q)$, where $\tilde{\lambda}_i:= \lambda_1-\mu_{n-i+1}$ and $\tilde{\mu}_i:= \lambda_1-\lambda_{n-i+1}$. Moreover, $\rho_k$ on $\mathrm{SSYT}(\lambda/\mu,q)$ corresponds to $t_{q-k}$ on $\mathrm{GT}(\tilde{\lambda},\tilde{\mu},q)$.
\end{corollary}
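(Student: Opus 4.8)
The plan is to obtain this as the specialization to $P=[n]$ of the bijection $\Phi$ (Definition~\ref{def:mainbijection}) and the equivariance of Lemma~\ref{lem:equiv}. By Corollary~\ref{cor:SSYT} we already know $\mathrm{SSYT}(\lambda/\mu,q)$ under $\pro$ is in equivariant bijection with $\mathcal{A}^{\widehat{B}}(\Gamma([n],\widehat{R^q}))$ under $\togpro$, with $\rho_k$ carried to the toggle $\tau_k$. So it suffices to exhibit an equivalence between $\mathcal{A}^{\widehat{B}}(\Gamma([n],\widehat{R^q}))$ and $\mathrm{GT}(\tilde\lambda,\tilde\mu,q)$ that turns the simultaneous toggle $\tau_k$ into the elementary transformation $t_{q-k}$; composing with Lemma~\ref{lem:equiv} then gives the stated correspondence of $\rho_k$ with $t_{q-k}$.

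First I would make the bijection explicit using Lemma~\ref{lem:j}. For $T\in\mathrm{SSYT}(\lambda/\mu,q)$, let $\nu^{(k)}$ be the partition shape formed by the cells of $T$ with entry at most $k$, so $\nu^{(0)}=\mu$ and $\nu^{(q)}=\lambda$. Counting positions in the fiber $F_{p_i}$ with the convention of Definition~\ref{def:j} gives $j^{p_i}_k=\nu^{(k)}_i+1$, whence, with $\ell=\lambda_1$,
\[ \sigma(p_i,k)=\ell+1-j^{p_i}_k=\lambda_1-\nu^{(k)}_i, \qquad \sigma=\Phi(T). \]
I would then set $a_{kj}:=\sigma(p_{n-j+1},\,q-k)=\lambda_1-\nu^{(q-k)}_{n-j+1}$ and verify that $a=\{a_{kj}\}\in\mathrm{GT}(\tilde\lambda,\tilde\mu,q)$. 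The monotonicity $\nu^{(q-k)}\subseteq\nu^{(q-k+1)}$ gives axiom $a_{kj}\ge a_{k-1,j}$, and the fact that $\nu^{(m)}/\nu^{(m-1)}$ is a horizontal strip (at most one entry $m$ per column) gives $\nu^{(q-k)}_{n-j+1}\le\nu^{(q-k-1)}_{n-j}$, i.e.\ axiom $a_{kj}\ge a_{k+1,j+1}$. The boundary rows come out as $a_{0j}=\lambda_1-\lambda_{n-j+1}=\tilde\mu_j$ and $a_{qj}=\lambda_1-\mu_{n-j+1}=\tilde\lambda_j$, exactly matching Definition~\ref{def:GT}. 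This computation also explains the complementation $\lambda_1-\bullet$ and reversal $n-j+1$ in the definitions of $\tilde\lambda,\tilde\mu$: they are precisely the coordinate change forced by $\sigma(p,k)=\ell+1-j^p_k$ together with the reversal of the chain $[n]$. Invertibility is inherited from Lemma~\ref{lem:bij}.

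For the dynamics, note that under $a_{kj}=\sigma(p_{n-j+1},q-k)$ the fiber-level toggle $\tau_k$, which toggles every element $(p_i,k)$ of $\Gamma([n],\widehat{R^q})$ simultaneously, is exactly a simultaneous toggle of the entire row $q-k$ of the array $a$. It then remains to check that the piecewise-linear toggle formula $\alpha_\sigma+\beta_\sigma-\sigma$ of Definition~\ref{def:toggle1}, read off the covering relations of $\Gamma([n],\widehat{R^q})$, coincides with the stencil $\min(a_{i-1,j-1},a_{i+1,j})+\max(a_{i-1,j},a_{i+1,j+1})-a_{ij}$ of Definition~\ref{def:elem}. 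Concretely, the two upper covers of $(p_i,k)$ (one within its fiber, one in the fiber of $p_{i+1}$) must map to the entries $a_{i-1,j-1}$ and $a_{i+1,j}$ controlling the $\min$, and the two lower covers to $a_{i-1,j}$ and $a_{i+1,j+1}$ controlling the $\max$, with the degenerate GT conventions ($a_{ij}=\infty$ for $j<1$, $a_{ij}=0$ for $j>i+m$) accounting for absent covers and for values pinned by $\hat B$. This yields $\tau_k\leftrightarrow t_{q-k}$, and combining with the previous paragraph gives $\rho_k\leftrightarrow t_{q-k}$.

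The main obstacle is precisely this last identification: matching, covering relation by covering relation, the neighborhood of $(p_i,k)$ in $\Gamma([n],\widehat{R^q})$ against the four-entry stencil of $t_{q-k}$ under the complemented-and-reversed coordinate dictionary, and reconciling the two boundary conventions (the $\hat B$-fixed elements of $\Gamma$ versus the $\infty$/$0$ out-of-pattern conventions of Definition~\ref{def:elem}), including the trapezoidal shape of $\mathrm{GT}(\tilde\lambda,\tilde\mu,q)$ arising as the set of indices for which $\sigma(p_{n-j+1},q-k)$ is a genuine (non-sentinel) element of $\Gamma([n],\widehat{R^q})$. Once the dictionary is pinned down these checks are routine verifications of inequalities and off-by-one shifts among $k$, $q-k$, and the GT row index; everything else follows formally from Corollary~\ref{cor:SSYT}, Lemma~\ref{lem:equiv}, and Lemma~\ref{lem:j}.
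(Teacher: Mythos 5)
Your proposal is correct and rests on the same underlying mechanism as the paper's proof (the map $\sigma(p,k)=\ell+1-j^p_k$ of Lemma~\ref{lem:j}, the rotation $a_{ij}=\sigma(p_{n+1-j},q-i)$, and the match between the piecewise-linear toggle and the stencil of Definition~\ref{def:elem}), but it is packaged differently. The paper does not work with $\mathcal{A}^{\widehat{B}}(\Gamma([n],\widehat{R^q}))$ directly: it first proves Theorem~\ref{thm:gtlike}, introducing the uniform restriction function $\overline{R}(p)=\{0,1,\ldots,q+1\}$ so that $\Gamma([n],\overline{R})$ is a full parallelogram whose covering relations are \emph{literally} the four-entry GT stencil, and whose elements are in bijection with all entries of the trapezoidal array; Corollary~\ref{cor:GT} is then a pure change of coordinates. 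Your route skips this intermediate object, which makes the bijection more concrete (your formula $\sigma(p_i,k)=\lambda_1-\nu^{(k)}_i$ and the horizontal-strip verification of the GT inequalities are cleaner than the paper's verification via covering relations), but it concentrates all the difficulty in the step you call routine. That step is precisely where the paper does real work: one must show that the GT entries $a_{q-k,j}$ with $k\notin R^q(p_{n+1-j})^*$ are constant over the whole set (so that $t_{q-k}$ fixes them), and that for genuine elements whose covers in $\Gamma([n],\widehat{R^q})$ skip a value of $k$ (which happens whenever $R^q(p)$ has a gap, as it can for skew shapes), the toggle still computes the same min and max as the full stencil. This is the content of Lemma~\ref{lem:pkfixed} together with the case analysis in the second half of the proof of Theorem~\ref{thm:gtlike}, and it deserves more than a parenthetical: the argument that a value constant on all of $\mathcal{A}^{\widehat{B}}$ forces the corresponding toggle to act as the identity, and the chain-of-fixed-elements argument in Lemma~\ref{lem:pkfixed}, are the only genuinely non-formal parts of the whole corollary. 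You correctly identified the obstacle, so nothing in your plan would fail, but to be complete you would need to supply essentially that lemma.
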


To put this corollary in the language of our main theorem, we show that $\mathrm{GT}(\tilde{\lambda},\tilde{\mu},q)$ is equivalent to $\mathcal{A}^{\overline{B}}(\Gamma([n],\overline{R}))$, where the restriction function $\overline{R}$ and the bounding function $\overline{B}$ are defined below.

\begin{definition}
For any convex subposet $P \times [\ell]^v_u$ and global bound $q$, let $\overline{R}$ be the (not necessarily consistent) restriction function on $P$ given by $\overline{R}(p) = \{0,1,\ldots, q+1\}$ for all $p \in P$, and let $\overline{B}$ be defined on $\Gamma(P,\overline{R})$ as $\overline{B}(p,0) = \ell - u(p)$ and $\overline{B}(p,q) = v(p)$.
\end{definition}

Thus the structure of $\Gamma(P,\overline{R})$ consists of the chains $(p,0) \gtrdot (p,1) \gtrdot \cdots \gtrdot (p,q)$ and we have $(p,k) \lessdot (p',k+1)$ whenever $p \lessdot_P p'$ and $0 \leq k \leq q-1$.  As we will see in the proof, these covering relations provide the inequality conditions $(2)$ and $(3)$ from Definition \ref{def:GT} in $\mathcal{A}^{\overline{B}}(\Gamma(P,\overline{R}))$ when $P = [n]$, and $\overline{B}$ gives conditions $(1)$ and $(4)$.

By generalizing semistandard tableaux to $P$-strict labelings, we are able to prove the equivariance result of Corollary \ref{cor:GT} for any poset $P$. In this way, $\mathcal{A}^{\overline{B}}(\Gamma(P,\overline{R}))$ can be considered a generalization of Gelfand-Tsetlin patterns.

\begin{theorem} \label{thm:gtlike}
The set of $P$-strict labelings $\mathcal{L}_{P \times [\ell]}(u,v,R^q)$ is in bijection with the set $\mathcal{A}^{\overline{B}}(\Gamma(P,\overline{R}))$ and $\rho_k$ on $\mathcal{L}_{P \times [\ell]}(u,v,R^q)$ corresponds to $\tau_k$ on $\mathcal{A}^{\overline{B}}(\Gamma(P,\overline{R}))$.
\end{theorem}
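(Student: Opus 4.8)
The plan is to imitate the bijection $\Phi$ of Definition~\ref{def:mainbijection} and the equivariance argument of Lemma~\ref{lem:equiv}, with the consistent restriction $\hat{R}$ replaced by the full-interval restriction $\overline{R}$. Concretely, I would define $\overline{\Phi}\colon \mathcal{L}_{P \times [\ell]}(u,v,R^q)\to \mathcal{A}^{\overline{B}}(\Gamma(P,\overline{R}))$ by the closed form of Lemma~\ref{lem:j}, now applied at every $k\in\overline{R}(p)^*=\{0,1,\dots,q\}$, namely $\overline{\Phi}(f)(p,k)=\ell+1-j^p_k$. Recall that $\Gamma(P,\overline{R})$ is the uniform grid with fiber covers $(p,k)\lessdot(p,k-1)$ and cross covers $(p,k)\lessdot(p',k+1)$ whenever $p\lessdot_P p'$; in particular every cover changes the second coordinate by exactly $1$, so $\mathcal{A}^{\overline{B}}(\Gamma(P,\overline{R}))$ is automatically column-adjacent.

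First I would check that $\overline{\Phi}$ is well-defined and invertible, following Lemma~\ref{lem:bij}. Monotonicity along a fiber is $j^p_k\le j^p_{k+1}$, which is immediate because a label exceeding $k+1$ exceeds $k$; across a cover $p\lessdot_P p'$ one needs $j^{p'}_{k+1}\le j^p_k$, which follows from strictness on layers (if $f(p,i)>k$ then $f(p',i)>k+1$), handling the top position via the convention in Definition~\ref{def:j}. The boundary values are forced: $j^p_0$ is the first position of $F_p$ and $j^p_q=\ell-v(p)+1$, so $\overline{\Phi}(f)(p,0)=\ell-u(p)$ and $\overline{\Phi}(f)(p,q)=v(p)$, exactly $\overline{B}$. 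Invertibility is by recovering $j^p_k=\ell+1-\sigma(p,k)$ and reading off $f$, whose label in position $i$ of $F_p$ is the unique $k$ with $j^p_{k-1}\le i<j^p_k$.

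For the equivariance $\overline{\Phi}\circ\rho_k=\tau_k\circ\overline{\Phi}$, where $\tau_k=\prod_{p}\tau_{(p,k)}$, I would rerun the computation of Lemma~\ref{lem:equiv}, noting that it refers only to the positions $j^p_{k'}$ that $\overline{\Phi}$ now records at all integers. The toggles split according to whether both $k$ and $k+1$ lie in $R^q(p)$. When they do, $R^q(p)_{>k}=k+1$, so the fiber cover $(p,k+1)$ of $\Gamma(P,\overline{R})$ plays the role that $(p,R^q(p)_{>k})$ played in $\Gamma(P,\hat{R})$, and the three-case count of raisable and lowerable labels is identical, giving $\overline{\Phi}(\rho_k(f))(p,k)=\tau_{(p,k)}(\overline{\Phi}(f))(p,k)$. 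When $k,k+1$ are not both in $R^q(p)$, I claim both sides act trivially on the $p$-part: $\rho_k$ fixes $F_p$, since a label that could be swapped would, by monotonicity between adjacent positions and layers, also be realizable at the forbidden value, contradicting consistency of $R^q$; and $\tau_{(p,k)}$ fixes $\sigma$ because $(p,k)$ is pinned.

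I expect this pinning statement --- the analog of Lemma~\ref{lem:pkfixed} for $\overline{R}$ --- to be the main obstacle. The argument should be a chaining argument: if $k+1\notin R^q(p)$, then, since $F_p$ has no label equal to $k+1$, the count of $k$-labels forces $\overline{\sigma}(p,k)=\overline{\sigma}(p,k+1)$; iterating this identity through the covers of $P$ and down a fiber, exactly as in Lemma~\ref{lem:pkfixed}, reaches an element of $\mathrm{dom}(\overline{B})$, where the value is fixed, and the case $k-1\notin R^q(p)$ is symmetric. Because every cover in $\Gamma(P,\overline{R})$ increments the second coordinate by one, each step of this chain relates adjacent coordinates with no ambiguity, as in the proof of Proposition~\ref{prop:hatcoladj}. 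Once well-definedness, invertibility, the pinning lemma, and the two toggle cases are in place, Theorem~\ref{thm:gtlike} follows; the Gelfand--Tsetlin specialization, Corollary~\ref{cor:GT}, then drops out by setting $P=[n]$ and matching the fiber and cross covers to conditions (2) and (3), and $\overline{B}$ to conditions (1) and (4), of Definition~\ref{def:GT}.
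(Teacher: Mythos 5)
Your proposal is correct and is essentially the paper's proof: you use the same map $\overline{\Phi}(f)(p,k)=\ell+1-j^p_k$ (the paper's $\Psi$ of Definition~\ref{def:gtlikebij}), the same well-definedness and invertibility checks as Lemma~\ref{lem:gtlikebij}, and the same reliance on the chaining argument of Lemma~\ref{lem:pkfixed} to handle the pinned elements $(p,k)$ with $k\notin R^q(p)^*$. The only organizational difference is that the paper establishes equivariance by transporting toggles through $\Phi\Psi^{-1}$ and comparing the covers of $(p,k)$ in $\Gamma(P,\widehat{R^q})$ versus $\Gamma(P,\overline{R})$ so that Lemma~\ref{lem:equiv} can be reused as a black box, whereas you rerun the raisable/lowerable count directly on the uniform grid $\Gamma(P,\overline{R})$; both routes go through.
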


We first define the bijection map using the value $j^p_k$ from Definition \ref{def:j}.  Recall from Definition \ref{def:position} that we consider the label $f(p,i)$ to be in \emph{position} $i$.

\begin{definition} \label{def:gtlikebij}
Let $\Psi: \mathcal{L}_{P \times [\ell]}(u,v,R^q) \rightarrow \mathcal{A}^{\overline{B}}(\Gamma(P,\overline{R}))$ where $\Psi(f)(p,k) = \ell + 1 - j^p_k$.  We can treat $\Psi(f)(p,k)$ as the number of positions $j$ in the fiber $F_p$ such that $f(p,j)$ is larger than $k$, where we consider $f(p,i) > k$ in the positions $\ell + 1 - v(p) \leq i \leq \ell$ for which $f$ is not defined.
\end{definition}

Refer to Figure \ref{fig:GT} for an example of the map $\Psi$.

\begin{lemma} \label{lem:gtlikebij}
$\Psi$ is a bijection. 
\end{lemma}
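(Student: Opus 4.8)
The plan is to verify directly that $\Psi$ lands in $\mathcal{A}^{\overline{B}}(\Gamma(P,\overline{R}))$ and then to exhibit an explicit inverse. Since $\Psi(f)(p,k)=\ell+1-j^p_k$ is exactly the formula appearing in Lemma~\ref{lem:j}, membership in the codomain is largely automatic. The values lie in $\{0,\ldots,\ell\}$ because $j^p_k$ ranges over the positions $u(p)+1,\ldots,\ell-v(p)+1$, so $\Psi(f)(p,k)\in[v(p),\ell-u(p)]$. The two $\overline{B}$-conditions come for free from the global bound $R^q$: every label exceeds $0$, so $j^p_0=u(p)+1$ and $\Psi(f)(p,0)=\ell-u(p)=\overline{B}(p,0)$; and no label exceeds $q$, so $j^p_q=\ell-v(p)+1$ and $\Psi(f)(p,q)=v(p)=\overline{B}(p,q)$.

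For well-definedness it then remains to check that $\Psi(f)$ is order-preserving on the two kinds of covers of $\Gamma(P,\overline{R})$ recorded after the definition of $\overline{R}$. On a within-fiber cover $(p,k+1)\lessdot(p,k)$ this amounts to the monotonicity $j^p_{k+1}\ge j^p_k$, which is immediate since raising the threshold can only delay the first crossing. The crux is the between-fiber cover $(p_1,k)\lessdot(p_2,k+1)$ with $p_1\lessdot_P p_2$, where I must show $j^{p_2}_{k+1}\le j^{p_1}_k$. Writing $j=j^{p_1}_k$: if $j$ is a genuine position of $F_{p_1}$ then $f(p_1,j)\ge k+1$, and then either $j\in F_{p_2}$, in which case strictness on layers gives $f(p_2,j)\ge k+2>k+1$ and hence $j^{p_2}_{k+1}\le j$, or $j>\ell-v(p_2)\ge j^{p_2}_{k+1}-1$ already forces $j^{p_2}_{k+1}\le j$; the remaining case $j=\ell-v(p_1)+1$ follows from $v(p_1)\le v(p_2)$. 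This short case analysis, forced by the fact that $u$, $v$, and hence the fiber supports, vary with $p$, is the only delicate point, and I expect it to be the main obstacle.

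For invertibility I would reconstruct $f$ from $\sigma\in\mathcal{A}^{\overline{B}}(\Gamma(P,\overline{R}))$ by setting $j^p_k:=\ell+1-\sigma(p,k)$ and placing the label $k$ in exactly the positions $j^p_{k-1}\le i<j^p_k$ of each fiber $F_p$, for $1\le k\le q$. The within-fiber order relations of $\sigma$ make $k\mapsto j^p_k$ weakly increasing, and the values $\overline{B}(p,0),\overline{B}(p,q)$ pin the endpoints to $u(p)+1$ and $\ell-v(p)+1$, so these half-open intervals partition the fiber range and yield a weakly increasing fiber with entries in $\{1,\ldots,q\}$. Strictness on layers for the reconstructed $f$ follows by reversing the between-fiber computation above: the relation $\sigma(p_1,k-1)\le\sigma(p_2,k)$, i.e.\ $j^{p_1}_{k-1}\ge j^{p_2}_k$, forces $f(p_2,i)>k$ whenever $f(p_1,i)=k$. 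Finally, since the reconstructed $f$ is itself a valid strict-on-layers, weak-on-fibers, $\{1,\ldots,q\}$-valued labeling, each of its entries $f(p,i)$ is by definition achievable and hence lies in the consistent set $R^q(p)$ (Definition~\ref{def:consistent}), so $f\in\mathcal{L}_{P\times[\ell]}(u,v,R^q)$. As this construction inverts $\Psi$ on both sides, $\Psi$ is a bijection.
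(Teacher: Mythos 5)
Your proof is correct and follows essentially the same route as the paper's: verify the within-fiber and between-fiber cover relations plus the $\overline{B}$-boundary conditions, then invert by reading off the label $k$ on the positions between consecutive thresholds $j^p_{k-1}$ and $j^p_k$. Your treatment of the between-fiber cover is somewhat more explicit about the boundary cases coming from $u$ and $v$ than the paper's counting phrasing, but the underlying argument is the same.
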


\begin{proof}
We begin by verifying that $\Psi(f) \in \mathcal{A}^{\overline{B}}(\Gamma(P,\overline{R}))$. For $1 \leq k \leq q$, $(p,k) \lessdot (p,k-1)$. Since $f$ is weakly increasing on fibers, we have $\Psi(f)(p,k) \leq \Psi(f)(p, k-1)$, as there must be at least as many positions greater than $k-1$ as are greater than $k$.  If $p \lessdot_P p'$ and $0 \leq k \leq q-1$, then $(p,k) \lessdot_{\Gamma(P,R^q)} (p', k+1)$.  Since there are $\Psi(f)(p,k)$ positions greater than $k$ in $f(F_p)$, there must be at least as many positions greater than $k+1$ in $f(F_{p'})$ in order to accommodate those values in $f(F_p)$, as $f$ is strictly increasing on layers.  Thus $\Psi(f)(p,k) \leq \Psi(f)(p',k+1)$, so $\Psi(f)(p,k)$ respects all covering relations in $\Gamma(P,\overline{R})$.  Moreover,  $\Psi(f)(p,0) = \ell - u(p)$ since the first position greater than zero is at $f(p,u(p)+1)$ for all $p$, and $\Psi(f)(p,q)$ = $v(p)$ since the only positions considered greater than $q$ are those after the end of the fiber.  Thus $\Psi(f) \in \mathcal{A}^{\overline{B}}(\Gamma(P,\overline{R}))$.

For the reverse map, let $\overline{\sigma} \in \mathcal{A}^{\overline{B}}(\Gamma(P,\overline{R}))$ and let $\Psi^{-1}(\overline{\sigma})(p,\ell + 1 - i) = k$ for $i$ such that $\overline{\sigma}(p,k) < i \leq \overline{\sigma}(p, k-1)$.  Since $\overline{\sigma}(p, k) \leq \overline{\sigma}(p,k-1)$, $\Psi^{-1}(\overline{\sigma})$ is weakly increasing on fibers, and because $\overline{\sigma}(p,k) \geq \overline{\sigma}(p',k-1)$ for all $p' \lessdot_P p$, if $\Psi^{-1}(\overline{\sigma})(p,j) = k$ then $\Psi^{-1}(\overline{\sigma})(p',j) \leq k-1$, so $\Psi^{-1}(\overline{\sigma})$ is strictly increasing across layers.  Thus $\Psi^{-1}(\overline{\sigma}) \in \mathcal{L}_{P \times [\ell]}(u,v,R^q)$.

Now $\Psi(\Psi^{-1}(\overline{\sigma}))(p,k) = \overline{\sigma}(k)$ since there are $\overline{\sigma}(p,k)$ positions greater than $k$ in $\Phi^{-1}(\overline{\sigma})(F_p)$, and, if $f(p,i) = k$, $\Psi^{-1}(\Psi(f))(p,i) = k$ since $\Psi(f)(p,k) < \ell + 1 - (\ell + 1 - i) = i$.  Therefore $\Psi$ is a bijection.
\end{proof}

\begin{proof}[Proof of Theorem \ref{thm:gtlike}]
Via the maps $\Phi$ from Definition \ref{def:mainbijection} and $\Psi$ from Definition \ref{def:gtlikebij}, $\mathcal{A}^{\overline{B}}(\Gamma(P,\overline{R}))$ is in bijection with $\mathcal{A}^{\widehat{B}}(\Gamma(P,\widehat{R^q}))$. We wish to show this bijection $\Phi\Psi^{-1}$ is equivariant under the action of $\tau_{(p,k)}$.

Let $\overline{\sigma} \in \mathcal{A}^{\overline{B}}(\Gamma(P,\overline{R}))$, $f = \Psi^{-1}(\overline{\sigma}) \in \mathcal{L}_{P \times [\ell]}(u,v,R^q)$, and $\sigma = \Phi(f) \in \mathcal{A}^{\widehat{B}}(\Gamma(P,\widehat{R^q}))$.  By Lemma \ref{lem:j}, $\sigma(p,k) = \ell + 1 - j_k^p = \overline{\sigma}(p,k)$  where $k \in R^q(p)^*$ (that is, for $(p,k) \in \Gamma(P,\widehat{R^q}) \setminus \text{dom} \hat{B}$).  Suppose $k \notin R^q(p)^*$. If $k < \min R^q(p)$, then $f(p,i)$ is always greater than $k$, so $\overline{\sigma}(p,k) = \ell - u(p)$. If $k \geq \max R^q(p)^*$, then $f(p,i)$ is always less than or equal to $k$, so $\overline{\sigma}(p,k) = v(p)$.  Finally, if $k_1$ is the largest value in $R^q(p)^*$ such that $k_1 < k$, then $\overline{\sigma}(p,k) = \overline{\sigma}(p,k_1)$, since the number of positions greater than $k_1$ must be the same as the number of positions greater than $k$.  By Lemma \ref{lem:pkfixed}, $(p,k_1)$ is fixed in $\mathcal{A}^{\widehat{B}}(\Gamma(P,\widehat{R^q}))$ and therefore in $\mathcal{A}^{\overline{B}}(\Gamma(P,\overline{R}))$ since $k_1 \in R^q(p)^*$.  Thus, whenever $k \notin R^q(p)^*$, $\overline{\sigma}(p,k)$ is fixed, so $\tau_{(p,k)}$ acts as the identity on $\mathcal{A}^{\overline{B}}(\Gamma(P,\overline{R}))$. Now, for equivariance, we need only show that $\tau_{(p,k)}(\sigma)(p,k) = \tau_{(p,k)}(\overline{\sigma})(p,k)$ whenever $k \in R^q(p)^*$.

Let $k \in R^q(p)^*$.  If $(p,k)$ covers and is covered by the same elements in $\Gamma(P,\widehat{R^q})$ as in $\Gamma(P,\overline{R})$, then we are done, so we will consider the cases in which these covers differ.  Suppose $k_1 > k+1$ and either $(p,k) \gtrdot (p,k_1)$ in $\Gamma(P,\widehat{R^q})$ or there exists $p' \gtrdot_P p$ such that $(p,k) \lessdot (p',k_1)$.  In each case, by definition of $\Gamma$, $k+1 \notin R^q(p)^*$ so, by Lemma \ref{lem:pkfixed}, $(p,k)$ is fixed in $\mathcal{A}^{\widehat{B}}(\Gamma(P,\widehat{R^q}))$ and therefore in $\mathcal{A}^{\overline{B}}(\Gamma(P,\overline{R}))$. Now suppose $k_1 < k-1$ and either $(p,k) \lessdot (p,k_1)$ or there exists $p' \lessdot_P p$ such that $(p,k) \gtrdot (p',k_1)$.  In the first case, $\sigma(p,k_1) = \overline{\sigma}(p,k_1) = \overline{\sigma}(p,k-1)$. In the second case, $k-1 \notin R^q(p')$, otherwise we would have $(p,k) \gtrdot (p',k-1)$, so $\sigma(p',k_1) = \overline{\sigma}(p',k_1) = \overline{\sigma}(p',k-1)$.

In both cases where the covers in $\Gamma(P,\widehat{R^q})$ differ from $\Gamma(P,\overline{R})$, the minimum value of the upper covers and the maximum value of the lower covers of $(p,k)$ is unchanged between $\mathcal{A}^{\widehat{B}}(\Gamma(P,\widehat{R^q}))$ and $\mathcal{A}^{\overline{B}}(\Gamma(P,\overline{R}))$. Thus, $\tau_{(p,k)}(\sigma)(p,k) = \tau_{(p,k)}(\overline{\sigma})(p,k)$.

By the above, $\tau_k$ on $\mathcal{A}^{\overline{B}}(\Gamma(P,\overline{R}))$ is equivalent to $\tau_k$ on $\mathcal{A}^{\widehat{B}}(\Gamma(P,\widehat{R^q}))$.  Thus, by Lemma \ref{lem:equiv}, $\tau_k$ on $\mathcal{A}^{\overline{B}}(\Gamma(P,\overline{R}))$ corresponds to $\rho_k$ on $\mathcal{L}_{P \times [\ell]}(u,v,R^q)$.  
\end{proof}

In the following proof of the Kirillov and Berenstein result, we consider a Gelfand-Tsetlin pattern as a parallelogram-shaped array $\{a_{ij} \}_{0 \leq i \leq q, 1 \leq j \leq n}$ with the same properties as Definition \ref{def:GT}.

\begin{proof}[Proof of Corollary \ref{cor:GT}]
Following Proposition \ref{prop:SSYT}, given $\mathrm{SSYT}(\lambda/\mu,q)$, define  $u(p_i) = \mu_i$ and \linebreak $v(p_i) = \lambda_1 - \lambda_i$ for all $1\leq i\leq n$. Then $\mathrm{SSYT}(\lambda/\mu,q)$ is equivalent to $\mathcal{L}_{[n]\times[\lambda_1]}(u,v,R^q)$.  Thus, to apply Theorem \ref{thm:gtlike}, consider $\mathcal{A}^{\overline{B}}(\Gamma([n],\overline{R}))$ where $\overline{B}$ is defined using the $u$ and $v$ above, that is, $\overline{B}(p_i,0) = \lambda_1 - \mu_i$ and $\overline{B}(p_i,q) = \lambda_1 - \lambda_i$ for $1 \leq i \leq n$.

Let $\overline{\sigma} \in \mathcal{A}^{\overline{B}}(\Gamma([n],\overline{R}))$. Then the array given by $a_{ij} = \overline{\sigma}(p_{n + 1 - j},q-i)$ for $0 \leq i \leq q$ and $1 \leq j \leq n$ satisfies the inequalities $a_{ij} \geq a_{i-1,j}$ and $a_{ij} \geq a_{i+1,j+1}$, since $(p_{n + 1 - j}, q-i) \gtrdot (p_{n+1-j},q-i+1)$ and $(p_{n + 1 - j}, q-i) \gtrdot (p_{n-j},q-i-1)$ in $\Gamma([n],\overline{R})$.  Additionally, $a_{0j} = \overline{\sigma}(p_{n + 1 - j},q) = \lambda_1 - \mu_{n + 1 - j}$ and $a_{qj} = \overline{\sigma}(p_{n + 1 - j},0) = \lambda_1 - \lambda_{n + 1 - j}$.  Thus $\{a_{ij}\} \in \mathrm{GT}(\tilde{\lambda},\tilde{\mu},q)$.  Since the map $\overline{\sigma} \mapsto \{a_{ij}\}$ given above is invertible (as it simply ``rotates'' the $\Gamma([n],\overline{R})$-partition $\overline{\sigma}$), $\mathcal{A}^{\overline{B}}(\Gamma([n],\overline{R}))$ is equivalent to $\mathrm{GT}(\tilde{\lambda},\tilde{\mu},q)$.

By their respective definitions, the toggle $\tau_{(p_i,k)}$ at $(p_i,k)$ on $\mathcal{A}^{\overline{B}}(\Gamma([n],\overline{R}))$ is exactly the elementary transformation $t_{q-k}$ at $a_{q-k,q-i}$ on $\mathrm{GT}(\tilde{\lambda},\tilde{\mu},q)$, so $\tau_k$ on $\mathcal{A}^{\overline{B}}(\Gamma([n],\overline{R}))$ corresponds to $t_{q-k}$ on $\mathrm{GT}(\tilde{\lambda},\tilde{\mu},q)$. Thus, by Theorem \ref{thm:gtlike}, $t_{q-k}$ on $\mathrm{GT}(\tilde{\lambda},\tilde{\mu},q)$ corresponds to $\rho_k$ on $\mathcal{L}_{[n]\times[\lambda_1]}(u,v,R^q) = \mathrm{SSYT}(\lambda/\mu,q)$.
\end{proof}

\begin{remark} 
\label{remark:KirBer}
Note, Kirillov and Berenstein \cite[Proposition 2.2]{KirBer95}  actually gave a bijection between $\mathrm{SSYT}(\lambda/\mu,q)$ and $\mathrm{GT}({\lambda},{\mu},q)$. Our bijection is dual to theirs, but this is an artifact of our conventions, not a substantive difference. 
See also \cite{SH2019PP} (Appendix A, especially Proposition A.7) and \cite{GF2019}.
\end{remark}

\begin{figure}[htpb]
\begin{center}
\includegraphics[width=\textwidth]{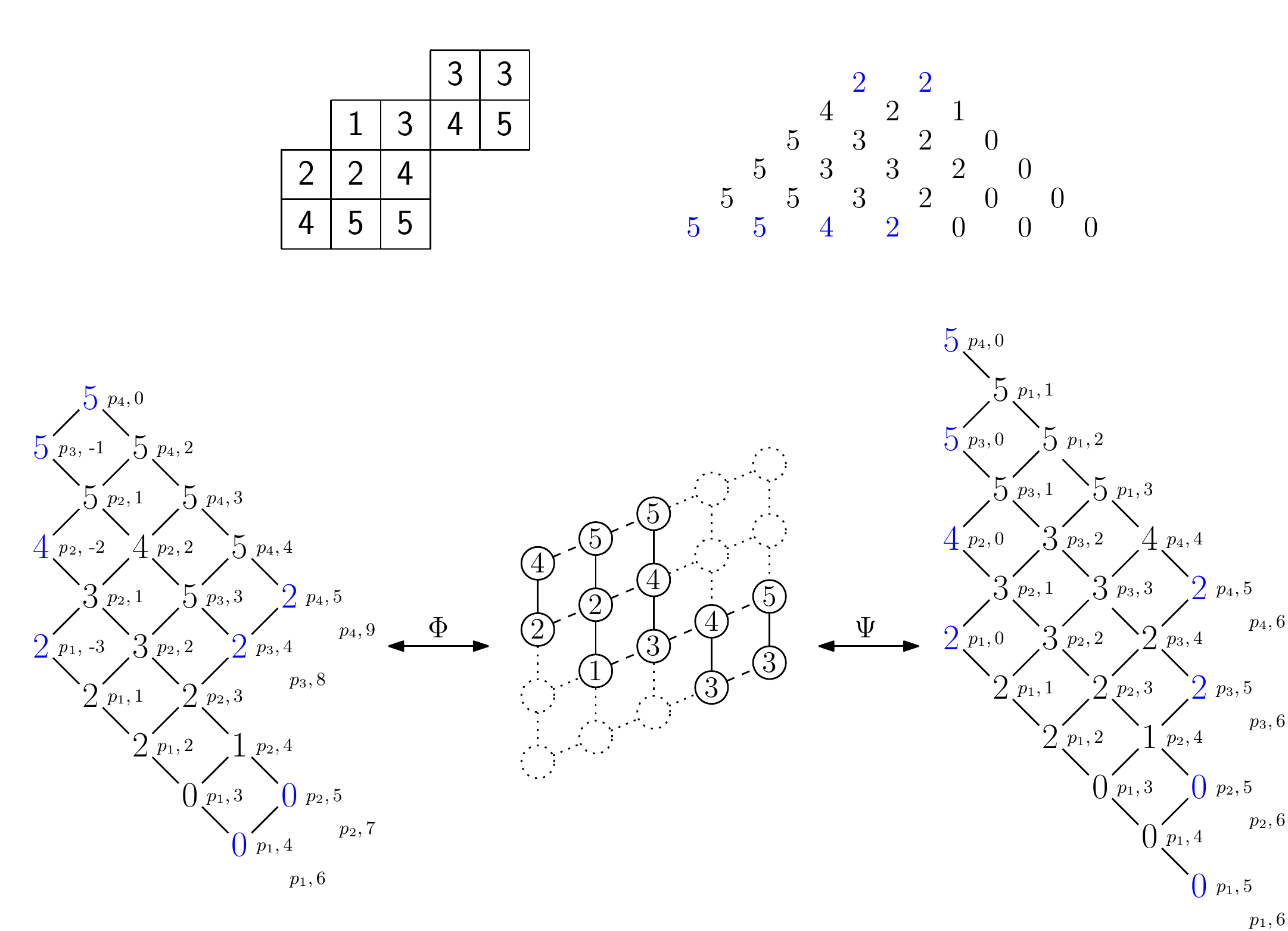}
\end{center} %
\caption{The top row shows a skew semistandard tableau with maximum entry $5$ and its corresponding Gelfand--Tsetlin pattern in $\mathrm{GT}(\tilde{\lambda},\tilde{\mu},5)$ where $\mu = (3,1), \tilde{\mu} = (2,2), \lambda = (5,5,3,3)$, and $\tilde{\lambda} = (5,5,4,2)$.  In the bottom row, the left is an element of $\mathcal{A}^{\widehat{B}}(\Gamma([n],\widehat{R^5}))$ from our main theorem, and on the right is an element of $\mathcal{A}^{\overline{B}}(\Gamma([n],\overline{R}))$ from Theorem \ref{thm:gtlike}.  If we rotate this $\overline{B}$-bounded $\Gamma([n],\overline{R})$-partition $90$ degrees counterclockwise, the labels coincide with those of the Gelfand--Tsetlin pattern above.}
\label{fig:GT}
\end{figure}

 %
%
%

In the case where $\mu=\emptyset$ and $\lambda$ is a rectangle, Corollary~\ref{cor:SSYT} specializes nicely.

\begin{corollary}
\label{cor:rectssytppartition}
The set of semistandard Young tableaux $\mathrm{SSYT}(\ell^n,q)$ under $\pro$ is in equivariant bijection with the set $\mathcal{A}^{\ell}([n]\times[q-n])$ under $\row$. 
\end{corollary}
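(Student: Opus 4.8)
The plan is to obtain this as the rectangular specialization of the graded-poset result already established, Corollary~\ref{cor:GradedGlobalq}, so that essentially no new argument is needed. First I would invoke Proposition~\ref{prop:SSYT} with $\lambda = \ell^n$ and $\mu = \emptyset$. Here $\lambda_1 = \ell$ and $\lambda_i = \ell$ for every $i$, so the bounding functions degenerate: $u(p_i) = \mu_i = 0$ and $v(p_i) = \lambda_1 - \lambda_i = 0$ for all $1 \le i \le n$. Thus $u \equiv v \equiv 0$, and Proposition~\ref{prop:SSYT} identifies $\mathrm{SSYT}(\ell^n,q)$ with $\mathcal{L}_{[n]\times[\ell]}(R^q)$, the full (non-skew) set of $P$-strict labelings of $[n]\times[\ell]$ with global bound $q$.

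Next I would observe that the chain $P = [n]$ is graded: its unique maximal chain has $n$ elements, so in the convention of the excerpt it is graded of rank $n-1$. This lets me apply Corollary~\ref{cor:GradedGlobalq} directly with $P = [n]$ of rank $n-1$, producing an equivariant bijection between $\mathcal{L}_{[n]\times[\ell]}(R^q)$ under $\pro$ and $\mathcal{A}^\ell\bigl([n]\times[q-(n-1)-1]\bigr)$ under $\row$. Since $q-(n-1)-1 = q-n$, the target is exactly $\mathcal{A}^\ell([n]\times[q-n])$, and composing with the identification of the first step yields the claim. Equivalently, one could route through Corollary~\ref{cor:SSYT} instead: with $u=v=0$, Proposition~\ref{prop:uv0} collapses $\mathcal{A}^{\widehat{B}}(\Gamma([n],\widehat{R^q}))$ to $\mathcal{A}^\ell(\Gamma([n],R^q))$, and Lemma~\ref{lem:GradedGlobalq} rewrites $\Gamma([n],R^q)$ as $[n]\times[q-n]$.

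The only points requiring care are bookkeeping rather than mathematics: verifying that the rectangular shape forces $u = v = 0$, so that we land in the setting governed by Proposition~\ref{prop:uv0} rather than the general $\widehat{B}$-bounded one, and keeping the grading convention straight, namely that $[n]$ has rank $n-1$ so that Lemma~\ref{lem:GradedGlobalq} delivers chain length $q-n$ and not $q-n-1$. Because Corollary~\ref{cor:GradedGlobalq} already bundles Proposition~\ref{prop:uv0}, Corollary~\ref{cor:abrow}, and Lemma~\ref{lem:GradedGlobalq}, no separate appeal to the column-adjacency machinery or to the explicit bijection $\Phi$ is needed; the statement falls out as a clean corollary of the preceding development.
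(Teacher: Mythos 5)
Your proposal is correct and follows essentially the same route as the paper, which also derives the statement by combining Proposition~\ref{prop:SSYT} (with $u=v=0$) and Corollary~\ref{cor:GradedGlobalq}. Your extra care with the grading convention (that $[n]$ has rank $n-1$, so the chain length comes out to $q-n$) is exactly the right bookkeeping and matches what the paper's proof implicitly uses.
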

\begin{proof}
By Proposition \ref{prop:SSYT}, $\mathrm{SSYT}(\ell^n, q)$ under $\pro$ is equivalent to $\mathcal{L}_{[n] \times [\ell]}(R^q)$ which, by Corollary \ref{cor:GradedGlobalq}, is in equivariant bijection with $\mathcal{A}^{\ell}([n]\times[q-n])$ under $\row$.
\end{proof}

\begin{figure}[htbp]
\begin{center}
\includegraphics[width = .85\textwidth]{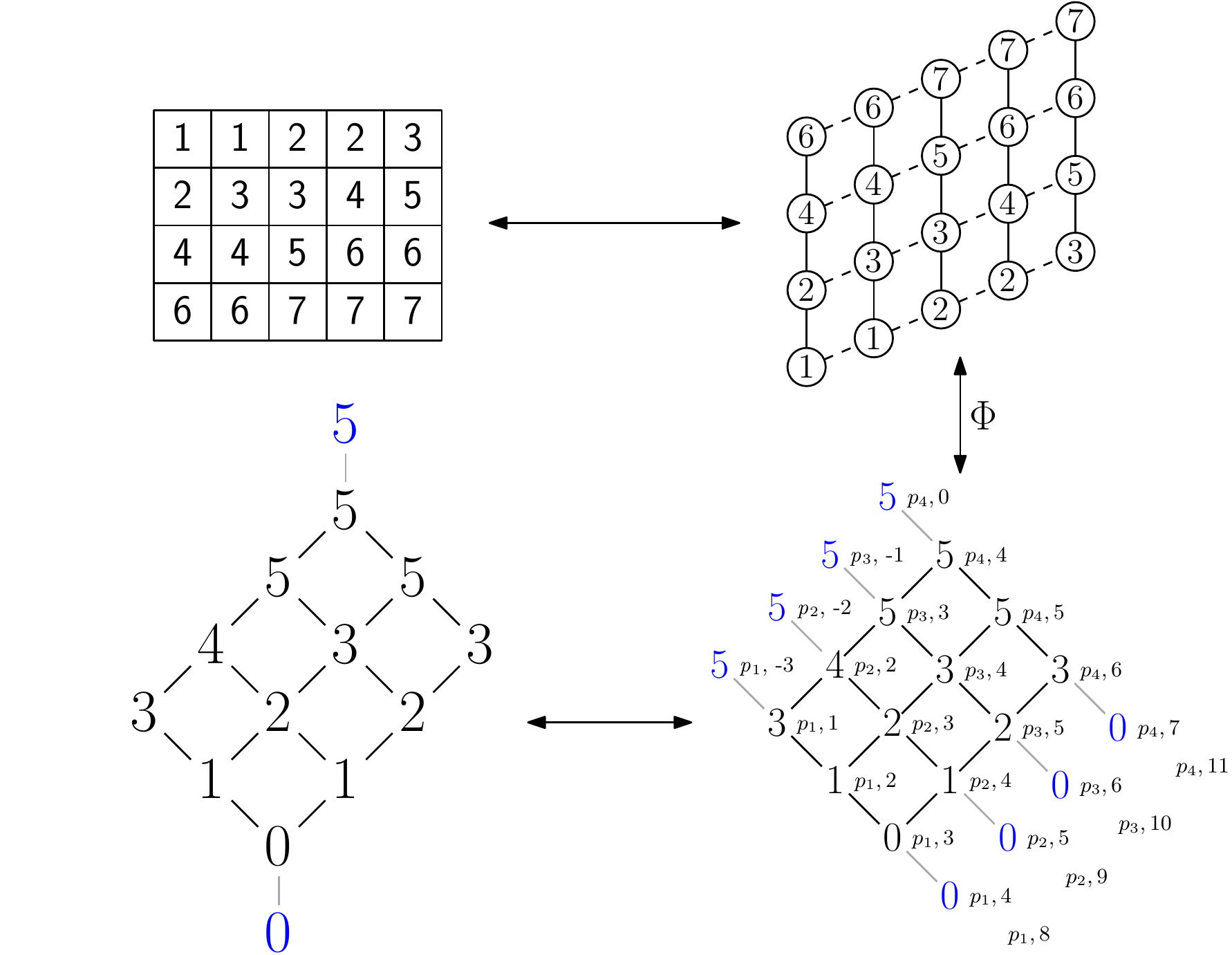}
\end{center}
\caption{The correspondence in the top row is that of Proposition \ref{prop:SSYT}, the bijection in the right column is our main theorem, and the bottom row more clearly shows the element of $\mathcal{A}^{\widehat{B}}(\Gamma([4],\widehat{R^7}))$ as an element of $\mathcal{A}^{5}([4] \times [3])$. To emphasize the underlying shape of $\Gamma(P,\hat{R})$, in this and the following figures we do not draw covering relations between the elements of $\Gamma(P,\hat{R})$ fixed by $\hat{B}$ and grey out the covering relations between those elements and the rest of the poset.}
\label{fig:nbyelltab}
\end{figure}

We now discuss a cyclic sieving result of B.~Rhoades on rectangular semistandard Young tableaux and its translation via Corollary~\ref{cor:rectssytppartition}.

\begin{definition}[\cite{ReStWh2004}]
\label{def:CSP}
Let $C$ be a finite cyclic group acting on a finite set $X$ and let $c$ be a generator of $C$. Let $\zeta \in \mathbb{C}$ be a root of unity having the same multiplicative order as $c$ and let $g \in \mathbb{Q}[x]$ be a polynomial. The triple $(X,C,g)$ exhibits the \textbf{cyclic sieving phenomenon} if for any integer $d \ge 0$, the fixed point set cardinality $|X^{c^d}|$ is equal to the polynomial evaluation $g(\zeta^d)$.
\end{definition}

\begin{theorem}[\protect{\cite[Theorem 1.4]{Rhoades2010}}]
\label{thm:rhoadescsp}
The triple $(\mathrm{SSYT}(\ell^n,q),\langle\pro\rangle,X(x))$ exhibits the cyclic sieving phenomenon, where 
\[X(x)\coloneqq \prod_{i=1}^{\ell}\prod_{j=1}^{n}\frac{1-x^{i+j+q-n-1}}{1-x^{i+j-1}} \]
\end{theorem}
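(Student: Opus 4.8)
The plan is to prove the cyclic sieving phenomenon via the representation-theoretic criterion of Reiner, Stanton, and White, following the strategy by which Rhoades originally established it. First I would identify the polynomial $X(x)$ as a principal specialization of a Schur polynomial: using a hook--content reindexing one checks that $X(1)=\lvert \mathrm{SSYT}(\ell^n,q)\rvert$ and that $X(x)$ is, up to a power of $x$ (which is irrelevant at roots of unity), equal to $s_{\ell^n}(1,x,\ldots,x^{q-1})$, the natural $q$-analogue of the tableaux count and the graded dimension of the corresponding $GL_q$-module of highest weight $\ell^n$. I would also record that $\pro$ has finite order dividing $q$ on rectangular $\mathrm{SSYT}(\ell^n,q)$, so that $C=\langle\pro\rangle$ is a genuine finite cyclic group and $\zeta$ may be taken to be a root of unity whose order equals that of $\pro$.

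The heart of the argument is to linearize. Let $V=\mathbb{C}\,\mathrm{SSYT}(\ell^n,q)$ be the permutation module for $C$, so that by definition $\lvert X^{\pro^d}\rvert=\operatorname{tr}(\pro^d \mid V)$. The key step is to realize $V$, together with its promotion action, as a Kazhdan--Lusztig cellular representation on which promotion is implemented (up to an explicit sign twist) by a long cycle, i.e.\ a Coxeter element of the relevant symmetric group, acting on the dual canonical / KL basis indexed by the tableaux. Granting this identification, the trace $\operatorname{tr}(\pro^d\mid V)$ becomes the value of an irreducible character at a power of a long cycle, which can be evaluated by Springer's theory of regular elements (or directly through the combinatorics of KL cells). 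One then checks that this character value coincides with the root-of-unity evaluation $X(\zeta^d)$; the matching is exactly the statement that the fake-degree / principal-specialization polynomial, evaluated at $\zeta^d$, records the eigenvalue multiplicities of the long cycle on $V$.

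I expect the identification of promotion with the long-cycle action on a cellular basis to be the main obstacle: establishing that Sch\"utzenberger promotion on rectangular $\mathrm{SSYT}$ agrees with a Coxeter-element action on the Kazhdan--Lusztig basis requires the full force of the jeu-de-taquin and evacuation symmetries of rectangular shapes together with the compatibility of promotion with the crystal (or cactus-group) structure, and this is where the rectangularity hypothesis is essential. Once this conjugacy is in hand, the remaining character computation and its comparison with $X(\zeta^d)$ are, while delicate, essentially a bookkeeping exercise with the cyclotomic factors of the product defining $X(x)$.

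An appealing alternative, native to this paper, would be to transport the problem across the equivariant bijection of Corollary~\ref{cor:rectssytppartition}: since $\pro$ on $\mathrm{SSYT}(\ell^n,q)$ is equivariantly conjugate to $\row$ on $\mathcal{A}^{\ell}([n]\times[q-n])$, the fixed-point counts $\lvert X^{\pro^d}\rvert$ agree with those of $\row^d$, so it would suffice to prove that $(\mathcal{A}^{\ell}([n]\times[q-n]),\langle\row\rangle,X(x))$ exhibits cyclic sieving. These $Q$-partitions are precisely plane partitions in an $n\times(q-n)\times\ell$ box, placing the statement squarely in the Reiner--Stanton--White lineage of cyclic sieving for products of chains. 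I would caution, however, that rowmotion cyclic sieving beyond the order-ideal case ($\ell=1$) is known to be subtle, so I do not expect this route to be strictly easier than the representation-theoretic one; its value is conceptual, supplying a second combinatorial model in which to evaluate the fixed-point counts.
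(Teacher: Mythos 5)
The paper does not prove this statement at all: Theorem~\ref{thm:rhoadescsp} is imported verbatim from Rhoades~\cite[Theorem 1.4]{Rhoades2010} and is used as a black box to deduce Corollary~\ref{cor:translatedCSP}. So there is no in-paper argument to compare against; what you have written is an outline of Rhoades' original proof. As an outline it is faithful --- the identification of $X(x)$ with the principal specialization $s_{\ell^n}(1,x,\ldots,x^{q-1})$ up to a power of $x$ is a correct hook--content computation, and Rhoades does indeed realize promotion on rectangular tableaux as the action of the long cycle on a dual canonical (Kazhdan--Lusztig cellular) basis and then evaluate characters at roots of unity via Springer's theory of regular elements. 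But you have explicitly deferred the one step that constitutes the entire technical content of Rhoades' paper, namely that promotion on rectangular $\mathrm{SSYT}$ is modeled, up to an explicit scalar, by the long cycle on that basis. Without that lemma the proposal is a plan, not a proof, and you should be aware that this lemma is genuinely hard: it occupies most of Rhoades' paper and uses rectangularity in an essential way.

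Your suggested alternative route is circular in the context of this paper. Corollary~\ref{cor:rectssytppartition} does give an equivariant bijection with $\mathcal{A}^{\ell}([n]\times[q-n])$ under rowmotion, but the cyclic sieving statement for those $Q$-partitions is Corollary~\ref{cor:translatedCSP}, which the paper \emph{derives from} Theorem~\ref{thm:rhoadescsp}; no independent proof of cyclic sieving for plane partitions in a box under (piecewise-linear) rowmotion is known for general $\ell$, which is precisely why analogous statements such as Conjecture~\ref{conj:propp_hopkins} remain open. So that route cannot substitute for the representation-theoretic argument. One further small caution: you wave away the power of $x$ relating $X(x)$ to the principal specialization as ``irrelevant at roots of unity,'' but a monomial factor $x^{m}$ does change values at roots of unity unless $m$ is handled by the same normalization Rhoades builds into his statement; this needs to be tracked rather than dismissed.
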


\begin{corollary}
\label{cor:translatedCSP}
Let $1 \le n\le q$. 
Then the triple $(\mathcal{A}^{\ell}([n]\times[q-n]),\langle\row\rangle,X(x))$ exhibits the cyclic sieving phenomenon. 
\end{corollary}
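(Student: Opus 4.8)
The plan is to deduce this corollary directly from Rhoades' cyclic sieving result (Theorem~\ref{thm:rhoadescsp}) by transporting it across the equivariant bijection of Corollary~\ref{cor:rectssytppartition}. The guiding principle is that the cyclic sieving phenomenon is preserved under equivariant bijection: CSP depends only on the fixed-point set cardinalities $|X^{c^d}|$ and on the polynomial evaluations $g(\zeta^d)$, so if two sets carry isomorphic cyclic group actions and the same polynomial is used, then CSP for one immediately yields CSP for the other.

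First I would record that Corollary~\ref{cor:rectssytppartition} provides an equivariant bijection $\Phi: \mathrm{SSYT}(\ell^n,q) \to \mathcal{A}^{\ell}([n]\times[q-n])$ satisfying $\Phi \circ \pro = \row \circ \Phi$. Iterating this intertwining relation gives $\Phi \circ \pro^d = \row^d \circ \Phi$ for every integer $d \geq 0$. In particular $\pro$ and $\row$ are conjugate as permutations via $\Phi$, so they have the same order $m$; thus $\langle \pro \rangle$ and $\langle \row \rangle$ are both cyclic of order $m$, and a single primitive $m$th root of unity $\zeta$ serves for both, with the same polynomial $X(x)$ applying in each case.

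Next I would observe that, because $\Phi$ intertwines the two actions, it restricts for each $d$ to a bijection between the fixed-point sets $\{T \mid \pro^d(T) = T\}$ and $\{\sigma \mid \row^d(\sigma) = \sigma\}$. Hence $|\mathrm{SSYT}(\ell^n,q)^{\pro^d}| = |\mathcal{A}^{\ell}([n]\times[q-n])^{\row^d}|$ for all $d$. By Theorem~\ref{thm:rhoadescsp}, the left-hand side equals $X(\zeta^d)$, and therefore so does the right-hand side. Since this holds for every $d \geq 0$, the triple $(\mathcal{A}^{\ell}([n]\times[q-n]),\langle\row\rangle,X(x))$ exhibits the cyclic sieving phenomenon, as desired.

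There is essentially no obstacle beyond verifying the routine fact that equivariance transfers CSP; the entire content of the corollary is already packaged into the equivariant bijection $\Phi$ of Corollary~\ref{cor:rectssytppartition} together with Rhoades' theorem. The only point meriting a moment's care is confirming that the two cyclic groups have equal order, so that the same root of unity and polynomial apply; this follows at once from the conjugacy of $\pro$ and $\row$ under $\Phi$.
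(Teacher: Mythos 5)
Your proposal is correct and follows exactly the paper's route: the paper's proof is a one-line deduction from Theorem~\ref{thm:rhoadescsp} and the equivariant bijection of Corollary~\ref{cor:rectssytppartition}, with your paragraphs simply spelling out the standard fact that an equivariant bijection matches fixed-point sets and hence transports the cyclic sieving phenomenon. The only additional content in the paper's proof is an aside (not needed for the logic) identifying $X(x)$ as MacMahon's generating function for plane partitions in an $\ell \times n \times (q-n)$ box, which are in simple bijection with $\mathcal{A}^{\ell}([n]\times[q-n])$.
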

\begin{proof}
This follows from Theorem \ref{thm:rhoadescsp} and 
Corollary \ref{cor:rectssytppartition}. Note that $X(x)$ is MacMahon's generating function for plane partitions which fit inside a box having dimensions $\ell$ by $n$ by $q-n$. These are in simple bijection with $\mathcal{A}^{\ell}([n]\times[q-n])$.
\end{proof}

\begin{remark} 
\label{remark:CSP}
Corollary~\ref{cor:translatedCSP} has been noted in the literature, for example, by Hopkins~\cite{SH2019PP} and Frieden~\cite{GF2019}.
Note the fact that the order of rowmotion on $\mathcal{A}^{\ell}([n]\times[q-n])$ divides $q$ (implicit in the statement of cyclic sieving) also follows from the order of \emph{birational rowmotion} on the poset $[n]\times[q-n]$. This was proved first by D.~Grinberg and T.~Roby~\cite{GR2015} with a more direct proof by G.~Musiker and Roby~\cite{MuRo2019}. 
\end{remark}

We now turn our attention toward several \emph{homomesy} results.  
Rather than present the most general definition, this definition is given for actions with finite orbits, as this is the only case we consider.

\begin{definition}[\cite{PR2015}]
\label{def:homomesy}
Given a finite set $S$, an action $\tau:S \rightarrow S$, and a statistic $f:S \rightarrow k$ where $k$ is a field of characteristic zero, we say that $(S, \tau, f)$ exhibits \textbf{homomesy} if there exists $c \in k$ such that for every $\tau$-orbit $\orb$ 
\begin{center}
$\displaystyle\frac{1}{|\orb|} \sum_{x \in \orb} f(x) = c$
\end{center}
where $|\orb|$ denotes the number of elements in $\orb$. If such a $c$ exists, we will say the triple is \textbf{$c$-mesic}.
\end{definition}

We state two known theorems below and prove their equivalence as a corollary of Theorem~\ref{thm:moregeneralpro}.

\begin{theorem}[\protect{\cite[Theorem 1.1]{BPS2016}}]
\label{thm:BPSHomomesy}
Let $S$ be a set of boxes in the rectangle $\ell^n$ that is fixed under $180^\circ$ rotation and $\Sigma_S$ denote the sum of entries in the boxes of $S$. Then $(\mathrm{SSYT}(\ell^n, q), \pro, \Sigma_S)$ exhibits homomesy.
\end{theorem}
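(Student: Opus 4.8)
The plan is to reduce the statement to a homomesy for individual antipodal pairs of boxes and then exploit the evacuation symmetry together with the finiteness of promotion orbits. Since $S$ is fixed under $180^\circ$ rotation, it decomposes into antipodal pairs $\{b,b'\}$ (together with the central box when both $n$ and $\ell$ are odd), where $b'$ denotes the antipode of $b$ as in Definition~\ref{def:antipode}. Writing $\Sigma_S(T)=\sum_{\{b,b'\}}\bigl(T(b)+T(b')\bigr)$, it suffices to show that for each antipodal pair the orbit average of $T(b)+T(b')$ is the constant $q+1$ (and $\tfrac{q+1}{2}$ on a central box); summing these contributions over $S$ then yields a single orbit-independent constant, which is the desired homomesy.

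The engine for the pair statistic is the relation between evacuation and the map $T\mapsto T^{+}$. For rectangular $\mathrm{SSYT}(\ell^n,q)$, i.e.\ for $P=[n]$, one has $\evac(T)=T^{+}$, so that $(\evac T)(b)=q+1-T(b')$ for every box $b$; this is the rectangular case where, unlike the general $P$-strict setting, the identity $\evac(f)=f^{+}$ does hold. Combining this with $\evac\circ\pro=\pro^{-1}\circ\evac$ and with the fact that promotion has finite order on $\mathrm{SSYT}(\ell^n,q)$ (for instance via Theorem~\ref{thm:rhoadescsp}), one sees that $\evac$ carries each $\pro$-orbit $\orb$ to a $\pro$-orbit $\evac(\orb)$ of the same size, and a direct computation over the orbit gives the average relation
\[
\frac{1}{|\evac(\orb)|}\sum_{X\in\evac(\orb)}X(b)\;=\;(q+1)-\frac{1}{|\orb|}\sum_{Y\in\orb}Y(b').
\]
When $\orb$ is fixed by $\evac$ this immediately yields $\frac{1}{|\orb|}\sum_{T\in\orb}\bigl(T(b)+T(b')\bigr)=q+1$, which is exactly the per-pair homomesy.

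The main obstacle is precisely the orbits that are not $\evac$-invariant: the displayed relation only controls the average of $T(b)+T(b')$ across the pair of orbits $\{\orb,\evac(\orb)\}$, not across each orbit separately, so one cannot conclude per-orbit constancy from the evacuation symmetry alone. I would resolve this in one of two ways. The first is to sharpen the rotational structure of promotion on rectangular tableaux so that every $\pro$-orbit is forced to be $\evac$-stable, using the cyclic, $q$-periodic rotation behaviour underlying Theorem~\ref{thm:rhoadescsp}, after which the computation above closes. The second, and more in the spirit of this paper, is to transfer the problem along the equivariant bijection of Corollary~\ref{cor:rectssytppartition}: promotion on $\mathrm{SSYT}(\ell^n,q)$ corresponds to rowmotion on $\mathcal{A}^{\ell}([n]\times[q-n])$, and using Lemma~\ref{lem:j} the box-sum statistic $\Sigma_S$ translates into a label-sum statistic on the $Q$-partition whose index set inherits the antipodal symmetry of the self-dual poset $[n]\times[q-n]$. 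The homomesy then follows from the antipodal (fiber) homomesy for rowmotion on products of two chains, extended to the piecewise-linear $Q$-partition setting; this is the form in which the statement is most naturally matched with its rowmotion counterpart, and where I expect the cleanest per-orbit argument to live.
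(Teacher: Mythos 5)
First, be aware that this paper does not prove Theorem~\ref{thm:BPSHomomesy}: it is imported from \cite{BPS2016}, and the only thing the paper does with it is Corollary~\ref{cor:BPSandEPresults}, which shows it is equivalent to the Einstein--Propp homomesy (Theorem~\ref{thm:EPHomomesy}) via the bijection of Corollary~\ref{cor:rectssytppartition}. So your second route is the one that parallels the paper, and it is legitimate in principle since Theorem~\ref{thm:EPHomomesy} is proved independently in \cite{EP2014,EP2020}. However, the statistic transfer is not as clean as you assert. By Lemma~\ref{lem:j}, $\Phi(T)(p_i,k)=\#\{j \mid T(i,j)>k\}$, so inverting gives $T(i,j)=i+\#\{k \mid \Phi(T)(p_i,k)\ge \ell+1-j\}$: a single box entry is a count of labels in a fiber exceeding a threshold, not a single label of the $Q$-partition. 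Consequently, for a general rotation-fixed set of boxes $S$, the statistic $\Sigma_S$ does not become a label sum over an antipodal subset of $[n]\times[q-n]$; the translation is clean only when $S$ is a union of antipodal pairs of whole rows, where row sums become fiber sums up to an additive constant. (The paper's own justification in Corollary~\ref{cor:BPSandEPresults} computes the constant only for $S$ equal to the full rectangle.) So route (b) as written yields the theorem for unions of rows, not for arbitrary $S$.

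Your first route has the gap you already identified, and neither proposed repair closes it. The identity $\evac(T)=T^{+}$ for rectangular semistandard tableaux is true but is not established in this paper (which only proves $\evac'(f)=\evac(f^{+})^{+}$ and exhibits a counterexample to $\evac(f)=f^{+}$ outside the chain case), so it needs its own citation or proof. More importantly, the assertion that every $\pro$-orbit of $\mathrm{SSYT}(\ell^n,q)$ is $\evac$-stable is precisely what your computation requires, and you give no argument for it; since $\evac$ conjugates $\pro$ to $\pro^{-1}$, the two generate a dihedral action, and there is no general reason a reflection should preserve each rotation orbit. Absent that, your displayed identity only controls the average of $T(b)+T(b')$ over the union $\orb\cup\evac(\orb)$, which is strictly weaker than per-orbit homomesy. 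Closing this gap is, in essence, the actual content of the argument in \cite{BPS2016}, which analyzes promotion directly rather than relying on the evacuation symmetry alone. As it stands, neither route constitutes a complete proof.
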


Recall Definition~\ref{def:product_of_chains}, which specifies notation for $[a] \times [b]$, and Definition~\ref{def:antipode} of antipode.
\begin{definition}
A subset $S$ of $[a] \times [b]$ is \textbf{antipodal} if $S$ contains the antipode of each of its elements.
\end{definition}

\begin{theorem}[\cite{EP2014} \protect{\cite[Theorem 3.4]{EP2020}}]
\label{thm:EPHomomesy} 
Let $S$ be a antipodal subset of $[n]\times[q-n]$ and $\Sigma_S$ denote the sum of labels of $S$. Then $(\mathcal{A}^{\ell}([n]\times[q-n]), \togpro, \Sigma_S)$ exhibits homomesy.
\end{theorem}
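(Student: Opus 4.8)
The plan is to deduce Theorem~\ref{thm:EPHomomesy} from Theorem~\ref{thm:BPSHomomesy} (and conversely) through the equivariant bijection of Corollary~\ref{cor:rectssytppartition}, which identifies $(\mathrm{SSYT}(\ell^n,q),\pro)$ with $(\mathcal{A}^{\ell}([n]\times[q-n]),\row)$; since $\row$ is conjugate in the toggle group to $\togpro$ by Theorem~\ref{thm:rowrow}, all three actions share the same orbit structure. The first, purely formal, observation is that an equivariant bijection $\Phi$ preserves orbits and therefore orbit-averages: if a statistic $g$ makes $(\mathcal{A}^{\ell}([n]\times[q-n]),\togpro,g)$ be $c$-mesic, then $(\mathrm{SSYT}(\ell^n,q),\pro,g\circ\Phi)$ is automatically $c$-mesic, and vice versa. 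Thus the dynamical content of both theorems transfers for free, and the entire problem collapses to a question about \emph{statistics}: one must understand what the label-sum $\Sigma_S$ of an antipodal subset $S$ becomes when pulled back to $\mathrm{SSYT}(\ell^n,q)$ along $\Phi$, and match it to the box-sum statistics of Bloom--Pechenik--Saracino.

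For this I would use Lemma~\ref{lem:j}, which makes the translation explicit: for $f=\Phi^{-1}(\sigma)$ the tableau $T$ with $T(i,j)=f(p_i,j)$, we have $\sigma(p_i,k)=\ell+1-j^{p_i}_k=\#\{\,j : T(i,j)>k\,\}$, the number of entries in row $i$ exceeding $k$. Summing this identity over an entire fiber telescopes to
\[
\sum_{k\in \widehat{R^q}(p_i)^*}\sigma(p_i,k)=\sum_{j}T(i,j)-c_i,
\]
with $c_i$ a constant depending only on $i$, so the total label sum and the total box sum agree up to an additive constant. I would then verify that under the poset isomorphism $\Gamma([n],\widehat{R^q})\cong[n]\times[q-n]$ of Lemma~\ref{lem:GradedGlobalq} the antipodal involution of $[n]\times[q-n]$ is carried by $\Phi$ to the $180^\circ$ rotation $(i,j)\mapsto(n+1-i,\ell+1-j)$ of the rectangle $\ell^n$; this exhibits antipodal poset subsets and rotation-invariant box sets as the two faces of a single symmetry, which is exactly the dictionary the corollary should supply.

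The main obstacle, and the place where genuine care is required, is that the translation $\sigma(p_i,k)=\#\{j:T(i,j)>k\}$ is a \emph{rank} (threshold-count) statistic rather than a linear combination of tableau entries; equivalently, in the Gelfand--Tsetlin/order-polytope coordinates of Theorem~\ref{thm:gtlike} it is the label sum $\Sigma_S$, not the box sum, that is linear. Consequently an antipodal label sum does not literally coincide with a rotation-invariant box sum for an arbitrary symmetric subset, and the two agree cleanly only for the full symmetric set, where the telescoping above yields an exact affine relation. I would therefore treat the general case by decomposing each symmetric set into antipodal pairs (together with a possible central fixed element) and analyzing, pair by pair, how the within-fiber conjugation relating the coordinates $\{T(i,j)\}$ and $\{\sigma(p_i,k)\}$ interacts with the rotation/antipode symmetry; the crux is to show that this interaction leaves the orbit-mean well-defined, so that homomesy of one family forces homomesy of the other (with a correspondingly shifted constant). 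Establishing this compatibility is the real work, while the bijection of Corollary~\ref{cor:rectssytppartition} and the symmetry identification reduce everything else to bookkeeping.
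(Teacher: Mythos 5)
This statement is not proved in the paper at all: Theorem~\ref{thm:EPHomomesy} is quoted from Einstein--Propp, and the only internal content related to your strategy is Corollary~\ref{cor:BPSandEPresults}, whose proof establishes the affine relation $\Sigma_S(T)=\Sigma_S(\Phi(T))+\tfrac{\ell n(n+1)}{2}$ essentially only for the \emph{total} sums (all boxes versus all poset elements), exactly the telescoping identity $\sum_k\sigma(p_i,k)=\sum_j T(i,j)-\ell i$ that you also derive from Lemma~\ref{lem:j}. So your reduction-to-BPS plan mirrors the paper's equivalence corollary, but it is not the route by which the theorem itself is established, and as a proof of Theorem~\ref{thm:EPHomomesy} it has a genuine gap.

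The gap is the one you yourself flag and then defer as ``the real work.'' The statistic $\sigma(p_i,k)=\#\{j: T(i,j)>k\}$ is a threshold count, hence \emph{not} a linear combination of the tableau entries; consequently, for a proper antipodal subset $S$ of $[n]\times[q-n]$ (e.g.\ a single antipodal pair $\{(p_i,k),(p_{n+1-i},q-k)\}$), the label sum $\Sigma_S\circ\Phi$ does not lie in the linear span of the rotation-invariant box-sum statistics covered by Theorem~\ref{thm:BPSHomomesy}. Your proposed fix --- decompose into antipodal pairs and analyze how the within-fiber conjugation interacts with the symmetry --- does not resolve this: the two families of statistics agree only after summing over an entire fiber-pair (equivalently, a full pair of rows), so BPS homomesy cannot be formally pushed forward to yield homomesy of individual antipodal label sums. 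That refinement is precisely the content of the Einstein--Propp theorem, which requires its own piecewise-linear argument (via the transfer map or recombination of orbits) and cannot be obtained as bookkeeping on top of Corollary~\ref{cor:rectssytppartition}. What your argument does correctly establish is the transfer of homomesy for the full label sum, and the correct identification of the antipodal involution with $180^{\circ}$ rotation under the isomorphism of Lemma~\ref{lem:GradedGlobalq}; but for general antipodal $S$ the claimed implication is unproved.
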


\begin{corollary}
\label{cor:BPSandEPresults}
The previous two results, Theorem \ref{thm:BPSHomomesy} and Theorem \ref{thm:EPHomomesy}, imply each other.
\end{corollary}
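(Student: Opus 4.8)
The plan is to treat both statements as homomesy assertions about one and the same dynamical system, transported through the equivariant bijection furnished by our main theorem. First I would record the bijection in the precise form needed: by Proposition~\ref{prop:SSYT} together with Proposition~\ref{prop:uv0} and Lemma~\ref{lem:GradedGlobalq}, the map $\Phi$ of Definition~\ref{def:mainbijection} is an equivariant bijection from $(\mathrm{SSYT}(\ell^n,q),\pro)$ to $(\mathcal{A}^{\ell}([n]\times[q-n]),\togpro)$ in the case $u=v=0$ (this is Corollary~\ref{cor:rectssytppartition}, read at the level of $\togpro$ via Theorem~\ref{thm:moregeneralpro}). The only general fact about homomesy I would invoke is that an equivariant bijection carries orbits to orbits of equal size, so that a triple $(X,\tau,g)$ is homomesic exactly when $(Y,\tau',g\circ\Phi^{-1})$ is, and that homomesy is unaffected by adding a constant to, or rescaling, the statistic (such changes only shift or scale the mesic value). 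Thus it suffices to match the two families of statistics across $\Phi$, up to an additive constant.

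Next I would set up the statistic dictionary from Lemma~\ref{lem:j}. Writing $T\in\mathrm{SSYT}(\ell^n,q)$ as the $P$-strict labeling $f(p_i,j)=T_{ij}$, that lemma gives $\Phi(T)(p_i,k)=\ell+1-j^{p_i}_k$, which is exactly the number of boxes in row $i$ whose entry exceeds $k$. Summing the identity $T_{ij}=\sum_{k\ge 0}\llbracket T_{ij}>k\rrbracket$ over a whole fiber then yields, for each row $i$, the clean relation $\sum_{j}T_{ij}=\ell\, i+\sum_{k}\Phi(T)(p_i,k)$, where the sum on the right runs over $k\in R^q(p_i)^{*}$, i.e.\ over the full $i$-th row of $\Gamma([n],\widehat{R^q})\cong[n]\times[q-n]$. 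Because the rectangle's rotation $(i,j)\mapsto(n+1-i,\ell+1-j)$ and the antipode $(r,s)\mapsto(n+1-r,q-n+1-s)$ both interchange row $i$ with row $n+1-i$, a $180^{\circ}$-symmetric region that is a union of full rows corresponds to an antipodal set of labels whose label-sum equals the entry-sum up to a shape-dependent constant. This already establishes the equivalence for all region statistics that are unions of full fibers, in particular for the sum over the entire tableau.

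The main obstacle is extending this dictionary from full fibers to arbitrary $180^{\circ}$-symmetric and antipodal regions. The difficulty is genuine: an individual entry $T_{ij}$ is recovered from the fiber labels $\Phi(T)(p_i,\cdot)$ by a threshold (conjugate-partition) rule rather than a linear one, so the sum of entries over a symmetric box set that splits rows need not equal the sum of labels over any single antipodal set. To handle this I would pass to the common three-dimensional model: after the shift $T_{ij}\mapsto T_{ij}-i$, both $T$ and $\Phi(T)$ encode the very same order ideal of the box $[n]\times[\ell]\times[q-n]$, read along two different axes, so that the two statistics count its cubes under two different projections. I would then exploit the symmetries of this box, together with the complementation and evacuation relations of Subsection~\ref{subsec:evac} (notably $\evac\circ\pro=\pro^{-1}\circ\evac$), to relate the two symmetric families and so recover each of Theorem~\ref{thm:BPSHomomesy} and Theorem~\ref{thm:EPHomomesy} from the other. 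I expect reconciling these two projections — showing that the symmetric region statistics on the two sides carry equivalent homomesy content even though they do not match term by term — to be the crux of the argument.
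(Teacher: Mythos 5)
Your first two paragraphs arrive at essentially the same computation as the paper's proof, just packaged differently: the paper checks that the minimal tableau (all $i$'s in row $i$) maps to the all-zero $Q$-partition and that each unit increase of a tableau entry increases exactly one label of $\Phi(T)$ by one, concluding $\Sigma_S(T)=\Sigma_S(\Phi(T))+\frac{\ell n(n+1)}{2}$ and transferring the homomesy; your telescoped identity $\sum_j T_{ij}=\ell i+\sum_k \Phi(T)(p_i,k)$ is the same fact summed along a fiber. Up to that point you are aligned with the paper, and your bookkeeping (equivariant bijections preserve orbits; homomesy is stable under affine changes of statistic) is correct.

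The gap is your third paragraph. You are right that a single box of $T$ does not correspond to a fixed element of $[n]\times[q-n]$ under $\Phi$ --- the label that gets incremented depends on the value $T_{ij}$, not just the position $(i,j)$ --- so the constant-difference identity is only literally valid for unions of full rows, and for a general $180^{\circ}$-symmetric $S$ that splits rows, $\Sigma_S(T)$ is not an affine function of any single antipodal label sum. (This point is in fact glossed over in the paper, whose claimed identity only makes sense when $S$ is the entire rectangle.) But your proposed resolution is a program, not an argument: passing to the plane-partition box and invoking $\evac\circ\pro=\pro^{-1}\circ\evac$ does not by itself produce a dictionary between symmetric box-pair entry sums and antipodal label sums, and you concede the two families ``do not match term by term'' without showing how one family's homomesy forces the other's. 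As written, the proposal proves the equivalence only for statistics built from full rows (in particular the total sum), which is exactly as far as the paper's own short argument explicitly goes; the case you yourself identify as the crux is left unproved. To close it you would need either to restrict the claim to the statistics for which the constant-difference identity holds, or to supply an actual linear-algebraic reduction expressing each symmetric region statistic (modulo constants and already-homomesic statistics) in terms of the other family --- neither of which appears in the proposal.
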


\begin{proof}
By Corollary \ref{cor:rectssytppartition},
$\mathrm{SSYT}(\ell^n,q)$ under promotion is in equivariant bijection with $\mathcal{A}^{\ell}([n]\times[q-n])$ under rowmotion, and also $\togpro$, by conjugacy. 
By Corollary \ref{cor:SSYT}, for $T\in \mathrm{SSYT}(\ell^n,q)$, $\Phi\left(\pro(T)\right)=\togpro\left(\Phi(T)\right)$.
Furthermore, we claim that  $\Sigma_S(T)=\Sigma_S(\Phi(T))+ \frac{\ell n(n+1)}{2}$. To show this claim, observe that if $T$ is the tableau with all 1's in the first row, all 2's in the second row, and so on, then $\Sigma_S(T)=\frac{\ell n(n+1)}{2}$. Additionally, the corresponding $Q$-partition $\Phi(T)$, where $Q=[n]\times[q-n],$ is such that every label is 0. Increasing the entry of a box in $T$ by 1 increases the label of an element in $\Phi(T)$ by 1, showing the claim. Because the statistic $\Sigma_S$ under the bijection differs by a constant, the corollary statement follows.
\end{proof}

\subsection{Flagged tableaux}
\label{sec:flagged}
In this section, we first specialize Theorem~\ref{thm:moregeneralpro} to flagged tableaux and use this correspondence to enumerate the corresponding set of $\hat{B}$-bounded $\Gamma(P,\hat{R})$-partitions. Then, we state some recent cyclic sieving and new homomesy conjectures and use Theorem~\ref{thm:moregeneralpro} to translate these conjectures between the two domains.

\begin{definition}
Let $\lambda=(\lambda_1,\lambda_2,\ldots,\lambda_n)$ and $\mu=(\mu_1,\mu_2,\ldots,\mu_m)$ be partitions with $\mu\subset\lambda$ and let $b = (b_1,b_2,\ldots,b_n)$ where $b_i$ is a positive integer and $b_1 \leq b_2 \leq \ldots \leq b_n$. A \textbf{flagged tableau} of shape $\lambda/\mu$ and flag $b$ is a skew semistandard Young tableau of shape $\lambda/\mu$ whose entries in row $i$ do not exceed $b_i$.  Let $\mathrm{FT}(\lambda/\mu,b)$ denote the set of flagged tableaux of shape $\lambda/\mu$ and flag $b$.
\end{definition}

Note that, depending on context, $b$ represents either the increasing sequence of positive integers $(b_1,\ldots,b_n)$ or the function $b: [n] \rightarrow \mathbb{Z}^{+}$ with $b(p_i) = b_i$.

\begin{proposition} 
\label{prop:FT}
The set of flagged tableaux $\mathrm{FT}(\lambda/\mu,b)$ is equivalent to $\mathcal{L}_{[n]\times[\lambda_1]}(u,v,R^b)$ where $u(p_i) = \mu_i$ and $v(p_i) = \lambda_1 - \lambda_i$ for all $1\leq i\leq n$.
\end{proposition}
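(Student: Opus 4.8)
The plan is to follow the proof of Proposition~\ref{prop:SSYT} essentially verbatim, replacing the single global entry bound $q$ by the row-dependent flag $b$. First I would establish the cell correspondence: identify each box $(i,j)$ of the skew shape $\lambda/\mu$ with the element $(p_i,j)$ of $[n]\times[\lambda_1]^v_u$. With $\ell = \lambda_1$, $u(p_i)=\mu_i$, and $v(p_i)=\lambda_1-\lambda_i$, the defining inequality $u(p_i) < j < \ell+1-v(p_i)$ of the convex subposet reduces to $\mu_i < j \leq \lambda_i$, which is exactly the set of columns occupied by row $i$ of $\lambda/\mu$ (cf.\ the example following Definition~\ref{def:P_ell_uv}). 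Hence the underlying cell sets agree.

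Next I would match the order conditions, exactly as in the semistandard case. Since a flagged tableau is by definition a skew semistandard Young tableau carrying an additional entry bound, its weak increase along rows is the weak increase along fibers $F_{p_i}$, and its strict increase down columns is the strict increase within layers $L_j$. These are precisely conditions (1) and (2) of Definition~\ref{def:Pstrict} and transfer without change.

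The one new ingredient is the flag bound. I would observe that constraining the entries of row $i$ to lie in $\{1,\ldots,b_i\}$ is exactly the statement that the labels of fiber $F_{p_i}$ are bounded below by $1$ and above by $b(p_i)=b_i$, i.e.\ that they obey the restriction function $R^b$ (with the suppressed lower subscript $1$). The only point requiring care is that $R^b$ is the \emph{consistent} restriction function induced by these bounds, so $R^b(p_i)$ may be a proper, gapped subset of $\{1,\ldots,b_i\}$ (as in the remark following Corollary~\ref{cor:abrow}). I would dispatch this by noting that passing to the consistent restriction function deletes only labels that no $P$-strict labeling ever realizes; such a value $k\le b_i$ likewise appears in no flagged tableau of this shape. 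Therefore the set of labelings defined by the literal interval bound and the set defined by $R^b$ coincide, and under the cell identification both equal $\mathrm{FT}(\lambda/\mu,b)$.

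I expect no real obstacle here; the argument is bookkeeping, with the reconciliation of the consistent $R^b$ against the naive flag bound being the only step that is not immediate. Proposition~\ref{prop:SSYT} is then recovered as the special case of the constant flag $b=(q,\ldots,q)$, for which $R^b=R^q$.
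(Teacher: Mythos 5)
Your proof is correct and takes essentially the same approach as the paper, which simply notes that $\mathcal{L}_{[n]\times[\lambda_1]}(u,v,R^b)\subset\mathcal{L}_{[n]\times[\lambda_1]}(u,v,R^{b_n})$ and invokes Proposition~\ref{prop:SSYT} to identify these labelings with semistandard Young tableaux whose row-$i$ entries are bounded by $b_i$. Your explicit reconciliation of the consistent restriction function $R^b$ (which may have gaps) with the naive interval bound is a point the paper leaves implicit, but it is handled correctly and does not change the substance of the argument.
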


\begin{proof}
Since $\mathcal{L}_{[n]\times[\lambda_1]}(u,v,R^b) \subset \mathcal{L}_{[n]\times[\lambda_1]}(u,v,R^{b_n})$, by Proposition \ref{prop:SSYT} we have that $[n]$-strict labelings in $\mathcal{L}_{[n]\times[\lambda_1]}(u,v,R^b)$ correspond to semistandard Young tableaux whose entries in row $i$ are restricted above by $b_i$, which is exactly $\mathrm{FT}(\lambda/\mu,b)$. 
\end{proof}

We now specify the $\hat{B}$-bounded $\Gamma(P,\hat{R})$-partitions in bijection with $\mathrm{FT}(\lambda/\mu,b)$. Recall $\hat{B}$ from Definition~\ref{def:Bhat}.
\begin{corollary}
\label{cor:FT}
The set $\mathrm{FT}(\lambda/\mu,b)$ under $\pro$ is in equivariant bijection with $\mathcal{A}^{\widehat{B}}(\Gamma([n],\widehat{R^b}))$ under $\row$, with $\ell=\lambda_1$, $u(p_i) = \mu_i$, $v(p_i) = \lambda_1 - \lambda_i$ for all $1\leq i\leq n$. Moreover, for $T\in \mathrm{FT}(\lambda/\mu,b)$, $\Phi\left(\pro(T)\right)=\togpro\left(\Phi(T)\right)$.
\end{corollary}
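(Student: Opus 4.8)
The plan is to deduce this corollary by specializing our two main theorems, following exactly the template used for the semistandard case in Corollary~\ref{cor:SSYT}. The bridge between the tableau side and the $P$-strict labeling side is Proposition~\ref{prop:FT}, which identifies $\mathrm{FT}(\lambda/\mu,b)$ with $\mathcal{L}_{[n]\times[\lambda_1]}(u,v,R^b)$ for the stated $u$ and $v$; once this identification is in hand, the corollary becomes purely a matter of transporting the conclusions of Theorem~\ref{thm:moregeneralpro} and Corollary~\ref{cor:abrow} across it.

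First I would observe that the relevant restriction function is of the form handled by Corollary~\ref{cor:abrow}. Here $P$ is the chain $[n]$, and $R^b$ abbreviates $R_1^b$: the lower bound is the constant function $a\equiv 1$ and the upper bound is the flag $b\colon [n]\to\mathbb{Z}^{+}$ with $b(p_i)=b_i$. Since $R^b$ is induced by local upper and lower bounds, it is a consistent restriction function of the type $R_a^b$, so Corollary~\ref{cor:abrow} applies verbatim and gives an equivariant bijection between $\mathcal{L}_{[n]\times[\lambda_1]}(u,v,R^b)$ under $\pro$ and $\mathcal{A}^{\widehat{B}}(\Gamma([n],\widehat{R^b}))$ under $\row$. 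Combining this with the identification of Proposition~\ref{prop:FT} yields the first assertion of the corollary.

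For the sharper second assertion, I would invoke Theorem~\ref{thm:moregeneralpro} directly. The map $\Phi$ of Definition~\ref{def:mainbijection} is precisely the bijection underlying Theorem~\ref{thm:moregeneralpro}, and that theorem asserts $\Phi(\pro(f))=\togpro(\Phi(f))$ for every $f\in\mathcal{L}_{[n]\times[\lambda_1]}(u,v,R^b)$. Transporting this identity along the equivalence of Proposition~\ref{prop:FT} gives $\Phi(\pro(T))=\togpro(\Phi(T))$ for all $T\in\mathrm{FT}(\lambda/\mu,b)$, which is the claim.

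I expect no genuine obstacle here, since the corollary is a direct specialization. The only thing requiring care is the bookkeeping in the first step: one must confirm that the flag restriction really does fit the hypotheses of Corollary~\ref{cor:abrow}, i.e.\ that $R^b=R_1^b$ is induced by (local) upper and lower bounds and is therefore consistent on $[n]\times[\lambda_1]^v_u$. This is immediate from the definitions, so the proof should be short.
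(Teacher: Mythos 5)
Your proposal is correct and matches the paper's own argument, which likewise derives the corollary from Proposition~\ref{prop:FT}, Corollary~\ref{cor:abrow}, and Theorem~\ref{thm:moregeneralpro}. Your extra check that $R^b=R_1^b$ fits the hypotheses of Corollary~\ref{cor:abrow} is sound bookkeeping that the paper leaves implicit.
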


\begin{proof}
This follows from Proposition \ref{prop:FT},  Corollary \ref{cor:abrow}, and Theorem~\ref{thm:moregeneralpro}.
\end{proof}

\begin{remark}
\label{remark:Flag_JT}
Flagged tableaux are enumerated by an analogue of the Jacobi-Trudi formula due to I.~Gessel and X.~Viennot~\cite{Gessel} with an alternative proof by M.~Wachs~\cite{Wachs85}. 
Thus the bijection of Corollary~\ref{cor:FT} allows one to translate this to enumerate $\mathcal{A}^{\widehat{B}}(\Gamma([n],\widehat{R^b}))$.
\end{remark}

In the rest of this subsection, we apply Corollary~\ref{cor:FT} to some specific sets of flagged tableaux, obtaining Corollaries~\ref{cor:flagged1} and \ref{cor:flagged2} along with further corollaries and conjectures. Our first corollary involves the triangular poset from the following definition. This poset is isomorphic to the \emph{Type $A_n$ positive root poset} from Coxeter theory. Though this algebraic interpretation is what has generated interest surrounding this poset, we will not need it here.
\begin{definition}
Let $\widetriangle_n$ denote the subposet of $[n] \times [n]$ given by $\{(i,j) \mid 1 \leq i \leq n, n-i < j \leq n\}$.
\end{definition}

As noted in Section~\ref{sec:ex} as our motivating example, we have the following correspondence in the case of flagged tableaux of shape $\ell^n$ and flag $b = (2,4,\ldots,2n)$.  Following the procedure of Corollary \ref{cor:rectssytppartition}, we first show that $\Gamma([n],R^b)$ has the desired shape.

\begin{lemma}
\label{lem:typeagamma}
Let $b = (2,4,\ldots,2n)$.  Then, if $R^b$ is consistent on $[n]$, $\Gamma([n],R^b)$ and $\widetriangle_n$ are isomorphic as posets.
\end{lemma}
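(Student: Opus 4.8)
The plan is to make the poset $\Gamma([n],R^b)$ completely explicit and then exhibit an order isomorphism onto $\widetriangle_n$ by a linear change of the two coordinates.

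First I would pin down the restriction function. Since $R^b=R_1^b$ is the consistent restriction induced by the lower bound $1$ and the flag $b_i=2i$, consistency on $[n]$ (that is, with respect to $[n]\times[1]$, which by Remark~\ref{rem:ell1inc} records increasing labelings of the chain) forces a label $k$ to lie in $R^b(p_i)$ exactly when it is attained by some strictly increasing $f$ with $f(p_j)\le 2j$. A short greedy argument shows the attainable labels are precisely $\{i,i+1,\ldots,2i\}$: the bound $k\ge i$ comes from needing $i-1$ strictly smaller positive values below position $i$, while the minimal completion $f(p_{i+t})=k+t$ stays within $k+t\le 2(i+t)$ whenever $k\le 2i$. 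Hence $R^b(p_i)=\{i,\ldots,2i\}$ and $R^b(p_i)^*=\{i,\ldots,2i-1\}$, so the elements of $\Gamma([n],R^b)$ are the pairs $(p_i,k)$ with $1\le i\le n$ and $i\le k\le 2i-1$; the $i$th fiber then has exactly $i$ elements, matching $|\widetriangle_n|=\binom{n+1}{2}$.

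Next I would read off the covers from Definition~\ref{def:GammaOne}. Because each $R^b(p_i)$ is a contiguous interval, the situation is exactly the one already recorded in the proof of Lemma~\ref{lem:GradedGlobalq} (and \cite[Thm.~2.21]{DSV2019}): the only covers are $(p_i,k)\lessdot(p_i,k-1)$ within a fiber, and $(p_i,k)\lessdot(p_{i+1},k+1)$ across adjacent fibers for $k\in R^b(p_i)^*$. For the cross-covers I would verify condition (2) directly, using that $R^b(p_i)_{<k_2}=k_2-1$ on the relevant range and that the interval structure makes the ``no greater $k$'' clause automatic.

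Finally I would define $\psi:\Gamma([n],R^b)\to\widetriangle_n$ by $\psi(p_i,k)=(n+i-k,\,i)$, with candidate inverse $(a,c)\mapsto(p_c,\,n+c-a)$. I would check that $\psi$ maps into $\widetriangle_n$ (with $c=i\in\{1,\dots,n\}$ the constraints $i\le k\le 2i-1$ are exactly $n-c<a\le n$, the defining condition of $\widetriangle_n$) and is a bijection, and then that it matches covers in both directions: a within-fiber cover $(p_i,k+1)\lessdot(p_i,k)$ maps to $(n+i-k-1,i)\lessdot(n+i-k,i)$, an increase in the first coordinate, while a cross-cover $(p_i,k)\lessdot(p_{i+1},k+1)$ maps to $(n+i-k,i)\lessdot(n+i-k,i+1)$, an increase in the second coordinate. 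Since every cover of $\widetriangle_n$ has one of these two forms, $\psi$ and $\psi^{-1}$ both preserve covers, and as covers generate the order in a finite poset this makes $\psi$ an isomorphism. The genuinely delicate step is the cover computation for $\Gamma([n],R^b)$ together with guessing the coordinate change that decouples the two cover types into the two generators of the product order; once the map $\psi(p_i,k)=(n+i-k,i)$ is in hand, everything else is routine verification. A useful sanity check I would run first is $n=2$, where $R^b(p_1)=\{1,2\}$, $R^b(p_2)=\{2,3,4\}$, and $\Gamma$ is the ``V'' on $\{(p_1,1),(p_2,2),(p_2,3)\}$, matching $\widetriangle_2=\{(1,2),(2,1),(2,2)\}$ under $\psi$.
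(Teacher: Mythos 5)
Your proposal is correct and follows essentially the same route as the paper's proof: compute $R^b(p_i)=\{i,\ldots,2i\}$, read off the two types of covers of $\Gamma([n],R^b)$ from \cite[Thm.~2.21]{DSV2019}, and exhibit an explicit linear coordinate change matching covers (your map $(p_i,k)\mapsto(n+i-k,i)$ is the paper's map $(p_i,k)\mapsto(i,n-k+i)$ composed with the transposition automorphism of $[n]\times[n]$, which preserves $\widetriangle_n$). The only difference is that you justify the formula for $R^b(p_i)$ and the bijectivity/cover checks in more detail than the paper does.
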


\begin{proof}
The restriction function on $[n]$ induced by $b$ is given by $R^b(p_i) = \{i, i+1, \ldots, 2i\}$.  By definition of $\Gamma$ (as noted in \cite[Thm.\ 2.21]{DSV2019}), $(p_{i_1}, k_1) \lessdot (p_{i_2},k_2)$ if and only if either $i_1=i_2$ and $k_1 -1= k_2$ or $i_1 +1= i_2$ and $k_1 +1= k_2$.  Define a map from $\Gamma([n],R^b)$ to $\widetriangle_n$ by $(p_i,k)\mapsto (i,n-k+i)$. Since  $i \leq k \leq 2i-1$ we have $n - i + 1 \leq n-k+i \leq n + 1$, so the above map is a bijection to $\{(i,j) \mid i + j > n\}$. Because $(i,j) \lessdot (i',j') \in [n] \times [n]$ if and only if $i = i'$ and $j +1= j'$ or $i +1= i'$ and $j=j'$, the covers of $(p_i,k)$ in $\Gamma([n],R^b)$ correspond exactly to the covers of $(i,n-k+i)$ in $\widetriangle_n$. Thus $\Gamma([n],R^b)$ and $\widetriangle_n$ are isomorphic as posets.
\end{proof}

\begin{corollary}
\label{cor:flagged1}
The set of flagged tableaux $\mathrm{FT}(\ell^n,(2,4,\ldots,2n))$ under $\pro$ is in equivariant bijection with $\mathcal{A}^{\ell}(\widetriangle_n)$ under $\row$. 
\end{corollary}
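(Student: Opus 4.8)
The plan is to assemble the statement from results already established, since all the substantive work has been carried out in the preceding lemmas and propositions. First I would translate the flagged tableaux into the language of $P$-strict labelings. By Proposition~\ref{prop:FT}, applied with $\lambda = \ell^n$, $\mu = \emptyset$, and flag $b = (2,4,\ldots,2n)$, the set $\mathrm{FT}(\ell^n,(2,4,\ldots,2n))$ is equivalent to $\mathcal{L}_{[n]\times[\ell]}(u,v,R^b)$, where $u(p_i) = \mu_i = 0$ and $v(p_i) = \lambda_1 - \lambda_i = \ell - \ell = 0$ for all $i$. Thus $u \equiv v \equiv 0$, and the labelings live on all of $[n]\times[\ell]$.

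Next I would invoke the main correspondence for bounded restriction functions. Since $R^b = R_1^b$ is induced by the (global) lower bound $1$ and the local upper bound $b$, Corollary~\ref{cor:abrow} (which relies on the column-adjacency established in Proposition~\ref{prop:hatcoladj}) gives that $\mathcal{L}_{[n]\times[\ell]}(R^b)$ under $\pro$ is in equivariant bijection with $\mathcal{A}^{\widehat{B}}(\Gamma([n],\widehat{R^b}))$ under $\row$. Because $u \equiv v \equiv 0$, Proposition~\ref{prop:uv0} then identifies $\mathcal{A}^{\widehat{B}}(\Gamma([n],\widehat{R^b}))$ with $\mathcal{A}^{\ell}(\Gamma([n],R^b))$, eliminating the auxiliary bounding data $\widehat{B}$ and the extra minimum and maximum elements introduced in passing from $R^b$ to $\widehat{R^b}$.

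Finally I would apply Lemma~\ref{lem:typeagamma}, which shows that for $b = (2,4,\ldots,2n)$ the poset $\Gamma([n],R^b)$ is isomorphic to $\widetriangle_n$. Since rowmotion is defined intrinsically from the poset structure (toggles along a linear extension), this isomorphism carries $\mathcal{A}^{\ell}(\Gamma([n],R^b))$ onto $\mathcal{A}^{\ell}(\widetriangle_n)$ and intertwines the two rowmotion actions. Composing the three equivalences yields the claimed equivariant bijection between $\mathrm{FT}(\ell^n,(2,4,\ldots,2n))$ under $\pro$ and $\mathcal{A}^{\ell}(\widetriangle_n)$ under $\row$.

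Because every step is a direct citation, the only real obstacle is bookkeeping: confirming that the hypotheses of each cited result genuinely hold in this instance. In particular I must check that $R^b$ is consistent on $[n]\times[\ell]$, and hence on $[n]$, so that both Proposition~\ref{prop:uv0} and Lemma~\ref{lem:typeagamma} apply. This consistency forces the induced restriction function to take the form $R^b(p_i) = \{i, i+1, \ldots, 2i\}$ rather than $\{1,\ldots,2i\}$, which is precisely the description used in the proof of Lemma~\ref{lem:typeagamma}. I expect no genuine difficulty beyond verifying these compatibility conditions.
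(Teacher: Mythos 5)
Your proposal is correct and follows essentially the same route as the paper: the paper's proof invokes Corollary~\ref{cor:FT} (which packages together Proposition~\ref{prop:FT} and Corollary~\ref{cor:abrow}, the two results you cite directly) and then applies Proposition~\ref{prop:uv0} and Lemma~\ref{lem:typeagamma} exactly as you do. Your added remark about consistency forcing $R^b(p_i)=\{i,\ldots,2i\}$ is a correct and worthwhile bookkeeping check that the paper handles inside the proof of Lemma~\ref{lem:typeagamma}.
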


\begin{proof} Let $b = (2,4,\ldots,2n)$. By Corollary \ref{cor:FT}, $\mathrm{FT}(\ell^n, b)$ under $\pro$ is in equivariant bijection with $\mathcal{A}^{\widehat{B}}(\Gamma([n],\widehat{R^b}))$ under $\row$ where $u(p_i) = v(p_i) = 0$ for all $p_i \in [n]$. By Proposition \ref{prop:uv0}, $\mathcal{A}^{\widehat{B}}(\Gamma([n],\widehat{R^b}))$ is equivalent to $\mathcal{A}^{\ell}(\Gamma([n],R^b))$ which, by Lemma \ref{lem:typeagamma}, is exactly $\mathcal{A}^{\ell}(\widetriangle_n)$.
\end{proof}

D.~Grinberg and T.~Roby proved a result on the order of birational rowmotion on $\widetriangle_n$, which implies the following.
\begin{theorem}[\protect{\cite[Corollary 66]{GR2015}}]
$\row$ on $\mathcal{A}^{\ell}(\widetriangle_n)$ is of order dividing $2(n+1)$. 
\end{theorem}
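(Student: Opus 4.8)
The plan is to obtain this periodicity statement as a direct transfer of the corresponding \emph{birational} result, which is precisely \cite[Corollary 66]{GR2015}, through the tropicalization dictionary developed by Einstein and Propp \cite{EP2014,EP2020}. The guiding principle is that birational rowmotion on a finite poset $Q$ — an operation assembled entirely from the semifield operations of addition, multiplication, and division — tropicalizes (sending multiplication to addition, addition to $\min$ or $\max$, and reciprocal to negation) to exactly the piecewise-linear rowmotion $\row$ of Definitions~\ref{def:toggle1} and~\ref{def:row1}. In particular, each birational toggle tropicalizes to the piecewise-linear toggle $\sigma(x)\mapsto \alpha_\sigma(x)+\beta_\sigma(x)-\sigma(x)$ recorded in Definition~\ref{def:toggle1}. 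Consequently, any identity of birational maps descends to the analogous identity for $\row$.

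First I would invoke \cite[Corollary 66]{GR2015}, which asserts that birational rowmotion on $\widetriangle_n$ (the type $A_n$ positive root poset) is periodic with period dividing $2(n+1)$, as an identity of rational maps in the field of rational functions in the vertex labels. Second, I would tropicalize this identity: since $\row$ is the tropicalization of birational rowmotion, the birational equality $R^{2(n+1)}=\mathrm{id}$ yields $\row^{2(n+1)}=\mathrm{id}$ on the order polytope of $\widetriangle_n$ (with the $\hat 0,\hat 1$ bounds of Definition~\ref{def:ppart}). Third, I would connect this to the objects of the paper: as noted in the remark following Definition~\ref{def:row1}, the action $\row$ on $\mathcal{A}^{\ell}(\widetriangle_n)$ is precisely this piecewise-linear rowmotion after rescaling by $\ell$, i.e.\ it is the restriction of piecewise-linear rowmotion to the set of labelings with $\sigma(\hat 0)=0$ and $\sigma(\hat 1)=\ell$. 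Since restricting to an invariant subset and rescaling cannot increase the order, $\row$ on $\mathcal{A}^{\ell}(\widetriangle_n)$ has order dividing $2(n+1)$.

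The one place requiring genuine care is the detropicalization step. The birational periodicity of \cite{GR2015} is an identity of rational functions, a priori valid only on a Zariski-dense subset, whereas what is needed is an honest equality of piecewise-linear maps at \emph{every} point of $\mathcal{A}^{\ell}(\widetriangle_n)$, including labelings lying on the boundary of the order polytope. This is handled by the standard facts (see \cite{EP2014,EP2020}) that birational rowmotion is free of poles when evaluated at positive real labels and that tropicalization is continuous, so the identity extends from the interior of the order polytope to its closure and hence to the relevant rational points. Aside from this continuity argument, and the routine verification that the poset named $\widetriangle_n$ here matches the root poset to which \cite[Corollary 66]{GR2015} applies, the result is an immediate consequence of the cited theorem via the correspondences already set up in Section~\ref{sec:row}.
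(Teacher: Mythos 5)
Your proposal is correct and matches the paper's treatment: the paper states this theorem with no proof of its own, simply citing \cite[Corollary 66]{GR2015} and asserting that the birational periodicity "implies" the piecewise-linear statement via exactly the tropicalization transfer you describe. Your extra care about extending the identity from a Zariski-dense set to the whole order polytope is a reasonable (and standard) way to make that implication rigorous, but it is not a departure from the paper's intended argument.
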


We then obtain the following as a corollary of this theorem and Corollary~\ref{cor:flagged1}.
\begin{corollary}
\label{cor:ftorder}
$\pro$ on $\mathrm{FT}(\ell^n,(2,4,\ldots,2n))$ is of order dividing $2(n+1)$. 
\end{corollary}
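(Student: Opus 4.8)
The plan is to transport the order bound on $\row$ across the equivariant bijection furnished by Corollary~\ref{cor:flagged1}. Write $b = (2,4,\ldots,2n)$, and let $\Psi\colon \mathrm{FT}(\ell^n,b) \to \mathcal{A}^{\ell}(\widetriangle_n)$ denote the equivariant bijection of that corollary, so that $\Psi \circ \pro = \row \circ \Psi$.

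First I would promote this equivariance to iterates: a routine induction on $k$ shows $\Psi(\pro^k(f)) = \row^k(\Psi(f))$ for every $f \in \mathrm{FT}(\ell^n,b)$ and every $k \geq 0$. Next I would invoke the Grinberg--Roby result, which gives that $\row^{2(n+1)}$ acts as the identity on $\mathcal{A}^{\ell}(\widetriangle_n)$. Setting $k = 2(n+1)$ then yields
\[
\Psi\bigl(\pro^{2(n+1)}(f)\bigr) = \row^{2(n+1)}\bigl(\Psi(f)\bigr) = \Psi(f)
\]
for all $f$. Finally, since $\Psi$ is injective, I may cancel it to obtain $\pro^{2(n+1)}(f) = f$, i.e.\ the order of $\pro$ on $\mathrm{FT}(\ell^n,b)$ divides $2(n+1)$.

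There is no genuine obstacle here: all the substance already lives in Corollary~\ref{cor:flagged1} (the equivariant bijection) and in the cited Grinberg--Roby theorem (the order bound), and the argument only records the elementary fact that equivariantly conjugate actions share the same order. The one point worth stating carefully is that the Grinberg--Roby bound, proved originally for birational rowmotion, specializes to the piecewise-linear rowmotion $\row$ on $\mathcal{A}^{\ell}(\widetriangle_n)$ considered here; this is precisely the form in which the theorem is quoted immediately preceding the statement, so invoking it directly suffices.
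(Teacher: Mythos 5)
Your proposal is correct and matches the paper's argument exactly: the paper likewise deduces this corollary by combining the Grinberg--Roby order bound for $\row$ on $\mathcal{A}^{\ell}(\widetriangle_n)$ with the equivariant bijection of Corollary~\ref{cor:flagged1}, leaving the routine transport of the order across the bijection implicit. Your explicit induction on iterates and injectivity step simply spell out what the paper treats as immediate.
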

Note, the order does not depend on $\ell$. Therefore, the order of promotion in this case is independent of the number of columns. 

J.~Propp conjectured the following instance of the cyclic sieving phenomenon (see Definition~\ref{def:CSP}) on $\mathcal{A}^{\ell}(\widetriangle_n)$ under rowmotion with a polynomial analogue of the Catalan numbers. S.~Hopkins recently extended this conjecture to positive root posets of all coincidental types (see \cite[Conj 4.23]{Hopkins_minuscule_doppelgangers}, \cite[Remark 5.5]{SH2019PP}).

\begin{conjecture}
\label{conj:propp_hopkins}
The triple $\left(\mathcal{A}^{\ell}(\widetriangle_n),\langle\row\rangle,Cat_{\ell}(x)\right)$ exhibits the {cyclic sieving phenomenon}, where \[Cat_{\ell}(x):= \displaystyle\prod_{j=0}^{\ell-1}\displaystyle\prod_{i=1}^{n}\displaystyle\frac{1-x^{n+1+i+2j}}{1-x^{i+2j}}.\]
\end{conjecture}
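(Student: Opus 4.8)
The plan is to exploit the equivariant bijection of Corollary~\ref{cor:flagged1}: since the cyclic sieving phenomenon of Definition~\ref{def:CSP} is preserved under any equivariant bijection (fixed-point cardinalities of corresponding powers agree), the triple $\left(\mathcal{A}^{\ell}(\widetriangle_n),\langle\row\rangle,Cat_{\ell}(x)\right)$ exhibits CSP if and only if $\left(\mathrm{FT}(\ell^n,(2,4,\ldots,2n)),\langle\pro\rangle,Cat_{\ell}(x)\right)$ does, so I may freely work in whichever domain is more tractable. I would first pin down the cyclic group: by Corollary~\ref{cor:ftorder} together with \cite{GR2015}, the order $N$ of $\row$ on $\mathcal{A}^{\ell}(\widetriangle_n)$ divides $2(n+1)$. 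Fixing a primitive $N$th root of unity $\zeta$, the content of the conjecture is then exactly the family of identities $Cat_{\ell}(\zeta^d)=\bigl|\mathcal{A}^{\ell}(\widetriangle_n)^{\row^d}\bigr|$ for all $d$.

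Two computations must be performed and matched. On the polynomial side I would factor $Cat_{\ell}(x)=\prod_{j=0}^{\ell-1}\prod_{i=1}^{n}\frac{1-x^{n+1+i+2j}}{1-x^{i+2j}}$ into cyclotomic polynomials and evaluate it at $\zeta^d$ by a $q$-Lucas-type argument, tracking precisely which numerator and denominator factors cancel at each $N$th root of unity; this yields a closed product formula for $Cat_{\ell}(\zeta^d)$, and this step I expect to be routine. On the combinatorial side I would count the $\widetriangle_n$-partitions fixed by $\row^d$. The case $d=0$ is just the cardinality $|\mathcal{A}^{\ell}(\widetriangle_n)|=Cat_{\ell}(1)$, which can be extracted from the flagged Jacobi--Trudi enumeration recalled in Remark~\ref{remark:Flag_JT}; the genuine difficulty lies entirely in the fixed-point counts for $d\neq 0$.

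The main obstacle will be this fixed-point enumeration under nontrivial powers of rowmotion, for which there is no transparent combinatorial model at general $\ell$. I see two promising routes. The first is to transport the problem, via a \emph{doppelg\"anger} bijection in the sense of \cite{Hopkins_minuscule_doppelgangers,SH2019PP}, to plane partitions in a product-of-chains (minuscule) poset, where the CSP is already available through Rhoades' theorem (Theorem~\ref{thm:rhoadescsp}) in the transported form of Corollary~\ref{cor:translatedCSP} (and Corollary~\ref{cor:rectssytppartition}); the catch is that such a bijection would have to intertwine $\row$ on $\widetriangle_n$ with $\row$ on the product of chains, and establishing exactly this equivariance is what makes doppelg\"anger phenomena notoriously resistant to direct proof. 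The second route is representation-theoretic in Rhoades' style: realize $\mathcal{A}^{\ell}(\widetriangle_n)$ (or the flagged tableaux) as a basis of a module for $\langle\row\rangle$, ideally a cyclic-sieving basis arising from a Kazhdan--Lusztig cellular or crystal structure, so that $\bigl|\mathcal{A}^{\ell}(\widetriangle_n)^{\row^d}\bigr|$ becomes a character value at $\zeta^d$, which one then identifies with $Cat_{\ell}(\zeta^d)$.

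A third, more self-contained possibility better adapted to the toggle methods of this paper is to lift everything to \emph{birational} rowmotion on $\widetriangle_n$ and exploit the explicit periodicity of \cite{GR2015}: if the birational orbit structure can be made sufficiently explicit (for instance alongside a birational version of the rank-alternating label-sum homomesy), one might read off the fixed-point counts of $\row^d$ after specializing back to the piecewise-linear level. In every one of these approaches the decisive and hardest step is establishing the equivariance or module structure needed to evaluate $\bigl|\mathcal{A}^{\ell}(\widetriangle_n)^{\row^d}\bigr|$ in closed form; by comparison the root-of-unity evaluation of $Cat_{\ell}$ is a mechanical exercise.
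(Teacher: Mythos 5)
The statement you were asked to address is an \emph{open conjecture}, not a theorem: the paper attributes it to Propp (extended by Hopkins in \cite{Hopkins_minuscule_doppelgangers}) and offers no proof. The only thing the paper does with it is use the equivariant bijection of Corollary~\ref{cor:flagged1} to show it is \emph{equivalent} to Conjecture~\ref{conj:FT246CSP} on flagged tableaux $\mathrm{FT}(\ell^n,(2,4,\ldots,2n))$ under promotion. Your opening move --- transporting the problem across that bijection and noting that the order of $\row$ divides $2(n+1)$ by Corollary~\ref{cor:ftorder} and \cite{GR2015} --- is exactly the (modest) content the paper actually establishes, so on that point you and the paper agree.

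Beyond that, however, your proposal is a survey of attack routes rather than a proof, and every route terminates at an unproven step that you yourself flag as the hard part: the doppelg\"anger route requires an equivariance between $\row$ on $\widetriangle_n$ and $\row$ on a product of chains that is not known (indeed, the cardinality agreement between $\mathcal{A}^{\ell}(\widetriangle_n)$ and the relevant minuscule poset ideal counts is precisely the doppelg\"anger mystery, and no rowmotion-equivariant bijection is available); the representation-theoretic route presupposes a module with a cyclic-sieving basis that no one has constructed for this family; and the birational route gives periodicity but not fixed-point counts of $\row^d$ for $0<d<N$. So the genuine gap is the entire fixed-point enumeration $\bigl|\mathcal{A}^{\ell}(\widetriangle_n)^{\row^d}\bigr|$ for nontrivial $d$, which is exactly why the statement remains a conjecture. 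A correct response here would have been to recognize that no proof is possible with the tools in the paper and to restrict claims to the equivalence with the flagged-tableaux formulation (and perhaps the known $\ell=1$ case via \cite{EuFu2008}-style arguments), rather than to present a proof plan whose every branch is conditional on an open problem.
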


Thus, Corollary~\ref{cor:flagged1} implies the equivalence of this conjecture and the following.

\begin{conjecture}
\label{conj:FT246CSP}
The triple $(\mathrm{FT}(\ell^n,(2,4,\ldots,2n)),\langle\pro\rangle,Cat_{\ell}(x))$ exhibits the {cyclic sieving phenomenon}.
\end{conjecture}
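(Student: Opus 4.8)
The plan is to prove the cyclic sieving phenomenon directly, verifying all three ingredients of Definition~\ref{def:CSP}---the cyclic group and its order, the root-of-unity evaluations of $Cat_\ell(x)$, and the matching fixed-point counts---rather than only transporting the statement to Conjecture~\ref{conj:propp_hopkins}. Throughout I would work on the rowmotion side: by the equivariant bijection of Corollary~\ref{cor:flagged1} the dynamical system $(\mathrm{FT}(\ell^n,(2,4,\ldots,2n)),\langle\pro\rangle)$ is isomorphic to $(\mathcal{A}^{\ell}(\widetriangle_n),\langle\row\rangle)$, and since fixed-point cardinalities are bijection invariants while $Cat_\ell(x)$ is unchanged, it suffices to establish the CSP for rowmotion on $\mathcal{A}^\ell(\widetriangle_n)$. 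This reformulation is a genuine reduction to a concrete enumeration problem, not a deferral to another open conjecture.

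First I would pin down the cyclic group $C=\langle\row\rangle$. Corollary~\ref{cor:ftorder} shows the order $N$ divides $2(n+1)$, and I would determine $N$ exactly---generically $N=2(n+1)$---by lifting to \emph{birational} rowmotion and invoking the order-$2(n+1)$ theorem of Grinberg and Roby \cite{GR2015} that already underlies Corollary~\ref{cor:ftorder}, exhibiting a full-length orbit to rule out a proper divisor. The case $d\equiv 0\pmod N$ is the identity $Cat_\ell(1)=|\mathcal{A}^\ell(\widetriangle_n)|$, which is the order polynomial of $\widetriangle_n$ evaluated at $\ell+1$; I would obtain this product formula from the flagged Jacobi--Trudi enumeration noted in Remark~\ref{remark:Flag_JT}, or directly from the hook-type factors of $Cat_\ell(x)$.

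The heart of the argument is the remaining evaluations: for each $0<d<N$ one must show $\bigl|\mathcal{A}^\ell(\widetriangle_n)^{\row^d}\bigr| = Cat_\ell(\zeta^d)$ for a primitive $N$-th root of unity $\zeta$. I would attack this on two fronts. On the polynomial side I would factor $Cat_\ell(x)$ into cyclotomic pieces and compute its $q\to\zeta$ limits using the standard evaluations of $q$-integer quotients, exploiting that each factor's numerator and denominator exponents differ by $n+1$. On the combinatorial side I would count $\row^d$-fixed $\widetriangle_n$-partitions by combining the periodicity of the birational lift with a transfer-matrix or determinantal expression for the fixed-point generating function, aiming to produce a product formula matchable term by term with $Cat_\ell(\zeta^d)$.

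The main obstacle is exactly this fixed-point enumeration, and it is precisely why the statement is open. The naive route---pushing through to Rhoades' Theorem~\ref{thm:rhoadescsp}---fails because the flagged polynomial $Cat_\ell(x)$ (with denominator exponents $i+2j$) is not the rectangular SSYT polynomial $X(x)$, so there is no identification of dynamical systems with a proven CSP. Unlike products of chains or minuscule posets, the positive root poset $\widetriangle_n$ is not Gaussian, so its $Q$-partition generating function has no a priori reason to factor under specialization, and there is no ready-made cellular or crystal representation in which promotion realizes a cyclic generator. The most promising concrete route I would pursue is Hopkins' \emph{doppelg\"anger} framework \cite{Hopkins_minuscule_doppelgangers}: seek a rowmotion-equivariant bijection from $\mathcal{A}^\ell(\widetriangle_n)$ to $\mathcal{A}^\ell$ of a Gaussian (for instance minuscule) poset of coincidental type for which a rowmotion CSP is already established, reducing the fixed-point count to a proven instance. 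Proving that such a bijection intertwines rowmotion itself---and not merely the order polynomials, which doppelg\"angers are only guaranteed to share---is the crux and the step I expect to be hardest.
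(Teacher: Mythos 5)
This statement is a conjecture in the paper, not a theorem, and the paper offers no proof of it: its entire treatment is the observation that, via the equivariant bijection of Corollary~\ref{cor:flagged1}, the triple $(\mathrm{FT}(\ell^n,(2,4,\ldots,2n)),\langle\pro\rangle,Cat_{\ell}(x))$ exhibits the cyclic sieving phenomenon if and only if $(\mathcal{A}^{\ell}(\widetriangle_n),\langle\row\rangle,Cat_{\ell}(x))$ does, i.e.\ the statement is equivalent to the (still open) Conjecture~\ref{conj:propp_hopkins} of Propp and Hopkins. Your opening paragraph performs exactly this reduction, and that step is sound: an equivariant bijection preserves orbit structure and hence all fixed-point cardinalities, while the candidate polynomial is unchanged. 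Where your proposal diverges from the paper is that you then attempt to prove the rowmotion-side CSP; the paper (correctly) stops at the equivalence, because no proof is known.

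Everything after your first paragraph is a research program with genuine gaps at each of its load-bearing steps, so the proposal does not constitute a proof. First, your claim that the order of $\row$ on $\mathcal{A}^{\ell}(\widetriangle_n)$ is ``generically $2(n+1)$'' and that you would exhibit a full-length orbit is unestablished, and it is false without qualification: for $n=1$ the poset $\widetriangle_1$ is a single element, rowmotion is $\sigma \mapsto \ell - \sigma$, and the order is $2$, a proper divisor of $2(n+1)=4$; since Definition~\ref{def:CSP} requires $\zeta$ to have the same multiplicative order as the generator, pinning down the exact order is not optional. Second, and fatally, the fixed-point enumeration $\bigl|\mathcal{A}^{\ell}(\widetriangle_n)^{\row^d}\bigr| = Cat_{\ell}(\zeta^d)$ for $0<d<N$ is asserted as the goal of a ``transfer-matrix or determinantal expression'' that is never produced; the cyclotomic evaluations of $Cat_{\ell}(x)$ are indeed routine, but there is no known method to count these fixed points, which is precisely why Conjecture~\ref{conj:propp_hopkins} is open. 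Third, the doppelg\"anger route you name as most promising requires a $\row$-equivariant bijection from $\mathcal{A}^{\ell}(\widetriangle_n)$ to $\mathcal{A}^{\ell}$ of a minuscule poset; Hopkins' framework \cite{Hopkins_minuscule_doppelgangers} guarantees only equality of order polynomials, and the existence of an equivariant such bijection is essentially a restatement of the open problem, as you yourself concede. So the honest summary is: your reduction agrees with the paper's, and the remainder is a plausible but incomplete plan whose unfilled steps coincide with the reasons the statement remains a conjecture.
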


We conjecture the following homomesy statement (Conjecture \ref{conj:typeahomomesy}), which was proved in the case $\ell=1$ by S.~Haddadan~\cite{HaddadanThesis,HaddadanArxiv}. 

\begin{definition}
We say a poset $P$ is \textbf{ranked} if there exists a rank function $\rk:P \rightarrow \mathbb{Z}$ such that $p_1 \lessdot_P p_2$ implies $\rk(p_2)=\rk(p_1)+1$.
\end{definition}

\begin{definition}
Let $P$ be a ranked poset and let $\sigma\in \mathcal{A}^{\ell}(P)$. Define \emph{rank-alternating label sum} to be $\mathcal{R}(\sigma) = \sum_{p \in P} (-1)^{\rk(p)}\sigma(p)$.
\end{definition}

For the following conjecture, we use the rank function of $\widetriangle_n$ defined by $\rk(p)=0$ if $p$ is a minimal element.

\begin{conjecture}
\label{conj:typeahomomesy}
The triple $\left(\mathcal{A}^{\ell}(\widetriangle_n),\togpro,\mathcal{R}\right)$ is $0$-mesic when $n$ is even and $\frac{\ell}{2}$-mesic when $n$ is odd.
\end{conjecture}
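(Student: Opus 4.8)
This statement is a conjecture, established so far only for $\ell=1$ (where it reduces to combinatorial rowmotion on order ideals of $\widetriangle_n$ and was proved by Haddadan \cite{HaddadanThesis,HaddadanArxiv}), so what follows is a proposed route rather than a finished argument. The plan is to lift the problem to birational rowmotion, where the extra multiplicative structure and the explicit iterate formulas of Grinberg and Roby for the triangle are available, and then tropicalize back down to the piecewise-linear level.

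First I would set up the reduction. Since $\togpro$ and $\row$ are conjugate in the toggle group when the underlying $Q$-partitions are column-adjacent (Theorem~\ref{thm:rowrow}), and column-adjacency holds here, I would keep $\togpro$ as the primary action while noting that transporting $\mathcal{R}$ along the conjugacy replaces it by $\mathcal{R}\circ w$ for the conjugating toggle-word $w$, so some care is needed. A useful preliminary reformulation is the level-set decomposition: writing $\sigma(p)=\sum_{t=1}^{\ell}[\sigma(p)\ge t]$ gives $\mathcal{R}(\sigma)=\sum_{t=1}^{\ell}\mathcal{R}_{\mathrm{id}}(I_t)$, where $I_t=\{p\mid \sigma(p)\ge t\}$ is an order ideal of $\widetriangle_n$ and $\mathcal{R}_{\mathrm{id}}(I)=\sum_{p\in I}(-1)^{\rk(p)}$ is exactly the rank-alternating ideal statistic whose homomesy Haddadan established at $\ell=1$. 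This expresses $\mathcal{R}$ as a sum over levels of the $\ell=1$ statistic, identifies the two candidate constants ($0$ for $n$ even and $\tfrac{\ell}{2}$ for $n$ odd) as $\ell$ times the per-level constant governed by the parity structure of the ranks of $\widetriangle_n$, and pinpoints the difficulty: piecewise-linear rowmotion does \emph{not} act level-by-level, so one cannot simply sum Haddadan's result over $t$.

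To overcome the coupling between levels, the main step would be to pass to birational rowmotion $R^{\mathrm{bir}}$ on $\widetriangle_n$, of which $\row$ (and hence $\togpro$ up to conjugacy) is the tropicalization in the sense of Einstein--Propp \cite{EP2014,EP2020}, and to prove the corresponding multiplicative homomesy: that the full-period product $\prod_{t=0}^{2(n+1)-1}\prod_{p\in\widetriangle_n}\bigl(R^{\mathrm{bir},t}(v)(p)\bigr)^{(-1)^{\rk(p)}}$ is independent of the starting labeling $v$. Here I would feed in Grinberg--Roby's explicit description of the iterates of birational rowmotion on the triangle---the same analysis yielding the order bound $2(n+1)$ used before Corollary~\ref{cor:ftorder} via \cite[Cor.~66]{GR2015}---with the signed exponents $(-1)^{\rk(p)}$ arranged precisely so that the $v$-dependent factors in their reciprocity formulas telescope and cancel over one full period, leaving a constant depending only on $n$ and the fixed boundary values $\hat 0\mapsto 0,\ \hat 1\mapsto \ell$. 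Detropicalizing this identity (products become sums, and the period-product becomes an orbit-sum) would give the constancy of the orbit-average of $\mathcal{R}$, and separately evaluating the leftover constant in the even and odd cases would produce $0$ and $\tfrac{\ell}{2}$.

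The hardest part is this birational cancellation. The usual shortcut for $0$-mesy, an antipodal involution $\sigma\mapsto \ell-\sigma$ composed with an order-reversing symmetry, is unavailable because the type-$A$ root poset $\widetriangle_n$ is not self-dual, so there is no anti-automorphism to restore order-preservation; and the flip (Dynkin) automorphism, being rank-preserving, leaves $\mathcal{R}$ invariant and supplies no leverage. Consequently the signed sum must be controlled by genuinely tracking the orbit, and the crux is verifying that the alternating product over one full birational period is starting-point independent---equivalently, that the rank-alternating statistic is liftable, in the Einstein--Propp sense, from the combinatorial level (Haddadan's theorem) through the piecewise-linear level up to the birational level. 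Establishing that liftability for this non-self-dual, non-product poset, and isolating the boundary contribution responsible for flipping the constant between even and odd $n$, is where I expect the real work to lie.
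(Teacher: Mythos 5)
There is nothing in the paper to compare your argument against: the statement is labeled a \emph{conjecture}, and the paper does not prove it. The authors only record that the case $\ell=1$ was proved by Haddadan (cited, not reproved), that they have verified the claim by computer for $n\leq 6$ and $\ell\leq 3$, and that the analogous statement fails for the type $B/C$ and type $D$ root posets in small cases; the only thing they actually prove in this vicinity is that Conjecture~\ref{conj:typeahomomesy} is equivalent, via Corollary~\ref{cor:flagged1} and the statistic translation, to the flagged-tableau homomesy Conjecture~\ref{conj:flaggedtableauxhomomesy}. You correctly recognized the status of the statement, so there is no error of attribution on your part.

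As a research plan your outline is coherent and consistent with how such homomesies have been attacked elsewhere: the level-set identity $\mathcal{R}(\sigma)=\sum_{t=1}^{\ell}\mathcal{R}_{\mathrm{id}}(I_t)$ is a correct reformulation (and correctly flagged as insufficient by itself, since piecewise-linear rowmotion does not preserve level sets); the direction of the tropicalization argument is right (a subtraction-free birational identity specializes to the piecewise-linear one, and since the exponents are $\pm1$ the alternating product is a ratio of subtraction-free expressions, so this is legitimate); your caution about transporting the statistic through the conjugacy between $\togpro$ and $\row$ is warranted, since the conjecture is stated for $\togpro$; and constancy of the sum over a full period of length $2(n+1)$ would indeed yield constancy of orbit averages. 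But the entire content of the conjecture is concentrated in the one step you defer: proving that the rank-alternating product over a full birational period on $\widetriangle_n$ is independent of the starting labeling. Grinberg and Roby's work gives the periodicity $2(n+1)$, not this cancellation, and no antipodal or duality symmetry is available (as you note). So the proposal identifies the right obstruction but does not close it; the statement remains open, exactly as in the paper.
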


Using Sage \cite{sage}, we have checked this conjecture for $n\leq 6$ and $\ell\leq 3$. We have also verified that a similar statement fails to hold for the Type B/C case when $n=2$ and $\ell=1$, and the Type D case when $n=4$ and $\ell=1$.

We use Corollary~\ref{cor:flagged1} to translate this to a conjecture on flagged tableaux.

\begin{definition}
Suppose $T \in \mathrm{FT}(\ell^n,(2,4,\ldots,2n))$. Let $R_O$ denote the boxes in the odd rows of $T$ and let $R_E$ denote the boxes in the even rows of $T$. Furthermore, let $O$ denote the set of boxes in $T$ containing an odd integer and $E$ denote the set of boxes in $T$ containing an even integer. Then $\sum|R_O \cap E|-\sum|R_E \cap O|$ denotes the difference of the number of boxes in odd rows of $T$ that contain an even integer and the number of boxes in even rows of $T$ that contain an odd integer.
\end{definition}

\begin{conjecture}
\label{conj:flaggedtableauxhomomesy}
$\left(\mathrm{FT}(\ell^n,(2,4,\ldots,2n)),\pro,\sum|R_O \cap E|-\sum|R_E \cap O|\right)$ is $0$-mesic when $n$ is even and $\frac{\ell}{2}$-mesic when $n$ is odd. \end{conjecture}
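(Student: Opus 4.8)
The plan is to establish Conjecture~\ref{conj:flaggedtableauxhomomesy} as equivalent to Conjecture~\ref{conj:typeahomomesy} by transporting the homomesy statement across the equivariant bijection underlying Corollary~\ref{cor:flagged1}. Recall that Corollary~\ref{cor:FT}, specialized as in Corollary~\ref{cor:flagged1} via Proposition~\ref{prop:uv0} and Lemma~\ref{lem:typeagamma}, gives a bijection $\Phi\colon \mathrm{FT}(\ell^n,(2,4,\ldots,2n)) \to \mathcal{A}^{\ell}(\widetriangle_n)$ satisfying $\Phi(\pro(T)) = \togpro(\Phi(T))$. Consequently $\pro$-orbits of flagged tableaux correspond bijectively and size-preservingly to $\togpro$-orbits of $\widetriangle_n$-partitions. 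Since homomesy is purely a statement about orbit averages, it suffices to show that the rank-alternating label sum $\mathcal{R}$ on $\mathcal{A}^{\ell}(\widetriangle_n)$ pulls back under $\Phi$ to the flagged statistic $\sum|R_O \cap E| - \sum|R_E \cap O|$; given this, the two triples have identical orbit averages and the conjectures become equivalent (with matching mesic values).

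First I would write $\Phi$ explicitly in coordinates. For $T$ with $\sigma = \Phi(T)$, Lemma~\ref{lem:j} gives $\sigma(p_i,k) = \ell + 1 - j^{p_i}_k$, which equals the number of entries in row $i$ of $T$ exceeding $k$. Composing with the poset isomorphism $(p_i,k)\mapsto (i,n-k+i)$ of Lemma~\ref{lem:typeagamma} and using $\rk(i,j)=i+j-(n+1)$ (so the minimal elements $(i,n-i+1)$ have rank $0$), the substitution $k=n-j+i$ converts $(-1)^{\rk(i,j)}$ into $(-1)^{k+1}$. Hence
\[
\mathcal{R}(\sigma) = \sum_{i=1}^{n}\ \sum_{k=i}^{2i-1} (-1)^{k+1}\,\#\{\text{entries in row } i \text{ exceeding } k\}.
\]

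The heart of the argument is then a summation-by-parts step, which I expect to be the only real obstacle. Writing $c_{i,m}$ for the number of entries equal to $m$ in row $i$ and exchanging the order of summation, the inner sum becomes $\sum_{m=i+1}^{2i} c_{i,m}\bigl(\sum_{k=i}^{m-1}(-1)^{k+1}\bigr)$, and the bracketed alternating sum equals $(-1)^{i+1}$ when $m-i$ is odd and $0$ otherwise. Thus row $i$ contributes $(-1)^{i+1}$ times the number of its entries of parity opposite to $i$, namely $+\#\{\text{even entries}\}$ when $i$ is odd and $-\#\{\text{odd entries}\}$ when $i$ is even. Summing over all rows yields exactly $\mathcal{R}(\sigma) = \sum|R_O \cap E| - \sum|R_E \cap O|$, with no additive constant. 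The vanishing of the constant here, in contrast to the semistandard case of Corollary~\ref{cor:BPSandEPresults}, is precisely what forces the same mesic values ($0$ for $n$ even and $\tfrac{\ell}{2}$ for $n$ odd) on both sides. With this statistic identity in hand, the equivalence of the two conjectures, and hence Conjecture~\ref{conj:flaggedtableauxhomomesy} assuming Conjecture~\ref{conj:typeahomomesy}, is immediate.
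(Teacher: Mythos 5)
Your proposal is correct and follows essentially the same route as the paper: both establish the equivalence with Conjecture~\ref{conj:typeahomomesy} via the equivariant bijection of Corollary~\ref{cor:flagged1} and then verify the statistic identity $\mathcal{R}(\Phi(T)) = \sum|R_O\cap E| - \sum|R_E\cap O|$ using $\sigma(p_i,k) = \ell+1-j^{p_i}_k$ from Lemma~\ref{lem:j}. Your summation-by-parts with the counts $c_{i,m}$ is just a tidier packaging of the paper's row-by-row telescoping of the differences $j^{p_i}_k - j^{p_i}_{k-1}$ for even and odd rows separately, and correctly identifies that no additive constant appears.
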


\begin{theorem}
The previous two conjectures, Conjecture \ref{conj:typeahomomesy} and Conjecture \ref{conj:flaggedtableauxhomomesy}, imply each other.
\end{theorem}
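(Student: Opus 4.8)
The plan is to follow the template of Corollary~\ref{cor:BPSandEPresults}: transport homomesy across the equivariant bijection of Corollary~\ref{cor:flagged1} by checking that the two statistics agree under that bijection. Concretely, write $\Phi$ for the bijection of Theorem~\ref{thm:moregeneralpro} specialized as in Corollary~\ref{cor:FT}, so that $\Phi(\pro(T)) = \togpro(\Phi(T))$ for all $T \in \mathrm{FT}(\ell^n,(2,4,\ldots,2n))$, and so that, via Proposition~\ref{prop:uv0} and Lemma~\ref{lem:typeagamma}, the image $\mathcal{A}^{\widehat{B}}(\Gamma([n],\widehat{R^b}))$ is identified with $\mathcal{A}^{\ell}(\widetriangle_n)$ and $\togpro$ is carried to the $\togpro$ of Conjecture~\ref{conj:typeahomomesy}. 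Because $\Phi$ is an equivariant bijection, it carries $\pro$-orbits bijectively to $\togpro$-orbits of the same size; hence it suffices to show that the two statistics match termwise, i.e.\ that
\[ \mathcal{R}(\Phi(T)) \;=\; \textstyle\sum|R_O\cap E| - \sum|R_E\cap O| \]
for every $T$. Granting this, the average of each statistic over a $\pro$-orbit equals the average of the other over the corresponding $\togpro$-orbit, so one triple is $c$-mesic exactly when the other is, with the same constant $c$; since both conjectures assert the identical mesic values ($0$ for $n$ even, $\tfrac{\ell}{2}$ for $n$ odd), they are equivalent.

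The heart of the argument is the statistic identity, which I would prove by a direct computation. Using the poset isomorphism $(p_i,k)\mapsto(i,n-k+i)$ of Lemma~\ref{lem:typeagamma}, the rank function $\rk(i,j) = i+j-n-1$ on $\widetriangle_n$ becomes, after the substitution $k = n+i-j$, the sign $(-1)^{\rk(i,j)} = (-1)^{k+1}$. By Lemma~\ref{lem:j}, the corresponding $\Gamma([n],R^b)$-partition value is $\sigma(p_i,k) = \ell+1-j^{p_i}_k$, which (since row $i$ of $T$ has $\ell$ weakly increasing entries) equals the number of entries in row $i$ that exceed $k$. Thus $\mathcal{R}(\Phi(T)) = \sum_{i=1}^n \sum_{k=i}^{2i-1}(-1)^{k+1}\,\#\{v > k \text{ in row } i\}$. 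Swapping the order of summation, the box containing entry $v$ in row $i$ is weighted by $\sum_{k=i}^{v-1}(-1)^{k+1} = \tfrac12\big((-1)^{i+1}+(-1)^v\big)$, which is $+1$ when $i$ is odd and $v$ even, $-1$ when $i$ is even and $v$ odd, and $0$ otherwise. This is exactly the signed indicator of $R_O\cap E$ minus that of $R_E\cap O$, giving the claimed identity.

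The main obstacle I anticipate is purely bookkeeping: one must confirm that no additive constant appears. The summation swap excludes the minimal possible entry $v=i$ in row $i$ (it never exceeds any $k\ge i$), but this omission is harmless because $v=i$ has the same parity as $i$, so such a box lies in neither $R_O\cap E$ nor $R_E\cap O$; hence the flagged-tableau statistic counts exactly the boxes with nonzero weight, and the identity holds on the nose rather than merely up to a constant. The only other point requiring care is that Conjecture~\ref{conj:typeahomomesy} is phrased for $\togpro$ while Corollary~\ref{cor:flagged1} is phrased for $\row$; this is reconciled by invoking the direct equivariance $\Phi\circ\pro = \togpro\circ\Phi$ of Theorem~\ref{thm:moregeneralpro} (equivalently Corollary~\ref{cor:FT}) rather than the rowmotion form, so that the $\togpro$-orbits of Conjecture~\ref{conj:typeahomomesy} are precisely the images of the $\pro$-orbits.
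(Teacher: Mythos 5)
Your proposal is correct and follows essentially the same route as the paper: transport the statistic through the equivariant bijection of Corollary~\ref{cor:FT} (with Proposition~\ref{prop:uv0} and Lemma~\ref{lem:typeagamma} identifying the $Q$-partition side with $\mathcal{A}^{\ell}(\widetriangle_n)$), and verify via Lemma~\ref{lem:j} that $\mathcal{R}(\Phi(T))$ equals $\sum|R_O\cap E|-\sum|R_E\cap O|$ with no additive constant. The only difference is cosmetic: the paper checks the identity row by row by pairing consecutive differences $j^{p}_{k}-j^{p}_{k-1}$ into telescoping alternating sums, whereas you swap the order of summation and compute the weight $\tfrac12\bigl((-1)^{i+1}+(-1)^{v}\bigr)$ of each box directly — an equivalent computation.
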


\begin{proof}
Corollary~\ref{cor:FT} shows that $\mathcal{A}^{\widehat{B}}(\Gamma([n],\widehat{R^b}))$ under $\togpro$ is in equivariant bijection with $\mathrm{FT}(\ell^n,(2,4,\ldots,2n))$ under $\pro$. Furthermore, recall that $\Gamma([n],R^b)$ and $\widetriangle_n$ are isomorphic as posets by Lemma \ref{lem:typeagamma}. As a result, by Proposition \ref{prop:uv0}, the objects and the actions in these conjectures are equivalent. What remains to be shown is that the rank-alternating label sum statistic $\mathcal{R}$ on $\mathcal{A}^{\ell}(\widetriangle_n)$ corresponds to the statistic $\sum|R_O \cap E|-\sum|R_E \cap O|$ on $\mathrm{FT}(\ell^n,(2,4,\ldots,2n))$. 

Let $T \in \mathrm{FT}(\ell^n,(2,4,\ldots,2n))$ and consider an even row, say row $2m$, of $T$. The allowable entries in the boxes of row $2m$ are $\{2m, 2m+1, \dots, 4m \}$. Using the notation of Definition~\ref{def:j}, we can compute the negation of the number of boxes that contain odd entries in row $2m$ as: \[-(j^{2m}_{4m-1}-j^{2m}_{4m-2})-(j^{2m}_{4m-3}-j^{2m}_{4m-4})-\dots-(j^{2m}_{2m+1}-j^{2m}_{2m}).\] 
By (\ref{eq:difference}) from the proof of Lemma~\ref{lem:equiv}, the corresponding computation on $\sigma=\Phi(T)\in\mathcal{A}^{\widehat{B}}(\Gamma([n],\widehat{R^b}))$ is: \[\sigma(2m,4m-1)-\sigma(2m,4m-2)+\sigma(2m,4m-3)-\sigma(2m,4m-2)+\dots+\sigma(2m,2m+1)-\sigma(2m,2m),\] which is the statistic $\mathcal{R}$ on the diagonal $i=2m$ in $\widetriangle_n$.

Now consider an odd row, say row $2m+1$, of $T$. The allowable entries in the boxes of row $2m+1$ are $\{2m+1, 2m+2, \dots, 4m+2 \}$. We can compute the number of boxes that contain even entries in row $2m+1$ as: \[(j^{2m+1}_{4m+2}-j^{2m+1}_{4m+1})+(j^{2m+1}_{4m}-j^{2m+1}_{4m-1})+\dots+(j^{2m+1}_{2m+2}-j^{2m+1}_{2m+1}).\] By (\ref{eq:difference}) from the proof of Lemma~\ref{lem:equiv}, this computation on the corresponding $\sigma$ is: 
\begin{align*}
(\sigma(2m+1,4m+1)-\sigma(2m+1,4m+2))&+(\sigma(2m+1,4m-1)-\sigma(2m+1,4m))+\dots\\
&+(\sigma(2m+1,2m+1)-\sigma(2m+1,2m+2)). 
\end{align*}
However, by construction we have $\sigma(2m+1,4m+2) = \widehat{B}(2m+1,4m+2)=0$. Thus, we obtain 
\begin{align*}
\sigma(2m+1,4m+1)-\sigma(2m+1,4m)&+\sigma(2m+1,4m-1)-\sigma(2m+1,4m-2)+\dots\\
&-\sigma(2m+1,2m+2)+\sigma(2m+1,2m+1),
\end{align*} 
which is the statistic $\mathcal{R}$ on the diagonal $i=2m+1$ in $\widetriangle_n$. As a result, by summing the statistic $\sum|R_O \cap E|-\sum|R_E \cap O|$ over all rows in $T$, we observe the corresponding statistic is $\mathcal{R}$, summed over all diagonals of the poset $\widetriangle_n$.
\end{proof}

Another set of flagged tableaux of interest in the literature is that of staircase shape $sc_n=(n,n-1,\ldots,2,1)$ with flag $b = (\ell+1,\ell+2,\ldots, \ell+n)$. The Type A case of a result of C.~Ceballos, J.-P.~Labb\'e, and C.~Stump \cite{CeLaSt2014} on multi-cluster complexes along with a bijection of L.~Serrano and Stump \cite{SerranoStump} yields the following result on the order of promotion on these flagged tableaux.
\begin{theorem}[\protect{\cite[Theorem 8.8]{CeLaSt2014}}, \protect{\cite[Theorem 4.7]{SerranoStump}}]
Let $b = (\ell+1,\ell+2,\ldots, \ell+n)$. $\pro$ on $\mathrm{FT}(sc_n,b)$ is of order dividing $n+1+2\ell$.
\end{theorem}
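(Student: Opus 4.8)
The plan is to transport the periodicity question across the equivariant bijection already available in this paper and then import the exact order from the cluster-theoretic literature. By Corollary~\ref{cor:flagged2}, $\pro$ on $\mathrm{FT}(sc_n,b)$ is in equivariant bijection with $\row$ on $\mathcal{A}^{\delta}_{\epsilon}([n]\times[\ell])$, where $\delta(i,j)=n$ and $\epsilon(i,j)=i-1$. Since an equivariant bijection preserves the orbit structure of the two actions, the order of $\pro$ on $\mathrm{FT}(sc_n,b)$ equals the order of $\row$ on $\mathcal{A}^{\delta}_{\epsilon}([n]\times[\ell])$. This reformulation is the natural first move, but by itself it only shifts the burden onto computing the order of rowmotion on these $(\delta,\epsilon)$-bounded $Q$-partitions; the value $n+1+2\ell$ does not fall out of the usual product-of-chains periodicity, so a separate input is needed.

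For that input I would follow the route indicated by the surrounding text. Serrano and Stump \cite{SerranoStump} give a bijection between flagged tableaux of staircase shape $sc_n$ with flag $b=(\ell+1,\ldots,\ell+n)$ and the facets of the Type~$A_n$ multi-cluster complex of Ceballos, Labb\'e, and Stump \cite{CeLaSt2014}, realized as the subword complex of a fixed word (a ``sorting network''). The key step is to check that, under this identification, $\pro$ corresponds to the natural rotation of the word on facets of the subword complex. Granting this, \cite[Theorem 8.8]{CeLaSt2014} supplies a cyclic group action on facets whose order divides $n+1+2\ell$ in this Type~$A_n$ instance (the rotation closing up after one full traversal of the underlying word), which yields the claimed bound on the order of $\pro$.

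The main obstacle is precisely the compatibility check in the middle paragraph: the three sources use different combinatorial models (Bender--Knuth promotion on tableaux here, the Serrano--Stump bijection, and the rotation on subword complexes in \cite{CeLaSt2014}), and aligning conventions so that one dynamical step of $\pro$ matches one step of the rotation requires care. An alternative, fully internal route would be to compute the order of $\row$ on $\mathcal{A}^{\delta}_{\epsilon}([n]\times[\ell])$ directly via birational rowmotion---rescaling as in Section~\ref{sec:row}, realizing the $(\delta,\epsilon)$-bounds through the augmented poset of Remark~\ref{remark:chi_delta}, and invoking a Grinberg--Roby-style periodicity result \cite{GR2015}---but pinning down the exact period $n+1+2\ell$ on this non-product bounded poset appears at least as delicate as the compatibility check, so I would default to the cluster-theoretic derivation and use Corollary~\ref{cor:flagged2} to then read the statement back as a rowmotion-order result on $\mathcal{A}^{\delta}_{\epsilon}([n]\times[\ell])$.
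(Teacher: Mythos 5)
Your proposal matches the paper's treatment: the paper offers no independent proof of this statement, but simply attributes it to the combination of the Serrano--Stump bijection between $\mathrm{FT}(sc_n,b)$ and facets of the Type $A_n$ multi-cluster complex together with the rotation-order result of Ceballos--Labb\'e--Stump, which is exactly the route you identify as the source of the period $n+1+2\ell$. Your preliminary transport to rowmotion on $\mathcal{A}^{\delta}_{\epsilon}([n]\times[\ell])$ via Corollary~\ref{cor:flagged2} is harmless but not needed for the theorem itself; the paper uses that bijection afterwards only to translate the cyclic sieving conjecture.
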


The following conjecture is given in terms of flagged tableaux in \cite{SerranoStump} and in terms of multi-cluster complexes in \cite{CeLaSt2014}.
\begin{conjecture}[\protect{\cite[Conjecture 1.7]{SerranoStump}},\protect{\cite[Open Problem 9.2]{CeLaSt2014}}]
\label{conj:SerranoStump}
Let $b = (\ell+1,\ell+2,\ldots, \ell+n)$ and $Cat_{\ell}(x)$ be as in Conjecture~\ref{conj:propp_hopkins}. $\left(\mathrm{FT}(sc_n,b),\langle\pro\rangle,Cat_{\ell}(x)\right)$ exhibits the {cyclic sieving phenomenon}.
\end{conjecture}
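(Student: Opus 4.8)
The plan is to move the entire problem off of flagged tableaux and onto the $Q$-partition side, where the underlying poset is a genuine product of two chains, and then attack the cyclic sieving there. Corollary~\ref{cor:flagged2} (with $b=(\ell+1,\ell+2,\ldots,\ell+n)$) supplies an equivariant bijection between $\mathrm{FT}(sc_n,b)$ under $\pro$ and $\mathcal{A}^{\delta}_{\epsilon}([n]\times[\ell])$ under $\row$, where $\delta(i,j)=n$ and $\epsilon(i,j)=i-1$. Since the cyclic sieving phenomenon is preserved under an equivariant bijection of cyclic-group sets — the set, the order of the action, and hence all fixed-point counts transfer verbatim, while the polynomial $Cat_{\ell}(x)$ is carried along unchanged — it is equivalent to prove that $\left(\mathcal{A}^{\delta}_{\epsilon}([n]\times[\ell]),\langle\row\rangle,Cat_{\ell}(x)\right)$ exhibits cyclic sieving. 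This $(\delta,\epsilon)$-bounded $Q$-partition reformulation is the form I would work with, since rowmotion on a product of two chains is by far the better-understood dynamical system.

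To establish cyclic sieving I would verify the three standard ingredients. First, the evaluation $Cat_{\ell}(1)$ must equal $|\mathrm{FT}(sc_n,b)|$; this should follow from the flagged enumeration, e.g.\ the flagged Jacobi--Trudi determinant referenced in Remark~\ref{remark:Flag_JT}, and it matches the Fuss--Catalan count appearing in \cite{CeLaSt2014,SerranoStump}. Second, the order of the cyclic group: the preceding theorem of Ceballos--Labb\'e--Stump and Serrano--Stump gives that $\pro$ has order dividing $N=n+1+2\ell$, and, exactly as in the proof of Corollary~\ref{cor:ftorder}, one can alternatively recover this bound from the order of birational rowmotion on the associated poset via Grinberg--Roby \cite{GR2015}, which also pins down the root of unity $\zeta$ (of order equal to that of $\langle\row\rangle$) to be used. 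Third, and the substance of the claim, one must show $|X^{\pro^{d}}|=Cat_{\ell}(\zeta^{d})$ for every $d$, i.e.\ that the number of flagged tableaux fixed by $\pro^{d}$ is computed by the corresponding evaluation of $Cat_{\ell}$ at an $N$th root of unity.

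The third ingredient is the genuine difficulty and is the reason this remains a conjecture. I see two plausible routes. The representation-theoretic route, modeled on Rhoades' proof for rectangular semistandard tableaux (Theorem~\ref{thm:rhoadescsp}), would seek a graded module for a cyclic group on which promotion acts as a cyclic shift and whose graded character specializes to $Cat_{\ell}(x)$; the obstacle is that, unlike the rectangular Grassmannian/Kazhdan--Lusztig setting, no such module is known for this staircase-flag family, and the cluster-complex origin of these objects in \cite{CeLaSt2014} suggests any model must come from the combinatorics of subword or multi-cluster complexes. The combinatorial route would work entirely on $\mathcal{A}^{\delta}_{\epsilon}([n]\times[\ell])$: rowmotion on the \emph{unbounded} product of two chains already satisfies cyclic sieving (the plane-partitions-in-a-box / MacMahon case underlying Corollary~\ref{cor:translatedCSP}), so the task is to control the effect of the marked bounds $\epsilon(i,j)=i-1$ and $\delta(i,j)=n$ on the fixed-point counts under $\row$, perhaps via a cyclotomic factorization of $Cat_{\ell}(x)$ matched to the orbit structure. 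A unifying possibility is the \emph{doppelg\"anger} coincidence: the same polynomial $Cat_{\ell}(x)$ conjecturally governs $\mathcal{A}^{\ell}(\widetriangle_n)$ (Conjecture~\ref{conj:propp_hopkins}), so a single rowmotion-equivariant bijection between $\mathcal{A}^{\delta}_{\epsilon}([n]\times[\ell])$ and $\mathcal{A}^{\ell}(\widetriangle_n)$ would reduce this conjecture to that one. Constructing a bijection that intertwines the two rowmotion actions is, however, itself open, and I expect it — equivalently, the fixed-point evaluation above — to be the main obstacle.
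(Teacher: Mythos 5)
This statement is an open conjecture, not a theorem of the paper: the paper records it (citing Serrano--Stump and Ceballos--Labb\'e--Stump), notes that only the case $\ell=1$ is known (via Eu--Fu), and offers no proof. So there is no ``paper's own proof'' to compare against. Your proposal correctly recognizes this and does not claim a proof; the one concrete step you carry out --- transporting the conjecture along the equivariant bijection of Corollary~\ref{cor:flagged2} to the statement that $\left(\mathcal{A}^{\delta}_{\epsilon}([n]\times[\ell]),\langle\row\rangle,Cat_{\ell}(x)\right)$ exhibits cyclic sieving --- is exactly what the paper does in deriving Conjecture~\ref{conj:cspflagged}, and your observation that an equivariant bijection transfers all fixed-point counts is the correct (and only) justification needed for that equivalence. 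Your identification of the fixed-point evaluation $|X^{\pro^d}|=Cat_{\ell}(\zeta^d)$ as the genuine open obstacle, and your two suggested routes (a Rhoades-style module, or a rowmotion-equivariant doppelg\"anger bijection with $\mathcal{A}^{\ell}(\widetriangle_n)$), are consistent with how the paper frames the problem.

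One caution: your claim that the order bound $N=n+1+2\ell$ can ``alternatively be recovered from the order of birational rowmotion on the associated poset via Grinberg--Roby, exactly as in the proof of Corollary~\ref{cor:ftorder}'' is not supported by the paper and is likely false as stated. Corollary~\ref{cor:ftorder} uses Grinberg--Roby for the poset $\widetriangle_n$ with the \emph{trivial} bounds of $\mathcal{A}^{\ell}(\widetriangle_n)$; here the relevant object is $\mathcal{A}^{\delta}_{\epsilon}([n]\times[\ell])$ with nontrivial marked bounds $\delta(i,j)=n$, $\epsilon(i,j)=i-1$, and the Grinberg--Roby order for plain birational rowmotion on $[n]\times[\ell]$ is $n+\ell$, not $n+1+2\ell$. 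The paper obtains the order $n+1+2\ell$ only from the Ceballos--Labb\'e--Stump / Serrano--Stump theorem on the tableau side. This does not affect the validity of your reduction, but it is a step you should not assert without an argument.
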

Note this is a set of flagged tableaux with different shape and flag but the {same} cardinality as the flagged tableaux in Corollary~\ref{cor:flagged1}, the {same} conjectured cyclic sieving polynomial, and a {different} order of promotion. The case $\ell=1$ follows from a result of S.P.~Eu and T.S.~Fu~\cite{EuFu2008} on cyclic sieving of faces of generalized cluster complexes, but for $\ell>1$ this conjecture is still open.

We can translate this conjecture to rowmotion on $P$-partitions with the following corollary of Theorem~\ref{thm:moregeneralpro}. Recall Definition~\ref{def:product_of_chains}, which specifies notation for $[a] \times [b]$.

\begin{corollary}
\label{cor:flagged2}
Let $b = (\ell+1,\ell+2,\ldots, \ell+n)$. There is an equivariant bijection between $\mathrm{FT}\left(sc_n,b\right)$ under $\pro$ and $\mathcal{A}^{\delta}_{\epsilon}([n] \times [\ell])$ under $\row$, where for $(i,j)\in[n]\times[\ell]$, $\delta(i,j) = n$ and $\epsilon(i,j) = i-1$.
\end{corollary}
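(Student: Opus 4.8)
The plan is to run the staircase through the chain-$P$ machinery of Section~\ref{sec:flagged}, exactly as in the proof of Corollary~\ref{cor:flagged1}, but now carefully tracking the nonzero $v$ that the staircase shape introduces. First I would apply Proposition~\ref{prop:FT} with $\lambda = sc_n = (n, n-1, \ldots, 1)$, $\mu = \emptyset$, and flag $b = (\ell+1, \ldots, \ell+n)$. Since $\lambda_1 = n$, this identifies $\mathrm{FT}(sc_n, b)$ with the $P$-strict labelings $\mathcal{L}_{[n] \times [n]}(u, v, R^b)$ where $u(p_i) = \mu_i = 0$ and $v(p_i) = \lambda_1 - \lambda_i = i - 1$. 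I emphasize that the height parameter here is $\lambda_1 = n$, \emph{not} the flag parameter $\ell$; keeping these two roles separate is the main bookkeeping issue throughout, since it is a reversal of how the symbols are used in Corollary~\ref{cor:flagged1}.

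Next I would compute the consistent restriction function. Because column-strictness forces row $i$ to be bounded below by $i$ and the flag bounds it above by $b_i = \ell + i$, one checks that $R^b(p_i) = \{i, i+1, \ldots, \ell + i\}$, an interval of $\ell + 1$ consecutive integers, and that this $R^b$ is consistent on the chain $[n]$ itself (any target value $k \in R^b(p_i)$ can be completed to an element of $\inc{[n]}{R^b}$ using the staggered intervals). Then Corollary~\ref{cor:abrow}, via Corollary~\ref{cor:FT} whose column-adjacency hypothesis holds by Proposition~\ref{prop:hatcoladj}, yields an equivariant bijection between $\mathrm{FT}(sc_n, b)$ under $\pro$ and $\mathcal{A}^{\widehat{B}}(\Gamma([n], \widehat{R^b}))$ under $\row$.

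It then remains to recognize this target as $\mathcal{A}^{\delta}_{\epsilon}([n] \times [\ell])$ with the stated bounds. Since $R^b$ is consistent on $[n]$, Proposition~\ref{prop:deltaepsilon} replaces $\mathcal{A}^{\widehat{B}}(\Gamma([n], \widehat{R^b}))$ by $\mathcal{A}^{\delta'}_{\epsilon'}(\Gamma([n], R^b))$, where $\delta'(p_i, k) = n - u(p_i) = n$ and $\epsilon'(p_i, k) = v(p_i) = i - 1$; crucially these depend only on the row $p_i$, not on $k$. Finally I would establish the poset isomorphism $\Gamma([n], R^b) \cong [n] \times [\ell]$ via $(p_i, k) \mapsto (i, \ell + i - k)$, an argument parallel to Lemma~\ref{lem:typeagamma}: the within-chain covers $(p_i, k+1) \lessdot (p_i, k)$ become $(i, j-1) \lessdot (i, j)$ and the cross-row covers $(p_i, k) \lessdot (p_{i+1}, k+1)$ become $(i, j) \lessdot (i+1, j)$, so the elements $(p_i, k)$ with $i \leq k \leq \ell+i-1$ fill out $[n] \times [\ell]$. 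Transporting $\delta'$ and $\epsilon'$ along this isomorphism gives $\delta(i, j) = n$ and $\epsilon(i, j) = i - 1$ (well-defined precisely because they were constant in $k$), which completes the identification and hence the corollary.

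The main obstacle is not any single deep step but the parameter tracking just described, together with one genuine verification: that $R^b$ really is the full interval $\{i, \ldots, \ell+i\}$ and is consistent on $[n]$. This is exactly what licenses both the interval structure of $\Gamma([n], R^b)$ and the hypothesis of Proposition~\ref{prop:deltaepsilon}; everything else is a direct invocation of the already-established Corollaries~\ref{cor:abrow} and~\ref{cor:FT} and Proposition~\ref{prop:deltaepsilon}, plus a routine $\Gamma$-computation modeled on Lemma~\ref{lem:typeagamma}.
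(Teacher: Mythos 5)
Your proposal is correct and follows essentially the same route as the paper's proof: Proposition~\ref{prop:FT} to get $\mathcal{L}_{[n]\times[n]}(u,v,R^b)$ with $u=0$, $v(p_i)=i-1$, the computation $R^b(p_i)=\{i,\ldots,\ell+i\}$ and its consistency on $[n]$, then Corollary~\ref{cor:FT} and Proposition~\ref{prop:deltaepsilon}, and finally the identification $\Gamma([n],R^b)\cong[n]\times[\ell]$. The only cosmetic difference is that you exhibit the isomorphism $(p_i,k)\mapsto(i,\ell+i-k)$ directly, whereas the paper observes $R^b=R^{\ell+n}$ and invokes Lemma~\ref{lem:GradedGlobalq} --- which produces exactly the same map.
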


\begin{proof}
By Proposition~\ref{prop:FT}, $\mathrm{FT}(sc_n,b)$ is equivalent to $\mathcal{L}_{[n]\times[n]}(u,v,R^b)$ where $u(p_i) = 0$ and $v(p_i) = i-1$ for all $1\leq i\leq n$.  The restriction function $R^b$ consistent on $[n]\times[n]^v_u$ is given by $R^b(p_i) = \{i, i+1, \ldots \ell + i\}$, and so $R^b$ is also consistent on $[n]$.  Now, by Proposition~\ref{prop:deltaepsilon} and Corollary~\ref{cor:FT}, $\mathrm{FT}(sc_n,b)$ under $\pro$ is equivalent to $\mathcal{A}^{\delta}_\epsilon(\Gamma([n],R^b))$ under $\row$ where $\delta(p_i,k) = n$ and $\epsilon(p_i,k) = i-1$.  Thus what remains to show is that $\Gamma([n],R^b)$ is isomorphic to $[n] \times [\ell]$ as a poset, and, in order to respect the bounds $\delta$ and $\epsilon$, for a given $i$ we have $(p_i,k) \in \Gamma([n],R^b)$ in correspondence with $(i,j) \in [n] \times [\ell]$ for some $j$.

$R^b$ is exactly the restriction function $R^{\ell + n}$ on $[n]$ induced by the global bound $\ell + n$, so, by the map $(p_i,k) \mapsto (p,q-(n-1)+h(p)-k-1)$ from Lemma \ref{lem:GradedGlobalq}, $\Gamma([n],R^b)$ is isomorphic to $[n] \times [(\ell + n) - n] = [n] \times [\ell]$ and we have the desired correspondence of elements.  Therefore $\mathcal{A}^{\delta}_\epsilon(\Gamma([n],R^b))$ is equivalent to $\mathcal{A}^{\delta}_{\epsilon}([n] \times [\ell]))$ where $\delta(i,j) = n$ and $\epsilon(i,j) = i-1$ for all $i$.
\end{proof}

\begin{figure}[htbp]
\begin{center}
\includegraphics[width=\textwidth]{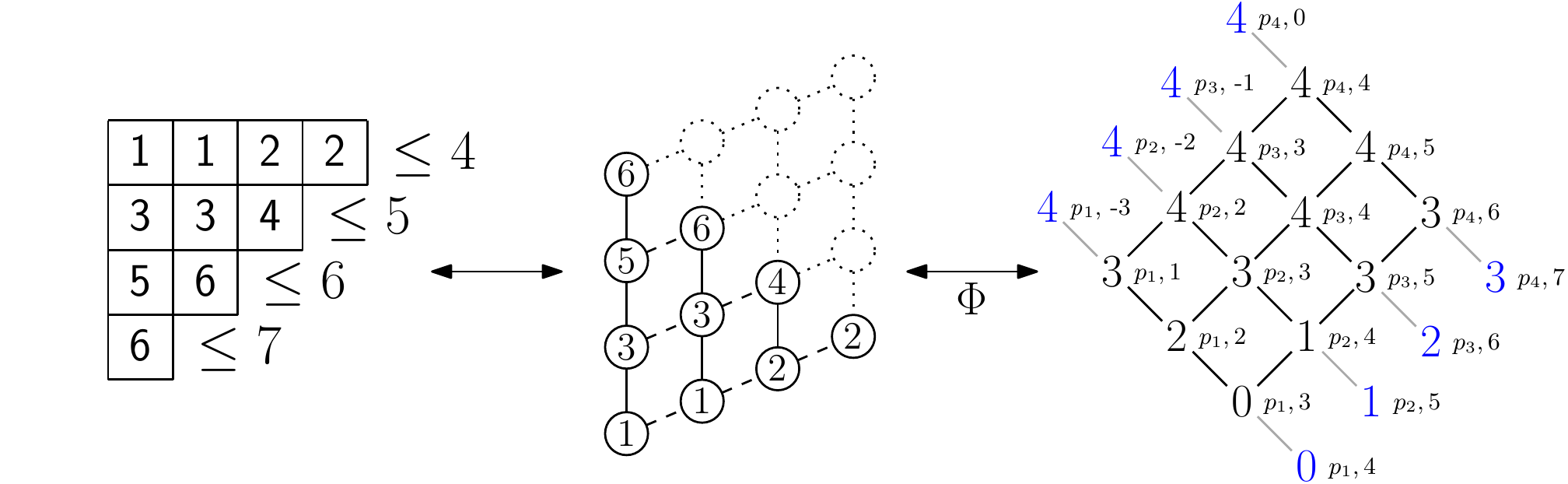}
\end{center}
\caption{On the left is an element of $\mathrm{FT}\left(sc_4,b\right)$ and, in the center, its equivalent $[4]$-strict labeling.  The corresponding $(\delta,\epsilon)$-bounded $[n] \times [\ell]$-partition is shown on the right, using the poset labels of $\Gamma([n],\widehat{R^b})$.}
\label{fig:stairflagtab}
\end{figure}

Corollary~\ref{cor:flagged2} implies the equivalence of this conjecture and the following new conjecture.

\begin{conjecture}
\label{conj:cspflagged}
$\left(\mathcal{A}^{\delta}_{\epsilon}([n] \times [\ell])),\langle\row\rangle,Cat_{\ell}(x)\right)$  exhibits the {cyclic sieving phenomenon}, where $\delta(i,j) = n$ and $\epsilon(i,j) = i-1$ for all $i$.
\end{conjecture}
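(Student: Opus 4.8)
The plan is to derive this statement as a direct consequence of the equivariant bijection already established, rather than to prove the cyclic sieving phenomenon from scratch. The key general fact I would use is that the cyclic sieving phenomenon (Definition~\ref{def:CSP}) depends only on the fixed-point counts of the cyclic action, and that these counts are preserved by any bijection intertwining the two actions. So the single essential step is to invoke Corollary~\ref{cor:flagged2}, which supplies an equivariant bijection between $\mathrm{FT}(sc_n,b)$ under $\pro$ and $\mathcal{A}^{\delta}_{\epsilon}([n]\times[\ell])$ under $\row$, with $\delta(i,j)=n$, $\epsilon(i,j)=i-1$, and $b=(\ell+1,\ell+2,\ldots,\ell+n)$.

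First I would observe that, because this bijection carries $\pro$-orbits bijectively to $\row$-orbits, for every integer $d\geq 0$ we have $|\mathrm{FT}(sc_n,b)^{\pro^d}| = |\mathcal{A}^{\delta}_{\epsilon}([n]\times[\ell])^{\row^d}|$; in particular $\pro$ and $\row$ have equal order, so the same root of unity $\zeta$ serves for both. Reading Definition~\ref{def:CSP} through this identification, the triple $\left(\mathcal{A}^{\delta}_{\epsilon}([n]\times[\ell]),\langle\row\rangle,Cat_{\ell}(x)\right)$ exhibits the cyclic sieving phenomenon if and only if $\left(\mathrm{FT}(sc_n,b),\langle\pro\rangle,Cat_{\ell}(x)\right)$ does, since both triples carry the identical polynomial $Cat_{\ell}(x)$ and, for each $d$, equal fixed-point cardinalities against the equal evaluation $Cat_{\ell}(\zeta^d)$. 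The latter triple is exactly Conjecture~\ref{conj:SerranoStump}. Hence the two conjectures are equivalent, and this equivalence is the content that Corollary~\ref{cor:flagged2} delivers.

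The hard part is that this argument only \emph{transfers} the phenomenon; it does not establish it. Proving the cyclic sieving statement outright for $\ell>1$ remains open, just as for Conjecture~\ref{conj:SerranoStump} and the Type A cluster-complex formulation of Ceballos--Labb\'e--Stump. A direct proof would require either exhibiting a representation whose character evaluations at roots of unity compute the $\row^d$-fixed-point counts, or an explicit enumeration of the $\row$-periodic $(\delta,\epsilon)$-bounded $[n]\times[\ell]$-partitions matched term by term against $Cat_{\ell}(\zeta^d)$. I would expect the $Q$-partition side to be the more promising place to attempt this, since fixed points of rowmotion on a product-of-chains poset with these bounds tend to be more structurally transparent than the corresponding periodic flagged tableaux; but securing such an enumeration is precisely the obstacle that the equivariant bijection leaves untouched.
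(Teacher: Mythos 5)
Your proposal matches the paper exactly: the statement is a conjecture, and the paper's only supporting content is the observation that the equivariant bijection of Corollary~\ref{cor:flagged2} transfers fixed-point counts and hence makes Conjecture~\ref{conj:cspflagged} equivalent to Conjecture~\ref{conj:SerranoStump}, which remains open for $\ell>1$. You correctly carry out this transfer and correctly identify that it does not constitute a proof of the cyclic sieving phenomenon itself.
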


\subsection{Symplectic tableaux}
We begin by defining semistandard symplectic Young tableaux, following the conventions of~\cite{CampbellStokke}.
\begin{definition}
Let $\lambda=(\lambda_1,\lambda_2,\ldots,\lambda_n)$ and $\mu=(\mu_1,\mu_2,\ldots,\mu_m)$ be partitions with non-zero parts such that $\mu\subset\lambda$. (Let $\mu_j:=0$ for $j>m$.) 
A \textbf{skew semistandard symplectic (Young) tableau} of shape $\lambda/\mu$ is a filling of $\lambda/\mu$ with entries in $\{1, \overline{1}, 2, \overline{2}, 3, \overline{3},\ldots \}$ such that the rows increase from left to right and the columns strictly increase from top to bottom, with respect to the ordering $1< \overline{1}< 2< \overline{2}< 3< \overline{3}<\ldots$, and such that the entries in the $i$th row are greater than or equal to $i$. Let $\mathrm{Sp}(\lambda/\mu, 2q)$ denote the set of semistandard symplectic tableaux of skew shape $\lambda/\mu$ with entries at most $\bar{q}$.
\end{definition}

\begin{proposition} 
\label{prop:sp}
The set of symplectic tableaux $\mathrm{Sp}(\lambda/\mu,2q)$ is equivalent to $\mathcal{L}_{[n]\times[\lambda_1]}(u,v,R^{2q}_a)$ where $a=(1,3,5,\ldots,2n-1)$,  $u(p_i) = \mu_i$
and $v(p_i) = \lambda_1 - \lambda_i$ for all $1\leq i\leq n$.
\end{proposition}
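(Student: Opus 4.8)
The plan is to mimic the proofs of Proposition~\ref{prop:SSYT} and Proposition~\ref{prop:FT}, the one genuinely new ingredient being an order-preserving relabeling of the symplectic alphabet by ordinary integers. First I would define a map $\iota$ on the totally ordered symplectic alphabet $1 < \overline{1} < 2 < \overline{2} < \cdots < q < \overline{q}$ by $\iota(m) = 2m-1$ and $\iota(\overline{m}) = 2m$. Since the unbarred and barred symbols interleave exactly with the odd and even integers, $\iota$ is an order isomorphism onto the chain $1 < 2 < \cdots < 2q$. In particular $x < y$ in the symplectic order if and only if $\iota(x) < \iota(y)$, and likewise for $\leq$.

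Next I would apply $\iota$ entrywise to a tableau $T \in \mathrm{Sp}(\lambda/\mu, 2q)$ and check that $\iota(T)$ is a $P$-strict labeling. Using the identification of box $(i,j)$ of the shape $\lambda/\mu$ with the poset element $(p_i,j)$ from Proposition~\ref{prop:SSYT}, rows correspond to fibers and columns to layers, with $u(p_i)=\mu_i$ and $v(p_i)=\lambda_1-\lambda_i$ recording the skew shape. Because $\iota$ is a strict order isomorphism, the weak increase along rows becomes weak increase along fibers and the strict increase down columns becomes strict increase within each layer, so conditions (1) and (2) of Definition~\ref{def:Pstrict} hold for $\iota(T)$.

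I would then translate the two symplectic constraints into bounds. The condition that the entries of row $i$ are $\geq i$ (in symplectic order) becomes, after applying $\iota$, the condition that the relabeled entries of row $i$ are at least $\iota(i)=2i-1$, giving the element-wise lower bound $a(p_i)=2i-1$, i.e. $a=(1,3,5,\ldots,2n-1)$. The condition that all entries are at most $\overline{q}$ becomes the global upper bound $\iota(\overline{q})=2q$. Thus $\iota$ carries symplectic tableaux bijectively onto the integer fillings of $[n]\times[\lambda_1]^v_u$ that are fiber-weak, layer-strict, and obey the lower bound $a$ and upper bound $2q$.

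The last step, and the one I expect to be the main obstacle, is identifying this image with $\mathcal{L}_{[n]\times[\lambda_1]}(u,v,R^{2q}_a)$. Here I would invoke the definition of the consistent restriction function $R^{2q}_a$ from Definition~\ref{def:consistent}: its values are precisely the labels in the intervals $[a(p_i),2q]$ that are actually attained in some valid labeling. Any order-respecting filling with values in these intervals attains each of its own labels, hence uses only consistent labels; conversely, every labeling counted by $\mathcal{L}(u,v,R^{2q}_a)$ respects the same bounds. Therefore $\mathcal{L}(u,v,R^{2q}_a)$ equals the full set of order-respecting integer fillings with values in $[a(p_i),2q]$, which is exactly the image of $\iota$, and since $\iota$ is a bijection on the alphabet the induced map on tableaux is a bijection. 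The care needed here is precisely that consistency may \emph{delete} non-attainable labels from $R^{2q}_a(p_i)$ (the gap phenomenon noted after Corollary~\ref{cor:abrow}), so the argument must proceed via attained values rather than assuming $R^{2q}_a(p_i)$ is the full interval $\{2i-1,2i,\ldots,2q\}$.
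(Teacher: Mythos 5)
Your proposal is correct and follows essentially the same route as the paper: the paper's proof likewise invokes Proposition~\ref{prop:SSYT} to handle the shape and order conditions and then applies the interleaving relabeling $2k-1 \leftrightarrow k$, $2k \leftrightarrow \overline{k}$ to convert the row-$i$ lower bound $i$ into $2i-1$ and the upper bound $\overline{q}$ into $2q$. Your additional care about the consistency of $R^{2q}_a$ (arguing via attained labels rather than assuming each $R^{2q}_a(p_i)$ is a full interval) is a point the paper's two-line proof passes over silently, but it does not change the argument.
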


\begin{proof}
Since $\mathcal{L}_{[n]\times[\lambda_1]}(u,v,R^{2q}_a) \subset \mathcal{L}_{[n]\times[\lambda_1]}(u,v,R^{2q})$, by Proposition \ref{prop:SSYT} we have that $[n]$-strict labelings in $\mathcal{L}_{[n]\times[\lambda_1]}(u,v,R^{2q}_a)$ correspond to semistandard Young tableaux whose entries in row $i$ are restricted below by $2i-1$. Then, sending $2k$ to $\overline{k}$ and $2k-1$ to ${k}$ for each $1\leq k\leq q$, we have exactly $\mathrm{Sp}(\lambda/\mu,2q)$. 
\end{proof}

We now specify the $\hat{B}$-bounded $\Gamma(P,\hat{R})$-partitions in bijection with $\mathrm{Sp}(\lambda/\mu,2q)$. Recall $\hat{B}$ from Definition~\ref{def:Bhat}.
\begin{corollary}
\label{cor:sp}
The set $\mathrm{Sp}(\lambda/\mu,2q)$ under $\pro$ is in equivariant bijection with $\mathcal{A}^{\widehat{B}}(\Gamma([n],\widehat{R^{2q}_a}))$ under $\row$, where $a=(1,3,5,\ldots,2n-1)$ and $\ell=\lambda_1$, $u(p_i) = \mu_i$, $v(p_i) = \lambda_1 - \lambda_i$ for all $1\leq i\leq n$. 
\end{corollary}

\begin{proof}
This follows from Proposition \ref{prop:sp} and Corollary \ref{cor:abrow}.
\end{proof}

\begin{remark}
\label{remark:Symp_JT}
Symplectic tableaux in the case $\mu=\emptyset$ are enumerated by an analogue of the Jacobi-Trudi formula, due to M.~Fulmek and C.~Krattenthaler~\cite{FulmekKrattenthaler}. 
Thus the bijection of Corollary~\ref{cor:FT} allows one to translate this to enumerate $\mathcal{A}^{\widehat{B}}(\Gamma([n],\widehat{R^{2q}_a}))$.
\end{remark}

There is also a hook-content formula for symplectic tableaux, due to P.\ Campbell and A.\ Stokke~\cite{CampbellStokke}. They proved a symplectic Schur function version of this formula, but we will not need that here.
\begin{theorem}[\protect{\cite[Corollary 4.6]{CampbellStokke}}]
\label{thm:symp_hook}
The cardinality of $\mathrm{Sp}(\lambda,2q)$ is  \[\prod_{(i,j) \in [\lambda]} \frac{2q + r_{\lambda}(i,j)}{h_{\lambda}(i,j)}\] where $h_{\lambda}(i,j)$ is the hook length $h_{\lambda}(i,j) = \lambda_i + \lambda^t_j -i - j + 1$ and $r_{\lambda}(i,j)$ is defined to be \[r_{\lambda}(i,j) = \begin{cases}
\lambda_i + \lambda_j -i-j+2 & \mathrm{if}\ i > j\\
i+j-\lambda^t_i - \lambda^t_j & \mathrm{if}\ i \leq j
\end{cases}\]
\end{theorem}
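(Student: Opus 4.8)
The plan is to prove the formula representation-theoretically: recognize $|\mathrm{Sp}(\lambda,2q)|$ as the dimension of an irreducible representation of the symplectic group, and then convert Weyl's dimension formula into the stated hook-content product. First I would invoke King's theorem that the symplectic tableaux $\mathrm{Sp}(\lambda,2q)$ — with the order $1<\overline{1}<2<\overline{2}<\cdots$ and the condition that row-$i$ entries are $\geq i$ — index a basis of the irreducible $\mathrm{Sp}(2q,\mathbb{C})$-module $V_\lambda$ of highest weight $\lambda$ (when $\lambda$ has at most $q$ rows; otherwise both sides vanish). Equivalently, the symplectic Schur function $sp_\lambda(x_1^{\pm1},\ldots,x_q^{\pm1})$ is the character of $V_\lambda$, and specializing every $x_i\to1$ gives $|\mathrm{Sp}(\lambda,2q)|=\dim V_\lambda$. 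This reduces the statement to a purely numerical identity about $\dim V_\lambda$.

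Next I would write down Weyl's dimension formula for type $C_q$. Putting $\ell_i=\lambda_i+q-i+1$ and $m_i=q-i+1$ (so $(m_i)$ records the Weyl vector $\rho=(q,q-1,\ldots,1)$), the positive roots $e_i-e_j$, $e_i+e_j$ for $i<j$, and the long roots $2e_i$ give
\[
\dim V_\lambda=\prod_{1\le i<j\le q}\frac{\ell_i^2-\ell_j^2}{m_i^2-m_j^2}\ \prod_{i=1}^{q}\frac{\ell_i}{m_i},
\]
where the diagonal factors $\ell_i/m_i$ come from $2e_i$. It remains to identify this with $\prod_{(i,j)\in[\lambda]}\frac{2q+r_\lambda(i,j)}{h_\lambda(i,j)}$. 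For the denominator I would use the classical Vandermonde-type identity expressing the full hook product $\prod_{(i,j)\in[\lambda]}h_\lambda(i,j)$ in terms of the $\ell_i$, exactly as in the proof of the ordinary $GL$ hook-content formula. For the numerator I would show box by box that the factors $2q+r_\lambda(i,j)$, after this hook normalization, reassemble into $\prod_{i<j}(\ell_i^2-\ell_j^2)\prod_i\ell_i$: the case $i>j$ in the definition of $r_\lambda$ covers the strictly below-diagonal boxes, which behave like ordinary contents and feed the $\ell_i-\ell_j$ part, while the case $i\le j$ covers the on-or-above-diagonal boxes, which pair up to produce the $\ell_i+\ell_j$ part, the main-diagonal boxes $(i,i)$ contributing the bare factors $\ell_i$ that match the long-root terms.

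The main obstacle is precisely this last bookkeeping: matching the box-indexed product over $[\lambda]$ with the root-indexed Weyl product, and in particular reconciling the asymmetric definition of $r_\lambda$ (its two cases $i>j$ and $i\le j$) with the symmetric-looking dimension formula, while checking that the diagonal boxes account for the doubled long roots and not for a reflected off-diagonal pair. The cleanest way to control this is to first prove a symplectic analogue of Stanley's hook-content identity at the level of a suitable principal specialization of $sp_\lambda$ — a $t$-deformation whose $t\to1$ limit is the desired cardinality — since the graded version makes the box-to-root correspondence far more transparent, and then set $t=1$; this is essentially the route Campbell and Stokke take, proving the symplectic Schur function version first. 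A more combinatorial alternative would be to realize the product directly via the El Samra--King lattice-path method or symplectic jeu de taquin, but the representation-theoretic reduction followed by the numerical identity is the most economical.
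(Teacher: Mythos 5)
The paper does not prove this statement at all: it is imported verbatim from Campbell and Stokke (their Corollary 4.6) with a citation, and the authors explicitly remark that they will not need the underlying symplectic Schur function version. So there is no in-paper argument to compare against; your proposal has to be judged on its own. As a proof sketch it is sound and follows the classical route: King's theorem identifying $\mathrm{Sp}(\lambda,2q)$ with a weight basis of the irreducible $Sp(2q,\mathbb{C})$-module $V_\lambda$ reduces the count to $\dim V_\lambda$; your type $C_q$ Weyl dimension formula with $\ell_i=\lambda_i+q-i+1$, $m_i=q-i+1$ is correct (the long roots $2e_i$ contribute the factors $\ell_i/m_i$ exactly as you say); and the remaining step is the box-to-root bookkeeping that turns $\prod_{i<j}(\ell_i^2-\ell_j^2)\prod_i\ell_i$ divided by the $\rho$-product into $\prod_{(i,j)\in[\lambda]}(2q+r_\lambda(i,j))/h_\lambda(i,j)$. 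This last identity is precisely the content of El Samra and King's hook-content form of the dimension formulas for the classical groups, and your description of how the two cases of $r_\lambda$ (strictly sub-diagonal boxes feeding the $\ell_i-\ell_j$ factors, on-or-above-diagonal boxes pairing to give $\ell_i+\ell_j$, diagonal boxes supplying the long-root factors $\ell_i$) is the right picture, though you leave it as a plan rather than executing it. Spot checks confirm the formula (e.g.\ $\lambda=(1)$ gives $2q$; $\lambda=(1,1)$ gives $(q-1)(2q+1)=\dim\Lambda^2_0V$).

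One caveat: your parenthetical claim that \emph{both} sides vanish when $\lambda$ has more than $q$ rows is not quite right. The left side vanishes for any $\lambda$ with $\ell(\lambda)>q$, and the right side vanishes when $\lambda_1^t=q+1$ (the box $(1,1)$ contributes $2q+2-2\lambda_1^t=0$), but for $\lambda_1^t>q+1$ the product need not vanish (e.g.\ $\lambda=(1,1,1,1)$, $q=2$ gives a nonzero product). So the theorem should carry the hypothesis $\ell(\lambda)\leq q$; this is harmless for the paper's application, where $\lambda=sc_n$ and $2q=2n$, but it is worth stating if you write the argument out.
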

We use this formula to enumerate symplectic tableaux of staircase shape, finding a particularly simple formula.
\begin{corollary}
\label{cor:cardsp}
The cardinality of $\mathrm{Sp}(sc_n, 2n)$ is $2^{n^2}$.
\end{corollary}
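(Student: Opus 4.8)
The plan is to apply the symplectic hook-content formula (Theorem~\ref{thm:symp_hook}) directly to the self-conjugate staircase $sc_n=(n,n-1,\ldots,1)$ with $2q=2n$, and then evaluate the resulting product by collecting factors along antidiagonals. First I would record that $sc_n$ is self-conjugate, so $\lambda_i=\lambda^t_i=n+1-i$ and the cell $(i,j)$ lies in $[\lambda]$ exactly when $i+j\le n+1$. Substituting into the formulas for $h_\lambda$ and $r_\lambda$ and setting $s:=i+j$, a short computation gives $h_\lambda(i,j)=2n+3-2s$ (always odd), while $2q+r_\lambda(i,j)=2(s-1)$ when $i\le j$ and $2q+r_\lambda(i,j)=2(2n+2-s)$ when $i>j$. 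The key structural observation is that all three quantities depend only on $s$ and on the sign of $i-j$.

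Next I would group the product over cells by antidiagonal. For each $s\in\{2,\ldots,n+1\}$ all $s-1$ cells with $i+j=s$ lie in $[\lambda]$, and they split into exactly $\lfloor s/2\rfloor$ cells with $i\le j$ and $\lceil s/2\rceil-1$ cells with $i>j$, all sharing the hook length $2n+3-2s$. Theorem~\ref{thm:symp_hook} then yields the closed form
\[
|\mathrm{Sp}(sc_n,2n)|=\prod_{s=2}^{n+1}\frac{\bigl(2(s-1)\bigr)^{\lfloor s/2\rfloor}\,\bigl(2(2n+2-s)\bigr)^{\lceil s/2\rceil-1}}{(2n+3-2s)^{\,s-1}}.
\]
Pulling one factor of $2$ out of each of the $\lfloor s/2\rfloor+(\lceil s/2\rceil-1)=s-1$ numerator terms and summing $\sum_{s=2}^{n+1}(s-1)=\tfrac{n(n+1)}{2}$ over antidiagonals separates the power of two, writing the numerator as $2^{n(n+1)/2}M_n$ with $M_n=\prod_{s=2}^{n+1}(s-1)^{\lfloor s/2\rfloor}(2n+2-s)^{\lceil s/2\rceil-1}$. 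Since every hook length is odd, the denominator $D_n=\prod_{s=2}^{n+1}(2n+3-2s)^{\,s-1}=\prod_{k=1}^{n}(2k-1)!!$ is odd.

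It therefore remains to show that $M_n=2^{n(n-1)/2}D_n$, from which $|\mathrm{Sp}(sc_n,2n)|=2^{n(n+1)/2+n(n-1)/2}=2^{n^2}$ follows. This is the step I expect to be the main obstacle, as it is where the floor/ceiling multiplicities (equivalently, the main diagonal) must be handled carefully. I would prove it by reindexing $M_n=\prod_{t=1}^{2n}t^{e(t)}$, where $e(t)=\lceil t/2\rceil$ for $1\le t\le n$ and $e(t)=n-\lfloor t/2\rfloor$ for $n<t\le 2n$, and then comparing the two sides prime-by-prime: a short $2$-adic count shows $v_2(M_n)=\sum_{t}e(t)\,v_2(t)=\tfrac{n(n-1)}{2}$, while for each odd $w$ the exponent $\sum_{a\ge 0,\,2^a w\le 2n}e(2^a w)$ of $w$ in $M_n$ equals $\tfrac{2n-w+1}{2}$, which is precisely the exponent of $w$ in $D_n$. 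Matching the odd parts and the powers of two completes the evaluation.

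As a sanity check I would confirm the formula at small $n$ (it gives $2$, $16$, $512$ for $n=1,2,3$, matching $2^{n^2}$). The identity $M_n=2^{n(n-1)/2}D_n$ can alternatively be verified by induction on $n$, splitting into the cases $n$ even and $n$ odd to control the ceilings; either route reduces the result entirely to elementary bookkeeping of the antidiagonal multiplicities, with no further input beyond Theorem~\ref{thm:symp_hook}.
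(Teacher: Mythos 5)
Your proposal is correct. It rests on the same foundation as the paper's proof---both apply the symplectic hook-content formula of Theorem~\ref{thm:symp_hook} to $sc_n$ and compute the identical cell data $h_\lambda(i,j)=2n+3-2(i+j)$ and $2n+r_\lambda(i,j)=2(i+j-1)$ for $i\le j$, $2(2n+2-i-j)$ for $i>j$---but the two arguments then evaluate the resulting product by genuinely different decompositions. The paper groups by rows and columns so that each group telescopes into ratios of factorials and odd double factorials, and the expression collapses by cancellation of the $(2n-2k+1)!$ and $(n-k)!$ terms. You group by antidiagonals $s=i+j$, exploit the fact that every factor depends only on $s$ and the sign of $i-j$, and reduce everything to the single identity $M_n=2^{n(n-1)/2}D_n$, verified by matching, for each odd $w$, the multiplicity of $w$ on both sides and then counting the $2$-adic valuation. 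Both of your asserted counts do check out: the odd-multiplicity identity follows from the downward recursion $g(m)=e(m)+g(2m)$ (with $g(m)=e(m)$ once $2m>2n$), which yields $g(m)=n-\lfloor m/2\rfloor$ for all $m\le 2n$ and hence $g(w)=(2n-w+1)/2$ for odd $w$; and the count $\sum_t e(t)\,v_2(t)=n(n-1)/2$ follows by summing $\sum_{2^j\mid t}e(t)$ over $j\ge 1$, or by your suggested induction on $n$. What your route buys is transparency about why the answer is a pure power of $2$: the odd parts of numerator and denominator coincide factor-by-factor, so only the tally of $2$'s remains. What the paper's route buys is the avoidance of floor/ceiling multiplicity bookkeeping in favor of standard factorial cancellation. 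Either way the computation is elementary, and your plan contains no gaps.
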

\begin{proof}
This follows from Theorem~\ref{thm:symp_hook} above. For $\lambda = sc_n = (n , n-1,\ldots 1)$, we have $\lambda_i = \lambda^t_i = n - i + 1$.  First, we calculate the product of the numerator, where we always take $(i,j) \in [\lambda]$, i.e.\ $1 \leq i \leq n$ and $1 \leq j \leq n-i+1$. 
\begin{align*}
\prod_{(i,j) \in [\lambda]} 2n + r_{\lambda}(i,j) &= \left(\prod_{i > j} 2n + r_{\lambda}(i,j)\right)\left(\prod_{i \leq j} 2n + r_{\lambda}(i,j)\right)\\
&= \left(\prod_{i > j} 2(2n - i - j + 2)\right)\left(\prod_{i \leq j} 2(i+j-1)\right)\\
\intertext{We now rewrite by considering the products over the columns $j \leq \lfloor \frac{n}{2} \rfloor$ or the rows $i \leq \lceil \frac{n}{2} \rceil$:}
&= 2^{\binom{n}{2}}\left(\prod_{j \leq \lfloor \frac{n}{2} \rfloor}\prod_{j < i \leq n-j+1} 2n - i - j + 2\right)\left(\prod_{i \leq \lceil \frac{n}{2} \rceil}\prod_{i \leq j \leq n-i+1} i+j-1\right)\\
&=2^{\binom{n}{2}}\left(\prod_{j \leq \lfloor \frac{n}{2} \rfloor} \frac{(2n-2j+1)!}{n!}\right)\left(\prod_{i \leq \lceil \frac{n}{2} \rceil} \frac{n!}{(2i-2)!}\right)
\end{align*}
Next, we find the product of the hook lengths, considered over the rows $1\leq i \leq n$: \begin{align*}
\prod_{(i,j) \in [\lambda]} h_{\lambda}(i,j) &= \prod_{1 \leq i \leq n}\prod_{1 \leq j \leq n-i+1} 2n - 2i - 2j + 3\\
&= \prod_{1 \leq i \leq n} (2n-2i + 1)(2n-2i-1)\cdots3 \cdot 1\\
&= \prod_{1 \leq i \leq n} \frac{(2n-2i+1)!}{2^{n-i}(n-i)!} = \frac{1}{2^{\binom{n-1}{2}}}\prod_{1 \leq i \leq n} \frac{(2n-2i+1)!}{(n-i)!}
\end{align*}
Finally, \begin{align*}
\prod_{(i,j) \in [\lambda]}&\frac{ 2n + r_{\lambda}(i,j)}{h_{\lambda}(i,j)} \\
&= 2^{\binom{n}{2}+\binom{n-1}{2}}\left(\prod_{k \leq \lfloor \frac{n}{2} \rfloor} \frac{(2n-2k+1)!}{n!}\right)\left(\prod_{k \leq \lceil \frac{n}{2} \rceil} \frac{n!}{(2k-2)!}\right) \Bigg/ \prod_{1 \leq k \leq n} \frac{(2n-2k+1)!}{(n-k)!}\\
&= 2^{n^2}(n!)^{\lceil \frac{n}{2} \rceil-\lfloor \frac{n}{2} \rfloor}\prod_{1\leq k\leq n} (n-k)! \Bigg/ \left[\left(\prod_{\lfloor \frac{n}{2} \rfloor < k \leq n}(2n-2k+1)!\right)\left(\prod_{1\leq k\leq \lceil \frac{n}{2} \rceil} (2k-2)!\right)\right]\\
&= 2^{n^2}(n!)^{\lceil \frac{n}{2} \rceil-\lfloor \frac{n}{2} \rfloor}\prod_{1\leq k\leq n} (n-k)! \Bigg/ \prod_{1\leq k\leq \lceil \frac{n}{2} \rceil} (2k-1)!(2k-2)!\\
&= 2^{n^2}(n!)^{\lceil \frac{n}{2} \rceil-\lfloor \frac{n}{2} \rfloor}\prod_{1\leq k\leq n} (n-k)! \Bigg/ \left((n!)^{\lceil \frac{n}{2} \rceil-\lfloor \frac{n}{2} \rfloor}\prod_{1\leq k\leq n} (n-k)!\right) = 2^{n^2}.
\end{align*}
\end{proof}

In the rest of this subsection, we apply Corollary~\ref{cor:sp} to staircase-shaped symplectic tableaux, obtaining Corollaries~\ref{cor:symp} and \ref{cor:sp_card}. This involves the poset in the following definition. This poset is isomorphic to the dual of the \emph{Type $B_n$ positive root poset}. As before, we will not need this algebraic motivation here. See Figure \ref{fig:stairsptab}.
\begin{definition} \label{def:necornertriangle}
Let $\necornertriangle_n$ denote the subposet of $[n] \times [2n-1]$ given by $\{(i,j) \mid i \leq j \text{ and } i+j \leq 2n\}$.
\end{definition}

We obtain the following correspondence, as a corollary of our main results.

\begin{corollary}
\label{cor:symp}
There is an equivariant bijection between $\mathrm{Sp}(sc_n, 2n)$ under $\pro$ and $\mathcal{A}^{\delta}_{\epsilon}(\necornertriangle_n)$ under $\row$, where for $(i,j) \in \necornertriangle_n$, $\delta(i,j) = \min(j,n)$ and $\epsilon(i,j) = i-1$.
\end{corollary}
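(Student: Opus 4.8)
The plan is to route through Corollary~\ref{cor:sp}, which for $\lambda = sc_n$, $\mu=\emptyset$, $q=n$ already furnishes an equivariant bijection between $\mathrm{Sp}(sc_n,2n)$ under $\pro$ and $\mathcal{A}^{\widehat{B}}(\Gamma([n],\widehat{R^{2n}_a}))$ under $\row$, where $a=(1,3,\ldots,2n-1)$, $\ell=n$, $u(p_i)=0$, and $v(p_i)=i-1$ (column-adjacency, needed to pass to $\row$, holds by Proposition~\ref{prop:hatcoladj}). So it suffices to show that $\mathcal{A}^{\widehat{B}}(\Gamma([n],\widehat{R^{2n}_a}))$, as a set carrying rowmotion, is equivalent to $\mathcal{A}^{\delta}_{\epsilon}(\necornertriangle_n)$ with $\delta(i,j)=\min(j,n)$ and $\epsilon(i,j)=i-1$. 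First I would pin down the consistent restriction function: on $[n]\times[n]^v_u$ one checks $R^{2n}_a(p_i)=\{2i-1,2i,\ldots,2n\}$, every such value being realized by placing it in the last (hence column-bottom) box of row $i$. Using the covering-relation description of $\Gamma$ recalled in the proofs of Lemmas~\ref{lem:GradedGlobalq} and \ref{lem:typeagamma}, the elements $(p_i,k)$ with $2i-1\le k\le 2n-1$ have covers $(p_i,k)\lessdot(p_i,k-1)$ and $(p_i,k)\lessdot(p_{i+1},k+1)$, and the map $(p_i,k)\mapsto(i,\,2n-1+i-k)$ is a poset isomorphism onto $\necornertriangle_n$, since $k\le 2n-1$ and $k\ge 2i-1$ translate exactly into $i\le j$ and $i+j\le 2n$. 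This is the direct analogue of Lemma~\ref{lem:typeagamma}.

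The main obstacle is the bounding function. Unlike the flagged staircase case of Corollary~\ref{cor:flagged2}, here $R^{2n}_a$ is \emph{not} consistent on $[n]$ (for instance the top label $2n$ of the first fiber cannot be extended to an increasing labeling of the chain $[n]$, as $\max R^{2n}_a(p_{i+1})=2n$ as well), so Proposition~\ref{prop:deltaepsilon} is unavailable and the bounds must be read off $\widehat{B}$ directly. Computing $\widehat{R}$ from Definition~\ref{def:rhat} gives $\widehat{R^{2n}_a}(p_i)=\{i-n\}\cup\{2i-1,\ldots,2n\}\cup\{2n+i\}$, so the elements of $\Gamma([n],\widehat{R^{2n}_a})$ lying outside the copy of $\necornertriangle_n$ are precisely the gadgets $(p_i,i-n)$ and $(p_i,2n)$, which by Definition~\ref{def:Bhat} are fixed by $\widehat{B}$ to the values $n$ and $i-1$ respectively. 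I would then trace the covers joining these gadgets to $\necornertriangle_n$: the cover $(p_i,2i-1)\lessdot(p_i,i-n)$ seats the value-$n$ gadget atop each fiber, $(p_i,2n)\lessdot(p_i,2n-1)$ seats the value-$(i-1)$ gadget below it, and the cross-fiber cover $(p_i,2n-1)\lessdot(p_{i+1},2n)$ caps the image $(i,i)$ of $(p_i,2n-1)$ by $i$.

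Propagating these fixed values through the order then yields the advertised bounds. Lower bounds travel up and are maximized at $(i,i)$, so the value-$(i-1)$ gadgets give $\epsilon(i,j)=i-1$. For the upper bound, the value-$n$ fiber-top gadgets give $\sigma(i,j)\le n$, while the cross-fiber gadget forces $\sigma(j,j)\le j$, a constraint that descends column $j$ to give $\sigma(i,j)\le j$ for all $i\le j$; the effective cap is therefore $\delta(i,j)=\min(j,n)$. Finally I would argue—exactly as in the $\tilde{\Gamma}(P,\hat{R})$ reduction in the proof of Theorem~\ref{thm:rowrow}—that deleting the fixed gadgets and re-encoding them as the per-element bounds $\delta,\epsilon$ preserves every toggle: at a non-fixed $(p_i,k)$ the cap $\min(j,n)$ is redundant against the genuine upper cover in its column except at a column-top, where it reproduces exactly the gadget bound, and symmetrically for $\epsilon$. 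Hence $\row$ on $\mathcal{A}^{\widehat{B}}(\Gamma([n],\widehat{R^{2n}_a}))$ matches $\row$ on $\mathcal{A}^{\delta}_{\epsilon}(\necornertriangle_n)$, closing the chain of equivariant bijections. The delicate point throughout is precisely the appearance of $\min(j,n)$ rather than the naive constant $n$ that Proposition~\ref{prop:deltaepsilon} would have predicted had it applied; a cardinality check against Corollary~\ref{cor:cardsp} (e.g.\ $n=2$ gives $16=2^{4}$ with the bound $\min(j,n)$, but $17$ with the constant bound $n$) confirms that this refinement is essential.
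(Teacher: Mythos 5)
Your proposal is correct and follows essentially the same route as the paper: reduce via Corollary~\ref{cor:sp}, identify the non-fixed part of $\Gamma([n],\widehat{R^{2n}_a})$ with $\necornertriangle_n$ by the coordinate change $(p_i,k)\mapsto(i,2n-1+i-k)$, and read the bounds $\delta(i,j)=\min(j,n)$, $\epsilon(i,j)=i-1$ off the elements fixed by $\widehat{B}$. Your added remarks (that $R^{2n}_a$ fails to be consistent on $[n]$ so Proposition~\ref{prop:deltaepsilon} is unavailable, the explicit toggle-compatibility of the re-encoding, and the $n=2$ cardinality check) are correct refinements of points the paper treats more tersely.
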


\begin{proof}
Let $a = (1,3,5,\ldots,2n-1)$ and define $\delta$ and $\epsilon$ as above. Then, by Corollary \ref{cor:sp}, $\mathrm{Sp}(sc_n, 2n)$ under $\pro$ is in equivariant bijection with $\mathcal{A}^{\widehat{B}}(\Gamma([n],\widehat{R^{2n}_a}))$ under $\row$ with $\ell = n$, $u(p_i) = 0$, $v(p_i) = i-1$ for all $1 \leq i \leq n$.  We show $\mathcal{A}^{\widehat{B}}(\Gamma([n],\widehat{R^{2n}_a}))$ is equivalent to $\mathcal{A}^{\delta}_{\epsilon}(\necornertriangle_n)$.

The restriction function $R^{2n}_a$ consistent on $\mathrm{Sp}(sc_n, 2n)$ is given by $R^{2n}_a(p_i) = \{2i-1,2i,\ldots,2n\}$.  Consider the poset structure of the elements of $\Gamma([n],\widehat{R^{2n}_a})$ that are not fixed by $\hat{B}$, that is $\{(p_i,k) \mid k \in R^{2n}_a(p_i)^*\}$.  By definition of $\Gamma$ (as noted in \cite[Thm.\ 2.21]{DSV2019}), $(p_i, k_1) \lessdot (p_j,k_2)$ if and only if either $i=j$ and $k_1 - 1 = k_2$ or $i + 1 = j$ and $k_1 + 1 = k_2$.  By the map $(p_i,k) \rightarrow (i,2n-1+i-k)$, the subposet $\Gamma([n],\widehat{R^{2n}_a}) \setminus \mathrm{dom} (\hat{B})$ is a subposet of $[n] \times [2n-1]$, since $1 \leq i \leq n$, $2i-1 \leq k \leq 2n-1$ implies $i \leq 2n-1+i-k \leq 2n-i$, and the above covering relations imply $(i_1,2n-1+i_1-k_1) \leq (i_2,2n-1+i_2-k_2)$ if and only if $i_1 \leq i_2$ or $2n-1+i_1-k_1 \leq 2n-1+i_2-k_2$.  Moreover, this subposet of $[n] \times [2n-1]$ is exactly $\necornertriangle_n$, since the range $i \leq 2n-1+i-k \leq 2n-i$ of the second component satisfies Definition \ref{def:necornertriangle}. Therefore, $\hat{B}$-bounded $\Gamma([n],\widehat{R^{2n}_a})$-partitions are exactly elements of $\mathcal{A}^n(\necornertriangle_n)$ with per-element bounds on the labels induced by the elements fixed by $\hat{B}$.

Finally, we determine these upper and lower bounds on the label of any element $(i,j) \in \necornertriangle_n$ by determining the corresponding bounds on the label $\sigma(p_i,k)$ where $\sigma \in \mathcal{A}^{\widehat{B}}(\Gamma([n],\widehat{R^{2n}_a}))$ and $(p_i,k) \in \Gamma([n],\widehat{R^{2n}_a})  \setminus \mathrm{dom} (\hat{B})$.  For the fixed elements $(p_i, \min \widehat{R^{2n}_a}(p_i)^*)$ we have $\hat{B}(p_i, \min \widehat{R^{2n}_a}(p_i)^*) = n - u(p_i) = n$, so these elements induce an upper bound of $n$ on all $\sigma(p_i,k)$.  Next, the fixed elements $(p_i, \max \widehat{R^{2n}_a}(p_i)^*) = (p_i,2n)$ induce a lower bound $v(p_i) = i-1$ on all $\sigma(p_i,k)$ and an equivalent upper bound on $\sigma(p_{i'},k')$, where $(p_{i'},k') < (p_i, 2n)$, which is the case whenever $i' < i$ and $k' \geq 2n - (i - i')$.  Therefore, a generic $\sigma(p_i,k)$ is bounded below by $i-1$ and above by at most $n$ and, if $k = 2n-(i'-i)$ for any $i < i' \leq n$, then $\sigma(p_i,k)$ is bounded above by $i'-1$.  Translating to $\mathcal{A}^n(\necornertriangle_n)$, $\sigma(i,j) = \sigma(p_i, 2n-1+i-j)$ (we keep the notation $\sigma$ due to the equivalence shown above) so $\sigma(i,j)$ is bounded below by $i-1$ and above by at most $n$.  We have $2n-1+i-j = 2n - (j+1 - i)$, so $\sigma(i,j)$ is bounded above by $j$ for $1 \leq j \leq n-1$.  Thus, if $\delta(i,j) = \min(j,n)$ and $\epsilon(i,j) = i-1$, then $\mathcal{A}^{\widehat{B}}(\Gamma([n],\widehat{R^{2n}_a}))$ is equivalent to $\mathcal{A}^{\delta}_{\epsilon}(\necornertriangle_n)$.
\end{proof}

\begin{figure}[hbtp]
\begin{center}
\includegraphics[width=\textwidth]{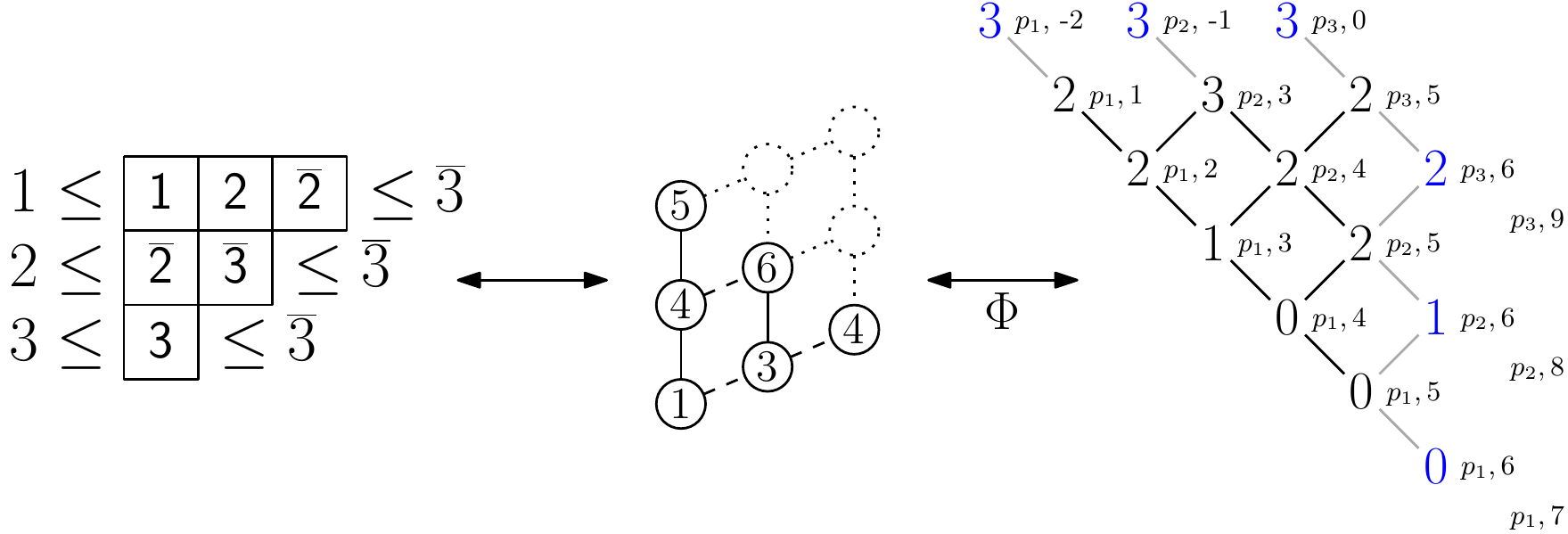}
\end{center}
\caption{On the left is an element of $\mathrm{Sp}(sc_3,6)$ (with entries in $\{1, \overline{1}, 2, \overline{2}, 3, \overline{3}\}$), and in the center is the equivalent $[3]$-strict labeling (with labels in $\{1,2,3,4,5,6\}$).  The corresponding $(\delta,\epsilon)$-bounded $\necornertriangle_n$-partition is given on the right, shown as the equivalent element of $\mathcal{A}^{\widehat{B}}(\Gamma([3],\widehat{R^6_a}))$.  Here, the poset element $(p_1,5) \in \Gamma([3],\widehat{R^6_a})$ corresponds to $(1,1) \in \necornertriangle_n$, $(p_1,4)$ corresponds to $(1,2)$, and so on.}
\label{fig:stairsptab}
\end{figure}

The corollary below follows directly from Corollaries~\ref{cor:cardsp} and~\ref{cor:symp}. 
\begin{corollary}
\label{cor:sp_card}
The cardinality of $\mathcal{A}^{\delta}_{\epsilon}(\necornertriangle_n)$ with $\delta(i,j) = \min(j,n)$ and $\epsilon(i,j) = i-1$ 
is $2^{n^2}$.
\end{corollary}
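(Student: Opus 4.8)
The plan is to recognize that this is an immediate transfer of a cardinality across a bijection, requiring no further combinatorial input. First I would invoke Corollary~\ref{cor:symp}, which supplies an equivariant bijection between $\mathrm{Sp}(sc_n, 2n)$ under $\pro$ and $\mathcal{A}^{\delta}_{\epsilon}(\necornertriangle_n)$ under $\row$, precisely for the bounding functions $\delta(i,j)=\min(j,n)$ and $\epsilon(i,j)=i-1$ appearing in the statement. For the purpose of counting I only need the underlying set bijection; the equivariance with respect to $\pro$ and $\row$ is additional structure that plays no role here. Since any bijection preserves cardinality, this gives $|\mathcal{A}^{\delta}_{\epsilon}(\necornertriangle_n)| = |\mathrm{Sp}(sc_n, 2n)|$.

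Next I would apply Corollary~\ref{cor:cardsp}, which evaluates $|\mathrm{Sp}(sc_n, 2n)| = 2^{n^2}$ using the symplectic hook-content formula of Campbell and Stokke (Theorem~\ref{thm:symp_hook}) specialized to the staircase shape $sc_n$. Chaining the two equalities then yields $|\mathcal{A}^{\delta}_{\epsilon}(\necornertriangle_n)| = 2^{n^2}$, as claimed. The entire argument is the composition of ``equinumerous by bijection'' with ``known count,'' so it occupies a single line once both prerequisites are cited.

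Because the deduction is nothing more than moving a known cardinality along an already-established bijection, there is no genuine obstacle in this step itself. All of the real difficulty lives upstream and is handled before the statement: identifying $\mathrm{Sp}(sc_n, 2n)$ with exactly the marked $\necornertriangle_n$-partitions cut out by $\delta$ and $\epsilon$ via the map $\Phi$ of Definition~\ref{def:mainbijection} (the content of Corollary~\ref{cor:symp}, which in turn rests on Proposition~\ref{prop:sp}, Corollary~\ref{cor:sp}, and the poset isomorphism $\Gamma([n],\widehat{R^{2n}_a})\setminus\mathrm{dom}(\hat B)\cong\necornertriangle_n$), and simplifying the hook-content product to the clean closed form $2^{n^2}$ (the content of Corollary~\ref{cor:cardsp}). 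With those two inputs in place, the present corollary follows at once, and I would present it as a one-sentence deduction.
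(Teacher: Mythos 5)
Your proposal is correct and matches the paper's proof exactly: the paper also deduces Corollary~\ref{cor:sp_card} directly from the bijection of Corollary~\ref{cor:symp} together with the count $|\mathrm{Sp}(sc_n,2n)| = 2^{n^2}$ from Corollary~\ref{cor:cardsp}. Your additional remark that only the underlying set bijection (not the equivariance) is needed is accurate and harmless.
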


It would be interesting to see whether one can find a set of symplectic tableaux that exhibit the cyclic sieving phenomenon with respect to promotion. A nice counting formula is generally a necessary first step.

\section*{Acknowledgments}
The authors thank the anonymous referees for helpful comments. They  thank the developers of \verb|SageMath|~\cite{sage} software, which was helpful in this research, and the developers of \verb|CoCalc|~\cite{SMC} for making \verb|SageMath| more accessible. They also thank O.\ Cheong for developing Ipe~\cite{ipe}, which we used to create the figures in this paper. They thank Sam Hopkins for helpful conversations (and the $\widetriangle$ and $\necornertriangle$ macros from his paper~\cite{SH2019PP}) and Anna Stokke for helpful conversations regarding symplectic tableaux. JS was supported by a grant from the Simons Foundation/SFARI (527204, JS). 

\bibliographystyle{abbrv}
\bibliography{master}

\begin{thebibliography}{10}

\bibitem{AST2013}
D.~Armstrong, C.~Stump, and H.~Thomas.
\newblock A uniform bijection between nonnesting and noncrossing partitions.
\newblock {\em Trans. Amer. Math. Soc.}, 365(8):4121--4151, 2013.

\bibitem{BK1972}
E.~A. Bender and D.~E. Knuth.
\newblock Enumeration of plane partitions.
\newblock {\em Journal of Combinatorial Theory, Series A}, 13(1):40 -- 54,
  1972.

\bibitem{BPS2016}
J.~Bloom, O.~Pechenik, and D.~Saracino.
\newblock Proofs and generalizations of a homomesy conjecture of {P}ropp and
  {R}oby.
\newblock {\em Discrete Math.}, 339(1):194--206, 2016.

\bibitem{BS1974}
A.~E. Brouwer and A.~Schrijver.
\newblock {\em On the period of an operator, defined on antichains}.
\newblock Mathematisch Centrum, Amsterdam, 1974.
\newblock Mathematisch Centrum Afdeling Zuivere Wiskunde ZW 24/74.

\bibitem{CF1995}
P.~J. Cameron and D.~G. Fon-Der-Flaass.
\newblock Orbits of antichains revisited.
\newblock {\em European J. Combin.}, 16(6):545--554, 1995.

\bibitem{CampbellStokke}
P.~S. Campbell and A.~Stokke.
\newblock Hook-content formulae for symplectic and orthogonal tableaux.
\newblock {\em Canad. Math. Bull.}, 55(3):462--473, 2012.

\bibitem{CeLaSt2014}
C.~Ceballos, J.-P. Labb\'{e}, and C.~Stump.
\newblock Subword complexes, cluster complexes, and generalized
  multi-associahedra.
\newblock {\em J. Algebraic Combin.}, 39(1):17--51, 2014.

\bibitem{ipe}
O.~Cheong.
\newblock {\em The {I}pe extensible drawing editor (Version 7.2)}, 2020.
\newblock \url{http://ipe.otfried.org/}.

\bibitem{DPS2017}
K.~Dilks, O.~Pechenik, and J.~Striker.
\newblock Resonance in orbits of plane partitions and increasing tableaux.
\newblock {\em J. Combin. Theory Ser. A}, 148:244--274, 2017.

\bibitem{DSV2019}
K.~Dilks, J.~Striker, and C.~Vorland.
\newblock Rowmotion and increasing labeling promotion.
\newblock {\em Journal of Combinatorial Theory, Series A}, 164:72 -- 108, 2019.

\bibitem{Duchet1974}
P.~Duchet.
\newblock Sur les hypergraphes invariantes.
\newblock {\em Discrete Math.}, 8:269--280, 1974.

\bibitem{EP2014}
D.~Einstein and J.~Propp.
\newblock Piecewise-linear and birational toggling.
\newblock In {\em 26th {I}nternational {C}onference on {F}ormal {P}ower
  {S}eries and {A}lgebraic {C}ombinatorics ({FPSAC} 2014)}, Discrete Math.
  Theor. Comput. Sci. Proc., AT, pages 513--524. Discrete Math. Theor. Comput.
  Sci., Nancy, 2014.

\bibitem{EP2020}
D.~Einstein and J.~Propp.
\newblock Combinatorial, piecewise-linear, and birational homomesy for products
  of two chains, 2020.
\newblock \url{arXiv:1310.5294v4}.

\bibitem{EuFu2008}
S.-P. Eu and T.-S. Fu.
\newblock The cyclic sieving phenomenon for faces of generalized cluster
  complexes.
\newblock {\em Adv. in Appl. Math.}, 40(3):350--376, 2008.

\bibitem{GF2019}
G.~Frieden.
\newblock Affine type {$A$} geometric crystal on the {G}rassmannian.
\newblock {\em J. Combin. Theory Ser. A}, 167:499--563, 2019.

\bibitem{FulmekKrattenthaler}
M.~Fulmek and C.~Krattenthaler.
\newblock Lattice path proofs for determinantal formulas for symplectic and
  orthogonal characters.
\newblock {\em J. Combin. Theory Ser. A}, 77(1):3--50, 1997.

\bibitem{Gansner}
E.~R. Gansner.
\newblock On the equality of two plane partition correspondences.
\newblock {\em Discrete Math.}, 30(2):121--132, 1980.

\bibitem{Gessel}
I.~Gessel and X.~Viennot.
\newblock Determinants, paths, and plane partitions.
\newblock 1989.
\newblock \url{http://people.brandeis.edu/~gessel/homepage/papers/pp.pdf}.

\bibitem{GR2015}
D.~Grinberg and T.~Roby.
\newblock Iterative properties of birational rowmotion {II}: rectangles and
  triangles.
\newblock {\em Electron. J. Combin.}, 22(3):Paper 3.40, 49, 2015.

\bibitem{HaddadanThesis}
S.~Haddadan.
\newblock {\em Algorithmic {P}roblems {A}rising in {P}osets and
  {P}ermutations}.
\newblock ProQuest LLC, Ann Arbor, MI, 2016.
\newblock Thesis (Ph.D.)--Dartmouth College.

\bibitem{HaddadanArxiv}
S.~Haddadan.
\newblock Some instances of homomesy among ideals of posets, 2016.
\newblock \url{https://arxiv.org/abs/1410.4819}.

\bibitem{SH2019PP}
S.~Hopkins.
\newblock Cyclic sieving for plane partitions and symmetry.
\newblock {\em SIGMA Symmetry Integrability Geom. Methods Appl.}, 16:130, 40
  pages, 2020.

\bibitem{Hopkins_minuscule_doppelgangers}
S.~Hopkins.
\newblock Minuscule doppelg\"angers, the coincidental down-degree expectations
  property, and rowmotion.
\newblock {\em Experimental Mathematics}, page 1–29, {M}ar 2020.

\bibitem{KirBer95}
A.~Kirillov and A.~Berenstein.
\newblock Groups generated by involutions, {G}elfand--{T}setlin patterns, and
  combinatorics of {Y}oung tableaux.
\newblock {\em Algebra i Analiz}, 7:92--152, 1995.

\bibitem{MuRo2019}
G.~Musiker and T.~Roby.
\newblock Paths to understanding birational rowmotion on products of two
  chains.
\newblock {\em Algebr. Comb.}, 2(2):275--304, 2019.

\bibitem{PR2015}
J.~Propp and T.~Roby.
\newblock Homomesy in products of two chains.
\newblock {\em Electron. J. Combin.}, 22(3):Paper 3.4, 29, 2015.

\bibitem{ReStWh2004}
V.~Reiner, D.~Stanton, and D.~White.
\newblock The cyclic sieving phenomenon.
\newblock {\em Journal of Combinatorial Theory, Series A}, 108(1):17 -- 50,
  2004.

\bibitem{Rhoades2010}
B.~Rhoades.
\newblock Cyclic sieving, promotion, and representation theory.
\newblock {\em Journal of Combinatorial Theory, Series A}, 117(1):38 -- 76,
  2010.

\bibitem{Roby2016}
T.~Roby.
\newblock Dynamical algebraic combinatorics and the homomesy phenomenon.
\newblock In {\em Recent Trends in Combinatorics}, pages 619--652, Cham, 2016.
  Springer International Publishing.

\bibitem{SMC}
{SageMath Inc.}
\newblock {\em CoCalc Collaborative Computation Online}, 2020.
\newblock {\tt https://cocalc.com/}.

\bibitem{Sch1972}
M.~P. Sch\"utzenberger.
\newblock Promotion des morphismes d'ensembles ordonn\'es.
\newblock {\em Discrete Math.}, 2:73--94, 1972.

\bibitem{SerranoStump}
L.~Serrano and C.~Stump.
\newblock Maximal fillings of moon polyominoes, simplicial complexes, and
  {S}chubert polynomials.
\newblock {\em Electron. J. Combin.}, 19(1):Paper 16, 18, 2012.

\bibitem{Stanley1972}
R.~P. Stanley.
\newblock {\em Ordered Structures and Partitions}.
\newblock Memoirs of the American Mathematical Society. American Mathematical
  Society, 1972.

\bibitem{Stanley2009}
R.~P. Stanley.
\newblock Promotion and evacuation.
\newblock {\em Electron. J. Combin.}, 16(2, Special volume in honor of Anders
  Bj\"orner):Research Paper 9, 24, 2009.

\bibitem{sage}
W.~Stein et~al.
\newblock {\em {S}age {M}athematics {S}oftware ({V}ersion 9.2)}.
\newblock The Sage Development Team, 2020.
\newblock \url{http://www.sagemath.org}.

\bibitem{Striker2017}
J.~Striker.
\newblock Dynamical algebraic combinatorics: {P}romotion, rowmotion, and
  resonance.
\newblock {\em Notices Amer. Math. Soc.}, 64(6):543--549, 2017.

\bibitem{SW2012}
J.~Striker and N.~Williams.
\newblock Promotion and rowmotion.
\newblock {\em European J. Combin.}, 33(8):1919--1942, 2012.

\bibitem{Wachs85}
M.~Wachs.
\newblock Flagged {S}chur functions, {S}chubert polynomials, and symmetrizing
  operators.
\newblock {\em Journal of Combinatorial Theory - Series A}, 40(2):276--289,
  Nov. 1985.

\end{thebibliography}
\end{document}